\documentclass[12pt]{amsart}
\allowdisplaybreaks
\usepackage{caption}
\usepackage{listings}

\setcounter{tocdepth}{3}
\setcounter{secnumdepth}{3}
\usepackage[dvipsnames]{xcolor}
\usepackage{fancyhdr}
\usepackage[english]{babel}
\usepackage{amsthm,amssymb}
\usepackage{amsmath}
\usepackage{enumerate}
\usepackage{enumitem}
\setlist[enumerate]{itemsep=0mm}
\usepackage[colorlinks=true,linkcolor=blue,anchorcolor=blue,citecolor=blue,filecolor=blue,menucolor=blue,runcolor=blue,urlcolor=Blue, backref]{hyperref}
\usepackage{amsfonts}
\usepackage{amsbsy}
\usepackage{bm}
\usepackage{graphicx}
\graphicspath{ {./images/} }
\newcommand{\bparen}[1]{\left(#1\right)}

\newcommand{\R}{\mathbb{R}}
\newcommand{\C}{\mathbb{C}}
\newcommand{\N}{\mathbb{N}}
\newcommand{\Z}{\mathbb{Z}}

\renewcommand{\mod}[1]{{\ifmmode\text{\rm\ (mod~$#1$)}\else\discretionary{}{}{\hbox{ }}\rm(mod~$#1$)\fi}}
\newcommand{\ep}{\varepsilon}

\thispagestyle{empty}
\pagenumbering{arabic}

\usepackage{pgfplots}\usepgfplotslibrary{fillbetween}
\pgfplotsset{compat=1.18}
\usepackage{colonequals}
\usepackage{fullpage}

\usepackage{graphicx}
\usepackage{amssymb}
\usepackage{color}

\numberwithin{equation}{section}
\newtheorem{theorem}{Theorem}[section]
\newtheorem{lemma}[theorem]{Lemma}
\newtheorem{corollary}[theorem]{Corollary}
\newtheorem{proposition}[theorem]{Proposition}
\theoremstyle{definition}
\newtheorem{definition}[theorem]{Definition}
\newtheorem{notation}[theorem]{Notation}
\newtheorem{remark}[theorem]{Remark}

\newcommand{\B}{\mathcal{B}} 
\newcommand{\lA}{\mathfrak{A}} 
\newcommand{\lB}{\mathfrak{B}} 
\newcommand{\la}{\mathfrak{a}} 
\newcommand{\lb}{\mathfrak{b}} 
\newcommand{\lAb}{\mathbf{\tilde A}} 
\newcommand{\lBb}{\mathbf{\tilde B}} 
\newcommand{\lL}{\mathfrak{L}} 

\newcommand{\li}{\operatorname{li}}
\newcommand{\Err}{\operatorname{Err}}
\newcommand{\Ex}{\mathbb{E}}

\newcommand{\PV}{\mathop{\rm P.V.}}

\textwidth=13.5cm
\textheight=23cm
\parindent=16pt
\hoffset=-1cm

\setcounter{tocdepth}{1}

\makeatletter
\newcommand{\addresseshere}{%
  \enddoc@text\let\enddoc@text\relax
}
\makeatother

\begin{document}
\baselineskip=17pt

\title{Correlations of error terms for weighted prime counting functions}
\author{Shubhrajit Bhattacharya}
\address{Department of Mathematics, University of Chicago, Chicago, IL, 60637, USA}
\email{shubhrajit@uchicago.edu}
\author{Greg Martin}
\address{Department of Mathematics, University of British Columbia, Vancouver, BC, V6T 1Z2, Canada}
\email{gerg@math.ubc.ca}
\author{Reginald M.~Simpson}
\address{Department of Mathematics, University of British Columbia, Vancouver, BC, V6T 1Z2, Canada}
\email{rs@mailc.net}

\begin{abstract}
Standard prime-number counting functions, such as $\psi(x)$, $\theta(x)$, and $\pi(x)$, have error terms with limiting logarithmic distributions once suitably normalized. The same is true of weighted versions of those sums, like $\pi_r(x) = \sum_{p\le x} \frac1p$ and $\pi_\ell(x) = \sum_{p\le x} \log(1-\frac1p)^{-1}$, that were studied by Mertens. These limiting distributions are all identical, but passing to the limit loses information about how these error terms are correlated with one another.

In this paper, we examine these correlations, showing, for example, that persistent inequalities between certain pairs of normalized error terms are equivalent to the Riemann hypothesis (RH). Assuming both RH and LI, the linear independence of the positive imaginary parts of the zeros of $\zeta(s)$, we calculate the logarithmic densities of the set of real numbers for which two different error terms have prescribed signs. For example, we conditionally show that $\psi(x) - x$ and $\sum_{n\le x} \frac{\Lambda(n)}n - (\log x - C_0)$ have the same sign on a set of logarithmic density $\approx 0.9865$.
\end{abstract}

\maketitle

\tableofcontents 

2020 Mathematics Subject Classification: Primary 11N64; Secondary 11M06, 11M26, 11Y60.

Keywords: Prime Counting Functions, Error Terms, Logarithmic Distributions, Multivariate Distributions.

\section{Introduction}

A central topic in comparative prime number theory is the relationship between the prime counting function $\pi(x)$ and the logarithmic integral $\li(x)=\int_0^x\frac{dx}{\log x}$, which are asymptotic to each other by the prime number theorem. While empirical data led many to believe that $\pi(x) < \li(x)$ for all $x\ge2$, Littlewood~\cite{1914.Littlewood} proved that $\pi(x) - \li(x)$ has infinitely many sign changes. Bays and Hudson~\cite{MR1752093} gave convincing evidence that the first value of~$x$ for which $\pi(x)>\li(x)$ (the true ``Skewes number'' for this prime number race) is approximately $1.4\times10^{316}$. Assuming the Riemann hypothesis (RH), Rubinstein and Sarnak~\cite{Rubinstein1994ChebyshevsB}, extending work of Wintner~\cite{MR1507933}, showed that a normalized version of the difference $\pi(x)-\li(x)$ has a limiting logarithmic distribution (see Definition~\ref{defn1} below); under the additional hypothesis (LI) that the positive numbers~$\gamma$ for which $\zeta(\frac12+i\gamma)=0$ are linearly independent over the rationals, they calculated that the logarithmic density (see Definition~\ref{logdensity} below) of the set of~$x$ for which $\pi(x)>\li(x)$ is a constant~$\eta_1$ which is approximately $2.6\times10^{-6}$.

These methods extend to various weighted versions of the prime counting function, such as $\theta(x) = \sum_{p\le x} \log p$, which is asymptotic to~$x$ again by the prime number theorem. Assuming RH and LI, a suitably normalized version of $\theta(x)-x$ has the same limiting logarithmic distribution as the one associated to $\pi(x)-\li(x)$, and Rubinstein and Sarnak's work also shows that the logarithmic density of the set of~$x$ for which $\theta(x)>x$ is the same constant~$\eta_1$. Moreover, Platt and Trudgian~\cite{MR3454375} gave evidence that the first value of~$x$ for which $\theta(x)>x$ (the ``Skewes number'' for this weighted prime number race) is also approximately $1.4\times10^{316}$, though presumably not exactly the same value as for the inequality $\pi(x)>\li(x)$. (See Remark~\ref{coincidence remark} below for an insight as to why these limiting distributions are the same.)

The same phenomenon can be observed when examining the Mertens sum $\sum_{p\le x} \frac1p \sim \log\log x - C_2$ (for a particular constant~$C_2$) and the Mertens product $\prod_{p\le x} (1-\frac1p)^{-1} \sim e^{C_0}\log x$ (for the Euler--Mascheroni constant~$C_0$). In both cases, all calculated values of the prime-related quantities were greater than the corresponding values of their asymptotic main terms; but Robin~\cite{MR0729173} showed that both differences between the prime-related quantities and their main terms have infinitely many sign changes. Work of B\"uthe~\cite{MR3414306} suggests that the first negative value of $\sum_{p\le x} \frac 1p - (\log\log x - C_2)$ (the ``Skewes number'' for this Mertens sum race) is approximately $1.9\times10^{215}$. In addition, assuming RH and LI, Lay~\cite{lay2015sign} and Lamzouri~\cite{lamzouri2016bias} have shown that appropriate normalizations of these differences have limiting logarithmic distributions that are identical (up to an additive constant) to the distribution associated to $\pi(x)-\li(x)$ and $\theta(x)-x$, and that the logarithmic density of the set of rare sign reversals is the same constant~$\eta_1$.

All of these observations raise the question of how these various races are correlated. Each of the four races has an exceptional set (of logarithmic density~$\eta_1$); for $\pi(x)$ and $\theta(x)$ these sets corresponding to regions with significantly more primes than average, while for the Mertens sum and product they correspond to regions with significantly fewer primes than average. Thus, we might expect the first two exceptional sets to be highly correlated (and the same for the second two), but also for the two pairs of exceptional sets to be highly anticorrelated (this idea appears, for example, at the beginning of~\cite{MR3414306}). Moreover, each of these races has an ``unbiased'' version involving counting prime powers rather than only primes---the simplest such versions (which were examined by Kaczorowski~\cite{Kaczorowski}, for example) involve $\psi(x) = \sum_{n\le x} \Lambda(n) \sim x$ and the corresponding Mertens sum $\sum_{n\le x} \frac{\Lambda(n)}n \sim \log x - C_0$, where $\Lambda(n)$ is the von Mangoldt function. Both of these functions lie above and below their asymptotic main terms equally often (on the logarithmic scale, assuming RH), and we can ask how correlated the sets are where the two functions exceed their respective main terms, for example.

The purpose of this paper is to answer questions like these concerning the correlations of these error terms in prime number-related races. For example, assuming RH and LI, we show:
\begin{itemize}
\item the sets $\{x\ge2\colon \pi(x)>\li(x)\}$ and $\{x\ge2\colon \theta(x)>x\}$ are the same up to a set of logarithmic density~$0$;
\item the sets $\{x\ge2\colon \pi(x)>\li(x)\}$ and $\{x\ge2\colon \sum_{p\le x} \frac 1p > \log\log x - C_2\}$ have bounded (and presumably empty) intersection;
\item but the $\psi(x)$ and $\sum_{n\le x} \frac{\Lambda(n)}n$ races are highly correlated yet not perfectly correlated: the set of~$x\ge2$ such that $\psi(x)-x$ and $\sum_{n\le x} \frac{\Lambda(n)}n - (\log x - C_0)$ have the same sign has a logarithmic density which is approximately $0.9865$.
\end{itemize}
We now proceed to set up the notation and terminology required to make these statements rigorous.

\subsection{Preliminaries}

This section introduces the basic definitions and notation that will be used and referred to frequently throughout the rest of this article.

\begin{definition} \label{defn:function_sets_zeta}
We use the following well-known prime-counting functions:
\begin{align*}
\psi(x) &= \sum_{n \le x} \Lambda(n),\quad
\theta(x) = \sum_{p \le x} \log p,\\
\Pi(x) &= \sum_{n \le x} \frac{\Lambda(n)}{\log n} = \sum_{p^k \le x} \frac 1k, \quad
\pi(x) = \sum_{p \le x} 1.
\end{align*}
We call $\B = \{\psi,\theta,\Pi,\pi\}$ the set of these four \emph{standard functions}.

Furthermore, we define the following \emph{weighted versions} of the above standard functions:
\begin{align*}
\psi_r(x) &= \sum_{n \le x} \frac{\Lambda(n)}n, \quad
\theta_r(x) = \sum_{p \le x} \frac{\log p}p,\\
\Pi_r(x) &= \sum_{n \le x} \frac{\Lambda(n)}{n\log n} = \sum_{p^k \le x} \frac 1{kp^k},\\
\pi_r(x) &= \sum_{p \le x} \frac1p.
\end{align*}
We also define the \emph{logarithm of the Mertens product}
\[
\pi_\ell(x) = \log \prod_{p\le x} \biggl( 1-\frac1p\biggr)^{-1} = -\sum_{p \le x} \log\biggl(1-\frac 1p\biggr) = \sum_{p\le x} \sum_{k=1}^\infty \frac1{kp^k},
\]
which behaves in many ways like the four weighted versions. We call $$\B_r = \{\psi_r,\theta_r,\Pi_r,\pi_r,\pi_\ell\}$$ the set of these five \emph{reciprocal functions}.
\end{definition}

Each of the five reciprocal functions in~$\B_r$ has a well-known asymptotic formula, proved by Mertens with elementary arguments, while each of the four standard functions in~$\B$ also has a well-known asymptotic formula that is equivalent to the prime number theorem. When the main terms of these asymptotic formulas are subtracted, the resulting error terms have explicit formulas involving a sum over the zeros of the Riemann zeta function (see Section~\ref{explicit section}); when further divided by the order of magnitude of any one of these summands, the resulting normalized error terms are known (conditionally) to have limiting logarithmic distribution functions. We record these normalized error terms explicitly.

\begin{definition} \label{defn:error_bias_zeta}
The normalized error terms of the four standard functions are
\begin{align*}
E^\psi(x) & = \frac{\psi(x)-x}{\sqrt x}, &
E^\Pi(x) & = \frac{\Pi(x)-\li(x)}{\sqrt x/\log x}, \\
E^\theta(x) & = \frac{\theta(x)-x}{\sqrt x}, &
E^\pi(x) & = \frac{\pi(x)-\li(x)}{\sqrt x/\log x}.
\end{align*}
The normalized error terms of the five reciprocal functions are
\begin{align*}
E^{\psi_r}(x) & = \bigl( \psi_r(x)-(\log x-C_0) \bigr)\sqrt{x},\\ 
E^{\Pi_r}(x) & = \bigl( \Pi_r(x)-(\log\log x + C_0) \bigr)\sqrt{x}\log x , \\ 
E^{\theta_r}(x) & = \bigl(\theta_r(x)- ( \log x - C_1 ) \bigr)\sqrt{x},\\ 
E^{\pi_r}(x) & = \bigl( \pi_r(x)- ( \log\log x - C_2 ) \bigr)\sqrt{x}\log x, \\ 
E^{\pi_\ell}(x) & = \bigl( \pi_\ell(x)- (\log\log x + C_0) \bigr) \sqrt{x}\log x . 
\end{align*}
Here $C_0$ is Euler's constant, while $C_1 = C_0 + \sum_p \frac{\log p}{p(p-1)}$ and $C_2 = -C_0 + \sum_{k=2}^\infty\sum_p \frac 1{kp^k}$ (although the exact values will usually not be important to us).
\end{definition}

Some of these normalized error terms are biased towards positive or negative values. (Loosely speaking, nature wants to count all prime powers, and it is our human insistence on throwing away squares and higher powers of primes that creates biases in functions that count only primes themselves.) These biases manifest as constant terms in the explicit formulas given in Section~\ref{explicit section}, which we now name.

\begin{definition} \label{bias terms def}
For the nine functions in~$\B\cup\B_r$, we define the {\em bias terms}
\[
\beta_\theta = \beta_\pi = -1, \qquad
\beta_\psi = \beta_\Pi = \beta_{\psi_r} = \beta_{\Pi_r} = 0,\qquad
\beta_{\theta_r} = \beta_{\pi_r} = \beta_{\pi_\ell} = 1.
\]
These constants are the average values of the corresponding normalized error terms (in ways that can be made precise).
\end{definition}

We can now state our first result, which concerns the ranges of values taken by differences of these normalized error terms. The following theorem is proved at the end of Section~\ref{explicit section}.

\begin{theorem} \label{o(1) theorem}
Assume RH.
\begin{enumerate}[label={\rm(\alph*)}]
\item If~$f$ and~$g$ are both standard functions, then
\[
E^f(x) - E^g(x) = \beta_f - \beta_g + o(1).
\]
The same is true if~$f$ and~$g$ are both reciprocal functions.
\item If~$f$ is a reciprocal function and~$g$ is a standard function, then
\begin{align*}
\liminf_{x\to\infty} \bigl( E^f(x) - E^g(x) \bigr) &\ge \beta_f - \beta_g - w
\\ 
\limsup_{x\to\infty} \bigl( E^f(x) - E^g(x) \bigr) &\le \beta_f - \beta_g + w,
\end{align*}
where
\begin{equation} \label{w def}
w = 2+C_0-\log 4\pi \approx 0.0461914.
\end{equation}
\end{enumerate}
\end{theorem}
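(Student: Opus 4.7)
The plan is to leverage the explicit formulas collected in Section~\ref{explicit section}, which under RH express each normalized error term as its bias plus a single zero-sum kernel plus a vanishing error. Specifically, the key input I expect from Section~\ref{explicit section} is that for every $f\in\B$,
\[
E^f(x) = \beta_f - \sum_\rho \frac{x^{i\gamma}}{\rho} + o(1),
\]
while for every $f\in\B_r$,
\[
E^f(x) = \beta_f + \sum_\rho \frac{x^{i\gamma}}{1-\rho} + o(1),
\]
where $\rho = \tfrac{1}{2}+i\gamma$ runs over the nontrivial zeros of~$\zeta$. The crucial point is that the zero-sum kernel is \emph{uniform} within each class: the bias terms in Definition~\ref{bias terms def} are chosen precisely to absorb the constant-order discrepancies between $\psi$-type and $\pi$-type sums (via $\sum_{k\ge 2}\theta(x^{1/k})\sim\sqrt x$) and between their reciprocal analogues (via partial summation and the tail $\sum_{\sqrt x<p\le x}\sum_{k\ge 2}1/(kp^k)$).

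Granting these formulas, part~(a) is immediate: when $f,g$ lie in the same class, subtracting the two displays cancels the zero sums and leaves $E^f(x)-E^g(x)=\beta_f-\beta_g+o(1)$.

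For part~(b), by part~(a) it suffices to treat a single cross-class pair; take $f=\psi_r$ and $g=\psi$, both with bias $0$. Subtracting the two displays gives
\[
E^{\psi_r}(x) - E^\psi(x) = \sum_\rho x^{i\gamma}\Bigl(\frac{1}{1-\rho} + \frac{1}{\rho}\Bigr) + o(1) = \sum_\rho \frac{x^{i\gamma}}{\rho(1-\rho)} + o(1).
\]
On RH, $\rho(1-\rho) = \tfrac{1}{4}+\gamma^2$, and pairing each zero with its conjugate collapses the sum to the real quantity
\[
\sum_{\gamma>0}\frac{2\cos(\gamma\log x)}{\tfrac{1}{4}+\gamma^2},
\]
whose absolute value is trivially bounded by $\sum_\rho 1/(\rho(1-\rho))$. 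To evaluate this constant, I use the classical identity $\sum_\rho 1/(\rho(1-\rho)) = 2+C_0-\log(4\pi)$, which follows from logarithmic differentiation of the completed zeta function $\xi(s)=\tfrac{1}{2}s(s-1)\pi^{-s/2}\Gamma(s/2)\zeta(s)$ at $s=1$ (using the Hadamard product on one side and the explicit factors on the other, with $\zeta(s)=1/(s-1)+C_0+O(s-1)$ and $\Gamma'(1/2)/\Gamma(1/2)=-C_0-2\log 2$). This identifies the bound with $w$, yielding $|E^{\psi_r}(x)-E^\psi(x)|\le w+o(1)$. For general $f\in\B_r$ and $g\in\B$, part~(a) supplies the bias shift $\beta_f-\beta_g$ and the same bound $w$ applies to the zero sum.

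The main obstacle is the uniformity claim for the zero-sum kernel within each class---namely, showing that the classical reductions $\psi\leftrightarrow\theta\leftrightarrow\Pi\leftrightarrow\pi$ and their reciprocal analogues $\psi_r\leftrightarrow\theta_r\leftrightarrow\Pi_r\leftrightarrow\pi_r\leftrightarrow\pi_\ell$ contribute only the constants captured in $\beta_f$ plus $o(1)$ under RH. This bookkeeping is the real content of Section~\ref{explicit section}; once it is in hand, the two-line zero-sum manipulation above together with the classical evaluation of the Hadamard sum finish the proof.
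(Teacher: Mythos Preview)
Your proposal is correct and follows essentially the same approach as the paper. The paper proves part~(a) directly from the explicit formulas in Propositions~\ref{corollary:explicit_standards} and~\ref{corollary:explicit_reciprocals} (which are exactly the uniform kernel statements you describe, stated with truncated sums and an error term made $o(1)$ by taking $X=x$), and for part~(b) it reduces to the single pair $(\psi,\psi_r)$, computes $E^\psi-E^{\psi_r}$ as $2\Re\sum_{0<\gamma\le x} x^{i\gamma}/(\tfrac14+\gamma^2)+o(1)$, and bounds this by $w$ via Lemma~\ref{lemma:rho_squared}, just as you do.
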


\begin{remark} \label{LI gives exact support}
If we assume LI in addition to RH, then both inequalities in Theorem~\ref{o(1) theorem}(b) are actually equalities, as we show in Theorem~\ref{theorem:joint_dist_zeta}(b) below.
\end{remark}

Next, we examine the question of when one normalized error term always exceeds a second when~$x$ is sufficiently large; in some cases, such a persistent inequality can be proved unconditionally, but in other cases it is equivalent to RH. The two parts of the following theorem are proved at the ends of Section~\ref{unconditional section} and~\ref{mellin section}.

\begin{theorem} \label{RH theorem} \
\begin{enumerate}[label={\rm(\alph*)}]
\item Unconditionally, there exists an absolute constant~$x_0$ such that each of the inequalities
\begin{multline*}
\qquad\qquad E^\theta(x) < E^\psi(x),\, E^\pi(x) < E^\Pi(x),\, E^{\psi_r}(x) < E^{\theta_r}(x), \\
\text{and } E^{\Pi_r}(x) < E^{\pi_\ell}(x) < E^{\pi_r}(x)
\end{multline*}
holds for all~$x>x_0$.
\item Suppose the two functions $f,g\in \B\cup\B_r$ satisfy $\beta_f < \beta_g$. If
\begin{equation} \label{not these five}
(f,g) \notin \bigl\{ (\theta,\psi),\, (\pi,\Pi),\, (\psi_r,\theta_r),\, (\Pi_r,\pi_r),\, (\Pi_r,\pi_\ell) \bigr\}
\end{equation}
(that is, if~$f$ and~$g$ do not form a pair of functions covered by the inequalities in part~(a) of this theorem), then RH is equivalent to the existence of an absolute constant~$x_0$ such that $E^f(x) < E^g(x)$ for all~$x>x_0$. More generally, RH is equivalent to the difference $E^g(x) - E^f(x)$ being bounded either above or below.
\end{enumerate}
\end{theorem}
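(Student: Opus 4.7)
The plan is to reduce each of the five inequalities to an elementary PNT estimate. For $(\theta,\psi)$ and $(\pi,\Pi)$, we have $\psi(x)-\theta(x)=\sum_{k\ge 2}\theta(x^{1/k})=\theta(\sqrt x)+O(x^{1/3})$ and $\Pi(x)-\pi(x)=\sum_{k\ge 2}\pi(x^{1/k})/k=\pi(\sqrt x)/2+O(x^{1/3}/\log x)$; after the normalizations in Definition~\ref{defn:error_bias_zeta}, both $E^\psi-E^\theta$ and $E^\Pi-E^\pi$ tend to $1$ by PNT. For $(\psi_r,\theta_r)$, $(\Pi_r,\pi_r)$, and $(\Pi_r,\pi_\ell)$ (and the derived $(\pi_\ell,\pi_r)$), the corresponding normalized difference factors as $\sqrt x$ (or $\sqrt x\log x$) times a positive tail $T(x)=\sum_{p^k>x,\,k\ge 2}(\cdots)$ of the absolutely convergent sum defining the associated bias constant; the $k=2$ piece of $T(x)$ is asymptotic to $C/\sqrt x$ (or $C/(\sqrt x\log x)$) by PNT, producing the positive limit $1$.

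\textbf{Part~(b), easy direction.} The implications (i)$\Rightarrow$(ii)$\Rightarrow$(iii) follow at once from Theorem~\ref{o(1) theorem}. The biases lie in $\{-1,0,1\}$ and $w<1$ by~\eqref{w def}, so the hypothesis $\beta_f<\beta_g$ forces $\beta_g-\beta_f\ge 1>w$. Theorem~\ref{o(1) theorem} then gives $\liminf(E^g-E^f)\ge\beta_g-\beta_f-w\ge 1-w>0$ in every case (using part~(a) for same-type pairs and part~(b) for mixed type), so $E^g(x)>E^f(x)$ for all sufficiently large $x$, which is \emph{a fortiori} bounded below.

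\textbf{Part~(b), hard direction.} I argue the contrapositive of (iii)$\Rightarrow$(i): if $\zeta$ has a nontrivial zero $\rho_0=\sigma_0+i\gamma_0$ with $\sigma_0>1/2$, then $E^g(x)-E^f(x)$ is unbounded above and below for every pair $(f,g)$ outside the exception list~\eqref{not these five}. The Mellin-transform framework of Section~\ref{mellin section} represents each $E^h$, $h\in\B\cup\B_r$, via an inverse Mellin integral whose singularities in the critical strip are located at the shifted zeros $s=\rho-1/2$, with the singular-part profile of $E^h$ acquiring an extra factor of $1/(\rho\log x)$ for $\pi$-type versus $\psi$-type functions (from $\li(x^\rho)=x^\rho/(\rho\log x)+\cdots$) and an extra factor of $1/\rho$ for reciprocal-type versus standard-type functions (from the partial-summation identity $\psi_r(x)=\psi(x)/x+\int_1^x\psi(t)/t^{2}\,dt$ and its analogues). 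The pairs in~\eqref{not these five} all lie within one of the four natural equivalence classes $\{\theta,\psi\}$, $\{\pi,\Pi\}$, $\{\psi_r,\theta_r\}$, $\{\Pi_r,\pi_r,\pi_\ell\}$; within each class the singular parts coincide (which is the content of part~(a)), while for any non-exceptional pair the kernel difference retains a nonvanishing residue at $s=\rho_0-1/2$. Hence the Perron-inversion expansion of $E^g-E^f$ acquires an irreducible oscillatory contribution of amplitude $\asymp x^{\sigma_0-1/2}$ (up to logarithmic factors), and a standard Landau/Ingham oscillation theorem forces $\limsup(E^g-E^f)=+\infty$ and $\liminf(E^g-E^f)=-\infty$, contradicting any one-sided bound.

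\textbf{Main obstacle.} The principal technical content is the uniform verification that the residue of the Mellin-kernel difference at $s=\rho_0-1/2$ is nonzero for every non-exceptional pair, across all six inter-class transitions (standard/reciprocal, $\pi$/$\psi$, and combinations). Although each individual residue reduces to checking a single rational function of $\rho_0$, systematizing the comparisons, and passing cleanly from a pole on $\mathrm{Re}(s)=\sigma_0-1/2>0$ to an $\Omega_\pm$ lower bound robust enough to defeat any one-sided bound, requires the full Mellin formalism of Section~\ref{mellin section}.
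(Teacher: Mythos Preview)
Part~(a) and the forward direction of part~(b) match the paper's proofs essentially line for line.

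For the converse in part~(b), you have identified the right strategy---compute the Mellin transforms $\int_2^\infty E^h(x)x^{-s-1/2}\,dx$, show that the transform of $E^g-E^f+L$ has non-real singularities in $\Re s>\tfrac12$ at the off-line zeros of $\zeta$, and invoke Landau's theorem---and this is exactly what the paper does. However, your sketch has two concrete gaps that the paper has to work to fill.

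First, your residue language is not adequate. The Mellin transforms of $E^\psi,E^\theta,E^{\psi_r},E^{\theta_r}$ involve $\frac{\zeta'}{\zeta}(s)$ and thus have \emph{poles} at the nontrivial zeros, whereas those of $E^\Pi,E^\pi,E^{\Pi_r},E^{\pi_r}$ involve $\log\zeta(s)$ and have \emph{logarithmic} singularities there. For a mixed pair such as $(f,g)=(\psi_r,\Pi)$, the relevant combination is $\frac{1}{s(1-s)}\frac{\zeta'}{\zeta}(s)-\frac{1}{s^2}\log\bigl(\zeta(s)(s-1)\bigr)$, and one must argue that a pole minus a logarithmic branch point is still genuinely singular (it is, because the two singularity types cannot cancel). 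Your formulation ``checking a single rational function of $\rho_0$'' and ``nonvanishing residue'' does not cover this, and this is precisely where the six inter-class transitions you mention require individual attention.

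Second, there is no clean Mellin transform for $E^{\pi_\ell}$ in the paper's toolkit, and your proposal gives none either; you simply place $\pi_\ell$ in the class $\{\Pi_r,\pi_r,\pi_\ell\}$. The paper handles every pair involving $\pi_\ell$ by a separate sandwiching argument: since $E^{\Pi_r}(x)<E^{\pi_\ell}(x)<E^{\pi_r}(x)$ for large $x$ (part~(a)), unboundedness of $E^{\Pi_r}-E^f$ above and of $E^{\pi_r}-E^f$ below transfers to $E^{\pi_\ell}-E^f$. Your outline needs this step, or an equivalent, to be complete.
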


\subsection{Main results} \label{main results}

The appropriate method of measuring sets of real numbers involving these error terms turns out to be the (limiting) logarithmic density.

\begin{definition} \label{logdensity}
For any set $P\subset[2,\infty)$, we define
\begin{align*}
\delta(P) = \lim_{X\to\infty}\frac{1}{\log X}\int_{t\in P\cap[2,X]}\frac{dt}{t}
\end{align*}
(when the limit exists) to be the \emph{logarithmic density} of~$P$. Given functions~$f$ and~$g$, we use the shorthand notation
\[
\delta(f > g) = \delta\bigl( \{ x\ge2 \colon f(x) > g(x) \} \bigr),
\]
and similarly for chained inequalities involving three or more functions.
\end{definition}

\begin{notation}
Throughout this paper, $\gamma_1\le\gamma_2\le\cdots$ denotes the sequence of positive imaginary parts of $\zeta(s)$, listed with multiplicity if necessary, so that $\gamma_1\approx 14.135$ for example.
\end{notation}

\begin{definition} \label{eta1 def}
Define the real-valued random variable
\[
V_1 = \sum_{m=1}^\infty 2\Re \frac{Z_m}{\frac12+i\gamma_m}
\quad\text{or equivalently}\quad
V_1 = \sum_{m=1}^\infty \frac{2\Re Z_m}{\sqrt{\frac14+\gamma_m^2}},
\]
where the~$Z_m$ are independent random variables, each uniformly distributed on the unit circle in~$\C$. Then, define~$\mu_1$ to be the probability distribution associated to~$V_1$ (which is symmetric about the origin), and define the constant~$\eta_1$ by
\[
\eta_1 = \Pr(V_1 > 1) = \int_1^\infty d\mu_1.
\]
Assuming RH, Rubinstein and Sarnak~\cite{Rubinstein1994ChebyshevsB} proved that~$\mu_1$ has support equal to all of~$\R$ and is absolutely continuous with respect to Lebesgue measure, and calculated that $\eta_1 \approx 2.6\times10^{-6}$.
\end{definition}

\begin{remark}
Assuming RH and LI, Rubinstein and Sarnak further proved that the limiting logarithmic distributions of~$E^\pi$ and $E^\theta$ (in the sense of Definition~\ref{defn1} below) are the same as the distribution of $V_1 - 1$, so that in particular $\delta(\pi(x)>\li(x)) = \delta(\theta(x)>x) = \eta_1$.

Indeed, for each of the nine functions $f\in\B\cup\B_r$, it is known (assuming RH and LI) that the limiting logarithmic distribution of~$E^f$ is the same as the distribution of $V_1 + \beta_f$ (which justifies the use of the name ``bias term''), so that all nine individual distributions are identical up to the additive constants~$\beta_f$. We collect these results in Corollary~\ref{nine mu1s cor}.
\end{remark}

\begin{definition} \label{eta2 def}
Define the $\R^2$-valued random variable
\[
V_2 = \sum_{m=1}^\infty \biggl( 2\Re \frac{Z_m}{\frac12+i\gamma_m}, 2\Re \frac{Z_m}{-\frac12+i\gamma_m} \biggr),
\]
with the~$Z_m$ as in Definition~\ref{eta1 def}.
Note that~$V_2$ is invariant under reflection in either diagonal line $y=\pm x$ (since the distribution of~$Z_m$ is invariant under conjugation and negation).
Note similarly that each coordinate has the same distribution as~$V_1$, although the two coordinates are not independent.

Then, define~$\mu_2$ to be the probability distribution associated to~$V_2$, and define the constant~$\eta_2$ by
\[
\eta_2 = \Pr\bigl( \text{the two coordinates of $V_2$ have different signs} \bigr)
\]
(so that~$\eta_2$ is the mass that~$\mu_2$ assigns to the union of the second and fourth open quadrants).
\end{definition}

Assuming RH, we can calculate (see the end of Section~\ref{error section}) that $\eta_2 \approx 0.01355$. Assuming RH and LI, this distribution is connected to the limiting logarithmic distribution of vector-valued functions such as $\bigl( E^\psi(x), E^{\psi_r}(x) \bigr)$, as evidenced in Theorems~\ref{log_density part 2} and~\ref{theorem:joint_dist_zeta}(c) below.

Our first three main considerations are the probabilities (in the sense of logarithmic density) of two normalized error terms being jointly positive, jointly negative, or having mixed signs. These probabilities depend upon the bias terms of the functions in question, as well as whether the two functions are standard or reciprocal functions. These three results are proved in Section~\ref{joint dist section}.

The first result of this type, which uses the quantity~$\eta_1$ from Definition~\ref{eta1 def}, covers the cases where the bias terms are unequal.

\begin{theorem}
\label{theorem:log_density_zeta}
Assume RH and LI. Let $f,g \in \B \cup \B_r$.

\begin{enumerate}[label={\rm(\alph*)}]
\item If $\beta_f = -1$ and $\beta_g = 0$ (for example, when $f = \pi$ and $g = \psi$), then
\begin{align*}
\delta(0 < E^f < E^g) = \eta_1,  \quad
\delta(E^f < 0 < E^g) = \tfrac12-\eta_1,  \quad
\delta(E^f < E^g < 0) = \tfrac12.
\end{align*}

\item If $\beta_f = 0$ and $\beta_g = 1$ (for example, when $f = \psi$ and $g = \pi_r$), then
\begin{align*}
\delta(0 < E^f < E^g) = \tfrac12,  \quad
\delta(E^f < 0 < E^g) = \tfrac12-\eta_1,  \quad
\delta(E^f < E^g < 0) =\eta_1.
\end{align*}

\item If $\beta_f = -1$ and $\beta_g = 1$ (for example, when $f = \pi$ and $g = \pi_r$), then
\begin{align*}
\delta(0 < E^f < E^g) = \eta_1, \,
\delta(E^f < 0 < E^g) = 1-2\eta_1, \,
\delta(E^f < E^g < 0) = \eta_1.
\end{align*}
\end{enumerate} 
\end{theorem}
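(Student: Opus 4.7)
The plan is to reduce the three joint probability computations in each part to (i) the marginal logarithmic distributions of $E^f$ and $E^g$ individually, combined with (ii) the tight asymptotic ordering $E^f(x) < E^g(x)$ for all sufficiently large $x$, which Theorem~\ref{o(1) theorem} essentially provides for free. In particular, the joint distribution of $(E^f,E^g)$ is not needed beyond this ordering, which is the key simplification in the regime of unequal bias terms.

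First I would verify the ordering. In the three cases the gap $\beta_g-\beta_f$ equals $1$, $1$, and $2$ respectively, and each exceeds $w\approx0.046$. Hence Theorem~\ref{o(1) theorem}(a) gives $E^g(x)-E^f(x)=\beta_g-\beta_f+o(1)>0$ when $f$ and $g$ are of the same type, while Theorem~\ref{o(1) theorem}(b) gives $\liminf_{x\to\infty}\bigl(E^g(x)-E^f(x)\bigr)\ge\beta_g-\beta_f-w>0$ when the types differ. In every situation $E^f(x)<E^g(x)$ for all large $x$, so $\delta(E^f<E^g)=1$.

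Next I would invoke Corollary~\ref{nine mu1s cor}: the limiting logarithmic distribution of each $E^f$ coincides with the distribution of $V_1+\beta_f$. Since $\mu_1$ is symmetric about the origin and absolutely continuous (Rubinstein--Sarnak), the set $\{x\ge2:E^f(x)>0\}$ has logarithmic density $\Pr(V_1+\beta_f>0)=\Pr(V_1>-\beta_f)$, which equals $\eta_1$, $\tfrac12$, or $1-\eta_1$ according as $\beta_f$ is $-1$, $0$, or $+1$; the same applies to $E^g$.

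Finally, the density-$1$ ordering collapses the three joint events into marginal ones:
\begin{align*}
\delta(0<E^f<E^g)&=\delta(E^f>0),\\
\delta(E^f<E^g<0)&=\delta(E^g<0),\\
\delta(E^f<0<E^g)&=1-\delta(E^f>0)-\delta(E^g<0),
\end{align*}
where the third identity uses that $\delta(E^f=0)=\delta(E^g=0)=0$ by absolute continuity of $\mu_1$. Substituting $(\beta_f,\beta_g)=(-1,0)$, $(0,1)$, and $(-1,1)$ yields the three displayed identities in parts (a), (b), and (c). The only real obstacle is conceptual rather than computational: one must recognize that because $\beta_g-\beta_f\ge1>w$ in every allowed case, the inequality $E^f<E^g$ holds almost everywhere in the logarithmic sense, so no genuine bivariate distributional information is required. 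This is precisely why part (c), which mixes a $-1$-biased function with a $+1$-biased one, comes out to a clean $\eta_1,\,1-2\eta_1,\,\eta_1$ split depending only on the marginals.
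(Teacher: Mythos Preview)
Your proposal is correct and follows essentially the same approach as the paper: use the persistent inequality $E^f<E^g$ (for large $x$) to reduce the joint densities to marginal ones, then evaluate via Corollary~\ref{nine mu1s cor} and the symmetry/absolute continuity of~$\mu_1$. The only cosmetic difference is that the paper cites Theorem~\ref{RH theorem} for the ordering, whereas you invoke Theorem~\ref{o(1) theorem} directly and distinguish the same-type and mixed-type cases; your version is slightly more explicit about why $\beta_g-\beta_f>w$ suffices in the mixed case.
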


\noindent
Note that in all cases covered by Theorem~\ref{theorem:log_density_zeta}, we have $\delta(E^f \ge E^g) = 0$ by Theorem~\ref{RH theorem}(b), and so the given sets of inequalities are exhaustive.

Our second such result concerns two functions with equal bias terms, where either both functions are standard or both functions are reciprocal.

\begin{theorem}
\label{theorem:log_density_zeta 1.5}
Assume RH and LI.

\begin{enumerate}[label={\rm(\alph*)}]
\item We have $\delta(0 < E^\pi \text{ and } 0 < E^\theta) = \eta_1$ and
$\delta(E^\pi < 0 \text{ and } E^\theta < 0) = 1-\eta_1$.
\item We have $\delta(0 < E^\psi \text{ and } 0 < E^\Pi) = \delta(E^\psi < 0 \text{ and } E^\Pi < 0) = \frac12$, and the same if~$\psi$ and~$\Pi$ are both replaced by~$\psi_r$ and~$\Pi_r$.
\item If $f,g\in\{\theta_r,\pi_r,\pi_\ell\}$ then
\[
\delta(0 < E^f \text{ and } 0 < E^g) = 1-\eta_1
\quad\text{and}\quad
\delta(E^f < 0 \text{ and } E^g < 0) = \eta_1.
\]
\end{enumerate}
\end{theorem}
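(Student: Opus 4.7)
The plan is to reduce this two-variable statement to the one-dimensional marginal distribution of $E^f$. In every case covered by the theorem, the two functions $f$ and $g$ belong to the same family (both standard, or both reciprocal) and carry equal bias terms, so Theorem~\ref{o(1) theorem}(a) immediately gives
\[
E^f(x) - E^g(x) = \beta_f - \beta_g + o(1) = o(1).
\]
Thus $E^f$ and $E^g$ are asymptotically indistinguishable, and the only real question is to confirm that the set on which they disagree in sign is negligible.

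The first step is to make this disagreement control precise. Fix $\varepsilon > 0$ and take $x_0(\varepsilon)$ so that $|E^f(x) - E^g(x)| < \varepsilon$ for all $x > x_0$. On this tail, the event $\{E^f(x) > 0,\, E^g(x) \le 0\}$ forces $E^f(x) \in (0, \varepsilon]$, and the event $\{E^f(x) \le 0,\, E^g(x) > 0\}$ forces $E^f(x) \in [-\varepsilon, 0]$. Hence the upper logarithmic density of the sign-disagreement set is at most $\mu^{E^f}([-\varepsilon, \varepsilon])$, where $\mu^{E^f}$ denotes the limiting logarithmic distribution of $E^f$.

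The second step invokes the previewed Corollary~\ref{nine mu1s cor}, which identifies $\mu^{E^f}$ with the law of $V_1 + \beta_f$; this distribution is absolutely continuous with respect to Lebesgue measure by Rubinstein--Sarnak. Letting $\varepsilon \to 0$ forces $\mu^{E^f}([-\varepsilon, \varepsilon]) \to 0$, so the sign-disagreement set has logarithmic density zero. Consequently
\[
\delta\bigl( 0 < E^f \text{ and } 0 < E^g \bigr) = \delta(E^f > 0) = \Pr(V_1 > -\beta_f),
\]
and symmetrically $\delta(E^f < 0 \text{ and } E^g < 0) = \Pr(V_1 < -\beta_f)$ (using that $\mu_1$ has no atom at $\pm\beta_f$).

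The final step is to evaluate these marginals using the symmetry of $V_1$ about the origin and the definition $\eta_1 = \Pr(V_1 > 1)$. The bias $\beta = -1$ gives $\Pr(V_1 > 1) = \eta_1$ and $\Pr(V_1 < 1) = 1-\eta_1$, yielding part (a); the bias $\beta = 0$ gives both probabilities equal to $\tfrac12$, yielding part (b); the bias $\beta = 1$ gives $\Pr(V_1 > -1) = 1-\eta_1$ and $\Pr(V_1 < -1) = \eta_1$ by symmetry, yielding part (c). The only genuine obstacle in this plan is the density-zero verification for the sign-disagreement set, which relies essentially on absolute continuity of $\mu_1$ in neighborhoods of the relevant points $0$ and $\pm 1$; everything else is bookkeeping, and all three parts fall out from the same template.
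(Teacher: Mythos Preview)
Your proposal is correct and follows essentially the same route as the paper: use Theorem~\ref{o(1) theorem}(a) to get $E^f-E^g=o(1)$, invoke the absolute continuity of~$\mu_1$ to show the sign-disagreement set has logarithmic density zero, and then read off the marginal probabilities $\Pr(V_1>-\beta_f)$ via Corollary~\ref{nine mu1s cor}. The paper's proof is organized identically, with only cosmetic differences in how the $\varepsilon$-argument is phrased.
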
 

Finally, our third and most novel main result concerns two functions with equal bias terms, where one function is standard and the other reciprocal. The statement uses the constant~$\eta_2$ from Definition~\ref{eta2 def}.

\begin{theorem}\label{log_density part 2}
Assume RH and LI. Suppose that~$f$ is a standard function and~$g$ is a reciprocal function with $\beta_f=\beta_g=0$ (for example, $f=\psi$ and $g=\psi_r$). Then
\begin{align*}
\delta(E^f < 0 < E^g) &= \delta(E^g < 0 < E^f) = \frac{\eta_2}2
\end{align*}
and
\begin{multline*}
\delta(E^f < E^g < 0) = \delta(E^g < E^f < 0) \\ = \delta(0 < E^f < E^g) = \delta(0 < E^g < E^f) = \frac{1-\eta_2}{4}.
\end{multline*}
\end{theorem}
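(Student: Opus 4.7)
The plan is to show that, under RH and LI, the pair $(E^f(x), E^g(x))$ has a joint limiting logarithmic distribution equal to $\mu_2$, and then read off the four claimed densities from the symmetries of $\mu_2$. Since $\beta_f=\beta_g=0$, the explicit formulas developed in Section~\ref{explicit section} should yield, under RH,
\[
E^f(x) = -\sum_{m=1}^\infty 2\Re\frac{x^{i\gamma_m}}{\frac12+i\gamma_m}+o(1)
\qquad\text{and}\qquad
E^g(x) = -\sum_{m=1}^\infty 2\Re\frac{x^{i\gamma_m}}{-\frac12+i\gamma_m}+o(1),
\]
the first arising from the Riemann--von Mangoldt explicit formula and the second from an Abel-summation transform of it (the factor of $\sqrt x$ in Definition~\ref{defn:error_bias_zeta} is exactly what shifts the denominator from $\rho$ to $\rho-1=-\frac12+i\gamma_m$). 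A two-dimensional Weyl equidistribution argument under LI then allows us to replace the sequence $(x^{i\gamma_m})_{m\ge 1}$ by an i.i.d.\ sequence $(Z_m)_{m\ge 1}$ of uniform random variables on $S^1$ as $x$ varies under the logarithmic measure; feeding both formulas simultaneously and absorbing the overall sign via $-Z_m\stackrel{d}{=}Z_m$ identifies the joint log-limit of $(E^f(x),E^g(x))$ with the distribution $\mu_2$ of $V_2$.

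Given this joint convergence, the four densities fall out of the symmetries of $\mu_2$ recorded in Definition~\ref{eta2 def}: invariance under each of the reflections $(u,v)\mapsto(v,u)$ and $(u,v)\mapsto(-v,-u)$, and hence also under negation $(u,v)\mapsto(-u,-v)$. The mixed-sign events $\{E^f<0<E^g\}$ and $\{E^g<0<E^f\}$ correspond to the two open regions $\{u<0<v\}$ and $\{v<0<u\}$, which are swapped by the first reflection and whose union has $\mu_2$-mass $\eta_2$ by definition; hence each has mass $\eta_2/2$. The negation symmetry then splits the complementary mass $1-\eta_2$ equally between the open first and third quadrants, and within each quadrant the reflection $(u,v)\mapsto(v,u)$ further splits the mass equally between the two pieces cut off by the diagonal $u=v$. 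This assigns each of the four regions $\{0<u<v\}$, $\{0<v<u\}$, $\{u<v<0\}$, $\{v<u<0\}$ mass $(1-\eta_2)/4$, matching the statement.

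The principal obstacle is the joint-distribution step: the individual log-limits of $E^f$ and $E^g$ are already in hand via Corollary~\ref{nine mu1s cor}, but here both error terms must be driven by the \emph{same} underlying sequence $(Z_m)_{m\ge 1}$ rather than independent copies. The cleanest route is to truncate the two explicit formulas at a common height $T$, establish a joint Weyl/Kronecker equidistribution under LI for the resulting finite-dimensional vector (two linear functionals applied to the same $(x^{i\gamma_1},\ldots,x^{i\gamma_N})$), control the tails of both sums uniformly in $x$ as $T\to\infty$, and pass to the limit. A minor secondary point, so that the open-set masses above really coincide with the log-densities of the strict inequalities in the statement, is that $\mu_2$ assigns zero mass to the coordinate axes and to the diagonal $\{u=v\}$; this absolute continuity follows from the same convolution argument Rubinstein--Sarnak used for $\mu_1$, applied now to $\R^2$-valued summands supported on line segments rather than single points.
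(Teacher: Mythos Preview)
Your proposal is correct and follows essentially the same approach as the paper: establish that the joint limiting logarithmic distribution of $(E^f,E^g)$ is $\mu_2$ (the paper packages this as Theorem~\ref{theorem:joint_dist_zeta}, proved via Proposition~\ref{nine prop} and the Akbary--Ng--Shahabi framework, which is exactly your truncate-plus-Kronecker-plus-tail-control sketch), and then read off the six densities from the symmetries of $\mu_2$ together with its absolute continuity (the paper obtains the latter via the Fourier-analytic Lemma~\ref{lemma:mu_one_continuity} rather than a convolution argument, but this is a cosmetic difference).
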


\noindent
The regions corresponding to these six sets of inequalities can be visualized in the right-hand graph in Figure~\ref{main pair of graphs} below.

Our primary tool for evaluating these densities is the limiting logarithmic distribution of a real- or vector-valued function, which we now define formally.

\begin{definition}\label{defn1}
We say that a vector-valued function $\vec\phi\colon[2,\infty)\to\R^d$ has a {\em limiting logarithmic distribution}~$\mu$ on~$\R^d$ if~$\mu$ is a probability measure on $\R^d$ and 
\[
\lim_{X\to\infty}\int_2^Xf\bigl(\vec\phi(x)\bigr){\frac{dx}{x}}=\int_{\R^\ell}f(\vec{x})\,d\mu(\vec{x})
\]
for all bounded continuous functions $f\colon\R^d\to\C$.
\end{definition}

\begin{remark}
Logarithmic density measures a set of inputs to a function, while logarithmic distributions correspond to the frequency of the function's outputs in various regions. One way they are related is as follows: for any vector-valued function $\vec \phi(t)$ possessing a limiting logarithmic distribution~$\mu$, and any subset $T\subset\R^d$, set $S = \{ x \in [2,\infty) \colon \vec \phi(x) \in T \}$. Then $\delta(S) = \mu(T)$, provided that the boundary of~$T$ satisfies $\mu(\partial T) = 0$. (This proviso is necessary because the characteristic function of~$T$ is not continuous.) In practice, $\mu(\partial T) = 0$ is automatically satisfied if the limiting distribution~$\mu$ is absolutely continuous with respect to Lebesgue measure on~$\R^d$, as long as~$\partial T$ has Lebesgue measure~$0$ (which, for example, is guaranteed if~$\partial T$ is contained in the union of finitely many hyperplanes).
\end{remark}

We are able to determine the precise joint limiting logarithmic distribution of any pair of normalized error terms under consideration, as our final main result describes.

\begin{theorem}
\label{theorem:joint_dist_zeta}
Assume RH and LI. Let $f,g \in \B \cup \B_r$.
\begin{enumerate}[label={\rm(\alph*)}]
\item If $f$ and~$g$ are both standard functions, then the limiting logarithmic distribution of $\bigl( E^{f}(x), E^{g}(x) \bigr)$ is singular in~$\R^2$ with its support equaling the line $y-\beta_g = x-\beta_f$. The same is true if~$f$ and~$g$ are both reciprocal functions.
\item If $f \in \B$ and $g \in \B_r$ (or vice versa) then the joint limiting
logarithmic distribution of $\bigl( E^{f}(x), E^{g}(x) \bigr)$ is absolutely continuous on~$\R^2$, with support equal to the narrow diagonal strip $$\bigl\{ (x,y) \in \R^2 \colon \bigl| (x-\beta_f) - (y-\beta_g) \bigr| \le w \bigr\}.$$ 
In this case, the joint distribution of $\bigl( E^{f}(x), E^{g}(x) \bigr)-(\beta_f,\beta_g)$ equals $\mu_2$ as in Definition~\ref{eta2 def}.
\end{enumerate}
\end{theorem}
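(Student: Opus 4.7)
This should follow almost immediately from Theorem~\ref{o(1) theorem}(a). Since that theorem yields $E^f(x) - E^g(x) \to \beta_f - \beta_g$ whenever $f$ and~$g$ are both standard or both reciprocal, any weak limit of the joint distribution of $(E^f, E^g)$ must concentrate on the line $y - \beta_g = x - \beta_f$. Combining this with Corollary~\ref{nine mu1s cor}, which identifies each marginal distribution as~$\mu_1$ translated by the corresponding bias, one concludes that the joint distribution equals the pushforward of~$\mu_1$ under the affine map $t \mapsto (t + \beta_f, t + \beta_g)$. This is a singular measure on~$\R^2$ with support equal to the claimed line.

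\textbf{Part (b), main strategy.} The approach is to exploit the explicit formulas from Section~\ref{explicit section}, which present both error terms as a common series over the zeros:
\[
E^f(x) = \beta_f \pm \sum_m 2\Re \frac{x^{i\gamma_m}}{\tfrac12 + i\gamma_m} + o(1), \qquad E^g(x) = \beta_g \pm \sum_m 2\Re \frac{x^{i\gamma_m}}{-\tfrac12 + i\gamma_m} + o(1),
\]
for $f \in \B$ and $g \in \B_r$, with the \emph{same} oscillating factors $x^{i\gamma_m} = e^{i(\log x)\gamma_m}$ appearing in both coordinates. Under LI, Kronecker-Weyl equidistribution asserts that $(e^{iy\gamma_m})_m$ equidistributes on the infinite torus $\prod_m S^1$ as $y \to \infty$. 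Applying this to a truncation of each series at index~$N$ (with tail control from $\sum 1/(\tfrac14 + \gamma_m^2) < \infty$) and then sending $N \to \infty$ should identify the joint limiting distribution of $(E^f(x) - \beta_f, E^g(x) - \beta_g)$ with the distribution~$\mu_2$ of~$V_2$. The sign ambiguities are absorbed by the invariance of~$\mu_2$ under $Z_m \mapsto -Z_m$.

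\textbf{Support and absolute continuity.} Containment of the support in the strip $|(x - \beta_f) - (y - \beta_g)| \le w$ follows immediately from Theorem~\ref{o(1) theorem}(b) in its sharp form under LI (Remark~\ref{LI gives exact support}); the reverse inclusion should follow from another Kronecker-Weyl density argument on truncated series, together with the full-support property of~$\mu_1$. For absolute continuity, I would follow the Rubinstein--Sarnak template by studying the Fourier transform
\[
\hat\mu_2(u,v) = \prod_m J_0\!\left(2\Bigl|\frac{u}{\tfrac12 + i\gamma_m} + \frac{v}{-\tfrac12 + i\gamma_m}\Bigr|\right)
\]
and showing that it belongs to $L^1(\R^2)$ (or at least that $\mu_2$ possesses a density).

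\textbf{Main obstacle.} The technical heart of the proof will be controlling the Bessel product in every direction $(u,v) \in \R^2$. In directions near $(t,-t)$ the linear combination above decays like $1/\gamma_m^2$ rather than like $1/\gamma_m$, causing each $J_0$ factor to stay close to~$1$ and giving only weak decay of $\hat\mu_2$. Overcoming this obstacle presumably requires separating the analysis into the ``easy'' direction $(u,v) = (t,t)$ (where the situation reduces to the absolute continuity of~$\mu_1$ already known from Rubinstein--Sarnak) and the ``hard'' complementary direction, where decay instead comes from the absolutely continuous distribution of the bounded difference $E^f - E^g$ across the strip. Combining these with the zero-counting estimate $\#\{\gamma_m \le T\} \asymp T \log T$ should yield the required decay and complete the proof.
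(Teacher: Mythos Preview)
Your outline is essentially correct and follows the same overall route as the paper: explicit formulas plus equidistribution (packaged in the paper via \cite[Theorems~1.4 and~1.9]{akbary2013limiting}) to compute the joint Fourier transform, then an identification with~$\hat\mu_2$, then separate lemmas for absolute continuity and for the support. The paper streamlines the bookkeeping by proving a single $9$-dimensional statement (Proposition~\ref{nine prop}) and projecting to any pair of coordinates via Lemma~\ref{lemma:simplify_fourier1}, rather than treating each pair $(f,g)$ separately; your part~(a) argument via Theorem~\ref{o(1) theorem}(a) and Corollary~\ref{nine mu1s cor} is a valid alternative for that case.

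The one place where you are making your life harder than necessary is your ``main obstacle''. You are right that in the direction $(t,-t)$ the argument of each $J_0$-factor is of size $|t|/\gamma_m^2$ rather than $|t|/\gamma_m$, so the tail factors give almost no decay. But you do not need decay from the tail: it suffices to extract decay from \emph{finitely many} fixed factors. Writing $\vec r_m = \frac{2\gamma_m}{1/4+\gamma_m^2}(1,1) + \frac{i}{1/4+\gamma_m^2}(1,-1)$, one has $\{\Re\vec r_m,\Im\vec r_m\}$ a basis of~$\R^2$ for every~$m$, so $|\vec r_m\cdot\vec t\,|\ge K_m|\vec t\,|$ uniformly in direction, with $K_m>0$ depending only on~$m$. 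Taking any five indices and using $|J_0(x)|\le\min\{1,\sqrt{2/\pi|x|}\}$ already gives $|\hat\mu_2(\vec t\,)|\ll |\vec t\,|^{-5/2}\in L^1(\R^2)$; this is the content of Lemmas~\ref{lemma:bessel_product_continuity} and~\ref{lemma:mu_one_continuity}. No splitting into easy/hard directions is needed.

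For the support, your sketch (``another Kronecker--Weyl density argument'') is too vague to be a proof. The paper's argument (Lemma~\ref{lemma:mu_one_max_support}) is a direct construction: given a target point $(u,u)+(v,-v)$ with $|v|<w/2$, one first chooses a subset $M\subset\N$ with $\sum_{m\in M}1/(\tfrac14+\gamma_m^2)=v$ via a greedy algorithm (Lemma~\ref{greedy lemma}), sets $Z_m=1$ for $m\in M$, and then uses Riemann rearrangement on the remaining indices (with $Z_m=\pm i$) to hit the $(1,1)$-coordinate~$u$. You would need something comparable to close this gap.
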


\noindent
Two examples of Theorem~\ref{theorem:joint_dist_zeta}(b) are shown in Figure~\ref{main pair of graphs} (other examples of such graphs appear in Appendix~\ref{graph appendix}).
This theorem is an immediate consequence of Proposition~\ref{nine prop}, which in fact determines the joint logarithmic distribution of our nine normalized error terms simultaneously.

\begin{figure}[ht]
\includegraphics[width=3in]{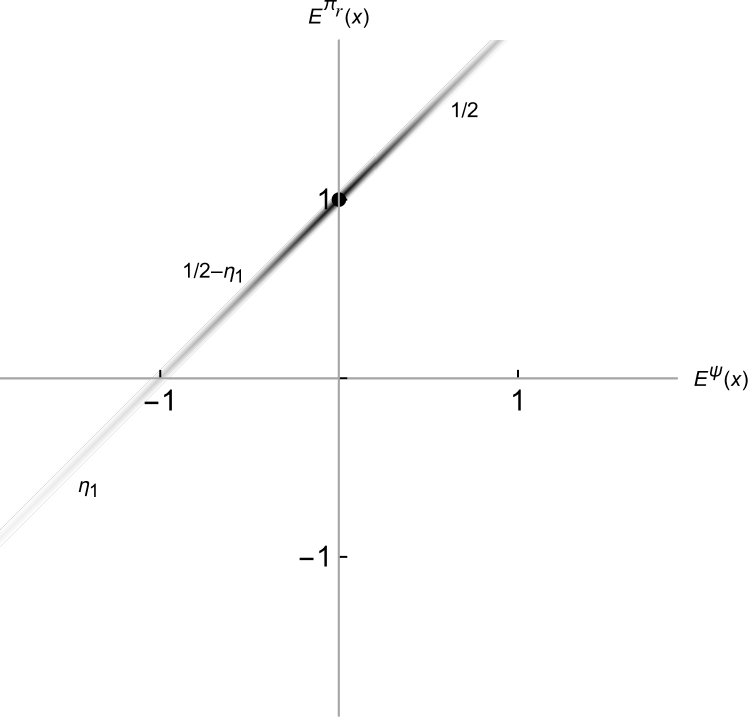}
\qquad
\includegraphics[width=3in]{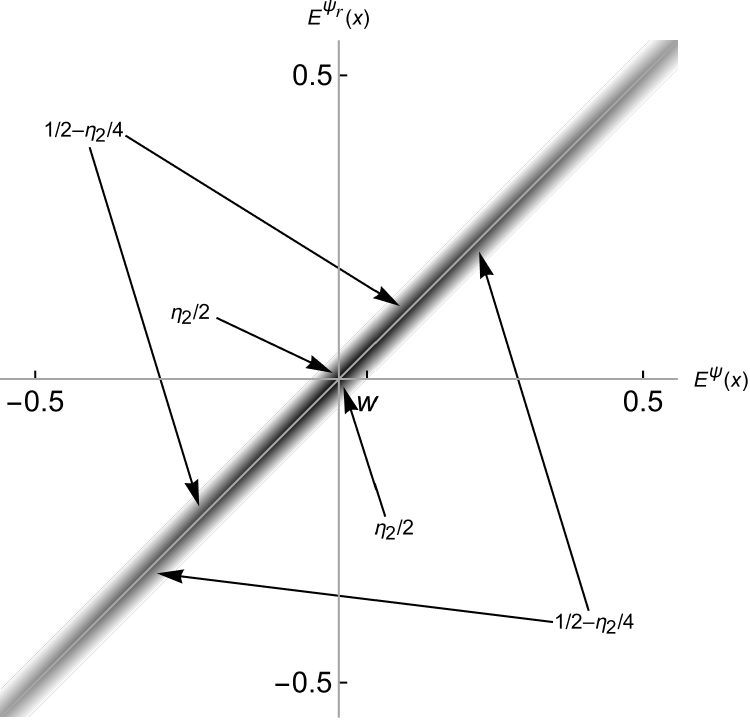}
\caption{Two manifestations of~$\mu_2$. The left-hand graph shows the joint distribution of $E^\psi(x)$ and $E^{\pi_r}(x)$, whose centre is $(\beta_\psi,\beta_{\pi_r}) = (0,1)$; certain regions in the support of $\mu_2+(0,1)$ are labeled with their densities from Theorem~\ref{theorem:log_density_zeta}(b). The right-hand graph shows the joint distribution of $E^\psi(x)$ and $E^{\psi_r}(x)$, whose centre is $(\beta_\psi,\beta_{\psi_r}) = (0,0)$; certain regions in the support of~$\mu_2$ are labeled with their densities from Theorem~\ref{theorem:joint_dist_zeta}(b).}
\label{main pair of graphs}
\end{figure}


In Section~\ref{explicit section} we exhibit the explicit formulas for our nine normalized error terms, which allows us to prove Theorem~\ref{o(1) theorem}. Using elementary methods combined with Landau's theorem for Mellin transforms, we establish Theorem~\ref{RH theorem} in Section~\ref{equivalent-to-RH section}. The Fourier transforms of these normalized error terms are examined in Section~\ref{joint dist section}, which leads to the determination of their joint limiting logarithmic distribution from which all the theorems in Section~\ref{main results} are then derived. Finally, Sections~\ref{density calc section} and~\ref{error section} contain the numerical calculation of the new constant~$\eta_2$, first writing it in terms of the principal value of a two-variable integral, and then numerically calculating an approximation to that integral with a rigorous error term. Finally, Appendix~\ref{graph appendix} contains many graphs like the ones in Figure~\ref{main pair of graphs}, illustrating all of the possible cases of Theorem~\ref{theorem:joint_dist_zeta}.

\section{Explicit formulas for normalized error terms} \label{explicit section}

In this section, we list explicit formulas for the error terms of the four standard functions and the five reciprocal functions. In this section,~$\gamma$ denotes an ordinate of a nontrivial zero of $\zeta(s)$.

\begin{proposition}
\label{corollary:explicit_standards}
Assume RH. In the notation of Definition~\ref{defn:error_bias_zeta}, for $x,X\ge2$:
\begin{enumerate}[label={\rm(\alph*)}]
\item $\displaystyle E^{\psi}(x)=-\Re\sum_{0 < \gamma \le X} \frac{2x^{i\gamma}}{\frac 12+i\gamma} + O\biggl( \frac{\sqrt{x}\log^2(xX)}{X}+\frac{\log x}{\sqrt{x}} \biggr)$
\item $\displaystyle E^{\theta}(x) = -1-\Re\sum_{0 < \gamma \le X} \frac{2x^{i\gamma}}{\frac 12+i\gamma}+O\biggl(\frac{\sqrt x\log^2(xX)}X + \frac{1}{x^{1/6}}\biggr)$
\item $\displaystyle E^{\Pi}(x) = -\Re \sum_{0 < \gamma \le X} \frac{2x^{i\gamma}}{\frac 12+i\gamma} + O\biggl(\frac{\sqrt x\log^2(xX)}X + \frac 1{\log x}\biggr)$
\item $\displaystyle E^{\pi}(x) = -1 - \Re \sum_{0 < \gamma \le X} \frac{2x^{i\gamma}}{\frac 12+i\gamma} + O\biggl(\frac{\sqrt x\log^2(xX)}X + \frac 1{\log x}\biggr)$.
\end{enumerate}
\end{proposition}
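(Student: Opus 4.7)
The plan is to derive (a) directly from the classical truncated explicit formula for $\psi(x)$ and then transfer it to the other three standard functions via elementary identities, carefully tracking the error terms. For part (a), start from the standard truncated Riemann--von Mangoldt explicit formula (as in Davenport, Chapter~17), which under RH reads
\[
\psi(x) = x - \sum_{|\gamma| \le X} \frac{x^{1/2 + i\gamma}}{\tfrac12 + i\gamma} + O\bigl(\tfrac{x \log^2(xX)}{X} + \log x\bigr).
\]
Dividing by $\sqrt x$ and pairing each ordinate $\gamma>0$ with its conjugate $-\gamma$ yields (a) immediately.

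For part (b), use the identity $\psi(x) = \sum_{k \ge 1} \theta(x^{1/k})$. By Chebyshev's estimate the tail $\sum_{k \ge 3} \theta(x^{1/k}) = O(x^{1/3})$, and under RH the $k=2$ term satisfies $\theta(\sqrt x) = \sqrt x + O(x^{1/4}\log^2 x)$. Therefore $\theta(x) - x = (\psi(x) - x) - \sqrt x + O(x^{1/3})$; dividing by $\sqrt x$ yields $E^\theta(x) = E^\psi(x) - 1 + O(x^{-1/6})$, and substituting (a) produces (b).

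For part (c), apply partial summation to the defining sum of $\Pi$ to get
\[
\Pi(x) - \li(x) = \frac{\psi(x) - x}{\log x} + \int_2^x \frac{\psi(t) - t}{t \log^2 t}\, dt + O(1).
\]
The integral is the delicate piece: the pointwise RH bound $|\psi(t) - t| \ll \sqrt{t}\log^2 t$ only gives $O(\sqrt x)$ here, which would produce an $O(\log x)$ error after normalization and is too large. Instead, expand $\psi(t) - t$ using the zero sum from (a) and integrate each $\int t^{-1/2+i\gamma}/\log^2 t\,dt$ by parts; this gains an extra factor of $1/(\tfrac12+i\gamma)$, so the resulting sum is controlled by $\sum_\gamma 1/|\tfrac12+i\gamma|^2 < \infty$ and is bounded by $O(\sqrt x/\log^2 x)$. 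Hence $\Pi(x) - \li(x) = (\psi(x) - x)/\log x + O(\sqrt x/\log^2 x)$, and dividing by $\sqrt x/\log x$ gives $E^\Pi(x) = E^\psi(x) + O(1/\log x)$, yielding (c).

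For part (d), use Möbius inversion $\pi(x) = \sum_{k \ge 1} \frac{\mu(k)}{k}\Pi(x^{1/k})$. The tail $k \ge 3$ contributes $O(x^{1/3}/\log x)$, and using $\Pi(\sqrt x) = \li(\sqrt x) + O(x^{1/4}\log x)$ (from the partial-summation identity above applied at $\sqrt x$) together with $\li(\sqrt x) = 2\sqrt x/\log x + O(\sqrt x/\log^2 x)$ shows that the $k=2$ term equals $-\sqrt x/\log x + O(\sqrt x/\log^2 x)$. Hence $\pi(x) - \li(x) = (\Pi(x) - \li(x)) - \sqrt x/\log x + O(\sqrt x/\log^2 x)$, which after normalization gives $E^\pi(x) = E^\Pi(x) - 1 + O(1/\log x)$; combining with (c) then yields (d). The main obstacle is the integral estimate in (c), since the other transfers require only classical pointwise bounds on $\psi$, $\theta$, and $\li$; once the requisite cancellation in the zero sum is secured, everything else is routine bookkeeping.
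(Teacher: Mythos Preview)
Your proposal is correct and follows essentially the same route as the paper: part~(a) is the classical truncated explicit formula, parts~(b) and~(d) are derived from (a) and~(c) respectively via the standard $\psi/\theta$ and $\Pi/\pi$ comparisons (the paper writes these as $\theta(x)=\psi(x)-\psi(\sqrt x)+O(x^{1/3})$ and $\pi(x)=\Pi(x)-\tfrac12\Pi(\sqrt x)+O(x^{1/3}/\log x)$, which is the same content as your Möbius-inversion phrasing), and part~(c) is obtained by partial summation followed by integrating the zero sum term by term to gain the extra factor of $(\tfrac12+i\gamma)^{-1}$, exactly as the paper does with its functions $g_X$ and~$G_X$. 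The only point you glide over is that, once you insert~(a) into the integral, the $O$-term from~(a) must also be integrated and shown to be acceptable; the paper tracks this explicitly and finds it contributes $O(\sqrt x\log^2(xX)/X)$ plus smaller pieces, all of which are absorbed into the stated error.
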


\begin{proof} \
\begin{enumerate}[label={\rm(\alph*)}]
\item This formula is equivalent to~\cite[equation~(1.6)]{akbary2013limiting}.
\item The standard comparison
\begin{align} \label{theta psi 1/3}
\psi(x) &= \sum_{k=1}^\infty \theta(x^{1/k}) = \theta(x) + \theta(\sqrt x) + O(x^{1/3})\\\Longrightarrow\theta(x) &= \psi(x) - \psi(\sqrt x) + O(x^{1/3})\notag
\end{align}
shows that this formula follows from part~(a).
\item Partial summation gives us
\begin{align*}
\Pi(x) - \li(x) &= \int_{2-}^x \frac1{\log t} \,d(\psi(t)-t) \\ &= \frac{\psi(t)-t}{\log t} \bigg|_{2-}^x + \int_2^x \frac{\psi(t)-t}{t\log^2t} \,dt \\
&= \frac{\psi(x)-x}{\log x} + O\biggl( 1 + \biggl| \int_2^x \frac{\psi(t)-t}{t\log^2t} \,dt \biggr| \biggr),
\end{align*}
which, upon dividing both sides by $\sqrt x/\log x$ becomes
\[
E^\Pi(x) =  E^\psi(x) + O\biggl( \frac{\log x}{\sqrt x} \biggl| \int_2^x \frac{E^\psi(t)}{\sqrt t\log^2t} \,dt \biggr| \biggr).
\]
Using the formula from part~(a) in the integrand, we see that
\begin{align}
E^\Pi(x) - E^\psi(x) & \ll \frac{\log x}{\sqrt x} \biggl| 2\Re\int_2^x \frac1{\sqrt t\log^2t} \sum_{0 < \gamma \le X} \frac{t^{i\gamma}}{\frac 12+i\gamma} \,dt \biggr| \notag \\
&\qquad{}+ \frac{\log x}{\sqrt x} \int_2^x \frac1{\sqrt t\log^2t} \biggl( \frac{\sqrt{t}\log^2(tX)}{X}+\frac{\log t}{\sqrt{t}} \biggr)\,dt \label{gotcha} \\
&\ll \frac{\log x}{\sqrt x} \biggl| \int_2^x \frac{g_X(t)}{\log^2t} \,dt \biggr| \notag \\ & \quad + \frac{\log x}{\sqrt x} \int_2^x \frac1{\sqrt t\log^2t} \biggl( \frac{\sqrt{t}\log^2(tX)}{X}+\frac{\log t}{\sqrt{t}} \biggr)\,dt \notag
\end{align}
where we have defined
\[
g_X(t) = \sum_{0 < \gamma \le X} \frac{t^{-1/2+i\gamma}}{\frac 12+i\gamma}.
\]
If we set $G_X(x) = \int_2^x g_X(t)\,dt$, then
\[
G_X(x) = \sum_{0 < \gamma \le X} \frac{t^{1/2+i\gamma}}{(\frac 12+i\gamma)^2} \bigg|_2^x \ll \sqrt x,
\]
from which it follows by another partial summation that
\begin{multline*}
\int_2^x \frac{g_X(t)}{\log^2t} \,dt = \frac{G_X(t)}{\log^2t} \bigg|_2^x  + 2 \int_2^x \frac{G_X(t)}{t\log^3t}\,dt \\ \ll \frac{\sqrt x}{\log^2 x} + \int_2^x \frac1{\sqrt t \log^3t}\,dt \ll \frac{\sqrt x}{\log^2 x} + \frac{\sqrt x}{\log^3 x}.
\end{multline*}
Therefore, the first term on the right-hand side of equation~\eqref{gotcha} is $\ll 1/\log x$; it remains only to estimate the second term on the right-hand side by
\begin{align*}
\frac{\log x}{\sqrt x} \int_2^x \frac1{\sqrt t\log^2t} & \biggl( \frac{\sqrt{t}\log^2(tX)}{X}+\frac{\log t}{\sqrt{t}} \biggr)\,dt \\
&\ll \frac{\log x}{X\sqrt x} \int_2^x \biggl( 1 + \frac{\log^2X}{\log^2t} \biggr) \,dt + \frac{\log x}{\sqrt x} \int_2^x \frac1{t\log t} \,dt \\
&\ll \frac{\sqrt x\log x}{X} + \frac{\sqrt x\cdot \log^2X}{X\log x} + \frac{\log x\log\log x}{\sqrt x}
\end{align*}
which is an acceptable error term.
\item Again, the standard comparison
\begin{align} \label{pi Pi 1/3}
\Pi(x) &= \sum_{k=1}^\infty \frac{\pi(x^{1/k})}k = \pi(x) + \frac{\pi(\sqrt x)}2 + O\biggl( \frac{x^{1/3}}{\log x} \biggr) \\\Longrightarrow \pi(x) &= \Pi(x) - \frac{\Pi(\sqrt x)}2 + O\biggl( \frac{x^{1/3}}{\log x} \biggr)\notag
\end{align}
shows that this formula follows from part~(c). \qedhere
\end{enumerate}
\end{proof}

Our goal for the rest of this section is to exhibit analogous explicit formulas for the five reciprocal functions.

\begin{lemma}
\label{lemma:rho_squared}
As a sum over all nontrivial zeros of $\zeta(s)$,
\[
\sum_{\rho}\frac{1}{\rho(1-\rho)} = \sum_{\rho} \biggl( \frac{1}{\rho}+\frac{1}{1-\rho} \biggr) = 2+C_0-\log4\pi = w
\]
in the notation of equation~\eqref{w def}.
\end{lemma}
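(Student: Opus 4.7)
The first equality is purely formal: partial fractions give $\tfrac1\rho+\tfrac1{1-\rho}=\tfrac{1}{\rho(1-\rho)}$, and when one pairs $\rho$ with its functional-equation mate $1-\rho$, each paired term is $O(1/|\rho|^2)$, so by the standard zero-counting estimate the series converges absolutely.

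For the numerical value, my plan is to exploit the Hadamard factorization of the completed zeta function $\xi(s)=\tfrac12 s(s-1)\pi^{-s/2}\Gamma(s/2)\zeta(s)$, which takes the form $\xi(s)=e^{A+Bs}\prod_\rho(1-s/\rho)e^{s/\rho}$. Logarithmically differentiating this product and evaluating at $s=0$ identifies
\[
B = \frac{\xi'}{\xi}(0) = -\sum_\rho \frac{1}{\rho}
\]
(interpreted with pairing, so that the convergence factors can be dropped). The functional equation $\xi(s)=\xi(1-s)$ forces $\xi'/\xi(0)=-\xi'/\xi(1)$; comparing this with the analogous computation at $s=1$, which produces $\xi'/\xi(1)=B+\sum_\rho(\tfrac1{1-\rho}+\tfrac1\rho)$, yields the clean identity
\[
\sum_\rho \frac{1}{\rho(1-\rho)} = -2B.
\]
Thus the lemma reduces to computing $B=\xi'(0)/\xi(0)$ directly from the definition of~$\xi$.

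To finish, I would differentiate $\log\xi(s)$ term by term:
\[
\frac{\xi'}{\xi}(s)=\frac{1}{s}+\frac{1}{s-1}-\frac{\log\pi}{2}+\frac{1}{2}\frac{\Gamma'}{\Gamma}(s/2)+\frac{\zeta'}{\zeta}(s).
\]
The apparent poles at $s=0$ cancel between $1/s$ and the expansion $\tfrac12(\Gamma'/\Gamma)(s/2)=-1/s-C_0/2+O(s)$, leaving $-C_0/2$ in the limit. Combining with $1/(s-1)|_{s=0}=-1$, the constant $-\tfrac12\log\pi$, and $(\zeta'/\zeta)(0)=\log 2\pi$ (which follows from $\zeta(0)=-\tfrac12$ and $\zeta'(0)=-\tfrac12\log 2\pi$) gives $B=-1+\log 2+\tfrac12\log\pi-\tfrac12 C_0$. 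Multiplying by $-2$ then produces $2+C_0-\log 4\pi=w$, as desired. The only delicate step is the cancellation of the poles at $s=0$ in $\Gamma(s/2)$ against the prefactor $s$ in the definition of~$\xi$, which requires the digamma expansion to be carried out to the correct order; since this identity is classical (cf.\ Davenport's \emph{Multiplicative Number Theory}, Ch.~12), I would cite that reference for readers who want the omitted calculations spelled out in full.
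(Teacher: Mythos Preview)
Your proof is correct. The paper's own proof is a one-line citation to Montgomery--Vaughan~\cite[equation~(10.30)]{montgomery_vaughan_2006}, so you have supplied considerably more than the authors did.

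The route you take is the standard derivation that sits behind that citation: identify $\sum_\rho 1/\rho(1-\rho)$ with $-2B$ via the Hadamard product and the functional equation, then evaluate $B=\xi'/\xi(0)$ by expanding the logarithmic derivative of $\xi$ at $s=0$. Your arithmetic checks out (in particular $B=-1+\log 2+\tfrac12\log\pi-\tfrac12 C_0$, hence $-2B=2+C_0-\log 4\pi$). The only comment is stylistic: since the paper is content to cite the identity, your derivation is more than is strictly needed here, but it is self-contained and would be appropriate if one wanted to avoid relying on an external reference for this specific constant.
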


\begin{proof}
This formula is a restatement of~\cite[equation~(10.30)]{montgomery_vaughan_2006}.
\end{proof}

\begin{proposition}
\label{prop:psi_r_explicit_formula}
For all $x,T\ge2$,
\begin{align*}
\psi_r(x)=-\sum_{|\gamma|\leq T}\frac{x^{\rho-1}}{\rho-1}+\log x-C_0+O\bparen{\frac{\log x}{x}+\frac{(\log x)^2}{T}+\frac{(\log T)^2}{T\log x}}.
\end{align*}
\end{proposition}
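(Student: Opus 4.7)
The plan is to apply Perron's formula directly to the Dirichlet series
\[
F(s+1) = -\frac{\zeta'(s+1)}{\zeta(s+1)} = \sum_{n=1}^\infty \frac{\Lambda(n)}{n^{s+1}}, \qquad \Re s > 0,
\]
with abscissa $c = 1/\log x$, and then to shift the contour of integration past the pole at $s=0$ and the poles at $s=\rho-1$ coming from the nontrivial zeros~$\rho$ with $|\gamma|\le T$. A standard truncated Perron argument applied to this series yields
\[
\psi_r(x) = \frac{1}{2\pi i}\int_{c-iT}^{c+iT} F(s+1)\,\frac{x^s}{s}\,ds + O\biggl(\frac{(\log x)^2}{T} + \frac{\log x}{x}\biggr),
\]
where the error absorbs both the usual truncation cost and the correction for values of~$n$ very close to~$x$.

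I then shift the contour to a vertical line $\Re s = -U$ for some $U>1$ chosen to avoid trivial zeros of $\zeta(s+1)$. The residue theorem contributes three kinds of terms. At $s=0$, the Laurent expansion $F(s+1) = 1/s - C_0 + O(s)$ (inherited from $-\zeta'(s)/\zeta(s) = 1/(s-1) - C_0 + O(s-1)$ at $s=1$) combined with $x^s/s = 1/s + \log x + O(s\log^2 x)$ gives a double pole whose residue is precisely $\log x - C_0$. At each simple pole $s=\rho-1$ with $|\gamma|\le T$, the residue is $-x^{\rho-1}/(\rho-1)$, since $F$ has residue $-1$ at each nontrivial zero. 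The trivial zeros at $s=-2k-1$ ($k\ge 1$) contribute collectively $O(1/x^3)$, easily absorbed into the error.

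The shifted contour must then be bounded. The vertical segment at $\Re s = -U$ is dominated by $x^{-U}$ up to polynomial factors in $\log T$, and is negligible compared to $\log x/x$. For the horizontal segments at $\Im s = \pm T$, I would perturb~$T$ by at most~$O(1)$ so that the classical bound $|F(\sigma+iT)| \ll (\log T)^2$ holds uniformly for $-U \le \sigma \le c$; the $\sigma$-integral then gives
\[
\int_{-U}^{c} x^\sigma\,d\sigma = \frac{x^c - x^{-U}}{\log x} \ll \frac{1}{\log x},
\]
producing a horizontal contribution of size $O\bigl((\log T)^2/(T\log x)\bigr)$, which is precisely the third error term in the statement.

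The main obstacle is establishing the uniform bound $|F(\sigma + iT)| \ll (\log T)^2$ on the horizontal contour: this requires the Hadamard-style partial-fraction expansion of $\zeta'/\zeta$, together with a careful perturbation of~$T$ to avoid the ordinates of nontrivial zeros. Both tools are classical, but the key subtlety is that the $\sigma$-integral only yields a $1/\log x$ saving (rather than $1/\log T$), which is what forces the nonstandard-looking shape $(\log T)^2/(T\log x)$ of the third error term. Assembling the three pieces---the residues, the Perron error, and the horizontal contour bound---then produces the stated formula.
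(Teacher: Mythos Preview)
Your argument is correct and takes a genuinely different route from the paper's. The paper does not apply Perron's formula directly at the boundary; instead it quotes Lamzouri's formula for $\sum_{n\le x}\Lambda(n)/n^\alpha$ valid for $\alpha>1$ (itself a Perron-type result), and then lets $\alpha\to 1^+$. The main term $\log x - C_0$ there arises by combining the Hadamard partial-fraction expansion of $-\zeta'/\zeta(\alpha)$ with the Laurent expansion of $x^{1-\alpha}/(1-\alpha)$, and the evaluation ultimately appeals to the identity $\sum_\rho 1/\rho(1-\rho)=2+C_0-\log 4\pi$ together with the special value $\tfrac{\Gamma'}{\Gamma}(\tfrac32)=-C_0-2\log 2+2$. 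Your approach is more direct: the constant $C_0$ appears immediately as the next Laurent coefficient of $-\zeta'/\zeta$ at its pole, and the double-pole residue at $s=0$ produces $\log x - C_0$ in one step, with no detour through the zero sum or gamma values. The paper's route buys brevity by outsourcing the contour work to an existing lemma; yours is self-contained, parallels the classical proof of the explicit formula for $\psi(x)$ more closely, and makes the provenance of $C_0$ transparent. One small point: the bound $|\zeta'/\zeta(\sigma+1+iT)|\ll(\log T)^2$ is only available as stated for $\sigma$ bounded below (say $\sigma\ge -2$); for the tail $-U\le\sigma<-2$ you need the functional equation, but that portion of the horizontal integral is already $O(x^{-2}(\log T)^2/T)$ and is absorbed with room to spare.
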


\begin{proof}
In~\cite[Lemma 2.4]{lamzouri2016bias}, the author proves that for any $\alpha>1$,
\begin{equation} \label{no longer 3.6}
\sum_{n\leq x}\frac{\Lambda(n)}{n^\alpha} = -\frac{\zeta'}{\zeta}(\alpha) + \frac{x^{1-\alpha}}{1-\alpha} - \sum_{|\Im{\rho}|\leq T}\frac{x^{\rho-\alpha}}{\rho-\alpha}+O\bigl( F(x;\alpha,T) \bigr)
\end{equation}
where 
\begin{equation*}
F(x;\alpha,T)=\frac{\log x}{x^\alpha}+\frac{x^{1-\alpha}}{T}\biggl( 4^{\alpha}+(\log x)^2+\frac{(\log T)^2}{\log x} \biggr) + \frac{1}{T}\sum_{n=1}^\infty\frac{\Lambda(n)}{n^{\alpha+{1}/{\log x}}}.
\end{equation*}
We immediately see that
\begin{align*}
\lim_{\alpha\to1^+}F(x;\alpha,T)&=F(x;1,T)\\ &= \frac{\log x}{x}+\frac1T \biggl( 4+(\log x)^2+\frac{(\log T)^2}{\log x} \biggr) - \frac1T \frac{\zeta'}{\zeta}\biggl( 1+\frac1{\log x} \biggr) \\
&\ll \frac{\log x}{x} + \frac{(\log x)^2}T + \frac{(\log T)^2}{T\log x} + \frac{\log x}T.
\end{align*}
We have the formula~\cite[Corollary~10.14]{montgomery_vaughan_2006}
\begin{equation*}
-\frac{\zeta'}{\zeta}(\alpha)=\frac{C_0}{2}+1-\log2\pi-\frac{1}{1-\alpha}+\frac{1}{2}\frac{\Gamma'}{\Gamma}\bparen{\frac{\alpha}{2}+1}-\sum_{\rho}\bparen{\frac{1}{\alpha-\rho}+\frac{1}{\rho}},
\end{equation*}
as well as the Laurent expansion around $\alpha=1$ that gives
\begin{equation*}
\frac{x^{1-\alpha}}{1-\alpha}=\frac{1}{1-\alpha}+\log x + O_x\bigl( |1-\alpha| \bigr).
\end{equation*}
Adding these two equations and taking the limit yields
\begin{align*}
\lim_{\alpha\to1^+}\bparen{-\frac{\zeta'}{\zeta}(\alpha)+\frac{x^{1-\alpha}}{1-\alpha}} &= \frac{C_0}{2}+1-\log2\pi + \log x \\ &\quad + \frac{1}{2}\frac{\Gamma'}{\Gamma}\biggl( \frac32 \biggr) - \sum_{\rho}\bparen{\frac{1}{1-\rho}+\frac{1}{\rho}}\\ &= \log x-C_0
\end{align*}
by Lemma~\ref{lemma:rho_squared} and the special value $\frac{\Gamma'}{\Gamma}(\frac{3}{2})=-C_0-2\log 2+2$.
With these evaluations, we can now take the limit of both sides of equation~\eqref{no longer 3.6} to find that
\begin{multline*}
\psi_r(x) = \lim_{\alpha\to1^+}\sum_{n\leq x}\frac{\Lambda(n)}{n^\alpha} \\ = \log x-C_0 -\sum_{|\gamma|\leq T}\frac{x^{\rho-1}}{\rho-1} +O\bparen{\frac{\log x}{x}+\frac{(\log x)^2}{T}+\frac{(\log T)^2}{T\log x}}
\end{multline*}
as claimed.
\end{proof}

\begin{proposition}
\label{corollary:explicit_reciprocals}
Assume RH. In the notation of Definition~\ref{defn:error_bias_zeta}, for $x,X\ge2$:
\begin{enumerate}[label={\rm(\alph*)}]
\item $\displaystyle E^{\psi_r}(x)=-\Re\sum_{0 < \gamma \le X} \frac{2x^{i\gamma}}{-\frac 12+i\gamma}+O\biggl( \frac{\log x}{\sqrt{x}}+\frac{\sqrt{x}(\log x)^2}{X}+\frac{\sqrt{x}(\log X)^2}{X\log x} \biggr)$ \label{eq1}

\item $\displaystyle E^{\pi_\ell}(x) = 1 - \Re \sum_{0 < \gamma \le X} \frac{2x^{i\gamma}}{-\frac 12+i\gamma} + O\biggl(\frac{\sqrt x\log^2(xX)}X + \frac 1{\log x}\biggr)$ \label{eq2}

\item $\displaystyle E^{\Pi_r}(x) = -\Re \sum_{0 < \gamma \le X} \frac{2x^{i\gamma}}{-\frac 12+i\gamma} + O\biggl(\frac{\sqrt x\log^2(xX)}X + \frac 1{\log x}\biggr)$ \label{eq3}

\item $\displaystyle E^{\pi_r}(x) = 1 - \Re \sum_{0 < \gamma \le X} \frac{2x^{i\gamma}}{-\frac 12+i\gamma} + O\biggl(\frac{\sqrt x\log^2(xX)}X + \frac 1{\log x}\biggr)$ \label{eq4}

\item $\displaystyle E^{\theta_r}(x) = 1-\Re\sum_{0 < \gamma \le X} \frac{2x^{i\gamma}}{-\frac 12+i\gamma}+O\biggl(\frac{\sqrt x\log^2(xX)}X + \frac 1{\log x}\biggr).$ \label{eq5}
\end{enumerate}
\end{proposition}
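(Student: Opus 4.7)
The plan is a cascade: derive part~(a) directly from Proposition~\ref{prop:psi_r_explicit_formula}, then chain the remaining four formulas off it via elementary comparisons and partial summation. Part~(a) itself is almost immediate: under RH each nontrivial zero has the form $\rho=\tfrac12+i\gamma$, so $x^{\rho-1}/(\rho-1)=x^{-1/2+i\gamma}/(-\tfrac12+i\gamma)$; pairing each zero with its complex conjugate collapses the symmetric sum in Proposition~\ref{prop:psi_r_explicit_formula} to $2\Re\sum_{0<\gamma\le X}$, and multiplying that formula through by $\sqrt x$ converts $x^{\rho-1}\sqrt x$ into $x^{i\gamma}$ while turning the displayed error into precisely the one stated in~(a), with $T=X$.

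For part~(e), I would compare $\theta_r$ to $\psi_r$ directly via
\[
\psi_r(x)-\theta_r(x)=\sum_{k\ge2}\sum_{p\le x^{1/k}}\frac{\log p}{p^k}=(C_1-C_0)-\sum_{k\ge 2}\sum_{p>x^{1/k}}\frac{\log p}{p^k},
\]
using the defining identity $C_1=C_0+\sum_p\log p/(p(p-1))$. Partial summation against $d\theta(t)$ with an effective PNT remainder gives $\sum_{p>\sqrt x}\log p/p^2=1/\sqrt x+O(1/(\sqrt x\log^A x))$, while the $k\ge 3$ tails contribute only $O(x^{-2/3})$. Rearranging and multiplying by $\sqrt x$ gives $E^{\theta_r}(x)=E^{\psi_r}(x)+1+O(x^{-1/6})$, which combined with~(a) proves~(e).

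Parts (c) and (d) then come from the partial summation identity
\[
\Pi_r(x)=\frac{\psi_r(x)}{\log x}+\int_2^x\frac{\psi_r(t)}{t\log^2 t}\,dt
\]
and its analogue expressing $\pi_r(x)$ via $\theta_r$. Writing $\psi_r(t)=(\log t-C_0)+\Delta(t)$, the main term $\log t-C_0$ integrates in closed form to $\log\log x+C_0$ plus additive constants, and the known asymptotic $\Pi_r(x)-\log\log x\to C_0$ forces those constants to cancel, leaving
\[
\Pi_r(x)-(\log\log x+C_0)=\frac{\Delta(x)}{\log x}-\int_x^\infty\frac{\Delta(t)}{t\log^2 t}\,dt.
\]
Multiplying by $\sqrt x\log x$ turns the first term into $E^{\psi_r}(x)$, and a single integration by parts on each oscillatory tail $\int_x^\infty t^{-3/2+i\gamma}/\log^2 t\,dt$ shows the second term is $O(1/(\sqrt x\log^2 x))$, contributing only $O(1/\log x)$ after rescaling. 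The identical argument with $\theta_r$ in place of $\psi_r$, invoked with~(e), gives~(d). Finally, part~(b) follows from $\pi_\ell(x)-\Pi_r(x)=\sum_{p>\sqrt x}\sum_{k\ge 2}1/(kp^k)+O(1/x)$, whose leading contribution $\sum_{p>\sqrt x}1/(2p^2)$ equals $1/(\sqrt x\log x)+O(1/(\sqrt x\log^2 x))$ by partial summation against $\pi(t)$; multiplying by $\sqrt x\log x$ yields $E^{\pi_\ell}(x)-E^{\Pi_r}(x)=1+O(1/\log x)$.

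The main obstacle will be the bookkeeping of constants in parts~(c) and~(d): partial summation naturally produces extra additive terms involving $\log\log 2$, $C_0/\log 2$, or $C_1/\log 2$ which, if not absorbed, would blow up after multiplication by $\sqrt x\log x$. The saving device is to use the known asymptotics of $\Pi_r$ and $\pi_r$ to rewrite the remainder as a tail integral from $x$ to $\infty$, from which the claimed $O(1/\log x)$ error falls out after one integration by parts on each oscillatory integrand of the form $t^{-3/2+i\gamma}/\log^2 t$.
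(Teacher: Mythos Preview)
Your proposal is correct and more self-contained than the paper's own argument, which simply imports parts~(b)--(e) from Lamzouri~\cite{lamzouri2016bias} and Lay~\cite{lay2015sign}: the paper takes~(b) as the external starting point and deduces~(c) and~(d) from~(b) via cited lemmas, with~(e) cited outright. You instead run the cascade in the opposite direction, deriving~(e) and~(c) directly from~(a) by elementary comparison and partial summation, then pulling~(d) from~(e) and~(b) from~(c). This mirrors the paper's treatment of the standard functions in Proposition~\ref{corollary:explicit_standards}(c), and it buys independence from the external references at the cost of redoing some Mertens-type estimates that the paper has available elsewhere (your computation for~(e) is essentially Lemma~\ref{reciprocal Es related 1}, and your computation for~(b) is essentially Lemma~\ref{reciprocal Es related 2}).

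One technical point deserves care in parts~(c) and~(d). Your tail-integral formulation $\Pi_r(x)-(\log\log x+C_0)=\Delta(x)/\log x-\int_x^\infty \Delta(t)/(t\log^2 t)\,dt$ is correct, but ``integrate by parts on each oscillatory tail'' presupposes you may interchange the infinite sum over zeros with $\int_x^\infty$. The truncated formula~(a) cannot be inserted directly, since its error term $\sqrt t(\log t)^2/X$, after dividing by $\sqrt t$ and integrating against $dt/(t\log^2 t)$ over $[x,\infty)$, diverges. The clean fix is to integrate by parts once at the level of the whole integral: set $\Psi(t)=\int_x^t E^{\psi_r}(s)\,s^{-3/2}\,ds$, observe from~(a) (with, say, $X=t^2$) that $\Psi(t)=O(x^{-1/2})$ uniformly for $t\ge x$, and then
\[
\int_x^\infty \frac{E^{\psi_r}(t)}{t^{3/2}\log^2 t}\,dt
= \frac{\Psi(t)}{\log^2 t}\bigg|_x^\infty + 2\int_x^\infty \frac{\Psi(t)}{t\log^3 t}\,dt
= O\!\left(\frac{1}{\sqrt{x}\,\log^2 x}\right),
\]
which is exactly the estimate you claim. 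This is the reciprocal analogue of the $G_X$ device in the paper's proof of Proposition~\ref{corollary:explicit_standards}(c). A minor correction: in part~(b), the contribution from $p\le\sqrt{x}$ with $p^k>x$ is $O(x^{-2/3}/\log x)$ rather than $O(1/x)$, but this is still comfortably within the $O(1/(\sqrt{x}\log^2 x))$ you need.
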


\begin{proof} \
\begin{enumerate}[label={\rm(\alph*)}]
\item Assuming RH, we can write
\begin{align*}
\sum_{1<|\Im(\rho)|\leq T}\frac{x^{\rho-1}}{\rho-1}=\sum_{1<|\gamma|\leq T}\frac{x^{-\frac{1}{2}+i\gamma}}{-\frac{1}{2}+i\gamma}.
\end{align*}
Then the formula for $E^{\psi_r}(x)$ is a straightforward consequence of Proposition~\ref{prop:psi_r_explicit_formula}.
\item This formula is~\cite[Corollary 2.2]{lamzouri2016bias} or~\cite[Proposition 1]{lay2015sign}, although both those statements contain a sign mistake (confirmed by the author of~\cite{lamzouri2016bias}) that is corrected here.
\item This formula follows from part~(b) and~\cite[Lemma 2.3]{lamzouri2016bias}, which in our notation is
\begin{align*}
\pi_\ell(x)=\Pi_r(x)+\frac{1}{\sqrt{x}\log x}+O\bparen{\frac{1}{\sqrt{x}(\log x)^2}}.
\end{align*}
\item This formula follows from part~(b) and~\cite[Lemma 5]{lay2015sign}, which (when both sides are multiplied by $\sqrt x\log x$) states in our notation that
\[
E^{\pi_r}(x) = E^{\pi_\ell}(x)+O\biggl(\frac{\log x}{\sqrt x}\biggr).
\]
\item This formula is~\cite[Lemma 6]{lay2015sign}. \qedhere
\end{enumerate}
\end{proof}

These explicit formulas enable us to establish the ranges of values taken by differences of the normalized error terms from Definition~\ref{defn:error_bias_zeta}.

\begin{proof}[Proof of Theorem~\ref{o(1) theorem}]
Part~(a) follows immediately from Propositions~\ref{corollary:explicit_standards} and~\ref{corollary:explicit_reciprocals}, in light of the bias terms from Definition~\ref{bias terms def}. Also from those propositions (setting $X=x$):
\begin{align}
\bigl| E^{\psi}(x) - E^{\psi_r}(x) \bigr| &= \biggl| \Re\sum_{0 < \gamma \le x} \frac{2x^{i\gamma}}{\frac 12+i\gamma} - \Re\sum_{0 < \gamma \le x} \frac{2x^{i\gamma}}{-\frac 12+i\gamma} \biggr| + O\biggl( \frac{\log^2x}{x} \biggr) \notag \\
& = \bigg|2\Re\sum_{0 < \gamma \le x} \frac{x^{i\gamma}}{\frac14+\gamma^2}\bigg| + o(1) \label{same comp} \\
& < 2 \sum_{\gamma >0} \frac1{\frac14+\gamma^2} + o(1) = \sum_\rho \frac 1{\rho(1-\rho)} + o(1) = w + o(1) \notag
\end{align}
by Lemma~\ref{lemma:rho_squared}. This inequality implies the special case of part~(b) where $f=\psi_r$ and $g=\psi$, and the general statement follows from this special case and part~(a).
\end{proof}

\section{Persistent inequalities between normalized error terms} \label{equivalent-to-RH section}

We begin this section by establishing some unconditional relationships between pairs of normalized error terms, after which we write down the Mellin transforms of those functions for use in Landau's theorem-type arguments. We prove
Theorem~\ref{RH theorem}(a) at the end of Section~\ref{unconditional section} and
Theorem~\ref{RH theorem}(b) at the end of Section~\ref{mellin section}.

\subsection{Unconditional relationships} \label{unconditional section}

We first establish some relationships among the error terms from Definition~\ref{defn:error_bias_zeta} that are independent of any assumptions on the zeros of $\zeta(s)$; Theorem~\ref{RH theorem}(a) will be an immediate consequence of these formulas.

\begin{lemma} \label{standard Es related}
Unconditionally, there exists a positive constant $c$ such that
\begin{align*}
E^\psi(x)-E^\theta(x) &= 1 + O\bigl( \exp(-c\sqrt{\log x}) \bigr) \\
E^\Pi(x)-E^\pi(x) &= 1 + O\bigl( \exp(-c\sqrt{\log x}) \bigr).
\end{align*}
\end{lemma}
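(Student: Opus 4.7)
The plan is to reuse the two classical Chebyshev-style comparison identities that already appear as~\eqref{theta psi 1/3} and~\eqref{pi Pi 1/3} in the proof of Proposition~\ref{corollary:explicit_standards}, namely
\[
\psi(x) - \theta(x) = \theta(\sqrt x) + O(x^{1/3}) \qquad\text{and}\qquad \Pi(x) - \pi(x) = \tfrac12 \pi(\sqrt x) + O\bigl(x^{1/3}/\log x\bigr),
\]
and to combine them with the unconditional prime number theorem in its classical de la Vall\'ee Poussin form. Since no hypothesis on the zeros of $\zeta(s)$ is available here, the right inputs are
\[
\theta(y) = y + O\bigl(y \exp(-c_0\sqrt{\log y})\bigr) \qquad\text{and}\qquad \pi(y) = \li(y) + O\bigl(y \exp(-c_0\sqrt{\log y})\bigr),
\]
both standard consequences of the classical zero-free region for $\zeta$ (e.g., Montgomery--Vaughan, Theorem~6.9).

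For the first line of the lemma I would substitute $y = \sqrt x$ into the $\theta$-estimate, yielding $\theta(\sqrt x) = \sqrt x + O\bigl(\sqrt x \exp(-c \sqrt{\log x})\bigr)$ after relabelling the constant. Plugging this into the first comparison identity and dividing by $\sqrt x$ produces
\[
E^\psi(x) - E^\theta(x) = 1 + O\bigl(\exp(-c\sqrt{\log x})\bigr) + O(x^{-1/6});
\]
the final $x^{-1/6}$ term decays faster than any $\exp(-c\sqrt{\log x})$ and is absorbed after possibly shrinking $c$.

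For the second line I would proceed analogously: set $y=\sqrt x$ in the $\pi$-estimate to get $\pi(\sqrt x) = \li(\sqrt x) + O\bigl(\sqrt x \exp(-c\sqrt{\log x})\bigr)$, invoke the asymptotic expansion $\li(\sqrt x) = 2\sqrt x/\log x + O\bigl(\sqrt x/(\log x)^2\bigr)$, substitute into the comparison identity, and finally divide by $\sqrt x/\log x$. The step I expect to require the most care is the bookkeeping around the expansion of $\li(\sqrt x)$: it produces an $O(1/\log x)$ correction after normalization, which strictly speaking is coarser than $\exp(-c\sqrt{\log x})$. The cleanest resolution is to interpret the ``$1$'' in the lemma as shorthand for $\li(\sqrt x)\log x/(2\sqrt x)$, or to allow a slightly weaker error in that one identity; either way this subtlety is immaterial for the downstream application to Theorem~\ref{RH theorem}(a), whose proof only requires each of $E^\psi(x) - E^\theta(x)$ and $E^\Pi(x) - E^\pi(x)$ to be bounded below by a positive constant for all~$x$ beyond some absolute threshold.
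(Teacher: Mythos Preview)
Your approach is essentially identical to the paper's: both use the comparison identities~\eqref{theta psi 1/3} and~\eqref{pi Pi 1/3} together with the classical de la Vall\'ee Poussin error term, then divide through by the normalizing factor. You are in fact more careful than the paper on one point: you correctly flag that the second line cannot literally hold with error $O(\exp(-c\sqrt{\log x}))$, since
\[
\frac{\pi(\sqrt x)\log x}{2\sqrt x} = \frac{\li(\sqrt x)\log x}{2\sqrt x} + O\bigl(\log x\cdot\exp(-c\sqrt{\log x})\bigr) = 1 + \frac{2}{\log x} + O\biggl(\frac{1}{(\log x)^2}\biggr),
\]
and the $2/\log x$ term genuinely survives. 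The paper's proof simply says ``invoking the prime number theorem for $\pi(x^{1/2})$'' and does not address this, so the stated error in the second line appears to be an oversight there as well. As you observe, this is immaterial for Theorem~\ref{RH theorem}(a), which only needs $E^\Pi(x)-E^\pi(x)=1+o(1)$.
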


\begin{proof}
As $\psi(x) = \sum_{k=1}^\infty \theta(x^{1/k})$, the standard argument gives
\begin{align} \label{psi-theta}
\psi(x)-\theta(x) &= \sum_{k=2}^\infty \theta(x^{1/k})\\ &= \theta(x^{1/2}) + \theta(x^{1/3}) + \sum_{4\le k\ll \log x} \theta(x^{1/k})\notag \\&= \theta(x^{1/2}) + O(x^{1/3}).\notag
\end{align}
By the prime number theorem, there exists $c_0>0$ such that
\begin{multline*}
\psi(x)-\theta(x) = x^{1/2} + O\bigl( \exp(-c_0\sqrt{\log x^{1/2}}) \bigr) + O(x^{1/3}) \\ = x^{1/2} + O\bigl( \exp(-c\sqrt{\log x}) \bigr)
\end{multline*}
with $c=\frac{c_0}2$. The first claim follows upon dividing both sides by $x^{1/2}$.

Similarly, the identity $\Pi(x) = \sum_{k=1}^\infty \frac 1k \pi(x^{1/k})$ yields
\begin{align} \label{Pi-pi}
\Pi(x) - \pi(x) &= \sum_{k=2}^\infty \frac{\pi(x^{1/k})}k \\&= \frac{\pi(x^{1/2})}2 + \frac{\pi(x^{1/3})}3 + \sum_{4\le k\ll \log x} \frac{\pi(x^{1/k})}k\notag\\ &= \frac{\pi(x^{1/2})}2 + O\biggl( \frac{x^{1/3}}{\log x} \biggr).\notag
\end{align}
The second claim follows by invoking the prime number theorem for $\pi(x^{1/2})$ and dividing both sides by $x^{1/2}/\log x$.
\end{proof}

\begin{lemma} \label{reciprocal Es related 1}
Unconditionally, $E^{\psi_r}(x)-E^{\theta_r}(x) = -1 + O(1/\log x)$.
\end{lemma}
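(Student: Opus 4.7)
The plan is to reduce the lemma to an explicit elementary computation of $\psi_r(x)-\theta_r(x)$ and then to identify its asymptotic expansion past the constant term with an error $O(1/(\sqrt{x}\log x))$, which after multiplication by $\sqrt{x}$ gives the stated result.

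First I would unwind the definitions. Since $C_1 = C_0 + \sum_p \frac{\log p}{p(p-1)}$, writing $S := C_1 - C_0 = \sum_p \frac{\log p}{p(p-1)}$ we have
\[
E^{\psi_r}(x) - E^{\theta_r}(x) = \sqrt{x}\bigl((\psi_r(x) - \theta_r(x)) - S\bigr).
\]
Next, by grouping prime powers,
\[
\psi_r(x) - \theta_r(x) = \sum_{\substack{p^k \le x\\ k\ge 2}}\frac{\log p}{p^k},
\qquad\text{and}\qquad
S = \sum_{p}\log p\sum_{k=2}^{\infty}\frac{1}{p^k},
\]
so the difference of interest is the "tail'' $-\sum_{p^k > x,\ k\ge 2}\frac{\log p}{p^k}$. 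The entire problem thus reduces to showing
\[
\sum_{\substack{p^k > x\\ k\ge 2}}\frac{\log p}{p^k} = \frac{1}{\sqrt{x}} + O\biggl(\frac{1}{\sqrt{x}\log x}\biggr).
\]

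The main step is the $k=2$ term $T_2(x) := \sum_{p > \sqrt{x}}\frac{\log p}{p^2}$; the contribution from $k\ge 3$ is easily seen, by partial summation (or the crude bound $\sum_{p>y}\log p/p^k \ll 1/y^{k-1}$), to be $O(x^{-2/3})$, which is negligible. For $T_2(x)$ I would apply Stieltjes integration by parts, writing $T_2(x) = \int_{\sqrt{x}}^{\infty}t^{-2}\,d\theta(t)$ and using the unconditional prime number theorem in the form $\theta(t) = t + O(t/\log t)$; the main contribution $\int_{\sqrt{x}}^\infty 2t^{-2}\,dt - \theta(\sqrt{x})/x$ evaluates to $1/\sqrt{x}$, and the error terms from $R(t) := \theta(t)-t$ contribute $O(R(\sqrt{x})/x) + O\bigl(\int_{\sqrt{x}}^\infty R(t)/t^3\,dt\bigr) = O(1/(\sqrt{x}\log x))$.

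Assembling these estimates gives $\psi_r(x) - \theta_r(x) - S = -1/\sqrt{x} + O(1/(\sqrt{x}\log x))$, and multiplying through by $\sqrt{x}$ yields the claim. The only real obstacle is the careful bookkeeping of the partial-summation argument for $T_2$: one must verify that the main term is exactly $1/\sqrt{x}$ (not just of order $1/\sqrt{x}$), since its coefficient of $\sqrt{x}$ is what produces the constant $-1$ on the right-hand side of the lemma. Everything else is routine.
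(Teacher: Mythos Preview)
Your proposal is correct and follows the same overall architecture as the paper: reduce to the tail sum $\sum_{p^k>x,\,k\ge2}\frac{\log p}{p^k}$, isolate the $k=2$ contribution via partial summation to extract the main term $1/\sqrt{x}$, and bound the $k\ge3$ contribution by a negative power of~$x$ smaller than $x^{-1/2}$.

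The one genuine difference is the input to the $k=2$ partial summation. You integrate against $d\theta(t)$ and invoke the prime number theorem $\theta(t)=t+O(t/\log t)$. The paper instead integrates against $d\theta_r(u)$ and uses only Mertens's estimate $\theta_r(u)=\log u-C_1+O(1/\log u)$, which is elementary (no zero-free region required). Both yield the same $1/\sqrt{x}+O(1/(\sqrt{x}\log x))$, so for the purposes of this lemma the distinction is cosmetic; but the paper's choice keeps the argument strictly at the Mertens level, which is philosophically tidier given that the reciprocal functions $\psi_r,\theta_r$ themselves live in that regime. For the $k\ge3$ tail the paper uses Rankin's trick to get $O(x^{-3/5})$ while you get $O(x^{-2/3})$ by direct estimation; either suffices.
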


\begin{proof}
By Definition~\ref{defn:function_sets_zeta}, it suffices to prove $\sum_p \frac{\log p}{p(p-1)} + \theta_r(x) - \psi_r(x) = x^{-1/2} (1 + O(1/\log x))$, or equivalently
\begin{equation} \label{apr5a}
\sum_p \sum_{k=2}^\infty \frac{\log p}{p^k} + \sum_{p\le x} \frac{\log p}p - \sum_{k=1}^\infty \sum_{p\le x^{1/k}} \frac{\log p}{p^k} = x^{-1/2} (1 + O(1/\log x)).
\end{equation}
The $k=1$ term in the third sum precisely cancels the second sum. If we define $\Delta^{\theta_r}(x) = \theta_r(x) - (\log x - C_1)$, so that $\Delta^{\theta_r}(x) \ll 1/\log x$ by Mertens's formula~\cite[line~3]{dusart},
then the contribution to the left-hand side of $k=2$ equals
\begin{align*}
\sum_{p > \sqrt x} \frac{\log p}{p^2} & = \int_{\sqrt x}^\infty \frac1u \, d\theta_r(u) = \int_{\sqrt x}^\infty \frac1u \, d\bigl( \log u - C_1 + \Delta^{\theta_r}(u) \bigr) \\
&= \int_{\sqrt x}^\infty \frac1{u^2} \, du + \frac{\Delta^{\theta_r}(u)}{u} \bigg|_{\sqrt x}^\infty + \int_{\sqrt x}^\infty \frac{\Delta^{\theta_r}(u)}{u^2} \,du \\
&= -\frac1u \bigg|_{\sqrt x}^\infty + O\biggl( \frac{1/\log u}{u} \bigg|_{\sqrt x}^\infty + \int_{\sqrt x}^\infty \frac{1/\log u}{u^2} \,du \biggr) \\&= \frac1{\sqrt x} + O\biggl( \frac1{\sqrt x\log\sqrt x} \biggr).
\end{align*}
Finally, the contribution to the left-hand side of equation~\eqref{apr5a} from $k\ge3$ is, by Rankin's trick,
\begin{align*}
\sum_{k=3}^\infty \sum_{p>x^{1/k}} \frac{\log p}{p^k} \le \sum_p \sum_{k\ge3} \frac{\log p}{p^k} \biggl( \frac{p^k}x \biggr)^{3/5} &= x^{-3/5} \sum_p \sum_{k\ge3} \frac{\log p}{p^{2k/5}} \\
&= x^{-3/5} \sum_p \frac{\log p}{p^{6/5} - p^{4/5}}\\& \ll x^{-3/5} \ll \frac1{\sqrt x\log x},
\end{align*}
which concludes the proof of the lemma.
\end{proof}

\begin{lemma} \label{reciprocal Es related 2}
Unconditionally,
\begin{align*}
E^{\pi_\ell}(x)-E^{\Pi_r}(x) & = 1 + O(1/\log x) \\
E^{\pi_\ell}(x)-E^{\pi_r}(x) & \ll x^{-1/2}\log x \\
E^{\Pi_r}(x)-E^{\pi_r}(x) & = -1 + O(1/\log x).
\end{align*}
Furthermore, $E^{\pi_\ell}(x) < E^{\pi_r}(x)$ for $x > 1$.
\end{lemma}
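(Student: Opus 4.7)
The lemma comprises three asymptotic relations among the reciprocal $E$-functions, together with one strict inequality valid for all $x > 1$. My plan is to reduce each asymptotic identity to a tail sum of the form $\sum_{p > y} 1/p^2$ that is estimable by PNT-based partial summation, and to derive the strict inequality as a positivity statement from the same identity that gives the second equation.

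For the first identity, I would note that the $k = 1$ terms in the defining sums of $\pi_\ell$ and $\Pi_r$ cancel, so that
\[
\pi_\ell(x) - \Pi_r(x) = \sum_{k=2}^{\infty}\sum_{x^{1/k} < p \le x}\frac{1}{k p^k}.
\]
The dominant contribution comes from $k = 2$, namely $\tfrac{1}{2}\sum_{\sqrt x < p \le x}\tfrac{1}{p^2}$, which a short partial summation against $\pi(t) = t/\log t + O(t/\log^2 t)$ evaluates to $\tfrac{1}{\sqrt x \log x}\bigl(1 + O(1/\log x)\bigr)$, using the pointwise estimate $\sum_{p > y}\tfrac{1}{p^2} = \tfrac{1}{y \log y}\bigl(1 + O(1/\log y)\bigr)$. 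The $k \ge 3$ contributions are $\ll x^{-2/3}$ and absorb into the error. Multiplying by $\sqrt x \log x$ yields $E^{\pi_\ell}(x) - E^{\Pi_r}(x) = 1 + O(1/\log x)$. Alternatively, this is a direct restatement of Lamzouri's~\cite[Lemma~2.3]{lamzouri2016bias}, already invoked in the proof of Proposition~\ref{corollary:explicit_reciprocals}.

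For the second identity, the cleanest route is to use the definition $C_2 = -C_0 + \sum_p \sum_{k \ge 2}\tfrac{1}{kp^k}$ (so that $C_0 + C_2 = \sum_p \sum_{k \ge 2}\tfrac{1}{kp^k}$) to derive the exact formula
\[
E^{\pi_\ell}(x) - E^{\pi_r}(x) = -\sqrt x\,\log x \sum_{p > x}\sum_{k \ge 2} \frac{1}{k p^k}.
\]
The double sum is dominated by $\sum_{p > x} 1/p^2 \ll 1/(x \log x)$ from the same partial summation, giving $E^{\pi_\ell}(x) - E^{\pi_r}(x) \ll x^{-1/2}$, comfortably inside the stated bound $\ll x^{-1/2}\log x$, and subsuming Lay's~\cite[Lemma~5]{lay2015sign}. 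The third identity is then immediate by addition:
\[
E^{\Pi_r}(x) - E^{\pi_r}(x) = -\bigl(E^{\pi_\ell}(x) - E^{\Pi_r}(x)\bigr) + \bigl(E^{\pi_\ell}(x) - E^{\pi_r}(x)\bigr) = -1 + O(1/\log x).
\]

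Finally, the strict inequality $E^{\pi_\ell}(x) < E^{\pi_r}(x)$ follows directly from the displayed identity in the preceding paragraph: for every $x > 1$ there exist primes $p > x$, so the tail $\sum_{p > x}\sum_{k \ge 2}\tfrac{1}{kp^k}$ is strictly positive, forcing $E^{\pi_\ell}(x) - E^{\pi_r}(x) < 0$. (For $1 < x < 2$ the sums $\pi_\ell(x)$ and $\pi_r(x)$ both vanish, so the tail reduces to $C_0 + C_2 > 0$, confirming the inequality on that edge range as well.) I do not foresee any real obstacle here; the only care required is in choosing a sharp enough PNT error to secure the $O(1/\log x)$ remainder in the first identity, which is a standard partial-summation exercise.
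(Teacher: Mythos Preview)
Your proposal is correct and follows essentially the same route as the paper: the first two assertions are exactly Lamzouri's~\cite[Lemma~2.3]{lamzouri2016bias} and Lay's~\cite[Lemma~5]{lay2015sign} (which the paper simply cites, while you additionally sketch the underlying partial-summation arguments), the third follows by subtraction, and the strict inequality is derived from the identical exact formula $E^{\pi_\ell}(x)-E^{\pi_r}(x)=-\sqrt x\log x\sum_{p>x}\sum_{k\ge2}\tfrac1{kp^k}$ that the paper uses. Your extra detail is sound and the only difference is expository.
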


\begin{proof}
The first assertion is precisely~\cite[Lemma 2.3]{lamzouri2016bias} translated into our notation. Similarly, the second assertion is precisely~\cite[Lemma 5]{lay2015sign}; then the third assertion follows from the other~two.
The final assertion follows from
\begin{align*}
\frac{E^{\pi_\ell}(x)-E^{\pi_r}(x)}{\sqrt x\log x} & = \bigl( \pi_\ell(x)- (\log\log x + C_0) \bigr) - \bigl( \pi_r(x)- (\log\log x - C_2) \bigr) \\
& = \pi_\ell(x)-\pi_r(x) - (C_0+C_2) \\
& =\sum_{p \le x} \sum_{k=2}^\infty \frac 1{kp^k} - \sum_{k=2}^\infty \sum_p \frac 1{kp^k} = -\sum_{k=2}^\infty \sum_{p > x} \frac 1{kp^k} < 0.
\qedhere
\end{align*}
\end{proof}

\begin{proof}[Proof of Theorem~\ref{RH theorem}(a)]
For the pairs of functions
$(f,g) \in \bigl\{ (\theta,\psi)$, $(\pi,\Pi)$, $(\psi_r,\theta_r)$, $(\Pi_r,\pi_r)$, $(\Pi_r,\pi_\ell) \bigr\}$, Lemmas~\ref{standard Es related}--\ref{reciprocal Es related 2} show that $E^g(x) - E^f(x) = 1 + o(1)$, which immediately implies that $E^f(x) < E^g(x)$ when~$x$ is sufficiently large.
\end{proof}

\subsection{Mellin transforms: preliminaries} \label{pre mellin section}

A familiar tool in analytic number theory is the Mellin transform $\int_2^\infty f(x) x^{-s-1}\,dx$ of a function $f(x)$, where the integral typically converges in some right half-plane. A famous theorem of Landau (see for example~\cite[Lemma~15.1]{montgomery_vaughan_2006}) allows us to connect oscillations of~$f(x)$ to the singularities of its Mellin transform. In preparation for carrying out this strategy in Section~\ref{mellin section} to facilitate the proof of Theorem~\ref{RH theorem}(b), we establish some preliminary integral formulas in this section. We consistently choose~$2$ as the lower endpoint of our integrals to avoid irrelevant technicalities when factors of $\frac1{\log x}$ or $\log\log x$ are present.

\begin{lemma}
\label{lemma:mellin_log}
For any complex number~$s$ with $\Re s > 1$,
\[
\int_2^\infty x^{-s}\log x \, dx = \frac{2^{1-s}}{(1-s)^2} - \frac{2^{1-s}\log 2}{1-s} .
\]
\end{lemma}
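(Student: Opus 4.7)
The plan is to prove the identity by a direct integration by parts. Since the integrand is elementary and the right-hand side is essentially the antiderivative of $x^{-s}\log x$ evaluated at the endpoints, there is no real obstacle beyond bookkeeping and justifying convergence of the boundary term at infinity. I would first note that for $\Re s > 1$ the integral converges absolutely, because $|x^{-s}\log x| = x^{-\Re s}\log x$, and $\int_2^\infty x^{-\Re s}\log x\,dx$ converges by comparison with $\int_2^\infty x^{-(1+\delta)}\,dx$ for any $\delta \in (0,\Re s - 1)$.

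Next, I would apply integration by parts with $u = \log x$ and $dv = x^{-s}\,dx$, so that $du = dx/x$ and $v = x^{1-s}/(1-s)$. The boundary contribution at $x=\infty$ vanishes, since $|x^{1-s}\log x| = x^{1-\Re s}\log x \to 0$ as $x \to \infty$ whenever $\Re s > 1$. The boundary contribution at $x = 2$ is exactly $-\frac{2^{1-s}\log 2}{1-s}$.
\begin{equation*}
\int_2^\infty x^{-s}\log x\,dx = -\frac{2^{1-s}\log 2}{1-s} - \frac{1}{1-s}\int_2^\infty x^{-s}\,dx.
\end{equation*}
The remaining integral is the standard Mellin evaluation $\int_2^\infty x^{-s}\,dx = -\frac{2^{1-s}}{1-s}$ (valid for $\Re s > 1$), so the second term on the right equals $\frac{2^{1-s}}{(1-s)^2}$. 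Adding the two pieces yields the claimed identity.

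As a sanity check, one can also derive the formula by differentiating the elementary identity $\int_2^\infty x^{-s}\,dx = -\frac{2^{1-s}}{1-s}$ with respect to $s$: the left-hand side becomes $-\int_2^\infty x^{-s}\log x\,dx$ (differentiation under the integral sign is justified by local uniform convergence on the half-plane $\Re s > 1$), and the right-hand side differentiates to $-\bigl(\frac{2^{1-s}}{(1-s)^2} - \frac{2^{1-s}\log 2}{1-s}\bigr)$ after applying the quotient rule. This provides an independent confirmation with no additional work. Since both methods are entirely routine, I do not expect any genuine difficulty; the only point deserving explicit comment is the vanishing of the boundary term at infinity, which is where the hypothesis $\Re s > 1$ is used.
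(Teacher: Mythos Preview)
Your proof is correct and is essentially the same as the paper's: both arguments reduce to a single integration by parts, the paper just substitutes $v=\log x$ first before carrying out the same computation. Your added remark about differentiating $\int_2^\infty x^{-s}\,dx$ with respect to~$s$ is a nice alternative derivation not mentioned in the paper.
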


\begin{proof}
The identity follows from the substitution $v = \log x$ and integrating by parts:
\begin{align*}
\int_2^\infty x^{-s}\log x \, dx &= \int_{\log 2}^\infty v e^{(1-s)v} \, dv \\
&= v \frac{e^{(1-s)v}}{1-s} \bigg|_{\log 2}^\infty - \frac 1{1-s}\int_{\log 2}^\infty e^{(1-s)v} \, dv \\&= -(\log 2) \frac{2^{1-s}}{1-s} + \frac{2^{1-s}}{(1-s)^2}.
\qedhere
\end{align*}
\end{proof}

\begin{lemma}
\label{lemma:mellin_overlog}
For any complex number~$s$ with $\Re s > 1$,
\[
\int_2^\infty \frac{x^{-s}}{\log x} \, dx = -\log(s-1) + r_0(s)
\]
where $r_0(s)$ is an entire function with $r_0(1) = -C_0 - \log\log2$.
\end{lemma}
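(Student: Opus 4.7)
The plan is to reduce the integral to the classical exponential integral $E_1(z)=\int_z^\infty \frac{e^{-t}}{t}\,dt$, whose logarithmic singularity at $z=0$ is well known. First I would substitute $v=\log x$ to obtain
\[
\int_2^\infty \frac{x^{-s}}{\log x}\,dx = \int_{\log 2}^\infty \frac{e^{-(s-1)v}}{v}\,dv,
\]
which is valid (as an absolutely convergent integral) for all $s$ with $\Re s>1$. For real $s>1$ the further substitution $t=(s-1)v$ turns the right-hand side into $E_1\bparen{(s-1)\log 2}$.

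Next, I would invoke (or quickly derive) the standard identity
\[
E_1(z) = -C_0 - \log z + \int_0^z \frac{1-e^{-t}}{t}\,dt,
\]
whose usual proof goes by splitting $E_1(z) = \int_1^\infty \frac{e^{-t}}{t}\,dt + \int_z^1 \frac{e^{-t}-1}{t}\,dt - \log z$ and recognizing the constants via $-C_0=\int_1^\infty \frac{e^{-t}}{t}\,dt+\int_0^1\frac{e^{-t}-1}{t}\,dt$. The key observation is that $(1-e^{-t})/t$ extends to an entire function of $t$ (its power series is $\sum_{k\ge1}(-1)^{k-1}t^{k-1}/k!$), so the contour integral $\int_0^z\frac{1-e^{-t}}{t}\,dt$ is path-independent and defines an entire function of $z$. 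Setting $z=(s-1)\log 2$ and using $\log z = \log(s-1)+\log\log 2$, I would therefore \emph{define}
\[
r_0(s) = -C_0 - \log\log 2 + \int_0^{(s-1)\log 2} \frac{1-e^{-t}}{t}\,dt,
\]
which is entire in $s$ and satisfies $r_0(1) = -C_0-\log\log 2$ since the integral vanishes at $s=1$.

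Putting these pieces together gives the claimed identity for real $s>1$. To promote it to all $s$ with $\Re s>1$, I would observe that both sides are holomorphic in the right half-plane $\Re s>1$ (the Mellin integral converges absolutely, $\log(s-1)$ is holomorphic on $\C\setminus(-\infty,1]$, and $r_0$ is entire), so their agreement on the real ray $(1,\infty)$ extends by analytic continuation. The only mildly delicate point — essentially the sole obstacle — is justifying the contour-rotation in the substitution $t=(s-1)v$ for non-real $s$; I would sidestep this entirely by establishing the identity for real $s>1$ first and then appealing to analytic continuation, rather than working with complex contours directly.
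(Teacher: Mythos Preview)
Your proposal is correct and is essentially the same as the paper's proof: both reduce the integral via the substitution $v=\log x$ (then $u=(s-1)v$) to the exponential integral $E_1\bigl((s-1)\log 2\bigr)$, split off the $-C_0-\log z$ part using the same classical identity, and define $r_0(s)$ as $-C_0-\log\log 2$ plus the entire integral $\int_0^{(s-1)\log 2}\frac{1-e^{-t}}{t}\,dt$. If anything, you are slightly more careful than the paper in handling complex~$s$, since you establish the identity for real $s>1$ first and then analytically continue rather than performing the contour rotation directly.
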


\begin{remark}
The proof is valid when $\Re s > 1$; what we mean by ``$r_0(s)$ is an entire function'' is that the function $r_0(s)$ implicitly defined by the formula has an analytic continuation to all of~$\C$. The analogous interpretation holds for similar statements in the rest of this section and the next.
\end{remark}

\begin{proof}
Using the substitution $u=(s-1)\log x$, we easily verify that
\begin{align*}
\int_2^\infty \frac{x^{-s}}{\log x} \, dx &= \frac{\li(2)}{s2^s} + \frac1s \int_{(s-1)\log2}^\infty \frac{e^{-u}}u \,du \\
&= \int_1^\infty \frac{e^{-u}}u \,du + \int_0^1 \frac{e^{-u}-1}u \,du \\ &\quad - \int_0^{(s-1)\log2} \frac{e^{-u}-1}u \,du + \int_{(s-1)\log2}^1 \frac{du}u \\
&= -C_0 - \int_0^{(s-1)\log2} \frac{e^{-u}-1}u \,du - \log\bigl( (s-1)\log2 \bigr),
\end{align*}
where the evalution of the first two integrals to~$-C_0$ is~\cite[equation~(7.31)]{montgomery_vaughan_2006}. This calculation establishes the lemma with
\[
r_0(s) = -C_0 - \int_0^{(s-1)\log2} \frac{e^{-u}-1}u \,du - \log\log2. \qedhere
\]
\end{proof}

\begin{lemma}
\label{lemma:mellin_li}
For any complex number~$s$ with $\Re s > 1$,
\begin{align*}
\int_2^\infty \li(x)x^{-s-1} \, dx &= -\frac 1s \log(s-1) + r_1(s) \\
\int_2^\infty \li(x)x^{-s-1}\log x \, dx &= \frac 1{s(s-1)} - \frac 1{s^2}\log(s-1) - r_\Pi(s),
\end{align*}
where $r_1(s)$ and $r_\Pi(s)$ are analytic for $\Re s>0$.
\end{lemma}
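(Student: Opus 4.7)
The plan is to integrate by parts in both integrals, using $\li'(x) = 1/\log x$, and then invoke Lemma~\ref{lemma:mellin_overlog} to extract the $\log(s-1)$ singularity. No genuine analytic subtlety should appear beyond careful bookkeeping.

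For the first integral, I would take $u = \li(x)$ and $dv = x^{-s-1}\,dx$, so that $v = -x^{-s}/s$. Since $\li(x) \ll x/\log x$, the boundary term at infinity vanishes for $\Re s > 1$, leaving
\[
\int_2^\infty \li(x)\,x^{-s-1}\,dx = \frac{\li(2)\,2^{-s}}{s} + \frac{1}{s}\int_2^\infty \frac{x^{-s}}{\log x}\,dx.
\]
Lemma~\ref{lemma:mellin_overlog} evaluates the residual integral as $-\log(s-1) + r_0(s)$ with $r_0$ entire, producing the main term $-\frac{1}{s}\log(s-1)$. Defining $r_1(s) = \bigl(\li(2)\,2^{-s} + r_0(s)\bigr)/s$ then yields an expression analytic on $\Re s > 0$ (the only potential obstruction, a pole at $s=0$, lies outside the stated domain).

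For the second integral, I would apply the same strategy but with $dv = x^{-s-1}\log x\,dx$. A quick antidifferentiation (verifiable by direct differentiation) gives
\[
v = -\frac{x^{-s}\log x}{s} - \frac{x^{-s}}{s^2}.
\]
Again the boundary contribution at infinity vanishes for $\Re s > 1$, and the resulting expression splits into an elementary integral $\frac{1}{s}\int_2^\infty x^{-s}\,dx = \frac{2^{1-s}}{s(s-1)}$ together with $\frac{1}{s^2}\int_2^\infty \frac{x^{-s}}{\log x}\,dx$, to which Lemma~\ref{lemma:mellin_overlog} applies to yield $-\frac{1}{s^2}\log(s-1)$ plus an analytic remainder $r_0(s)/s^2$. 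To produce the $\frac{1}{s(s-1)}$ shape stated in the lemma, I would then split
\[
\frac{2^{1-s}}{s(s-1)} = \frac{1}{s(s-1)} + \frac{2^{1-s}-1}{s(s-1)},
\]
and collect $\frac{2^{1-s}-1}{s(s-1)}$, $\frac{r_0(s)}{s^2}$, and the lower boundary term $\li(2)\bigl(\frac{2^{-s}\log 2}{s} + \frac{2^{-s}}{s^2}\bigr)$ into the definition of $-r_\Pi(s)$.

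The main (and essentially only) thing worth checking is that $r_\Pi(s)$ really is analytic throughout $\Re s > 0$, most notably at $s=1$. For every summand except $\frac{2^{1-s}-1}{s(s-1)}$ this is immediate; for that summand the apparent pole at $s=1$ is removable because $2^{1-s}-1 = -(s-1)\log 2 + O\bigl((s-1)^2\bigr)$. Since $r_0$ is entire and the remaining denominators are powers of $s$ alone, analyticity on the rest of $\Re s > 0$ is automatic. I do not anticipate any true obstacle; the proof is mechanical once integration by parts is set up correctly.
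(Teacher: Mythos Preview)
Your argument for the first integral is identical to the paper's: integrate by parts and invoke Lemma~\ref{lemma:mellin_overlog}, with $r_1(s) = \li(2)\,2^{-s}/s + r_0(s)/s$.

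For the second integral the paper takes a different route. Rather than a fresh integration by parts, it simply observes that differentiating the first identity with respect to~$s$ and negating brings down the factor $\log x$ on the left, giving the second formula at once with $r_\Pi(s) = r_1'(s)$; analyticity on $\Re s > 0$ is then automatic since $r_1$ is analytic there. Your direct integration by parts is perfectly correct and yields the same $r_\Pi$ (one can verify $r_0'(s) = (1-2^{1-s})/(s-1)$ from the explicit formula for $r_0$ in Lemma~\ref{lemma:mellin_overlog}, which reconciles the two expressions), but it costs you the extra step of checking that $\frac{2^{1-s}-1}{s(s-1)}$ has a removable singularity at $s=1$. The differentiation trick is worth noting because the paper reuses it several times (Lemmas~\ref{Pi mellin}, \ref{pi mellin}, \ref{Pi_r mellin}, \ref{pi_r mellin}, and the second part of Lemma~\ref{lemma:mellin_loglog}).
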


\begin{proof}
Integration by parts gives
\[
\int_2^\infty \li(x)x^{-s-1} \, dx = \frac{\li(2)}{s2^s} + \frac1s \int_2^\infty \frac{x^{-s}}{\log x} \,dx = \frac{\li(2)}{s2^s} + \frac{-\log(s-1) + r_0(s)}s
\]
by Lemma~\ref{lemma:mellin_overlog}, which establishes the first identity with
\[
r_1(s) = \frac{\li(2)}{s2^s} + \frac{r_0(s)}s.
\]
The second formula follows by differentiating both sides of the first equation with respect to~$s$, negating both sides, and setting $r_\Pi(s) = r_1'(s)$.
\end{proof}

\begin{remark}
Our proof of Lemma~\ref{lemma:mellin_li} is essentially the same as the one in~\cite[Theorem~15.2]{montgomery_vaughan_2006}. However, those authors use the convention that $\li(x) = \int_2^\infty \frac{du}{\log u}$ rather than our convention that $\li(x) = \int_0^\infty \frac{du}{\log u}$; thus we have chosen to include a complete proof (that yields an additional boundary term) to avoid confusion.
\end{remark}

\begin{lemma}
\label{lemma:mellin_loglog}
For any complex number~$s$ with $\Re s > 1$,\begin{align*}
\int_2^\infty x^{-s}\log\log x \, dx & = \frac 1{1-s} \bigl( \log(s-1) + C_0 \bigr) + r_2(s) \\
\int_2^\infty x^{-s} \log x \log\log x \, dx & = -\frac{\log (s-1)+C_0-1}{(s-1)^2} + r_3(s)
\end{align*}
where $r_2(s)$ and $r_3(s)$ are entire.
\end{lemma}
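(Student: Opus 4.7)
The plan is to reduce both identities to applications of Lemma~\ref{lemma:mellin_overlog} via integration by parts (for the first identity) and term-by-term differentiation in~$s$ (for the second). For the first integral, set $u = \log\log x$ and $dv = x^{-s}\,dx$, so that $du = \frac{dx}{x\log x}$ and $v = \frac{x^{1-s}}{1-s}$. Integrating by parts on $[2,\infty)$ (the boundary term at $\infty$ vanishes since $\Re s > 1$) yields
\[
\int_2^\infty x^{-s}\log\log x \, dx = -\frac{2^{1-s}\log\log 2}{1-s} - \frac{1}{1-s}\int_2^\infty \frac{x^{-s}}{\log x}\,dx.
\]
Substituting the formula $\int_2^\infty \frac{x^{-s}}{\log x}\,dx = -\log(s-1) + r_0(s)$ from Lemma~\ref{lemma:mellin_overlog} gives
\[
\int_2^\infty x^{-s}\log\log x \, dx = \frac{\log(s-1)}{1-s} - \frac{2^{1-s}\log\log 2 + r_0(s)}{1-s},
\]
and this matches the desired shape upon setting
\[
r_2(s) = \frac{-2^{1-s}\log\log 2 - r_0(s) - C_0}{1-s}.
\]

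To confirm that $r_2(s)$ is entire, I need to check that its only potential singularity (a simple pole at $s=1$) is removable; this amounts to verifying that the numerator vanishes at $s=1$. Since Lemma~\ref{lemma:mellin_overlog} specifies $r_0(1) = -C_0-\log\log 2$, the numerator at $s=1$ equals $-\log\log 2 - (-C_0 - \log\log 2) - C_0 = 0$, so the singularity is indeed removable and $r_2$ extends to an entire function.

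For the second identity, I will exploit the fact that differentiating the integrand $x^{-s}\log\log x$ with respect to $s$ produces $-x^{-s}\log x\log\log x$. Standard arguments (the integral is locally uniformly convergent in~$s$ in any compact subset of $\Re s > 1$, via a dominant of the form $x^{-(1+\epsilon)}\log\log x$) let me differentiate under the integral sign, yielding
\[
\int_2^\infty x^{-s} \log x \log\log x \, dx = -\frac{d}{ds}\left( \frac{\log(s-1)+C_0}{1-s} + r_2(s) \right).
\]
A routine quotient-rule computation gives $\frac{d}{ds}\frac{\log(s-1)+C_0}{1-s} = \frac{\log(s-1)+C_0-1}{(s-1)^2}$, so the claimed formula holds with $r_3(s) = -r_2'(s)$, which is entire because $r_2$ is.

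The only genuine subtlety is the cancellation that makes $r_2$ entire at $s=1$; the rest is bookkeeping. There is no serious obstacle, since the proof reuses exactly the pattern used to pass from Lemma~\ref{lemma:mellin_overlog} to Lemma~\ref{lemma:mellin_li}.
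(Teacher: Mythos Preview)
Your proof is correct and follows essentially the same approach as the paper: integration by parts followed by Lemma~\ref{lemma:mellin_overlog} for the first identity, then differentiation in~$s$ for the second, with $r_3(s)=-r_2'(s)$. The only cosmetic difference is that the paper rewrites $r_2(s)$ as $\frac{2^{1-s}-1}{s-1}\log\log 2 + \frac{r_0(s)-r_0(1)}{s-1}$ so that each summand is visibly entire, whereas you verify directly that the numerator of your (equivalent) expression vanishes at $s=1$.
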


\begin{proof}
Integration by parts gives
\begin{align*}
\int_2^\infty x^{-s}\log\log x \, & dx = -\frac{2^{1-s}}{1-s} \log\log 2 - \frac 1{1-s} \int_2^\infty \frac{x^{1-s}}{x\log x} \, dx \\
&= \frac{2^{1-s}}{s-1} \log\log 2 + \frac{-\log(s-1) + r_0(s)}{s-1} \\
&= \frac{2^{1-s}-1}{s-1} \log\log 2 + \frac{-\log(s-1) - C_0}{s-1} + \frac{r_0(s) - r_0(1)}{s-1}
\end{align*}
by Lemma~\ref{lemma:mellin_overlog}, which establishes the first formula with
\[
r_2(s) = \frac{2^{1-s}-1}{s-1} \log\log 2 + \frac{r_0(s) - r_0(1)}{s-1}.
\]
The second formula follows by differentiating both sides of the first equation with respect to~$s$, negating both sides, and setting $r_3(s) = -r_2'(s)$.
\end{proof}

\subsection{Mellin transforms of number-theoretic functions} \label{mellin section}

In this section, we determine the Mellin transforms of all of the functions from Definition~\ref{defn:function_sets_zeta} and, more importantly, of all of the normalized error terms from Definition~\ref{defn:error_bias_zeta}. These Mellin transforms are needed for the proof of Theorem~\ref{RH theorem}(b) at the end of this section. One detail of note is that all of the upcoming Mellin transforms of normalized error terms have removable singularities at $s=1$; this is equivalent in essence to the fact that the main terms in Definition~\ref{defn:error_bias_zeta} have been chosen appropriately.

The usual Mellin transform identity (see~\cite[Theorem~1.3]{montgomery_vaughan_2006}, applied to any sequence~$(a_n)$ of complex numbers with $a_1=0$, is
\begin{equation} \label{usual Mellin}
\int_2^\infty \biggl( \sum_{n\le x} a_n \biggr) x^{-s-1}\,dx = \frac1s \sum_{n=1}^\infty \frac{a_n}{n^s}
\end{equation}
whenever $\Re s$ is positive and larger than the abscissa of convergence of the Dirichlet series on the right-hand side. From this formula, it is easy to derive that
\begin{equation} \label{reciprocal Mellin}
\int_2^\infty \biggl( \sum_{n\le x} \frac{a_n}n \biggr) x^{-s}\,dx = \frac1{s-1} \sum_{n=1}^\infty \frac{a_n}{n^s}.
\end{equation}

\begin{lemma} \label{psi mellin}
For any complex number~$s$ with $\Re s > 1$,
\begin{align*}
\int_2^\infty \psi(x) x^{-s-1} \, dx &= -\frac 1s \frac{\zeta'}\zeta(s) \\
\int_2^\infty E^\psi(x) x^{-s-1/2} \, dx &= -\frac 1s \frac{\zeta'}\zeta(s) + \frac{2^{1-s}}{1-s}.
\end{align*}
\end{lemma}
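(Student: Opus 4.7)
The plan is to derive both identities by direct application of the Mellin transform formula~\eqref{usual Mellin} together with an elementary evaluation of $\int_2^\infty x^{-s}\,dx$.

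For the first identity, I would apply~\eqref{usual Mellin} with $a_n = \Lambda(n)$, noting that $\sum_{n\le x}\Lambda(n) = \psi(x)$ (and $a_1 = \Lambda(1) = 0$, so the hypothesis of~\eqref{usual Mellin} is satisfied). The associated Dirichlet series is the classical identity $\sum_{n=1}^\infty \Lambda(n)/n^s = -\zeta'(s)/\zeta(s)$, valid for $\Re s > 1$, which immediately yields
\[
\int_2^\infty \psi(x) x^{-s-1} \, dx = -\frac 1s \frac{\zeta'}\zeta(s).
\]

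For the second identity, I would use the definition $E^\psi(x) = (\psi(x) - x)/\sqrt{x}$ from Definition~\ref{defn:error_bias_zeta}, so that
\[
E^\psi(x) x^{-s-1/2} = \psi(x) x^{-s-1} - x^{-s}.
\]
Splitting the integral and applying the first identity to the $\psi(x)$ term reduces the problem to evaluating $\int_2^\infty x^{-s}\,dx$. For $\Re s > 1$ this is a trivial antiderivative computation giving $-2^{1-s}/(1-s)$; subtracting this from $-\frac{1}{s}\frac{\zeta'}{\zeta}(s)$ yields exactly $-\frac{1}{s}\frac{\zeta'}{\zeta}(s) + \frac{2^{1-s}}{1-s}$, as desired.

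Both integrals converge absolutely in the stated half-plane $\Re s > 1$, since $\psi(x) \ll x$ makes the first integrand $O(x^{-\Re s})$ and since $\sum \Lambda(n)/n^s$ is absolutely convergent there, which is precisely what is needed to justify~\eqref{usual Mellin}. There is no real obstacle in this lemma; the only thing to watch is the sign convention in the elementary integral $\int_2^\infty x^{-s}\,dx = -2^{1-s}/(1-s)$, which produces the correct boundary term $2^{1-s}/(1-s)$ with the sign displayed in the statement. Note also that although both $-\frac{1}{s}\frac{\zeta'}{\zeta}(s)$ and $\frac{2^{1-s}}{1-s}$ have a pole at $s=1$, these poles cancel---consistent with the remark preceding the lemma that the Mellin transforms of normalized error terms have removable singularities at $s=1$ thanks to the correctly chosen main term.
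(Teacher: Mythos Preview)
Your proposal is correct and essentially identical to the paper's own proof: apply equation~\eqref{usual Mellin} with $a_n=\Lambda(n)$ for the first identity, then expand $E^\psi(x)x^{-s-1/2}=\psi(x)x^{-s-1}-x^{-s}$ and evaluate $\int_2^\infty x^{-s}\,dx$ for the second. The paper's proof is just these two steps without the additional commentary.
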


\begin{proof}
The first formula follows immediately from equation~\eqref{usual Mellin} with $a_n = \Lambda(n)$. We then calculate
\begin{align}
\int_2^\infty E^{\psi}(x)x^{-s-1/2} \, dx & = \int_2^\infty \frac{\psi(x)-x}{x^{1/2}} x^{-s-1/2} \, dx \label{psi to Epsi mellin} \\
& = \int_2^\infty \psi(x)x^{-s-1} \, dx - \int_2^\infty x^{-s} \, dx\notag \\&= -\frac 1s \frac{\zeta'}\zeta(s) + \frac{2^{1-s}}{1-s}. \notag
\qedhere
\end{align}
\end{proof}

\begin{lemma} \label{theta mellin}
For any complex number~$s$ with $\Re s > 1$,
\begin{align*}
\int_2^\infty \theta(x) x^{-s-1} \, dx &= -\frac 1s \frac{\zeta'}\zeta(s) + \frac1s \frac{\zeta'}\zeta(2s) + r_\theta(s) \\
\int_2^\infty E^\theta(x) x^{-s-1/2} \, dx &= -\frac 1s \frac{\zeta'}\zeta(s) + \frac1s \frac{\zeta'}\zeta(2s) + \frac{2^{1-s}}{1-s} + r_\theta(s)
\end{align*}
where $r_\theta(s)$ is analytic for $\Re s>\frac13$.
\end{lemma}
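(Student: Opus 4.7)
The approach is to start from the Dirichlet series formula \eqref{usual Mellin} applied to $a_n = \log p$ when $n = p$ is prime and $a_n = 0$ otherwise, which immediately yields
\[
\int_2^\infty \theta(x) x^{-s-1}\,dx = \frac{1}{s} \sum_p \frac{\log p}{p^s}
\]
for $\Re s > 1$. To relate the prime-only Dirichlet series on the right to $\zeta'/\zeta$, I would use the standard identity $-\frac{\zeta'}{\zeta}(s) = \sum_p \sum_{k\ge 1} \frac{\log p}{p^{ks}}$, separating out the $k=1$ and $k=2$ contributions:
\[
\sum_p \frac{\log p}{p^s} = -\frac{\zeta'}{\zeta}(s) - \sum_p \frac{\log p}{p^{2s}} - \sum_p \sum_{k\ge 3} \frac{\log p}{p^{ks}}.
\]
Applying the same identity in reverse at argument $2s$ to rewrite $\sum_p \frac{\log p}{p^{2s}} = -\frac{\zeta'}{\zeta}(2s) - \sum_p \sum_{k\ge 2} \frac{\log p}{p^{2ks}}$, I arrive at
\[
\sum_p \frac{\log p}{p^s} = -\frac{\zeta'}{\zeta}(s) + \frac{\zeta'}{\zeta}(2s) + \sum_p \sum_{k\ge 2} \frac{\log p}{p^{2ks}} - \sum_p \sum_{k\ge 3} \frac{\log p}{p^{ks}}.
\]
Dividing by~$s$ and defining $r_\theta(s)$ to be $\frac 1s$ times the difference of the last two sums gives the first claimed formula.

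The analyticity claim then reduces to showing that the two double Dirichlet series defining $s \cdot r_\theta(s)$ are absolutely convergent for $\Re s > \frac 13$. In the subtracted sum, the smallest exponent appearing is $3s$, so absolute convergence holds whenever $\Re(3s) > 1$; in the added sum the smallest exponent is $4s$, which is even better. Hence both series extend analytically to the half-plane $\Re s > \frac 13$, and the explicit factor $\frac 1s$ is harmless there (since $0 < \frac 13$).

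The second formula is an immediate consequence of the first. Using $E^\theta(x) = (\theta(x)-x)/\sqrt x$, exactly as in equation~\eqref{psi to Epsi mellin} from the proof of Lemma~\ref{psi mellin}, I would write
\[
\int_2^\infty E^\theta(x) x^{-s-1/2}\,dx = \int_2^\infty \theta(x) x^{-s-1}\,dx - \int_2^\infty x^{-s}\,dx,
\]
and then evaluate $\int_2^\infty x^{-s}\,dx = \frac{2^{1-s}}{s-1} = -\frac{2^{1-s}}{1-s}$ for $\Re s > 1$, so that subtracting it adds $\frac{2^{1-s}}{1-s}$ to the first formula.

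The only delicate point is the bookkeeping at $k=2$: one must resist the temptation to replace $\sum_p \frac{\log p}{p^{2s}}$ by $-\frac{\zeta'}{\zeta}(2s)$ outright, since the discrepancy involves exponents as small as $4s$ which are still within the region $\Re s > \frac 13$ that we care about. Once that secondary peeling-off is performed, every leftover term has exponent at least $3s$, and the analyticity of $r_\theta$ in the advertised half-plane follows with no further work.
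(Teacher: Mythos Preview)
Your proof is correct, but it takes a different route from the paper's. The paper works at the level of the arithmetic functions: it uses the decomposition $\theta(x) = \psi(x) - \psi(\sqrt x) + O(x^{1/3})$ from equation~\eqref{theta psi 1/3}, applies the Mellin transform term by term, and invokes Lemma~\ref{psi mellin} twice (once at~$s$ and once, via the substitution $x=t^2$, at~$2s$) to produce the two $\zeta'/\zeta$ terms; the $O(x^{1/3})$ remainder becomes $r_\theta(s)$ directly as an integral that visibly converges for $\Re s>\frac13$. You instead take the Mellin transform of~$\theta$ first to get $\frac1s\sum_p \frac{\log p}{p^s}$ and then do the peeling on the Dirichlet-series side.

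Both arguments are short and yield the same function $r_\theta(s)$: your leftover $\sum_p\sum_{k\ge2}\frac{\log p}{p^{2ks}}-\sum_p\sum_{k\ge3}\frac{\log p}{p^{ks}}$ telescopes to $-\sum_{k\ge3,\ k\text{ odd}}\sum_p\frac{\log p}{p^{ks}}$, which is exactly the Dirichlet series of the coefficients of $\theta(x)-\psi(x)+\psi(\sqrt x)$ (and is the function $r_5(s)$ appearing later in equation~\eqref{r5 def}). The paper's approach has the mild advantage that the analyticity of $r_\theta$ for $\Re s>\frac13$ is immediate from the $O(x^{1/3})$ bound, with no need for your secondary peeling at~$2s$; your approach has the mild advantage of not appealing to that pointwise estimate at all.
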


\begin{proof}
We write
\begin{equation} \label{theta mellin 1}
\int_2^\infty \theta(x) x^{-s-1} \, dx = \int_2^\infty \psi(x) x^{-s-1} \, dx - \int_2^\infty \psi(\sqrt x) x^{-s-1} \, dx + r_\theta(s)
\end{equation}
where, by equation~\eqref{theta psi 1/3},
\[
r_\theta(s) = \int_2^\infty \bigl( \theta(x) - \psi(x) + \psi(\sqrt x) \bigr) x^{-s-1} \, dx \ll \int_2^\infty x^{1/3} x^{-\sigma-1}\,dx
\]
converges to an analytic function for all $\Re s>\frac13$. The first integral on the right-hand side of equation~\eqref{theta mellin 1} is evaluated in Lemma~\ref{psi mellin}, and the second integral also follows from Lemma~\ref{psi mellin} since
\begin{equation} \label{psi sqrt}
\int_2^\infty \psi(\sqrt x) x^{-s-1} \, dx = \int_{\sqrt2}^\infty \psi(t) (t^2)^{-s-1} \cdot 2t\,dt = 2 \int_2^\infty \psi(t) t^{-(2s)-1}\,dt
\end{equation}
(again we may change the lower endpoint from~$\sqrt2$ to~$2$ since $\psi(t)=0$ for intermediate values).
The second formula follows from the first by the same calculation as in equation~\eqref{psi to Epsi mellin}.
\end{proof}

\begin{lemma} \label{Pi mellin}
For any complex number~$s$ with $\Re s > 1$,
\begin{align*}
\int_2^\infty \Pi(x) x^{-s-1} \, dx &= \frac 1s \log\zeta(s) \\
\int_2^\infty \Pi(x) x^{-s-1}\log x \, dx &= -\frac 1s \frac{\zeta'}\zeta(s) + \frac 1{s^2}\log\zeta(s) \\
\int_2^\infty E^{\Pi}(x)x^{-s-1/2} \, dx & = -\frac 1s \frac{\zeta'}\zeta(s) + \frac 1{s^2} \log\bigl( \zeta(s)(s-1) \bigr) \\ &\quad - \frac 1{s(s-1)} + r_\Pi(s)
\end{align*}
where $r_\Pi(s)$ is analytic for $\Re s>0$.
\end{lemma}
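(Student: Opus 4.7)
The plan is to obtain the three formulas sequentially, with each one built from the previous, using the identity $\log\zeta(s) = \sum_{n=2}^\infty \Lambda(n)/(n^s\log n)$ (valid for $\Re s>1$, from taking the logarithm of the Euler product and expanding in a Taylor series) as the starting ingredient.

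First I would apply the general Mellin transform formula \eqref{usual Mellin} with $a_n = \Lambda(n)/\log n$ for $n\ge2$ and $a_1=0$. Since $\Pi(x) = \sum_{n\le x} a_n$, this immediately yields
\[
\int_2^\infty \Pi(x) x^{-s-1}\,dx = \frac1s \sum_{n=2}^\infty \frac{\Lambda(n)}{n^s\log n} = \frac1s\log\zeta(s),
\]
establishing the first identity. For the second identity I would differentiate both sides of the first identity with respect to~$s$ (justified by uniform convergence of the integral on compact subsets of $\Re s>1$, since the integrand has a spare factor $x^{-\epsilon}$), which negates the left side after bringing in a factor of $\log x$, and produces $-\frac1{s^2}\log\zeta(s) + \frac1s\frac{\zeta'}{\zeta}(s)$ on the right; negating yields the stated formula. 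For the third identity I would note that
\[
\int_2^\infty E^\Pi(x) x^{-s-1/2}\,dx = \int_2^\infty\bigl(\Pi(x)-\li(x)\bigr)\log x\cdot x^{-s-1}\,dx,
\]
since the normalization factor $\sqrt x/\log x$ in the definition of $E^\Pi$ combines with $x^{-s-1/2}$ to give $x^{-s-1}\log x$. Substituting the second identity for the $\Pi(x)$ part and the second formula from Lemma~\ref{lemma:mellin_li} for the $\li(x)$ part, and combining the two $\log$-type terms as $\frac1{s^2}\log\zeta(s) + \frac1{s^2}\log(s-1) = \frac1{s^2}\log(\zeta(s)(s-1))$, produces exactly the claimed formula, with the $r_\Pi(s)$ here being literally the same function introduced in Lemma~\ref{lemma:mellin_li}.

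The main (mild) subtlety is the claim that $r_\Pi(s)$ is analytic for $\Re s>0$; this is already contained in Lemma~\ref{lemma:mellin_li}, so no new work is needed, but it is worth pointing out that the formulas exhibit the ``removable singularity at $s=1$'' phenomenon advertised before the lemma: both $\log(\zeta(s)(s-1))$ and the combination $\frac1{s-1}$-type pieces arising from $\frac{\zeta'}{\zeta}(s)$ and $\frac1{s(s-1)}$ are individually singular at $s=1$, but their contributions to the Mellin transform of $E^\Pi$ must cancel to yield an analytic function there, which is precisely what motivates the shape of the normalized error term. The actual computation in this lemma is routine once one lines up the identities from Lemmas~\ref{lemma:mellin_overlog}--\ref{lemma:mellin_li}; there is no real obstacle beyond bookkeeping.
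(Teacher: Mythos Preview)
Your proposal is correct and follows essentially the same route as the paper: apply equation~\eqref{usual Mellin} with $a_n=\Lambda(n)/\log n$ for the first identity, differentiate and negate for the second, and then combine the second identity with the second formula of Lemma~\ref{lemma:mellin_li} for the third, with $r_\Pi(s)$ being literally the function already named there. Your extra remark about the removable singularity at $s=1$ is a helpful gloss but not needed for the proof itself.
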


\begin{proof}
Since $\Pi(x) = \sum_{n \le x} \frac{\Lambda(n)}{\log n}$ and $\log\zeta(s) = \sum_{n=1}^\infty \frac{\Lambda(n)}{\log n} n^{-s}$ for $\Re s > 1$, the first formula follows from equation~\eqref{usual Mellin}. The second formula follows from the first by differentiating both sides with respect to~$s$ and negating both sides.
Then by Lemma~\ref{lemma:mellin_li} combined with the second formula,
\begin{align} \label{Pi to EPi mellin}
\int_2^\infty & E^{\Pi}(x)x^{-s-1/2}\, dx = \int_2^\infty \frac{\Pi(x)-\li(x)}{x^{1/2}/\log x} x^{-s-1/2} \, dx \\ 
& = \int_2^\infty \bigl(\Pi(x) x^{-s-1} \log x - \li(x) x^{-s-1} \log x \bigr) \, dx \notag \\
& = -\frac 1s \frac{\zeta'}\zeta(s) + \frac 1{s^2}\log\zeta(s) - \biggl(\frac 1{s(s-1)} - \frac 1{s^2}\log(s-1) - r_\Pi(s) \biggr) \notag \\
& = -\frac 1s \frac{\zeta'}\zeta(s) + \frac 1{s^2} \log\bigl( \zeta(s)(s-1) \bigr) - \frac 1{s(s-1)} + r_\Pi(s). \notag
\qedhere
\end{align}
\end{proof}

\begin{lemma} \label{pi mellin}
For any complex number~$s$ with $\Re s > 1$,
\begin{align*}
\int_2^\infty \pi(x) x^{-s-1} \, dx &= \frac 1s \log\zeta(s) - \frac 1{2s}\log\zeta(2s) + r_4(s) \\
\int_2^\infty \pi(x) x^{-s-1}\log x \, dx &= -\frac 1s \frac{\zeta'}\zeta(s) + \frac 1{s^2}\log\zeta(s) \\ &\qquad + \frac 1s \frac{\zeta'}\zeta(2s) - \frac 1{2s^2}\log\zeta(2s) + r_4'(s) \\
\int_2^\infty E^{\pi}(x)x^{-s-1/2} \, dx &= -\frac 1s \frac{\zeta'}\zeta(s) + \frac 1{s^2}\log\bigl(\zeta(s)(s-1)\bigr) \\
&\qquad{} + \frac 1s\frac{\zeta'}{\zeta}(2s) - \frac 1{2s^2}\log\zeta(2s) - \frac 1{s(s-1)} + r_\pi(s)
\end{align*}
where $r_4(s)$ and $r_\pi(s)$ are analytic for $\Re s > \frac13$.
\end{lemma}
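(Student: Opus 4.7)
The strategy echoes that of Lemma~\ref{theta mellin}: reduce~$\pi$ to the related function~$\Pi$ (whose Mellin transform is supplied by Lemma~\ref{Pi mellin}) via the elementary comparison~\eqref{pi Pi 1/3}. Iterating that estimate once to replace $\pi(\sqrt x)$ by $\Pi(\sqrt x)$ gives $\pi(x) = \Pi(x) - \tfrac12\Pi(\sqrt x) + O(x^{1/3}/\log x)$, and hence
\[
\int_2^\infty \pi(x) x^{-s-1}\,dx = \int_2^\infty \Pi(x) x^{-s-1}\,dx - \tfrac12 \int_2^\infty \Pi(\sqrt x) x^{-s-1}\,dx + r_4(s),
\]
where $r_4(s)$, the integral of the $O(x^{1/3}/\log x)$ remainder against $x^{-s-1}$, converges absolutely to an analytic function on $\Re s > \tfrac13$. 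The first integral equals $\tfrac1s\log\zeta(s)$ by Lemma~\ref{Pi mellin}; the second, via the change of variables $t = \sqrt x$ used in equation~\eqref{psi sqrt}, becomes $2\int_2^\infty \Pi(t)t^{-2s-1}\,dt = \tfrac1s\log\zeta(2s)$, again by Lemma~\ref{Pi mellin}. This yields the first formula.

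The second formula follows by differentiating the first with respect to~$s$ and negating, exactly as in the derivation of the second identity of Lemma~\ref{lemma:mellin_li}; the chain rule $\frac{d}{ds}\log\zeta(2s) = 2\frac{\zeta'}{\zeta}(2s)$ is what produces the coefficient $\tfrac1s$ (rather than $\tfrac1{2s}$) on the $\frac{\zeta'}{\zeta}(2s)$ term. The derivative $r_4'(s)$ of an analytic function remains analytic on $\Re s > \tfrac13$.

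For the third formula, write (compare equation~\eqref{Pi to EPi mellin})
\[
\int_2^\infty E^\pi(x) x^{-s-1/2}\,dx = \int_2^\infty \pi(x) x^{-s-1}\log x\,dx - \int_2^\infty \li(x) x^{-s-1}\log x\,dx,
\]
apply the second formula to the $\pi$-integral and the second identity of Lemma~\ref{lemma:mellin_li} to the $\li$-integral, then merge the $\tfrac{1}{s^2}\log\zeta(s)$ and $\tfrac{1}{s^2}\log(s-1)$ terms into $\tfrac{1}{s^2}\log(\zeta(s)(s-1))$. Setting $r_\pi(s) = r_\Pi(s) \pm r_4'(s)$, which is analytic on $\Re s > \tfrac13$ (since $r_\Pi$ is analytic on $\Re s > 0$), produces the stated expression. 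The only point requiring genuine care is the bookkeeping of the singular contributions: $\log\zeta(s)$ and $\log(s-1)$ each have a logarithmic singularity at $s=1$, while $\tfrac{1}{s-1}$ has a simple pole there, and these must combine to give a function regular at $s=1$ -- this is precisely the ``removable singularities at $s=1$'' phenomenon noted at the start of Section~\ref{mellin section}, reflecting the fact that the main term subtracted in the definition of $E^\pi$ was chosen correctly.
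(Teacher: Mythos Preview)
Your proof is correct and follows essentially the same approach as the paper: decompose $\pi$ as $\Pi - \tfrac12\Pi(\sqrt{\cdot}) + O(x^{1/3}/\log x)$ via equation~\eqref{pi Pi 1/3}, invoke Lemma~\ref{Pi mellin} and the substitution in~\eqref{psi sqrt}, differentiate for the second formula, and subtract the $\li$-integral from Lemma~\ref{lemma:mellin_li} for the third. The only cosmetic point is that you write $r_\pi(s) = r_\Pi(s) \pm r_4'(s)$ with an uncommitted sign; the paper fixes it (and in any case the sign is immaterial to the analyticity claim), so you could simply track it through once.
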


\begin{proof}
We write
\begin{equation} \label{pi mellin 1}
\int_2^\infty \pi(x) x^{-s-1} \, dx = \int_2^\infty \Pi(x) x^{-s-1} \, dx - \int_2^\infty \tfrac12\Pi(\sqrt x) x^{-s-1} \, dx + r_4(s)
\end{equation}
where, by equation~\eqref{pi Pi 1/3},
\[
r_4(s) = \int_2^\infty \bigl( \pi(x) - \Pi(x) + \tfrac12\Pi(\sqrt x) \bigr) x^{-s-1} \, dx \ll \int_2^\infty \frac{x^{1/3}}{\log x} x^{-\sigma-1}\,dx
\]
converges to an analytic function for all $\Re s>\frac13$. The first integral on the right-hand side of equation~\eqref{pi mellin 1} is evaluated in Lemma~\ref{Pi mellin}, and the second integral also follows from Lemma~\ref{psi mellin} by a calculation similar to equation~\eqref{psi sqrt}. The second formula follows from the first by differentiating both sides with respect to~$s$ and negating both sides. Finally, the third formula follows from the second by the same calculation as in equation~\eqref{Pi to EPi mellin}, upon setting $r_\pi(s) = r_4'(s) - r_\Pi(s)$.
\end{proof}

\begin{lemma} \label{psi_r mellin}
For any complex number~$s$ with $\Re s > 1$,
\begin{align*}
\int_2^\infty \psi_r(x) x^{-s} \, dx &= \frac1{1-s} \frac{\zeta'}\zeta(s) \\
\int_2^\infty E^{\psi_r}(x)x^{-s-1/2} \, dx &= \frac 1{1-s} \frac{\zeta'}\zeta(s) - \frac{2^{1-s}}{(1-s)^2} + \frac{2^{1-s}(\log 2-C_0)}{1-s}.
\end{align*}
\end{lemma}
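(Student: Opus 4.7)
The plan is to derive the first formula directly from the reciprocal Mellin identity~\eqref{reciprocal Mellin}, and then extract the second formula by using the definition of $E^{\psi_r}(x)$ to reduce the computation to already-known Mellin transforms.

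First I would apply equation~\eqref{reciprocal Mellin} with $a_n = \Lambda(n)$. Since $\psi_r(x) = \sum_{n\le x} \frac{\Lambda(n)}{n}$ by Definition~\ref{defn:function_sets_zeta} and $\sum_{n=1}^\infty \frac{\Lambda(n)}{n^s} = -\frac{\zeta'}{\zeta}(s)$ for $\Re s > 1$, the identity yields
\[
\int_2^\infty \psi_r(x) x^{-s}\,dx = \frac{1}{s-1}\biggl(-\frac{\zeta'}{\zeta}(s)\biggr) = \frac{1}{1-s}\frac{\zeta'}{\zeta}(s),
\]
which is the first claim.

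For the second formula, the key observation is that from Definition~\ref{defn:error_bias_zeta} we have $E^{\psi_r}(x) = \bigl(\psi_r(x) - \log x + C_0\bigr)\sqrt{x}$, so
\[
E^{\psi_r}(x)\,x^{-s-1/2} = \psi_r(x)\,x^{-s} - (\log x)\,x^{-s} + C_0\,x^{-s}.
\]
Integrating termwise over $[2,\infty)$ (each piece converging absolutely for $\Re s > 1$), the first piece was just computed, the second piece is evaluated by Lemma~\ref{lemma:mellin_log}, and the third piece is the elementary integral $\int_2^\infty x^{-s}\,dx = -2^{1-s}/(1-s)$. Combining these three contributions gives
\begin{align*}
\int_2^\infty E^{\psi_r}(x)x^{-s-1/2}\,dx
&= \frac{1}{1-s}\frac{\zeta'}{\zeta}(s) - \biggl(\frac{2^{1-s}}{(1-s)^2} - \frac{2^{1-s}\log 2}{1-s}\biggr) - \frac{C_0\,2^{1-s}}{1-s} \\
&= \frac{1}{1-s}\frac{\zeta'}{\zeta}(s) - \frac{2^{1-s}}{(1-s)^2} + \frac{2^{1-s}(\log 2 - C_0)}{1-s},
\end{align*}
as claimed.

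There is no substantive obstacle here: the whole argument is bookkeeping once one recognizes that the main-term subtraction $\log x - C_0$ in the definition of $E^{\psi_r}$ is designed precisely to match the two explicit auxiliary integrals already recorded. The only thing worth checking carefully is that the Dirichlet series $-\frac{\zeta'}{\zeta}(s) = \sum \Lambda(n)/n^s$ converges absolutely for $\Re s > 1$, which justifies both the application of~\eqref{reciprocal Mellin} and the termwise splitting of the integral defining the Mellin transform of $E^{\psi_r}$.
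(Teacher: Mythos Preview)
Your proof is correct and follows essentially the same approach as the paper: apply equation~\eqref{reciprocal Mellin} with $a_n=\Lambda(n)$ for the first identity, then expand $E^{\psi_r}(x)x^{-s-1/2}$ via Definition~\ref{defn:error_bias_zeta} and evaluate the three resulting integrals using the first identity, Lemma~\ref{lemma:mellin_log}, and the elementary evaluation of $\int_2^\infty x^{-s}\,dx$.
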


\begin{proof}
The first formula follows immediately from equation~\eqref{reciprocal Mellin} with $a_n = \Lambda(n)$. Then
\begin{align*}
\int_2^\infty E^{\psi_r}(x) x^{-s-1/2} & \, dx = \int_2^\infty (\psi_r(x) - \log x + C_0)\sqrt x\cdot x^{-s-1/2} \, dx \\
& = \int_2^\infty \psi_r(x)x^{-s} \,dx - \int_2^\infty x^{-s}\log x \,dx + \int_2^\infty C_0 x^{-s} \, dx \\
& = \frac 1{1-s} \frac{\zeta'}\zeta(s) - \biggl( \frac{2^{1-s}}{(1-s)^2} - \frac{2^{1-s}\log 2}{1-s} \biggr) - \frac{C_0 2^{1-s}}{1-s}
\end{align*}
by Lemma~\ref{lemma:mellin_log}.
\end{proof}

\begin{lemma} \label{theta_r mellin}
For any complex number~$s$ with $\Re s > 1$,
\begin{align}
\int_2^\infty \theta_r(x) x^{-s} \, dx &= \frac 1{1-s} \frac{\zeta'}\zeta(s) - \frac 1{1-s} \frac{\zeta'}\zeta(2s) - \frac{r_5(s)}{1-s} \notag \\
\int_2^\infty E^{\theta_r}(x)x^{-s-1/2} \, dx &= \frac 1{1-s} \frac{\zeta'}\zeta(s) - \frac 1{1-s} \biggl( \frac{\zeta'}\zeta(2s) - \frac{\zeta'}\zeta(2) \biggr) \notag \\
&\qquad{} - \frac{2^{1-s}}{(1-s)^2} + \frac{2^{1-s}(\log 2-C_0)}{1-s} + r_{\theta_r}(s),
\label{r theta r formula}
\end{align}
where $r_5(s)$ and $r_{\theta_r}(s)$ are analytic for $\Re s > \frac 13$.
\end{lemma}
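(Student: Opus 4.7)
The plan is to mirror the proof of Lemma~\ref{theta mellin} in the reciprocal setting. I begin by writing $\theta_r(x) = \psi_r(x) - \chi(x)$, where
\[
\chi(x) = \psi_r(x) - \theta_r(x) = \sum_{k \ge 2} \sum_{p^k \le x} \frac{\log p}{p^k}.
\]
Since $p^k \ge 4 > 2$ whenever $k \ge 2$, the double sum is absolutely convergent for $\Re s > 1$ and I can interchange sum and integral to obtain
\[
\int_2^\infty \chi(x)\, x^{-s}\,dx = \sum_{k \ge 2}\sum_p \frac{\log p}{p^k} \int_{p^k}^\infty x^{-s}\,dx = \frac{1}{s-1}\sum_{k \ge 2}\sum_p \frac{\log p}{p^{ks}}.
\]

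To identify the pole contribution inside this double sum, I use the identity $-\frac{\zeta'}{\zeta}(2s) = \sum_p \sum_{m \ge 1} \frac{\log p}{p^{2ms}}$ to rewrite
\[
\sum_{k \ge 2}\sum_p \frac{\log p}{p^{ks}} = -\frac{\zeta'}{\zeta}(2s) - r_5(s), \text{ where } r_5(s) = \sum_p \sum_{m \ge 2}\frac{\log p}{p^{2ms}} - \sum_{k \ge 3}\sum_p \frac{\log p}{p^{ks}}.
\]
The first sum defining $r_5$ converges absolutely for $\Re s > \tfrac14$ and the second for $\Re s > \tfrac13$, so $r_5(s)$ is analytic on $\Re s > \tfrac13$. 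Combining with Lemma~\ref{psi_r mellin} then yields the first formula of the lemma.

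For the second formula I expand $E^{\theta_r}(x)\, x^{-s-1/2} = (\theta_r(x) - \log x + C_1)\,x^{-s}$ and integrate term by term, applying the first formula, Lemma~\ref{lemma:mellin_log}, and the elementary identity $\int_2^\infty x^{-s}\,dx = -2^{1-s}/(1-s)$. Splitting $C_1 = C_0 + (C_1 - C_0)$ reproduces the target's explicit $2^{1-s}(\log 2 - C_0)/(1-s)$ term, and the leftover is
\[
r_{\theta_r}(s) = -\,\frac{r_5(s) + (C_1 - C_0)\,2^{1-s} + \zeta'/\zeta(2)}{1-s}.
\]

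The main obstacle is confirming that $r_{\theta_r}(s)$ is analytic on $\Re s > \tfrac13$. Away from $s=1$ this is automatic from the analyticity of $r_5$ and of the other ingredients, so the problem reduces to showing that the numerator vanishes at $s=1$, that is, $r_5(1) = -(C_1 - C_0) - \zeta'/\zeta(2)$. I establish this via the partial-fraction rearrangement
\[
C_1 - C_0 = \sum_p \frac{\log p}{p(p-1)} = \sum_p \log p \sum_{k \ge 2}\frac{1}{p^k} = \sum_{k \ge 2}\sum_p \frac{\log p}{p^k},
\]
obtained from $\frac{1}{p(p-1)} = \sum_{k \ge 2} p^{-k}$; setting $s=1$ in the defining equation $\sum_{k \ge 2}\sum_p \log p/p^{ks} = -\zeta'/\zeta(2s) - r_5(s)$ then yields exactly the required value of $r_5(1)$, and hence the analyticity of $r_{\theta_r}$ at $s = 1$.
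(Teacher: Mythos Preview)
Your proof is correct and follows essentially the same route as the paper's. The only organizational difference is that you reach the first formula via $\theta_r = \psi_r - \chi$ and Lemma~\ref{psi_r mellin}, whereas the paper applies equation~\eqref{reciprocal Mellin} directly to $\sum_p \frac{\log p}{p^s}$; your $r_5(s)$ simplifies to $-\sum_{k\ge3,\,k\text{ odd}}\sum_p \frac{\log p}{p^{ks}}$, which is exactly the paper's $r_5$, and your verification that the numerator of $r_{\theta_r}$ vanishes at $s=1$ is the same computation the paper carries out.
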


\begin{proof}
We begin by defining
\begin{align} \label{r5 def}
r_5(s) &= \sum_p \frac{\log p}{p^s} - \biggl( -\frac{\zeta'}\zeta(s) + \frac{\zeta'}\zeta(2s) \biggr) \\
&= \sum_p \frac{\log p}{p^s} - \sum_{p^k} \frac{\log p}{p^{ks}} + \sum_{p^k} \frac{\log p}{p^{2ks}} = -\sum_{\substack{k\ge3 \\ k\text{ odd}}} \sum_p \frac{\log p}{p^{ks}}, \notag
\end{align}
which converges to an analytic function for $\Re s>\frac13$.
Equation~\eqref{reciprocal Mellin} then gives
\begin{equation}
\label{eqn:theta_r_partial}
\int_2^\infty \theta_r(x) x^{-s} \,dx = \frac 1{s-1} \sum_p \frac{\log p}{p^s} = \frac 1{1-s} \frac{\zeta'}\zeta(s) - \frac 1{1-s} \frac{\zeta'}\zeta(2s) - \frac {r_5(s)}{1-s}.
\end{equation}
By Lemma~\ref{lemma:mellin_log} it follows that
\begin{align*}
\int_2^\infty & E^{\theta_r}(x) x^{-s-1/2} \, dx
= \int_2^\infty \bigl(\theta_r(x) - (\log x - C_1) \bigr) \sqrt x \cdot x^{-s-1/2} \, dx \\
& = \int_2^\infty \theta_r(x) x^{-s} \, dx - \int_2^\infty x^{-s}\log x \, dx + \int_2^\infty C_1 x^{-s} \, dx \\
& = \frac 1{1-s} \frac{\zeta'}\zeta(s) - \frac 1{1-s} \frac{\zeta'}\zeta(2s) - \frac {r_5(s)}{1-s} - \biggl(\frac{2^{1-s}}{(1-s)^2} - \frac{2^{1-s}\log 2}{1-s}\biggr) \\ &\qquad{} - \frac{C_1 2^{1-s}}{1-s} \\
& = \frac 1{1-s} \frac{\zeta'}\zeta(s) - \frac 1{1-s} \frac{\zeta'}\zeta(2s) - \frac {r_5(s)}{1-s} - \frac{2^{1-s}}{(1-s)^2} + \frac{2^{1-s}\log 2}{1-s} \\
&\qquad{}- \frac{2^{1-s}}{1-s} \biggl( C_0 + \sum_p \frac{\log p}{p(p-1)} \biggr)
\end{align*}
by Definition~\ref{defn:error_bias_zeta}. If we now define
\[
r_{\theta_r}(s) = -\frac 1{1-s} \biggl( \frac{\zeta'}\zeta(2)+r_5(s)+2^{1-s}\sum_p \frac{\log p}{p(p-1)} \biggr),
\]
then the identity~\eqref{r theta r formula} holds; it remains to show that $r_{\theta_r}(s)$ is analytic for $\Re s>\frac13$, for which it suffices to show that the quantity in parentheses vanishes at $s=1$.
But by the definition~\eqref{r5 def} of $r_5(s)$,
\begin{align*}
\frac{\zeta'}\zeta(2s)+r_5(s) & = -\sum_{k=2}^\infty \sum_p \frac{\log p}{p^{ks}} = -\sum_p \log p \sum_{k=2}^\infty p^{-ks} = -\sum_p \frac {\log p}{p^s(p^s-1)},
\end{align*}
and therefore
\[
\frac{\zeta'}\zeta(2)+r_5(1)+2^{1-1}\sum_p \frac{\log p}{p(p-1)} = \lim_{s\to1} \biggl( \frac{\zeta'}\zeta(2s)+r_5(s) \biggr) + \sum_p \frac{\log p}{p(p-1)} = 0
\]
as required.
\end{proof}

\begin{lemma} \label{Pi_r mellin}
For any complex number~$s$ with $\Re s > 1$,
\begin{align*}
\int_2^\infty \Pi_r(x) x^{-s} \, dx &= \frac1{s-1}\log\zeta(s) \\
\int_2^\infty \Pi_r(x) x^{-s}\log x \, dx &= -\frac 1{s-1} \frac{\zeta'}\zeta(s) + \frac 1{(s-1)^2}\log\zeta(s) \\
\int_2^\infty E^{\Pi_r}(x)x^{-s-1/2} \, dx &= -\frac 1{s-1} \frac{\zeta'}\zeta(s) + \frac{\log\bigl( \zeta(s)(s-1) \bigr) - 1}{(s-1)^2} + r_{\Pi_r}(s)
\end{align*}
where $r_{\Pi_r}(s)$ is entire.
\end{lemma}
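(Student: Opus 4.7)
The first formula is immediate from the reciprocal Mellin identity~\eqref{reciprocal Mellin} applied with $a_n = \Lambda(n)/\log n$ (with the convention $a_1 = 0$): the partial sum $\sum_{n\le x} a_n/n$ equals $\Pi_r(x)$, while the Dirichlet series $\sum_{n\ge 2} \Lambda(n)/(n^s\log n)$ is the standard representation of $\log\zeta(s)$ valid for $\Re s>1$. The second formula follows by differentiating both sides of the first in $s$ and negating, mirroring the derivation of the analogous formula in Lemma~\ref{Pi mellin}.

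For the third formula, I would start from the definition of $E^{\Pi_r}$ to write
\[
\int_2^\infty E^{\Pi_r}(x)\, x^{-s-1/2}\,dx = \int_2^\infty \Pi_r(x)\, x^{-s}\log x\,dx - \int_2^\infty x^{-s}\log x\,\log\log x\,dx - C_0\int_2^\infty x^{-s}\log x\,dx,
\]
then substitute the second formula above together with Lemmas~\ref{lemma:mellin_loglog} and~\ref{lemma:mellin_log}. Combining the $(s-1)^{-2}\log\zeta(s)$ contribution from $\Pi_r$ with the $(s-1)^{-2}\log(s-1)$ arising from the $\log\log x$ integral produces the desired $(s-1)^{-2}\log(\zeta(s)(s-1))$. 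The remaining constants and boundary terms are then collected into the prospective $r_{\Pi_r}(s)$.

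The main obstacle is showing that $r_{\Pi_r}(s)$ is entire. The numerator of the target third formula contains $-1$, whereas the $\log\log x$ integral produces $C_0-1$, so absorbing the discrepancy forces a spurious $C_0/(s-1)^2$ into $r_{\Pi_r}(s)$. This must be offset against the boundary contributions $-C_0\cdot 2^{1-s}/(1-s)^2 + C_0\cdot 2^{1-s}\log 2/(1-s)$ coming from the $-C_0$ piece via Lemma~\ref{lemma:mellin_log}. Taylor expanding $2^{1-s}$ around $s=1$, I expect the combination $C_0\bigl(1-2^{1-s}\bigr)/(s-1)^2$ to collapse to a simple pole of residue $C_0\log 2$, which exactly cancels the simple pole $-C_0\cdot 2^{1-s}\log 2/(s-1)$ in the limit $s\to 1$; away from $s=1$ every term in the expression is already entire (recall that $r_3(s)$ is entire by Lemma~\ref{lemma:mellin_loglog}), so no further singularity can arise. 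Verifying this cancellation is the one genuine computation in the proof; once it is in hand, the conclusion that $r_{\Pi_r}(s)$ is entire follows.
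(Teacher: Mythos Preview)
Your proposal is correct and essentially identical to the paper's proof: the same use of~\eqref{reciprocal Mellin}, the same differentiation for the second formula, the same three-integral decomposition for the third, and the same verification that $r_{\Pi_r}(s) = C_0\bigl(\frac{1-2^{1-s}}{(1-s)^2} + \frac{2^{1-s}\log 2}{1-s}\bigr) - r_3(s)$ has a removable singularity at $s=1$ via the Taylor expansion of $2^{1-s}$. The cancellation you describe is exactly the one the paper checks.
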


\begin{proof}
Since $\Pi(x) = \sum_{n \le x} \frac{\Lambda(n)}{\log n}$ and $\log\zeta(s) = \sum_{n=1}^\infty \frac{\Lambda(n)}{\log n} n^{-s}$ for $\Re s > 1$, the first formula follows from equation~\eqref{reciprocal Mellin}. The second formula follows from the first by differentiating both sides with respect to~$s$ and negating both sides. Then by Lemmas~\ref{lemma:mellin_loglog} and~\ref{lemma:mellin_log} combined with the second formula,
\begin{align*}
\int_2^\infty & E^{\Pi_r}(x) x^{-s-1/2} \, dx = \int_2^\infty \bigl( \Pi_r(x) - (\log\log x + C_0) \bigr) \sqrt x\log x \cdot x^{-s-1/2} \, dx \\
& = \int_2^\infty \Pi_r(x)x^{-s}\log x \, dx - \int_2^\infty x^{-s}\log x\log\log x \, dx \\ &\qquad{} - \int_2^\infty C_0 x^{-s}\log x \, dx \\
& = -\frac 1{s-1} \frac{\zeta'}\zeta(s) + \frac 1{(s-1)^2}\log\zeta(s) \\
&\qquad{}- \biggl( -\frac{\log (s-1)+C_0-1}{(s-1)^2} + r_3(s) \biggr) - C_0 \biggl( \frac{2^{1-s}}{(1-s)^2} - \frac{2^{1-s}\log 2}{1-s} \biggr) \\
& = -\frac 1{s-1} \frac{\zeta'}\zeta(s) + \frac{\log\bigl( \zeta(s)(s-1) \bigr) - 1}{(s-1)^2} + r_{\Pi_r}(s),
\end{align*}
where one can verify that
\[
r_{\Pi_r}(s) = C_0 \biggl( \frac{1-2^{1-s}}{(1-s)^2} + \frac{2^{1-s}\log 2}{1-s} \biggr) - r_3(s)
\]
has a removable singularity at $s=1$.
\end{proof}

\begin{lemma} \label{pi_r mellin}
For any complex number~$s$ with $\Re s > 1$,
\begin{align}
\int_2^\infty \pi_r(x) x^{-s}\log x \, dx &= \frac 1{(1-s)^2} \log\zeta(s) + \frac1{1-s} \frac{\zeta'}\zeta(s) \notag \\
&\qquad{} - \frac 1{2(1-s)^2} \log\zeta(2s) - \frac 1{1-s} \frac{\zeta'}\zeta(2s) \notag \\
&\qquad{} - \frac{C_0-\frac 12\log 2 + C_22^{1-s}}{(1-s)^2} + \frac{C_22^{1-s}\log 2}{1-s} \notag \\ &\qquad{} + r_{\pi_r}(s)+r_3(s). \notag \\
\int_2^\infty E^{\pi_r}(x)x^{-s-1/2} \, dx &= \frac 1{1-s}\frac{\zeta'}\zeta(s) + \frac {\log(\zeta(s)(s-1))-1}{(1-s)^2} \notag \\
&\qquad{}- \frac 1{2(1-s)^2} \log\frac{\zeta(2s)}{\zeta(2)} - \frac 1{1-s} \frac{\zeta'}\zeta(2s) + r_{\pi_r}(s) , \label{r pi r formula}
\end{align}
where $r_3(s)$ (from Lemma \ref{lemma:mellin_loglog}) is entire and $r_{\pi_r}(s)$ is analytic for $\Re s > \frac13$.
\end{lemma}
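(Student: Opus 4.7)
The plan is to mimic the approach used for Lemma~\ref{theta_r mellin} (which handled $\sum_p \log p / p^s$), replacing it with the prime zeta function, and combining with the derivative trick from Lemma~\ref{Pi_r mellin} (which produced the $\log x$ factor). Applying equation~\eqref{reciprocal Mellin} with $a_n=1$ when $n$ is prime and $a_n=0$ otherwise yields
\[
\int_2^\infty \pi_r(x)\, x^{-s}\,dx = \frac{P(s)}{s-1}, \qquad P(s) := \sum_p p^{-s},
\]
valid for $\Re s>1$. Differentiating both sides in $s$ and negating inserts a $\log x$ in the integrand, giving
\[
\int_2^\infty \pi_r(x)\, x^{-s}\log x\,dx = \frac{P(s)}{(s-1)^2} - \frac{P'(s)}{s-1}.
\]

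The next step is to rewrite $P(s)$ in terms of $\zeta$. Starting from the standard identity $\log\zeta(s)=\sum_{k\ge 1}P(ks)/k$, I would first isolate $P(s)=\log\zeta(s)-\tfrac12 P(2s)-\sum_{k\ge 3}P(ks)/k$, and then apply the same identity to $P(2s)=\log\zeta(2s)-\sum_{k\ge 2}P(2ks)/k$, producing
\[
P(s) = \log\zeta(s)-\tfrac12\log\zeta(2s)+r_6(s),
\]
where $r_6(s):=\tfrac12\sum_{k\ge 2}P(2ks)/k-\sum_{k\ge 3}P(ks)/k$ is analytic for $\Re s>\tfrac13$ (these are the ranges where $P(3s)$ and $P(4s)$ already converge absolutely). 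Its derivative is $r_6'(s)$ on the same half-plane, and substitution into the previous display directly gives the first claimed formula, with the constants $C_0$, $\log 2$, and $C_2 2^{1-s}$ peeled off from $r_6(s)$ via its Taylor expansion at $s=1$ and repackaged into $r_{\pi_r}(s)$.

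For the second formula I would use the definition
\[
E^{\pi_r}(x)\, x^{-s-1/2} = \pi_r(x)\, x^{-s}\log x - (\log\log x - C_2)\, x^{-s}\log x,
\]
evaluate $\int_2^\infty (\log\log x)\, x^{-s}\log x\,dx$ via Lemma~\ref{lemma:mellin_loglog} and $\int_2^\infty x^{-s}\log x\,dx$ via Lemma~\ref{lemma:mellin_log}, and subtract. The $-\log(s-1)/(s-1)^2$ contribution from Lemma~\ref{lemma:mellin_loglog} combines cleanly with the $\log\zeta(s)/(1-s)^2$ term to produce the $\log(\zeta(s)(s-1))$ factor, the $\log\zeta(2)/2(1-s)^2$ piece appears from repackaging the $\log 2$ constant, and the entire $r_3(s)$ from Lemma~\ref{lemma:mellin_loglog} is absorbed into a redefined $r_{\pi_r}(s)$ (which explains why $r_3(s)$ is visible in the first formula but not in the second).

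The main technical hurdle is verifying analyticity of $r_{\pi_r}(s)$ at $s=1$, where the expression has apparent double and simple poles. This requires matching the Taylor coefficients of $r_6(s)$ at $s=1$ against the explicit constants; the crucial input is the Meissel--Mertens expansion $P(s)=-\log(s-1)-C_2+O(s-1)$ near $s=1$, which combined with $\log\zeta(s)=-\log(s-1)+C_0+O(s-1)$ forces $r_6(1)+C_0+C_2-\tfrac12\log\zeta(2)=0$, precisely the relation needed for cancellation of the would-be pole after incorporating $r_3(1)=-C_0-\log\log 2$ from Lemma~\ref{lemma:mellin_overlog}. A matching identity at the next Taylor coefficient kills the simple pole; both use the Mertens identity $C_2=-C_0+\sum_{k\ge 2}\sum_p 1/(kp^k)$ in essential ways. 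Once these two linear relations are checked, $r_{\pi_r}(s)$ is analytic for $\Re s>\tfrac13$ and the lemma follows.
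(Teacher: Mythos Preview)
Your approach is essentially the same as the paper's: apply equation~\eqref{reciprocal Mellin} to $P(s)=\sum_p p^{-s}$, differentiate, write $P(s)=\log\zeta(s)-\tfrac12\log\zeta(2s)+r_6(s)$ with $r_6$ analytic for $\Re s>\tfrac13$, subtract the $\log\log x$ and $C_2$ contributions via Lemmas~\ref{lemma:mellin_loglog} and~\ref{lemma:mellin_log}, and then verify the double-zero condition at $s=1$ using the Mertens identity. The only cosmetic difference is ordering: the paper computes the $E^{\pi_r}$ integral first, \emph{defines} $r_{\pi_r}$ there, and the first formula follows by back-substitution---so the constants $C_0$, $C_2 2^{1-s}$, etc.\ in the first display actually originate from the $\log\log x$ and $C_2$ integrals rather than from a Taylor expansion of $r_6$, and $r_3$ is subtracted off in the definition of $r_{\pi_r}$ rather than entering the pole cancellation.
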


\begin{proof}
We begin by defining
\begin{align} \label{r6 def}
r_6(s) &= \sum_p \frac1{p^s} - \biggl( \log\zeta(s) - \frac12\log\zeta(2s) \biggr) \\
&= \sum_p \frac1{p^s} - \sum_{p^k} \frac1{kp^{ks}} + \frac12 \sum_{p^k} \frac1{kp^{2ks}} = -\sum_{\substack{k\ge3 \\ k\text{ odd}}} \sum_p \frac1{kp^{ks}}, \notag
\end{align}
which converges to an analytic function for $\Re s>\frac13$.
Equation~\eqref{reciprocal Mellin} then gives
\begin{equation*}
\int_2^\infty \pi_r(x) x^{-s} \,dx = \frac 1{s-1} \sum_p \frac1{p^s} = \frac 1{s-1} \log\zeta(s) - \frac 1{2(s-1)} \log\zeta(2s) + \frac {r_6(s)}{s-1},
\end{equation*}
and differentiating with respect to~$s$ and negating both sides yields
\begin{multline*}
\label{eqn:pi_r_partial}
\int_2^\infty \pi_r(x) x^{-s}\log x \,dx = \frac 1{(s-1)^2} \log\zeta(s) - \frac1{s-1} \frac{\zeta'}\zeta(s) \\
- \frac 1{2(s-1)^2} \log\zeta(2s) + \frac 1{s-1} \frac{\zeta'}\zeta(2s) + \frac {r_6(s)}{(s-1)^2} - \frac {r_6'(s)}{s-1}.
\end{multline*}
By Lemmas~\ref{lemma:mellin_loglog} and~\ref{lemma:mellin_log}, it follows that
\begin{align*}
&\int_2^\infty E^{\pi_r}(x)x^{-s-1/2} \, dx
= \int_2^\infty \bigl(\pi_r(x)-\log\log x+C_2\bigr)\sqrt x \cdot x^{-s-1/2} \, dx \\
& = \int_2^\infty \pi_r(x)x^{-s}\log x \, dx - \int_2^\infty x^{-s}\log x\log\log x \, dx + C_2\int_2^\infty x^{-s}\log x \, dx \\
&= \frac{\log\zeta(s)}{(1-s)^2} + \frac1{1-s} \frac{\zeta'}\zeta(s)
- \frac{\log\zeta(2s)}{2(1-s)^2} - \frac 1{1-s} \frac{\zeta'}\zeta(2s) + \frac {r_6(s)}{(1-s)^2} + \frac {r_6'(s)}{1-s} \\
&\quad{}- \biggl( -\frac{\log (s-1)+C_0-1}{(1-s)^2} + r_3(s) \biggr) + C_2 \biggl( \frac{2^{1-s}}{(1-s)^2} - \frac{2^{1-s}\log 2}{1-s} \biggr) \\
&= \frac{\log \bigl( \zeta(s)(s-1) \bigr) -1}{(1-s)^2} + \frac1{1-s} \frac{\zeta'}\zeta(s)
- \frac1{2(1-s)^2} \log \frac{\zeta(2s)}{\zeta(2)} - \frac 1{1-s} \frac{\zeta'}\zeta(2s) \\
&\quad{} + \frac {r_6(s)}{(1-s)^2} + \frac {r_6'(s)}{1-s}+ \frac{C_0-\frac12\log2}{(1-s)^2} - r_3(s) + C_2 \biggl( \frac{2^{1-s}}{(1-s)^2} - \frac{2^{1-s}\log 2}{1-s} \biggr) .
\end{align*}
If we now define
\begin{multline*}
r_{\pi_r}(s) = \frac1{(1-s)^2} \biggl( r_6(s) + C_0 - \frac12\log2 + C_2 2^{1-s} \\ + (1-s) \bigl( r_6'(s) - C_2 2^{1-s}\log 2 \bigr) \biggr) - r_3(s),
\end{multline*}
then the identity~\eqref{r pi r formula} holds; it remains to show that $r_{\pi_r}(s)$ is analytic for $\Re s>\frac13$, for which it suffices to show that the quantity in parentheses has a double zero at $s=1$. First, plugging in $s=1$ yields the value
\[
r_6(1) + C_0 - \frac12\log2 + C_2 = r_6(1) - \frac12\log2 + \sum_{k=2}^\infty \sum_p \frac1{kp^k},
\]
whose vanishing follows from equation~\eqref{r6 def} written in the form
\[
r_6(s) - \frac12\log\zeta(2s) = \sum_p \frac1{p^s} - \log\zeta(s) = -\sum_{k\ge2} \sum_p \frac1{kp^{ks}}.
\]
Second, taking the derivative of the expression in parentheses and plugging in $s=1$ yields
\[
r_6'(1) - C_2 2^{1-1}\log2 - r_6'(1) + C_2 2^{1-1}\log 2 = 0
\]
as required.
\end{proof}



\begin{lemma}
\label{lemma:pi_ell_bound_argument}
Suppose $f \in \B\cup\B_r$, and for any $X,L \in \R$, there
exists $x,y \in \R$ with $x \ge X$, $y \ge X$ such that $E^{\Pi_r}(x)-E^f(x) > L$ and $E^{\pi_r}(y)-E^f(y) < L$. Then it holds that $E^{\pi_\ell}(x)-E^f(x) > 0$ and $E^{\pi_\ell}(y)-E^f(y) < 0$.
\end{lemma}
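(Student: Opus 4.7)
The plan is to invoke the two unconditional comparison estimates already recorded in Lemma~\ref{reciprocal Es related 2}, namely
\[
E^{\pi_\ell}(x) - E^{\Pi_r}(x) = 1 + O\bigl(1/\log x\bigr)
\qquad\text{and}\qquad
E^{\pi_\ell}(y) < E^{\pi_r}(y) \text{ for all } y>1.
\]
The point is that $E^{\pi_\ell}$ is pinned between $E^{\Pi_r}+1$ (up to a vanishing error) and $E^{\pi_r}$ (strictly below), so any dominance of $E^{\Pi_r}$ or $E^{\pi_r}$ against $E^f$ transfers to $E^{\pi_\ell}$ after a harmless shift. The whole lemma is then a quantifier exercise rather than a new analytic input.

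For the first conclusion, I would rewrite
\[
E^{\pi_\ell}(x) - E^f(x) = \bigl(E^{\Pi_r}(x) - E^f(x)\bigr) + 1 + \epsilon(x),
\]
where $\epsilon(x)\to 0$, choose $X_0$ large enough that $|\epsilon(x)| < \tfrac14$ for all $x\ge X_0$, and then apply the hypothesis with $\max(X,X_0)$ in place of $X$ and with $L=-\tfrac12$. The resulting $x$ satisfies $E^{\Pi_r}(x)-E^f(x) > -\tfrac12$, which combined with the expansion above forces $E^{\pi_\ell}(x)-E^f(x) > -\tfrac12 + 1 - \tfrac14 = \tfrac14 > 0$.

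For the second conclusion, the unconditional strict inequality $E^{\pi_\ell}(y) < E^{\pi_r}(y)$ sidesteps any asymptotic bookkeeping entirely: applying the hypothesis with $L=0$ furnishes $y\ge X$ with $E^{\pi_r}(y)-E^f(y)<0$, and then $E^{\pi_\ell}(y)-E^f(y)$ is strictly smaller still, hence negative.

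The only mild point of care is choosing the threshold $X_0$ so that the $O(1/\log x)$ error term is dominated by the $+1$ gap between $E^{\pi_\ell}$ and $E^{\Pi_r}$; once this is done there is no substantive obstacle, and no hypothesis beyond the two unconditional facts cited above is required.
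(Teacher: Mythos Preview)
Your argument is correct and follows the same sandwich idea as the paper, which likewise appeals to Lemma~\ref{reciprocal Es related 2} to place $E^{\pi_\ell}$ between $E^{\Pi_r}$ and $E^{\pi_r}$ for large arguments and then transfers the hypothesis. The paper's version is marginally more direct in that it uses the bare inequality $E^{\Pi_r}(z)<E^{\pi_\ell}(z)$ (for $z\ge X_0$) rather than the $+1$ asymptotic, and it keeps $L$ general throughout, concluding $E^{\pi_\ell}(x)-E^f(x)>L$ and $E^{\pi_\ell}(y)-E^f(y)<L$; this stronger form is what the application in Theorem~\ref{RH theorem}(b) actually needs, and the ``$0$'' in the stated conclusion appears to be a typo for ``$L$''.
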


\begin{proof}
By Lemma \ref{reciprocal Es related 2} there exists some $X_0$ such that for any real $z \ge X_0$,
$E^{\Pi_r}(z) < E^{\pi_\ell}(z) < E^{\pi_r}(z)$ and thus $E^{\Pi_r}(z)-E^f(z) < E^{\pi_\ell}(z)-E^f(z) < E^{\pi_r}(z)-E^f(z)$. By the assumption of this lemma, for any real $X$, one may select $x,y \ge \max\{X,X_0\}$ such that $E^{\Pi_r}(x)-E^f(x) > L$ and $E^{\pi_r}(y)-E^f(y) < L$.
As $x,y \ge X_0$, this implies $E^{\pi_\ell}(x)-E^f(x) > E^{\Pi_r}(x)-E^f(x) > L$ and $E^{\pi_\ell}(y) - E^f(y) < E^{\pi_r}(y)-E^f(y) < L$. This completes the proof.
\end{proof}

\begin{proof}[Proof of Theorem~\ref{RH theorem}(b)]
First, assume that RH is true, and choose any functions~$f$ and~$g$ in $\B\cup\B_r$ with $\beta_f < \beta_g$. Propositions~\ref{corollary:explicit_standards} and~\ref{corollary:explicit_reciprocals} show that $E^g(x) - E^f(x) = \beta_g - \beta_f + o(1) \ge 1 + o(1)$, which immediately implies that $E^f(x) < E^g(x)$ when~$x$ is sufficiently large.

Now, assume that RH is false. We first work out the special case $f=\theta$ and $g=\psi_r$, and afterwards we indicate how the proof extends to all other cases except those involving $g=\pi_\ell$.
By Lemmas~\ref{theta mellin} and~\ref{psi_r mellin}, for any real number~$L$ and any complex number~$s$ with $\Re s > 1$,
\begin{align} \label{first landau}
\int_2^\infty & \bigl( E^{\theta}(x) - E^{\psi_r}(x) + L \bigr) x^{-s-1/2} \, dx \notag \\ & = \biggl( -\frac 1s \frac{\zeta'}\zeta(s) + \frac1s \frac{\zeta'}\zeta(2s) + \frac{2^{1-s}}{1-s} + r_\theta(s) \biggr)
\\ &\qquad{} - \biggl( \frac 1{1-s} \frac{\zeta'}\zeta(s) - \frac{2^{1-s}}{(1-s)^2} + \frac{2^{1-s}(\log 2-C_0)}{1-s} \biggr) + \frac{2^{1/2-s}L}{s-\frac12}. \notag
\end{align}
We can verify that the right-hand side has a removable singularity at $s=1$ and no real singularities for $s>\frac12$. Moreover, the right-hand equals $\frac1{s(s-1)} \frac{\zeta'}\zeta(s)$ plus a function with no singularities in the half-plane $\Re s>\frac12$ except at $s=1$; in particular, since we assuming that RH is false, the right-hand side has non-real singularities in the half-plane $\Re s>\frac12$.

Since the analytic continuation of the left-hand side of equation~\eqref{first landau} has non-real singularities to the right of all its real singularities, we conclude from Landau's theorem~\cite[Lemma~15.1]{montgomery_vaughan_2006} that the integrand cannot eventually be of constant sign. In particular, the difference $E^{\theta}(x) - E^{\psi_r}(x)$ is neither bounded above nor bounded below.

To make this proof work, the key properties we needed the right-hand side to satisfy are:
\begin{itemize}
\item it has a meromorphic continuation to $\Re s>\frac13$,
\item it has a removable singularity at $s=1$ and no real singularities for $s>\frac12$;
\item it has non-removable singularities at the zeros of $\zeta(s)$.
\end{itemize}
If we choose any pair of distinct functions $f,g\in\B\cup\B_r\setminus\{\pi_\ell\}$, other than the five pairs in equation~\eqref{not these five}, we can use the lemmas in this section to verify that all three of these properties hold for $\int_2^\infty \bigl( E^{f}(x) - E^{g}(x) + L \bigr) x^{-s-1/2} \, dx$. (Indeed, the assumption $\beta_f < \beta_g$ is not even needed for this part of the proof.) The same Landau's theorem argument then finishes the proof of all cases of the theorem.

One further instructive example is $f=\psi_r$ and $g=\Pi$, in which case the relevant terms in the Mellin transform of $E^{\psi_r}(x) - E^{\Pi}(x) + L$ are
\begin{multline*}
\frac 1{1-s} \frac{\zeta'}\zeta(s) -  \biggl( -\frac 1s \frac{\zeta'}\zeta(s) + \frac 1{s^2} \log\bigl( \zeta(s)(s-1) \bigr) \biggr) \\ = \frac1{s(1-s)} \frac{\zeta'}\zeta(s) - \frac 1{s^2} \log\bigl( \zeta(s)(s-1) \bigr).
\end{multline*}
Both terms have singularities at the zeros of $\zeta(s)$; the fact that the singularity of the difference is non-removable follows from the observation that the first term has poles at each zero while the second term has a logarithmic singularity at each zero.

For $f,g\in\B\cup\B_r$ other than the five pairs in equation~\eqref{not these five}, the above proof can be extended to pairs that include $\pi^\ell$ by the observation it holds for all $f,g\in\B\cup\B_r\setminus\{\pi_\ell\}$ (except the aforementioned five pairs) and the application of Lemma \ref{lemma:pi_ell_bound_argument} to every $f \in \B\cup\B_r$ except for $\Pi_r$, $\pi_\ell$, and $\pi_r$.
\end{proof}

\section{Fourier transforms of distributions and densities} \label{joint dist section}

In this section, we record the Fourier transforms of the nine normalized error terms from Definition~\ref{defn:error_bias_zeta}, both individually and jointly, as well as the Fourier transforms (characteristic functions) of the random-variable distributions~$\mu_1$ and~$\mu_2$ from Definitions~\ref{eta1 def} and~\ref{eta2 def}. We then establish properties of these distributions that allow us to prove Theorems~\ref{theorem:log_density_zeta}--\ref{log_density part 2} and~\ref{theorem:joint_dist_zeta}.

\subsection{Fourier transforms}

Our first lemma allows us to use Fourier transforms to identify when a random variable on a high-dimensional space is really an affine image of a random variable on a lower-dimensional subspace.

\begin{lemma}
\label{lemma:simplify_fourier1}
Let $D \ge d \ge 1$ be integers. Let $\mathbf X = ( X_1,\ldots,X_D )$ be a random variable on~$\R^D$ whose distribution~$\mu$ has Fourier transform $\hat \mu(t_1,\ldots,t_D)$ and let~$\mathbf Y = ( Y_1,\ldots,Y_d )$ be a random variable on~$\R^d$ whose distribution~$\nu$ has Fourier transform $\hat \nu(t_1,\ldots,t_d)$.

Suppose there exists a partition $A_1,\ldots,A_d$ of $\{1,\ldots,D\}$, and real vectors $\vec c = (c_1,\ldots,c_D)$ and $\vec r = (r_1,\ldots,r_D)$ with $r_k \ne 0$ for all $1 \le k \le D$, such that the identity
\[
\hat\mu(t_1,\ldots,t_D) = \exp\biggl( -i\sum_{k=1}^D c_kt_k \biggr) \hat\nu\biggl(\sum_{k \in A_1} r_kt_k, \ldots, \sum_{k \in A_d} r_kt_k\biggr)
\]
holds for all $(t_1,\ldots,t_D) \in \R^D$. Then~$\mathbf X$ has the same distribution as the random variable
\begin{equation} \label{new rv}
\sum_{j=1}^d \sum_{k \in A_j} (r_kY_j+c_k) \mathbf e_k ,
\end{equation}
where $\{\mathbf e_1,\ldots,\mathbf e_D\}$ is the standard basis for~$\R^D$.
\end{lemma}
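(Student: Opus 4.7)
The plan is to compute the characteristic function (Fourier transform) of the random variable appearing on the right-hand side of~\eqref{new rv} directly from its definition, verify that it coincides with $\hat\mu$, and then invoke uniqueness of the Fourier transform for probability measures on~$\R^D$.

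Concretely, I would set
\[
\mathbf Z = \sum_{j=1}^d \sum_{k \in A_j} (r_kY_j+c_k) \mathbf e_k,
\]
so that the $k$th coordinate $(\mathbf Z)_k$ equals $r_kY_j + c_k$ for the unique index $j$ with $k \in A_j$. Because $A_1,\ldots,A_d$ partition $\{1,\ldots,D\}$, for every $\vec t = (t_1,\ldots,t_D) \in \R^D$ the inner product regroups as
\[
\sum_{k=1}^D t_k (\mathbf Z)_k = \sum_{k=1}^D c_k t_k + \sum_{j=1}^d Y_j \biggl(\, \sum_{k \in A_j} r_k t_k \biggr).
\]
Substituting this into the definition of the characteristic function of $\mathbf Z$ (with the sign convention consistent with the hypothesis, namely $\hat\mu(\vec t) = \Ex[e^{-i\vec t\cdot \mathbf X}]$, as forced by the $-i$ in the $\exp(-i\sum c_kt_k)$ factor) and pulling the deterministic factor outside the expectation yields precisely
\[
\exp\biggl( -i\sum_{k=1}^D c_k t_k \biggr) \, \hat\nu\biggl( \sum_{k \in A_1} r_kt_k,\ldots, \sum_{k \in A_d} r_kt_k \biggr),
\]
which by assumption equals $\hat\mu(t_1,\ldots,t_D)$. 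Consequently $\mathbf Z$ and $\mathbf X$ have the same characteristic function, and the uniqueness theorem for Fourier transforms of probability measures implies that they have the same distribution on~$\R^D$.

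I do not anticipate a substantive obstacle here; the argument is essentially a bookkeeping exercise, the only real content being that the partition of the coordinate indices lets the exponent be rearranged so that each block~$A_j$ contributes a single independent variable $Y_j$. Note that the assumption $r_k \neq 0$ is not actually used in this direction of the argument; it is presumably present to ensure that the representation~\eqref{new rv} is non-degenerate in the applications to $\mu_1$ and $\mu_2$ (Definitions~\ref{eta1 def} and~\ref{eta2 def}), where each coordinate of~$\mathbf X$ must depend nontrivially on exactly one~$Y_j$.
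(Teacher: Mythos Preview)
Your proposal is correct and matches the paper's own proof essentially line for line: compute the characteristic function of the random variable in~\eqref{new rv} by splitting off the deterministic $c_k$ terms, recognize the remaining expectation as $\hat\nu$ evaluated at the block sums $\sum_{k\in A_j} r_k t_k$, and conclude by uniqueness of the Fourier transform. Your side remark that the hypothesis $r_k\ne 0$ is not actually used in the proof is also accurate.
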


\begin{proof}
By definition, the Fourier transform of the random variable~\eqref{new rv} is
\begin{align*}
\Ex \exp\biggl(-i \sum_{j=1}^d\sum_{k \in A_j} & t_k (r_kY_j+c_k) \biggr) \\
&= \exp\biggl( -i\sum_{j=1}^D c_kt_k \biggr) \Ex \exp\biggl(-i \sum_{j=1}^d\sum_{k \in A_j} t_k r_k Y_j \biggr) \\
& = \exp\biggl( -i\sum_{j=1}^D c_kt_k \biggr) \hat\nu\biggl(\sum_{k \in A_1} r_kt_k, \ldots, \sum_{k \in A_d} r_kt_k\biggr) \\
&= \hat\mu(t_1,\ldots,t_D),
\end{align*}
and therefore the two random variables have the same distribution.
\end{proof}

Next we give a criterion for the absolute continuity of a probability measure based on its Fourier transform.

\begin{definition} \label{J0 definition}
The Bessel function of the first kind of order~$0$ is the even function from~$\R$ to~$\R$ defined by
\[
J_0(x) = \frac1\pi \int_0^\pi e^{ix\cos\theta} \,d\theta = \sum_{n=0}^\infty \frac{(-1)^n}{4^n} \frac{x^{2n}}{n!^2}
\]
(see~\cite[item~9.1.21]{AS} and~\cite[equation~(1.71.1)]{Szego}). The defining integral shows that $|J_0(x)| \le 1$, and in fact the stronger bound
\begin{equation} \label{Bessel bound}
|J_0(x)| \le \min\biggl\{1, \sqrt{\frac 2{\pi|x|}}\biggr\}
\end{equation}
holds for all $x\in\R$ (see~\cite[Theorem~7.31.2]{Szego}).
\end{definition}

\begin{lemma}
\label{lemma:bessel_product_continuity}
Let $\mu$ be a probability measure on $\R^d$ with Fourier transform
\[ \hat\mu(\vec t\,\,) = \prod_{m=1}^\infty J_0(|\vec r_m \cdot \vec t\,\,|) \]
for some sequence~$(\vec r_m)$ of vectors in~$\C^d$.
The measure~$\mu$ is absolutely continuous with respect to Lebesgue measure on~$\R^d$ if there exist $2d+1$ disjoint sets $S_1,\ldots,S_{2d+1}$ of indices such that for $1 \le k \le 2d+1$,
\[
\mathrm{Span}(\{ \Re \vec r_m \colon m \in S_k \} \cup \{ \Im \vec r_m \colon m \in S_k \}) = \R^d.
\]
In particular,~$\mu$ is absolutely continuous with respect to Lebesgue measure on~$\R^d$ if either
\begin{enumerate}[label={\rm(\alph*)}]
\item $d = 1$ and at least three elements of the sequence $\vec r_m$ are nonzero, or
\item $d = 2$ and $\{\Re \vec r_m, \Im \vec r_m\}$ is a basis for~$\R^2$ for at least five elements of the sequence~$(\vec r_m)$.
\end{enumerate}
\end{lemma}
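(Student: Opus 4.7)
The plan is to prove that $\hat\mu \in L^1(\R^d)$, after which Fourier inversion immediately yields a bounded continuous density for $\mu$ and hence absolute continuity. The heuristic is that each factor $J_0(|\vec r_m \cdot \vec t|)$ contributes at most square-root decay by the Bessel bound~\eqref{Bessel bound}, so $2d+1$ well-chosen factors should produce $|\hat\mu(\vec t)| \ll |\vec t|^{-(d+1/2)}$, which is integrable on $\R^d$ because in polar coordinates the surviving integrand is $r^{-3/2}$. The count $2d+1$ is sharp for this argument, since $2d$ factors yield only $|\vec t|^{-d}$ decay and, combined with the polar Jacobian $r^{d-1}$, produce the divergent tail $\int r^{-1}\,dr$.

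The key technical step is converting the spanning hypothesis on each $S_k$ into a \emph{quantitative} lower bound: for each $k \in \{1,\ldots,2d+1\}$ there exists $c_k > 0$ with
\[
\max_{m \in S_k} |\vec r_m \cdot \vec t\,| \ge c_k |\vec t\,| \qquad \text{for all } \vec t \in \R^d.
\]
I would establish this by noting
\[
|\vec r_m \cdot \vec t\,|^2 = \bigl( (\Re \vec r_m) \cdot \vec t\,\bigr)^2 + \bigl( (\Im \vec r_m) \cdot \vec t\,\bigr)^2,
\]
so if the left-hand maximum vanished at some nonzero $\vec t$, every $(\Re \vec r_m) \cdot \vec t$ and $(\Im \vec r_m) \cdot \vec t$ with $m \in S_k$ would vanish, contradicting the hypothesis. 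Extracting a finite subcollection of $S_k$ whose real and imaginary parts still span $\R^d$ turns this maximum into a norm on $\R^d$, and equivalence of norms supplies $c_k$. For each $\vec t$ one then picks an index $m_k(\vec t) \in S_k$ attaining $|\vec r_{m_k} \cdot \vec t\,| \ge c_k |\vec t\,|$; the disjointness of the $S_k$ ensures these indices are distinct, so bounding every other $J_0$ factor trivially by $1$ and applying~\eqref{Bessel bound} to the chosen ones gives
\[
|\hat\mu(\vec t\,)| \le \prod_{k=1}^{2d+1} |J_0(|\vec r_{m_k} \cdot \vec t\,|)| \le \prod_{k=1}^{2d+1} \sqrt{\frac{2}{\pi c_k |\vec t\,|}} = \frac{C}{|\vec t\,|^{d+1/2}}
\]
whenever $|\vec t\,|$ is large enough that $c_k |\vec t\,| \ge 1$ for every $k$. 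Splitting the Fourier integral into a bounded ball (where $|\hat\mu| \le 1$) and its complement then confirms that $\hat\mu \in L^1(\R^d)$.

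The two explicit cases follow by taking each $S_k$ to be a singleton. In case~(a) any three distinct indices $m$ with $\vec r_m \ne 0$ work, since at least one of $\Re \vec r_m, \Im \vec r_m$ is then a nonzero real number and trivially spans $\R$. In case~(b) any five indices with $\{\Re \vec r_m, \Im \vec r_m\}$ a basis for $\R^2$ obviously satisfy the spanning requirement. The main obstacle is really the quantitative bound $\max_{m \in S_k} |\vec r_m \cdot \vec t\,| \ge c_k |\vec t\,|$; once the $2d+1$ factors have been decoupled in this way the rest is standard Fourier analysis.
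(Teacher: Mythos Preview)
Your proposal is correct and follows essentially the same approach as the paper: show $\hat\mu\in L^1(\R^d)$ by extracting from each $S_k$ an index $m$ with $|\vec r_m\cdot\vec t\,|\gg|\vec t\,|$, then apply the Bessel bound~\eqref{Bessel bound} to the resulting $2d+1$ factors to obtain $|\hat\mu(\vec t\,)|\ll|\vec t\,|^{-d-1/2}$, and integrate in polar coordinates. The only cosmetic difference is in how the quantitative lower bound $\max_{m\in S_k}|\vec r_m\cdot\vec t\,|\ge c_k|\vec t\,|$ is justified: you invoke equivalence of norms on $\R^d$, whereas the paper phrases the same fact via the operator norm of the inverse of the matrix whose rows form a basis drawn from $\{\Re\vec r_m,\Im\vec r_m:m\in S_k\}$.
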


\begin{proof}
Parts (a) and (b) are special cases of the general statement (simply take the $S_k$ to be singleton sets containing the relevant elements~$r_m$). We prove the general statement using the fact (see~\cite[Lemma~A.6(b)]{IPNR}) that a probability measure~$\mu$ is absolutely continuous with respect to Lebesgue measure on~$\R^d$ (which we denote by~$\lambda_d$) if $\int_{\R^d} |\hat\mu(\vec t\,)| \, d\lambda(\vec t\,)$ converges.

By assumption, for each $k$ the set $\{ \Re \vec r_m \colon m \in S_k \} \cup \{ \Im \vec r_m \colon m \in S_k \}$ contains a basis~$B_k$ for~$\R^d$. Let $\mathbf M_k$ be the $d \times d$ matrix whose rows are the elements of~$B_k$, so that~$\mathbf M_k$ is invertible.
On one hand, the elements of $\mathbf M_k \vec t\,$ are $\{\vec b \cdot \vec t\,\colon \vec b\in B_k\}$, and so $|\mathbf M_k\vec t\,|^2 = \sum_{\vec b \in B_k} (\vec b \cdot \vec t\,)^2$; on the other hand, $|\mathbf M_k \vec t\,| \ge K_k |\vec t\,|$ for some constant~$K_k$ (namely the reciprocal of the operator norm of~$\mathbf M_k^{-1}$). It follows that there must be at least one $\vec b \in B_k$ such that $(\vec b \cdot \vec t\,)^2 \ge K_k^2|\vec t\,|^2/d$, and thus at least one $m \in S_k$ such that $|\vec r_m \cdot \vec t\,| \ge K_k|\vec t\,|/\sqrt d$. Consequently, by the inequality~\eqref{Bessel bound},
\begin{align*}
|\hat\mu(\vec t\,)| = \prod_{m=1}^\infty |J_0(|\vec r_m \cdot \vec t\,|)| &\le \prod_{k=1}^{2d+1} \prod_{m \in S_k} |J_0(\vec r_m \cdot \vec t\,)|\\&\le \prod_{k=1}^{2d+1} \min\biggl\{ 1, \sqrt{\frac {2\sqrt d}{\pi K_k|\vec t\,|}} \biggr\}\\& \le \min\{1,K|\vec t\,|^{-d-1/2}\}
\end{align*}
with $K=(\frac2\pi)^{d+1/2} d^{d/1/4} \min\{K_1,\dots,K_{2d+1}\}^{-d-1/2}$.
Finally, if $\mathcal B$ is the unit ball in~$\R^d$ centered at the origin and $\mathcal B^C$ its complement, then
\[ \int_{\R^d} |\hat\mu(\vec t\,)| \, d\lambda(\vec t\,) \le \int_{\mathcal B} 1 \, d\lambda(\vec t\,) + \int_{\mathcal B^C} K|\vec t\,|^{-d-1/2} \, d\lambda(\vec t\,) < \infty . \]
which confirms that~$\mu$ is absolutely continuous with respect to Lebesgue measure on~$\R^d$.
\end{proof}




\subsection{Properties of the distribution~\texorpdfstring{$\bm{\mu_2}$}{mu2}} \label{mu2 properties section}

In this section, we formally connect the normalized error terms in Definition~\ref{defn:error_bias_zeta} to the distributions~$\mu_1$ and~$\mu_2$ from Definitions~\ref{eta1 def} and~\ref{eta2 def}, as well as establishing some useful properties of these distributions, starting with their Fourier transforms.

\begin{lemma}
\label{note:common_measures}
The Fourier transform of $\mu_1$ is
\[
\hat\mu_1(t) = \prod_{m=1}^\infty J_0\Biggl(\frac {2t}{\sqrt{\frac 14+\gamma_m^2}}\Biggr).
\]
The Fourier transform of $\mu_2$ is
\[
\hat\mu_2(t_1,t_2) = \prod_{m=1}^\infty J_0\left(\frac {|2\gamma_m(t_1+t_2)+i(t_1-t_2)|}{\frac 14+\gamma_m^2}\right).
\]
\end{lemma}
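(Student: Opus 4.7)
The plan is to express each Fourier transform as an infinite product over $m$, using independence of the $Z_m$ to factor the expectation, and then to recognize each single-factor characteristic function as a Bessel value via the integral representation $J_0(x)=\tfrac{1}{2\pi}\int_0^{2\pi}e^{ix\cos\theta}\,d\theta$ from Definition~\ref{J0 definition} (noting that, since $J_0$ is even, the sign convention on the Fourier transform does not matter).

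For $\hat\mu_1$, I would parameterize $Z_m=e^{i\theta_m}$ with $\theta_m$ uniform on $[0,2\pi)$, so the $m$-th summand of $V_1$ equals $(2/\sqrt{\tfrac14+\gamma_m^2})\cos\theta_m$. Its characteristic function is then
\[
\Ex\bigl[\exp\bigl(-it\cdot(2/\sqrt{\tfrac14+\gamma_m^2})\cos\theta_m\bigr)\bigr]=J_0\bigl(2t/\sqrt{\tfrac14+\gamma_m^2}\bigr),
\]
and the stated product formula for $\hat\mu_1$ follows immediately by independence.

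For $\hat\mu_2$, a short calculation of $1/(\pm\tfrac12+i\gamma_m)$ and rationalization shows that the two coordinates of the $m$-th summand of $V_2$ are
\[
X_1=\frac{\cos\theta_m+2\gamma_m\sin\theta_m}{\tfrac14+\gamma_m^2},\qquad X_2=\frac{-\cos\theta_m+2\gamma_m\sin\theta_m}{\tfrac14+\gamma_m^2},
\]
so that for real $t_1,t_2$,
\[
t_1X_1+t_2X_2=\frac{(t_1-t_2)\cos\theta_m+2\gamma_m(t_1+t_2)\sin\theta_m}{\tfrac14+\gamma_m^2}.
\]
Using $a\cos\theta+b\sin\theta=\sqrt{a^2+b^2}\cos(\theta-\phi)$ for a suitable phase $\phi$, together with translation-invariance of the uniform distribution on $[0,2\pi)$, the single-term characteristic function equals $J_0\bigl(\sqrt{(t_1-t_2)^2+4\gamma_m^2(t_1+t_2)^2}/(\tfrac14+\gamma_m^2)\bigr)$. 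Recognizing the numerator as $|2\gamma_m(t_1+t_2)+i(t_1-t_2)|$ gives exactly the claimed form, and multiplying over $m$ concludes the computation.

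The only technical nuance, which I would address briefly, is to ensure the infinite products actually converge to genuine characteristic functions. Since $J_0(x)=1-x^2/4+O(x^4)$ near $0$ and $\sum_m 1/\gamma_m^2$ converges by the Riemann--von Mangoldt estimate, each $\log J_0$ factor is $O(|t|^2/\gamma_m^2)$ uniformly on compact sets of $t$ (resp.\ $(t_1,t_2)$), so the products converge absolutely and uniformly on compacta. The almost-sure convergence of $V_1$ and $V_2$ themselves, which legitimizes the swap between the expectation and the infinite product, is part of the Rubinstein--Sarnak framework already invoked in Definition~\ref{eta1 def}. I expect no substantive obstacle here: the only spot that requires care is the algebraic bookkeeping in the $\mu_2$ case that repackages $(t_1-t_2)^2+4\gamma_m^2(t_1+t_2)^2$ into the complex modulus appearing in the statement.
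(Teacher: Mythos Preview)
Your proposal is correct and follows essentially the same route as the paper: compute the characteristic function of each summand via the integral representation of $J_0$, multiply over $m$ by independence, and invoke square-summability of the coefficients for convergence. The only cosmetic difference is that the paper packages the single-term computation abstractly as $\Ex\,e^{-i\vec t\cdot 2\Re(Z\vec\alpha)}=J_0(2|\vec t\cdot\vec\alpha|)$ for a generic complex vector $\vec\alpha$ and then specializes to $\vec\alpha=1/(\tfrac12+i\gamma_m)$ and $\vec\alpha=\bigl(1/(\tfrac12+i\gamma_m),\,1/(-\tfrac12+i\gamma_m)\bigr)$, whereas you carry out the same calculation coordinate-wise with the explicit trigonometric identity; the content is identical.
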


\begin{proof}
If $Z$ is a random variable uniformly distributed on the complex unit circle and $\vec \alpha \in \C^k$, then the Fourier transform of $2\Re(Z\vec\alpha)$, for any real vector~$\vec t\,$, is by definition.
\[
\Ex e^{-i\vec t\, \cdot 2\Re(Z \vec \alpha)} = \Ex e^{-i\Re(2(\vec t\, \cdot \vec \alpha)Z)} = \int_0^1 \exp\Bigl( -i\Re\bigl( 2(\vec t\, \cdot \vec \alpha)e^{2\pi i\theta} \bigr) \Bigr) \,d\theta = J_0(2|\vec t\, \cdot \vec \alpha|),
\]
where the last equality follows from a standard calculation (see for example \cite[starting from equation~(5.1)]{akbary2013limiting}). Given an independent collection of many such random variables, the Fourier transform of their sum is simply the product of their individual Fourier transforms. This establishes the lemma both for~$\mu_1$, for which each~$\vec \alpha$ is of the form $1/(\frac12+i\gamma_m) \in \C^1$, and for~$\mu_2$, for which each~$\vec \alpha$ is of the form $\bigl( 1/(\frac12+i\gamma_m), 1/(-\frac12+i\gamma_m) \bigr) \in \C^2$. (In both cases, the square-summability of the series of coefficients implies both the almost-sure convergence of the overall random variable and the convergence of the infinite product that forms the Fourier transform.)
\end{proof}


\begin{corollary}\label{nine mu1s cor}
Assume RH and LI.
For every $f\in\B\cup\B_r$, the normalized error term $E^f(x)$ has a limiting logarithmic distribution equal to~$\mu_1+\beta_f$.
\end{corollary}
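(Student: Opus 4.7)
The plan is to combine the explicit formulas of Propositions~\ref{corollary:explicit_standards} and~\ref{corollary:explicit_reciprocals} with the Rubinstein--Sarnak probabilistic machinery that already underlies Lemma~\ref{note:common_measures}. Each of the nine explicit formulas can be written, after absorbing the bias term onto the left-hand side, in the common shape
\[
E^f(x) - \beta_f = -\Re \sum_{0 < \gamma_m \le X} \frac{2 x^{i\gamma_m}}{\sigma_f + i\gamma_m} + o(1),
\]
where $\sigma_f = \tfrac12$ when $f \in \B$ and $\sigma_f = -\tfrac12$ when $f \in \B_r$, and the $o(1)$ is valid provided $X = X(x)$ is chosen appropriately (e.g., $X = \sqrt{x}\log^2 x$). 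The critical algebraic observation is that $|\sigma_f + i\gamma_m| = \sqrt{\tfrac14 + \gamma_m^2}$ is \emph{independent} of the sign of $\sigma_f$; this is what will let the standard and reciprocal families share the same limiting distribution.

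Next I would identify the Fourier transform of the limiting logarithmic distribution of $E^f(x) - \beta_f$ by the standard Kronecker--Weyl route. For any fixed $M$, the truncated partial sum $S_{M,f}(x) := -\Re \sum_{m=1}^M 2 x^{i\gamma_m}/(\sigma_f + i\gamma_m)$ is a trigonometric polynomial in the phases $\gamma_1 \log x,\dots,\gamma_M \log x$; assuming LI, these phases are jointly equidistributed modulo $2\pi$ with respect to logarithmic measure. Hence the logarithmic average of $\exp\bigl(-it S_{M,f}(x)\bigr)$ over $[2,X]$ converges, as $X \to \infty$, to
\[
\Ex \exp\!\biggl(-it \sum_{m=1}^M 2\Re\!\frac{Z_m}{\sigma_f + i\gamma_m}\biggr) = \prod_{m=1}^M J_0\!\biggl(\frac{2t}{\sqrt{\tfrac14 + \gamma_m^2}}\biggr),
\]
with the $Z_m$ independent and uniform on the unit circle, by exactly the computation in the proof of Lemma~\ref{note:common_measures}. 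This value is independent of the sign of $\sigma_f$ and converges to $\hat\mu_1(t)$ as $M \to \infty$; so by L\'evy's continuity theorem the limiting logarithmic distribution of $E^f(x) - \beta_f$ is $\mu_1$, and hence the limiting logarithmic distribution of $E^f(x)$ itself is $\mu_1 + \beta_f$.

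The step I expect to require the most care is the interchange of the two limits: to rigorously replace $E^f(x) - \beta_f$ by the $M$-truncated sum $S_{M,f}(x)$, I need the residual tail $-\Re \sum_{m > M,\,\gamma_m \le X(x)} 2x^{i\gamma_m}/(\sigma_f + i\gamma_m)$ to vanish (together with the $o(1)$ remainder from the explicit formula) in a mode strong enough to kill its effect on the Fourier transform as $M \to \infty$. This is the standard $L^2$ approximation argument going back to Rubinstein--Sarnak and refined by Akbary--Ng--Shahabi and by Lamzouri, and it works here because $\sum_m (\tfrac14 + \gamma_m^2)^{-1}$ converges. As a practical shortcut, once $f = \psi$ and $f = \psi_r$ have been handled in this way, the remaining seven cases follow immediately from Theorem~\ref{o(1) theorem}(a): within each family (standard or reciprocal) $E^f(x) - E^g(x) \to \beta_f - \beta_g$, so the desired limiting distribution is obtained by a pure translation.
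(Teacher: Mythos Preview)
Your proposal is correct and is essentially the paper's approach: the paper simply cites \cite[Theorems~1.4 and~1.9]{akbary2013limiting} as a black box applied to the explicit formulas of Propositions~\ref{corollary:explicit_standards} and~\ref{corollary:explicit_reciprocals}, then matches the resulting Fourier transform against Lemma~\ref{note:common_measures}. What you have written is a faithful sketch of what those Akbary--Ng--Shahabi theorems actually do (Kronecker--Weyl under LI plus an $L^2$ tail bound), together with the key algebraic remark that $|\pm\tfrac12+i\gamma_m|$ does not depend on the sign, which the paper records separately in Remark~\ref{coincidence remark}. Your final shortcut via Theorem~\ref{o(1) theorem}(a) is a valid alternative for the seven remaining cases, though the paper treats all nine uniformly.
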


\begin{proof}
The claim follows immediately by applying~\cite[Theorems~1.4 and~1.9]{akbary2013limiting} to the nine explicit formulas given in Propositions~\ref{corollary:explicit_standards} and~\ref{corollary:explicit_reciprocals}, and confirming that in all cases the result matches the Fourier transform of~$\mu_1$ computed in Lemma~\ref{note:common_measures}.
\end{proof}

\begin{remark} \label{coincidence remark}
Each of those nine explicit formulas is a sum of terms of the form $2x^{i\gamma}$ weighted by a coefficient. For the four standard functions in Proposition~\ref{corollary:explicit_standards} that coefficient is $1/\rho = 1/(\frac12+i\gamma)$, while for the five reciprocal functions in Proposition~\ref{corollary:explicit_reciprocals} that coefficient is $1/(\rho-1) = 1/(-\frac12+i\gamma)$. The coincidence that $\bigl| 1/(\frac12+i\gamma) \bigr| = \bigl| 1/(-\frac12+i\gamma) \bigr|$ is what causes these nine limiting logarithmic distributions to be identical up to the bias factors. Furthermore, the fact that $1/(\frac12+i\gamma)$ and $1/(-\frac12+i\gamma)$ are very close (when~$\gamma$ is large) but not equal is what causes the two coordinates of~$\mu_2$ to be highly but not perfectly correlated.
\end{remark}

\begin{lemma}
\label{lemma:mu_one_continuity}
The distribution~$\mu_2$ is absolutely continuous with respect to Lebesgue measure on~$\R^2$.
\end{lemma}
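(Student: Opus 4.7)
The plan is to apply Lemma~\ref{lemma:bessel_product_continuity}(b), which provides a sufficient condition for a probability measure on~$\R^2$ whose Fourier transform factors as a product of Bessel functions to be absolutely continuous. The Fourier transform of~$\mu_2$ was already computed in Lemma~\ref{note:common_measures}, so the whole task reduces to identifying the underlying vectors $\vec r_m \in \C^2$ and verifying the basis condition.

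First I would match the product expression for $\hat\mu_2(t_1,t_2)$ against the template $\prod_m J_0(|\vec r_m \cdot \vec t\,|)$. Inspecting the definition of~$V_2$, the natural choice is
\[
\vec r_m = \biggl( \frac{2}{\tfrac12+i\gamma_m},\, \frac{2}{-\tfrac12+i\gamma_m} \biggr) = \frac{1}{\tfrac14+\gamma_m^2} \bigl( 1-2i\gamma_m,\, -(1+2i\gamma_m) \bigr),
\]
and a quick computation of $\vec r_m \cdot (t_1,t_2)$ recovers exactly the argument $|2\gamma_m(t_1+t_2)+i(t_1-t_2)|/(\tfrac14+\gamma_m^2)$ appearing in Lemma~\ref{note:common_measures}.

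Next I would compute
\[
\Re \vec r_m = \frac{1}{\tfrac14+\gamma_m^2}(1,-1), \qquad \Im \vec r_m = \frac{-2\gamma_m}{\tfrac14+\gamma_m^2}(1,1).
\]
Since $\gamma_m > 0$ for every~$m$, the vectors $(1,-1)$ and $(1,1)$ are linearly independent, so $\{\Re \vec r_m, \Im \vec r_m\}$ forms a basis of~$\R^2$ for \emph{every} index~$m \ge 1$. In particular the hypothesis of Lemma~\ref{lemma:bessel_product_continuity}(b) is satisfied (with room to spare), and the conclusion follows immediately.

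There is no real obstacle here; the substantive work is all contained in Lemma~\ref{lemma:bessel_product_continuity}, which has been established upstream. The only thing to be careful about is making the identification of the~$\vec r_m$ consistent with the factor of~$2$ absorbed in the definition of~$V_2$ (so that $|\vec r_m \cdot \vec t\,| = 2|\vec \alpha_m \cdot \vec t\,|$ with $\vec \alpha_m = (1/(\tfrac12+i\gamma_m),\,1/(-\tfrac12+i\gamma_m))$), and then observing that $\Re \vec r_m$ and $\Im \vec r_m$ are never parallel because one is a multiple of $(1,-1)$ and the other a nonzero multiple of $(1,1)$.
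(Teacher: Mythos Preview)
Your proposal is correct and essentially identical to the paper's proof: both identify vectors $\vec r_m\in\C^2$ so that $\hat\mu_2(\vec t\,)=\prod_m J_0(|\vec r_m\cdot\vec t\,|)$, observe that $\Re\vec r_m$ and $\Im\vec r_m$ are nonzero multiples of $(1,-1)$ and $(1,1)$ respectively, and invoke Lemma~\ref{lemma:bessel_product_continuity}. Your $\vec r_m$ differs from the paper's by a unimodular factor of~$-i$, which is immaterial since only $|\vec r_m\cdot\vec t\,|$ enters.
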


\begin{proof}
Lemma~\ref{note:common_measures} can be written as
\[
\hat\mu_2(\vec t\,) = \prod_{m=1}^\infty J_0\bigl( |\vec r_m \cdot \vec t\,| \bigr)
\quad\text{where}\quad
\vec r_m = \frac {2\gamma_m}{1/4+\gamma_m^2}(1,1) + \frac i{1/4+\gamma_m^2}(1,-1) ;
\]
since the real and imaginary parts of each~$\vec r_m$ form a basis for~$\R^2$, the absolute continuity of~$\mu_2$ follows from Lemma~\ref{lemma:bessel_product_continuity}.
\end{proof}

Our last goal of this section is to determine the precise support of~$\mu_2$, for which we need a preliminary lemma.

\begin{lemma} \label{greedy lemma}
Assume RH. For any real number $0 \le v < \frac w2$, there exists a set $M\subset\N$ of positive integers, containing only finitely many pairs of consecutive integers, such that $\sum_{m\in M} 1/(\frac14+\gamma_m^2) = v$.
\end{lemma}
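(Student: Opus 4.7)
The plan is to set $a_m := 1/(\tfrac14+\gamma_m^2)$; under RH, Lemma~\ref{lemma:rho_squared} gives $\sum_{m=1}^\infty a_m = w/2$, with the sequence $(a_m)$ strictly decreasing and positive. Writing $T_m := \sum_{k>m} a_k$ for its tail, the key auxiliary fact I would first establish is the ``skip-Kakeya'' inequality
\[
a_m \;\le\; \sum_{j=1}^\infty a_{m+2j}
\qquad\text{for all } m \ge m_0,
\]
for some absolute constant $m_0$. This follows from the Riemann--von Mangoldt asymptotic $\gamma_m \sim 2\pi m/\log m$: the left-hand side is $\Theta((\log m)^2/m^2)$ while the right-hand side is comparable to $T_m/2 = \Theta((\log m)^2/m)$, so the ratio tends to infinity with $m$.

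Given $v\in[0,w/2)$, set $\delta := w/2-v$ and choose $N$ to be the largest nonnegative integer with $T_N \ge \delta$. Taking $M_1 := \{1,2,\ldots,N\}$ contributes $\sum_{m\in M_1}a_m = w/2 - T_N$, leaving residual
\[
r \;:=\; v - \sum_{m \in M_1} a_m \;=\; T_N - \delta \;\in\; [0,\,a_{N+1}),
\]
by the choice of $N$ together with $T_N - T_{N+1} = a_{N+1}$. I would then cover $r$ by a skip-greedy construction: process $m = N+2, N+3, \ldots$ in turn, including $m$ in $M_2$ precisely when $m-1 \notin M_2$ and $a_m$ is at most the current residual. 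For $N \ge m_0$, the skip-Kakeya inequality (applied at every index $m \ge N+2$) is exactly the condition needed to run a standard Kakeya-type induction and conclude that this procedure drives the residual to zero exactly, producing $M_2 \subseteq \{N+2, N+3,\ldots\}$ with no consecutive pairs and $\sum_{m\in M_2} a_m = r$.

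Setting $M := M_1 \cup M_2$ gives $\sum_{m\in M} a_m = v$; the only consecutive pairs in $M$ are the $\max(0,N-1)$ pairs $(1,2),\ldots,(N-1,N)$ inside $M_1$, which is finite. For the bounded range of $v$ corresponding to $N < m_0$, one can either verify the skip-Kakeya inequality for small $m$ directly (for the explicit zeta-zero sequence it already holds at $m=1$), or adjust $M_1$ to a suitable subset of $\{1,\ldots,m_0\}$ tailored so that the residual lands in the skip-greedy reachable range. The main obstacle is precisely the verification of the skip-Kakeya inequality together with the compatibility check $r < a_{N+1} \le \sum_{j\ge 1} a_{N+1+2j}$ that ensures the skip-greedy reaches the target residual.
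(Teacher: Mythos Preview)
Your approach is correct but genuinely different from the paper's. The paper runs the \emph{ordinary} greedy algorithm (at each step, pick the smallest index whose term still fits under~$v$), and then argues \emph{a posteriori} that only finitely many consecutive pairs can occur: if $m-1\notin M$ but $m,m+1\in M$, the greedy's failure to select $m-1$ forces $a_m+a_{m+1}<a_{m-1}$, which the asymptotic $a_m\asymp(\log m)^2/m^2$ rules out for large~$m$. Your construction instead bakes the conclusion into the algorithm: an initial block $\{1,\dots,N\}$ absorbs most of~$v$, and then a skip-greedy on $\{N+2,N+3,\dots\}$ handles the residual $r<a_{N+1}$ without ever creating adjacent indices, with the skip-Kakeya inequality $a_m\le\sum_{j\ge1}a_{m+2j}$ playing the role of the usual tail condition. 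Both arguments ultimately rest on the same zero-counting asymptotic, just deployed differently (the paper compares $a_{m-1}$ to $a_m+a_{m+1}$; you compare $a_m$ to half of its tail). The paper's route is a bit cleaner in that the construction is uniform and the finiteness of consecutive pairs falls out of one inequality, whereas your version needs the separate bookkeeping for small~$N$ (or a direct check that skip-Kakeya already holds from $m=1$). On the other hand, your construction is more explicit about \emph{why} the set has the required structure, and the skip-greedy idea would transfer to any sequence with $a_m=o(T_m)$, not just this one. One small point: your invariant actually needs skip-Kakeya at $m=N+1$ (to seed $r<a_{N+1}\le\sum_{j\ge1}a_{N+1+2j}\le a_{N+2}+a_{N+4}+\cdots$), not only at $m\ge N+2$; you note this in the final sentence, so just make sure the threshold $m_0$ covers it.
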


\begin{proof}
We prove that the greedy algorithm produces such a subset $M=\{m_1,m_2,\dots\}$: given $m_1,\dots,m_{k-1}$, let $m_k$ be the smallest integer such that $\sum_{j=1}^k 1/(\frac14+\gamma_{m_j}^2) \le v$. Since $\sum_{m\in\N} 1/(\frac14+\gamma_m^2) = \frac w2$ by equation~\eqref{same comp}, this greedy algorithm produces a finite or infinite set~$M$ such that $\sum_{m\in M} 1/(\frac14+\gamma_m^2) = v$; the fact that equality holds when~$M$ is infinite follows in a standard way from the fact that each $1/(\frac14+\gamma_m^2)$ is smaller than $\sum_{j=m+1}^\infty 1/(\frac14+\gamma_m^2)$. The same fact shows that~$M$ cannot contain every integer from some point onwards, since the greedy algorithm would have actually selected the last integer not in~$M$.

To argue that~$M$ contains only finitely many pairs of consecutive integers, it thus suffices to show that the set of integers~$m$ with $m-1\notin M$ and $m,m+1\in M$ is bounded above. Indeed, for any such~$m$, we must have
\[
\frac1{\frac14+\gamma_m^2} + \frac1{\frac14+\gamma_{m+1}^2} \le t - \sum_{j\in M\cap\{1,\dots,m-1\}} \frac1{\frac14+\gamma_j^2} < \frac1{\frac14+\gamma_{m-1}^2}.
\]
But the classic counting function for the zeros of $\zeta(s)$ implies that $\gamma_m = \frac{2\pi m}{\log m} \bigl( 1 + O(\frac1{\log m}) \bigr)$, and therefore we would have
\begin{multline*}
\frac{\log^2m}{4\pi^2m^2} \biggl( 1 + O\Bigl( \frac1{\log m} \Bigr) \biggr) + \frac{\log^2(m+1)}{4\pi^2(m+1)^2} \biggl( 1 + O\Bigl( \frac1{\log m} \Bigr) \biggr) \\ < \frac{\log^2(m-1)}{4\pi^2(m-1)^2} \biggl( 1 + O\Bigl( \frac1{\log m} \Bigr) \biggr),
\end{multline*}
which is false when~$m$ is sufficiently large.
\end{proof}

\begin{lemma}
\label{lemma:mu_one_max_support}
Assume RH. The support of the distribution~$\mu_2$ is equal to the diagonal strip $\{ (x,y) \in \R^2 \colon |x-y| \le w \}$, where~$w$ is as in equation~\eqref{w def}.
\end{lemma}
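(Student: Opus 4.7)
The plan is to prove the two inclusions separately. For the upper bound (support contained in the strip), I would start by computing the difference of the two coordinates of the $m$-th summand of~$V_2$:
\[
2\Re\frac{Z_m}{\tfrac12+i\gamma_m} - 2\Re\frac{Z_m}{-\tfrac12+i\gamma_m} = 2\Re\biggl( Z_m \cdot \frac{1}{\tfrac14+\gamma_m^2} \biggr) = \frac{2\Re Z_m}{\tfrac14+\gamma_m^2}.
\]
Writing $V_2^{(1)}$ and $V_2^{(2)}$ for the two coordinates of~$V_2$, and using $|\Re Z_m|\le 1$, summing over~$m$ yields
\[ \bigl| V_2^{(1)} - V_2^{(2)} \bigr| \le \sum_{m=1}^\infty \frac{2}{\tfrac14+\gamma_m^2} = \sum_\rho \frac{1}{\rho(1-\rho)} = w \]
almost surely by Lemma~\ref{lemma:rho_squared}, which places the support of~$\mu_2$ inside the claimed strip.

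For the opposite inclusion, fix a target point $(x_0,y_0)$ with $|x_0-y_0| < w$ (boundary points will then follow automatically since supports are closed) and some $\epsilon > 0$. Writing $Z_m=e^{i\theta_m}$, a direct computation shows that the $m$-th summand $(U_m^{(1)},U_m^{(2)})$ satisfies
\[
U_m^{(1)} - U_m^{(2)} = \frac{2\cos\theta_m}{\tfrac14+\gamma_m^2}, \qquad U_m^{(1)} + U_m^{(2)} = \frac{4\gamma_m \sin\theta_m}{\tfrac14+\gamma_m^2}.
\]
Setting $v_0 = x_0-y_0$ and $u_0 = x_0+y_0$, the idea is to steer the difference-coordinate to~$v_0$ and the sum-coordinate to~$u_0$ via complementary choices. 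First, apply Lemma~\ref{greedy lemma} with target $|v_0|/2$ to obtain $M\subset\N$ with $\sum_{m\in M}\frac{1}{\tfrac14+\gamma_m^2} = |v_0|/2$, and force $\theta_m\approx 0$ or $\pi$ (according to $\mathrm{sign}(v_0)$) for $m\in M$, so that $\cos\theta_m\approx \pm 1$ and $\sin\theta_m\approx 0$. Second, for $m\notin M$, force $\theta_m\approx\pm\pi/2$ with signs $\epsilon_m\in\{\pm1\}$ chosen greedily so that $\sum_{m\le N,\,m\notin M} \epsilon_m \frac{4\gamma_m}{\tfrac14+\gamma_m^2}$ lies within~$\epsilon/4$ of~$u_0$ for some~$N$. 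This greedy choice will succeed because $\frac{4\gamma_m}{\tfrac14+\gamma_m^2}\sim 4/\gamma_m$ gives a nonsummable sequence, and the property of~$M$ from Lemma~\ref{greedy lemma} (only finitely many pairs of consecutive integers) ensures that the complement $\N\setminus M$ inherits this divergence. Each prescribed arc constraint on $\theta_m$ has positive probability, and the constraints for distinct $m\le N$ are independent.

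The main obstacle will be controlling the tail sum over $m > N$. The difference-coordinate tail is uniformly bounded by $\sum_{m>N}\frac{2}{\tfrac14+\gamma_m^2}$, which tends to zero, causing no trouble. The sum-coordinate tail is more delicate, because the individual bounds $\frac{4\gamma_m}{\tfrac14+\gamma_m^2}$ are not summable; however, its variance $\sum_{m>N}\frac{8\gamma_m^2}{(\tfrac14+\gamma_m^2)^2} \le 8 \sum_{m>N}\frac{1}{\tfrac14+\gamma_m^2}$ tends to zero as $N\to\infty$, so by Chebyshev's inequality the tail stays below any prescribed~$\eta$ with positive probability. Since the head and tail depend on disjoint independent $\theta_m$, their probabilities multiply. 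For $N$ large and the arc widths narrow enough, this produces an event of positive probability on which $V_2$ lies within~$\epsilon$ of $(x_0,y_0)$, so $(x_0,y_0)$ lies in the support of~$\mu_2$, completing the proof.
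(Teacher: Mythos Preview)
Your proposal is correct and takes essentially the same approach as the paper: both use Lemma~\ref{greedy lemma} to select~$M$, then assign $Z_m\approx\pm1$ for $m\in M$ to steer the difference coordinate and $Z_m\approx\pm i$ for $m\notin M$ to steer the sum coordinate, exploiting the divergence of $\sum_{m\notin M}\frac{4\gamma_m}{1/4+\gamma_m^2}$. The paper simply exhibits exact values $Z_m\in\{1,\pm i\}$ achieving $V_2=(x_0,y_0)$ and implicitly invokes the standard passage from ``achievable value'' to ``support point''; your head/tail split with Chebyshev makes that passage explicit, which is a welcome addition of rigor but not a different argument.
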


\begin{proof}
By Definition~\ref{eta2 def}, the absolute difference between the two components of~$V_2$ is
\begin{align*}
\biggl| 2\Re\sum_{m=1}^\infty Z_m \biggl( \frac 1{\frac12+i\gamma_m}-\frac 1{-\frac12+i\gamma_m} \biggr) \biggr| &= \biggl| 2\Re\sum_{m=1}^\infty \frac{Z_m}{\frac14+\gamma_m^2} \biggr|\\& \le 2 \sum_{m=1}^\infty \frac1{\frac14+\gamma_m^2} = \sum_\rho \frac 1{\rho(1-\rho)} = w
\end{align*}
by Lemma~\ref{lemma:rho_squared} (this is the same computation as in equation~\eqref{same comp}). Thus, the support of~$\mu_2$ is contained in the given diagonal strip.

To prove the converse, it suffices to show that the open diagonal strip is contained in the support, since the support is always closed. Any point $(x,y)$ in the open diagonal strip can be written as $(x,y) = (1,1)u+(1,-1)v$ where $|v| < \frac w2$. By the symmetry of~$\mu_2$ it suffices to consider $0\le v<\frac w2$. By Lemma~\ref{greedy lemma} we may choose $M\subset\N$, containing only finitely many pairs of consecutive integers, such that $\sum_{m\in M} 1/(\frac14+\gamma_m^2) = v$.

We are now ready to assign values to the independent random variables~$Z_m$ in Definition~\ref{eta2 def} that force $V_2=(x,y)$. Note that
\begin{align*}
V_2 &= \sum_{m=1}^\infty \biggl( 2\Re \frac{Z_m}{\frac12+i\gamma_m}, 2\Re \frac{Z_m}{-\frac12+i\gamma_m} \biggr)\\ &= \Re \sum_{m=1}^\infty \biggl( -iZ_m\frac{2\gamma_m}{\frac14+\gamma_m^2} (1,1) + Z_m\frac1{\frac14+\gamma_m^2}(1,-1) \biggr).
\end{align*}
Set $Z_m = 1$ for each $m\in M$, and set $Z_m = \pm i$ for each $m\in\N\setminus M$, so that
\[
V_2 = (1,1) \sum_{m\in\N\setminus M} \pm\frac{2\gamma_m}{\frac14+\gamma_m^2} + (1,-1) \sum_{m\in M} \frac1{\frac14+\gamma_m^2}.
\]
The second sum equals~$v$ by construction. Since~$M$ contains only finitely many pairs of consecutive integers, $\N\setminus M$ is a set of integers of density at least~$\frac12$; therefore, the first sum diverges when all terms are positive. Consequently, by the Riemann rearrangement theorem, there is a choice of signs that makes that sum equal~$u$, as required.
\end{proof}

\subsection{Joint distribution and densities} \label{joint dist subsection}

We can now establish the joint limiting logarithmic distribution of all nine normalized error terms from Definition~\ref{defn:error_bias_zeta}, from which we can deduce Theorems~\ref{theorem:log_density_zeta}--\ref{log_density part 2} and~\ref{theorem:joint_dist_zeta}.

\begin{proposition} \label{nine prop}
Assume RH and LI. Define the $\R^9$-valued function
\begin{multline*}
\mathbf E(x) = \bigl( E^\psi(x),E^\theta(x),E^\Pi(x),E^\pi(x),\\ E^{\psi_r}(x),E^{\theta_r}(x),E^{\Pi_r}(x),E^{\pi_r}(x),E^{\pi_\ell}(x) \bigr).
\end{multline*}
Then the limiting logarithmic distribution of $\mathbf E(x)$ exists and is the same as the distribution of the random variable
\[
(0,{-1},0,{-1},0,1,0,1,1) + Y_1(1,1,1,1,0,0,0,0,0) + Y_2(0,0,0,0,1,1,1,1,1)
\]
where $Y_1$ and $Y_2$ are the components of the $\R^2$-valued random variable~$V_2$ from Definition~\ref{eta2 def}.
\end{proposition}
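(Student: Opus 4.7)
The plan is to reduce the nine-dimensional claim to a two-dimensional joint limit, identify that two-dimensional limit with $\mu_2$ via its Fourier transform, and then lift back to $\R^9$ using Lemma~\ref{lemma:simplify_fourier1}.

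The first step is purely structural. Propositions~\ref{corollary:explicit_standards} and~\ref{corollary:explicit_reciprocals} show that every $E^f$ with $f\in\B$ has the same oscillatory part
\[
W_1(x) = -\Re\sum_{0<\gamma\le X} \frac{2 x^{i\gamma}}{\tfrac12+i\gamma}
\]
up to the constant $\beta_f$ and an admissible truncation error, while every $E^f$ with $f\in\B_r$ has the same oscillatory part
\[
W_2(x) = -\Re\sum_{0<\gamma\le X} \frac{2 x^{i\gamma}}{-\tfrac12+i\gamma}
\]
up to $\beta_f$ and an admissible truncation error. Setting $\vec c=(0,-1,0,-1,0,1,0,1,1)$, $\vec u=(1,1,1,1,0,0,0,0,0)$, and $\vec v=(0,0,0,0,1,1,1,1,1)$, one obtains
\[
\mathbf E(x) = \vec c + W_1(x)\vec u + W_2(x)\vec v + o_{\log}(1),
\]
so that the joint limiting logarithmic distribution of $\mathbf E(x)$ reduces to that of the pair $\bigl(W_1(x),W_2(x)\bigr)$ via a deterministic affine map.

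Next, I would apply the vector-valued form of the Akbary--Ng--Shahabi framework (the same result already invoked in Corollary~\ref{nine mu1s cor}, but in two dimensions rather than one) to $\bigl(W_1(x),W_2(x)\bigr)$. Under RH and LI, the fractional parts $\{(\gamma_m \log x)/(2\pi)\}$ equidistribute jointly on the infinite-dimensional torus with respect to logarithmic measure; because the same factor $x^{i\gamma_m}$ drives the $m$-th term in both $W_1$ and $W_2$, a single uniform random variable $Z_m$ couples the two coordinates, and the limiting joint Fourier transform equals
\[
\prod_{m=1}^\infty \Ex\exp\biggl( -i\cdot 2\Re\Bigl( Z_m\Bigl( \tfrac{t_1}{\tfrac12+i\gamma_m} + \tfrac{t_2}{-\tfrac12+i\gamma_m} \Bigr) \Bigr) \biggr).
\]
The same computation used in the proof of Lemma~\ref{note:common_measures} identifies this product with $\hat\mu_2(t_1,t_2)$. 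Hence $\bigl(W_1(x),W_2(x)\bigr)$ has limiting logarithmic distribution $\mu_2$, i.e.\ the same joint distribution as $V_2=(Y_1,Y_2)$.

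Finally, I would invoke Lemma~\ref{lemma:simplify_fourier1} with $D=9$, $d=2$, partition $A_1=\{1,2,3,4\}$, $A_2=\{5,6,7,8,9\}$, shift vector $\vec c$, and scale vector $\vec r=(1,1,\ldots,1)$. The decomposition from step one shows that the nine-variable Fourier transform of $\mathbf E$ equals
\[
\exp(-i\,\vec c\cdot \vec t\,)\,\hat\mu_2(\vec u\cdot \vec t,\,\vec v\cdot \vec t),
\]
which is precisely the hypothesis of that lemma; its conclusion is exactly the claimed identification of $\mathbf E$ in distribution with $\vec c+Y_1\vec u+Y_2\vec v$. The main obstacle I anticipate is the joint existence step: one must verify that the truncation at height $X$ can be taken simultaneously across all nine components, with the remainders decaying jointly in logarithmic measure, and that LI indeed forces the limit to be driven by a single sequence $(Z_m)$ shared between $W_1$ and $W_2$ rather than by two independent copies---this shared coupling is precisely what encodes the near-perfect but not perfect correlation highlighted in Remark~\ref{coincidence remark}. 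Once that is in hand, the remaining steps are bookkeeping and direct appeals to Lemma~\ref{lemma:simplify_fourier1} and to \cite{akbary2013limiting}.
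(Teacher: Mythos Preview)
Your proposal is correct and uses the same ingredients as the paper: the explicit formulas of Propositions~\ref{corollary:explicit_standards} and~\ref{corollary:explicit_reciprocals}, the Akbary--Ng--Shahabi limiting-distribution machinery, and Lemma~\ref{lemma:simplify_fourier1} with exactly the parameters $D=9$, $d=2$, $A_1=\{1,2,3,4\}$, $A_2=\{5,6,7,8,9\}$, $\vec c=(0,-1,0,-1,0,1,0,1,1)$, $\vec r=(1,\dots,1)$.

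The only organizational difference is that the paper applies \cite[Theorems~1.4 and~1.9]{akbary2013limiting} directly in nine dimensions to obtain the Fourier transform of the limiting distribution of $\mathbf E(x)$ in one step, and then invokes Lemma~\ref{lemma:simplify_fourier1} to recognize it as the affine pushforward of $\mu_2$. You instead pass through the two-dimensional pair $(W_1,W_2)$ first and then lift. Your route makes the coupling between the two coordinates more visible, but it also makes your final invocation of Lemma~\ref{lemma:simplify_fourier1} somewhat redundant (once you have $\mathbf E=\vec c+W_1\vec u+W_2\vec v+o(1)$ and know $(W_1,W_2)\Rightarrow V_2$, the conclusion is immediate by continuous mapping). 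The paper's route is shorter because the nine-dimensional ANS application already absorbs the ``shared $Z_m$'' concern you flag at the end: that is precisely what those theorems deliver, so there is no additional obstacle to overcome.
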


\noindent Note that the constant vector is exactly the list of bias terms for the nine functions in~order.

\begin{proof}
Given Propositions~\ref{corollary:explicit_standards} and~\ref{corollary:explicit_reciprocals}, we may apply~\cite[Theorems~1.4 and~1.9]{akbary2013limiting} to conclude that the limiting logarithmic distribution of $\mathbf E(x)$ exists and has Fourier transform
\begin{multline*}
\exp\bigl( -i(-\xi_2-\xi_4+\xi_6+\xi_8+\xi_9) \bigr) \\ \cdot
\prod_{m=1}^\infty J_0\biggl(2\biggl|
\frac{\xi_1+\xi_2+\xi_3+\xi_4}{\frac 12+i\gamma_m}+
\frac{\xi_5+\xi_6+\xi_7+\xi_8+\xi_9}{-\frac 12+i\gamma_m}
\biggr|\biggr) .
\end{multline*}
In light of the formula for~$\hat\mu_2$ from Lemma~\ref{note:common_measures},
the proposition now follows from an application of Lemma~\ref{lemma:simplify_fourier1} with $D=9$ and $d=2$, $A_1=\{1,2,3,4\}$, $A_2=\{5,6,7,8,9\}$, $(c_1,\dots,c_9) = (0,{-1},0,{-1},0,1,0,1,1)$, and $(r_1,\dots,r_9) = (1,\dots,1)$.
\end{proof}

\begin{proof}[Proof of Theorem~\ref{theorem:joint_dist_zeta}]
All cases of the theorem follow immediately from Proposition~\ref{nine prop}.

As examples of part~(a), if~$f$ and~$g$ are the standard functions~$\psi$ and~$\theta$, then the limiting logarithmic distribution of $\bigl( E^\psi(x), E^\pi(x) \bigr)$ is given by the first two coordinates of the limiting logarithmic distribution of $\mathbf E(x)$, which yields $(0,-1) + Y_1(1,1)$ where~$Y_1$ is a random variable whose distribution is~$\mu_1$. Similarly, if~$f$ and~$g$ are the reciprocal functions~$\pi_r$ and~$\pi_\ell$, then the limiting logarithmic distribution of $\bigl( E^{\pi_r}(x), E^{\pi_\ell}(x) \bigr)$ is given by the last two coordinates of the limiting logarithmic distribution of $\mathbf E(x)$, which yields $(1,1) + Y_2(1,1)$ where~$Y_2$ is also a random variable whose distribution is~$\mu_1$. In both cases, the statement of the theorem follows from the properties of~$\mu_1$ noted in Definition~\ref{eta1 def}.

On the other hand, an example of part~(b) is~$f = \psi$ and~$g = \psi_r$, for which the limiting logarithmic distribution of $\bigl( E^\psi(x), E^{\psi_r}(x) \bigr)$ is given by the first and fifth coordinates of the limiting logarithmic distribution of $\mathbf E(x)$; this yields $(0,0) + Y_1(1,0) + Y_2(0,1) = (Y_1,Y_2)$, which by definition has distribution~$\mu_2$. In this case, the statement of the theorem follows from Lemmas~\ref{lemma:mu_one_continuity} and~\ref{lemma:mu_one_max_support}.
\end{proof}


\begin{proof}[Proof of Theorem~\ref{theorem:log_density_zeta}]
Assuming RH, note that Theorem~\ref{RH theorem} tells us that the hypothesis $\beta_f < \beta_g$ (which holds for all parts of this theorem) implies that $E^f(x) < E^g(x)$ for all sufficiently large~$x$. In particular, Corollary~\ref{nine mu1s cor} implies that
\begin{align*}
\delta(0<E^f<E^g) = \delta(0<E^f) &= \Pr(0<V_1+\beta_f) = \Pr(-\beta_f<V_1) \\
\delta(E^f<E^g<0) = \delta(E^g<0) &= \Pr(V_1+\beta_g<0) = \Pr(V_1<-\beta_g)
\end{align*}
in the notation of Definition~\ref{eta1 def}.
\begin{enumerate}[label={\rm(\alph*)}]
\item When $\beta_f=-1$ and $\beta_g=0$, we have $\Pr(-\beta_f<V_1) = \eta_1$ and $\Pr(V_1<-\beta_g) = \frac12$.
\item When $\beta_f=0$ and $\beta_g=1$, we have $\Pr(-\beta_f<V_1) = \frac12$ and $\Pr(V_1<-\beta_g) = \eta_1$.
\item When $\beta_f=-1$ and $\beta_g=1$, we have $\Pr(-\beta_f<V_1) = \Pr(V_1<-\beta_g) = \eta_1$.
\end{enumerate}
In all cases, we then have $\delta(E^f<0<E^g) = 1 - \delta(0<E^f) - \delta(E^g<0)$ by the absolute continuity of~$\mu_1$.
\end{proof}

\begin{proof}[Proof of Theorem~\ref{theorem:log_density_zeta 1.5}]
Consider the pairs of functions treated by the theorem: let
\[
(f,g) \in \bigl\{ (\pi,\theta),\, (\theta_r,\pi_r),\, (\theta_r,\pi_\ell),\, (\pi_r,\pi_\ell),\, (\psi,\Pi),\, (\psi_r,\Pi_r) \bigr\},
\]
so that $\beta_f=\beta_g$, and~$f$ and~$g$ are either both standard functions or both reciprocal functions. In these cases, Theorem~\ref{o(1) theorem}(a) implies that $f(x) = g(x) + o(1)$. Consequently for every $\ep>0$,
\[
\delta(E^f \le 0 \le E^g) < \delta(\ep < E^f \le 0) = \mu_1\bigl( (\ep,0] \bigr),
\]
and therefore $\delta(E^f \le 0 \le E^g) = 0$ by the absolute continuity of~$\mu_1$ (Lemma~\ref{lemma:mu_one_continuity}). By the same argument, $\delta(E^g \le 0 \le E^f) = 0$ as well. It follows that
\begin{align*}
\delta(0 < E^f \text{ and } 0 < E^g) = \delta(0<E^f) &= \Pr(0<V_1+\beta_f) = \Pr(-\beta_f<V_1) \\
\delta(E^f < 0 \text{ and } E^g < 0) = \delta(E^f<0) &= \Pr(V_1+\beta_f<0) = \Pr(V_1<-\beta_f)
\end{align*}
in the notation of Definition~\ref{eta1 def}.
\begin{enumerate}[label={\rm(\alph*)}]
\item When $(f,g) = (\pi,\theta)$, we have $\beta_f=\beta_g=-1$ and therefore $\Pr(-\beta_f<V_1) = \eta_1$ and $\Pr(V_1<-\beta_f)$ by definition.
\item When $(f,g) \in \bigl\{ (\psi,\Pi),\, (\psi_r,\Pi_r) \bigr\}$, we have $\beta_f=\beta_g=0$ and therefore $\Pr(-\beta_f<V_1) = \Pr(V_1<-\beta_f) = \frac12$ by symmetry.
\item Finally, when $(f,g) \in \bigl\{ (\theta_r,\pi_r),\, (\theta_r,\pi_\ell),\, (\pi_r,\pi_\ell) \bigr\}$, we have $\beta_f=\beta_g=1$ and therefore $\Pr(-\beta_f<V_1) = 1-\eta_1$ and $\Pr(V_1<-\beta_f) = \eta_1$. \qedhere
\end{enumerate}
\end{proof}

\begin{proof}[Proof of Theorem~\ref{log_density part 2}]
Since~$f$ is a standard function and~$g$ is a reciprocal function with $\beta_f=\beta_g=0$, Theorem~\ref{theorem:joint_dist_zeta} gives that the limiting logarithmic distribution of~$\bigl( E^f(x),E^g(x) \bigr)$ is~$\mu_2$. Definition~\ref{eta2 def} tells us that $\eta_2 = \delta(E^f < 0 < E^g) + \delta(E^g < 0 < E^f)$; but the two densities on the right-hand side are equal by symmetry, and thus each individual density equals $\frac{\eta_2}2$. Similarly,
\begin{multline*}
\delta(E^f < E^g < 0) + \delta(E^g < E^f < 0) + \delta(0 < E^f < E^g) + \delta(0 < E^g < E^f) \\ = 1-\eta_2
\end{multline*}
by the absolute continuity of~$\mu_2$;
but the four densities on the left-hand side are all equal by the symmetries of~$\mu_2$, and therefore each individual density equals~$\frac{1-\eta_2}4$.
\end{proof}

\noindent
We note for future use that this proof implies that the $\mu_2$-measure of the first quadrant is
\begin{multline} \label{relating mu1(Q1) and eta1}
\mu_2\bigl( \{(x,y)\in\R^2\colon x>0,\, y>0 \bigr) \\ = \delta(0 < E^f < E^g) + \delta(0 < E^g < E^f) = \frac{1-\eta_2}2.
\end{multline}

\section{A formula for densities related to~\texorpdfstring{${\eta_2}$}{eta2}} \label{density calc section}

It turns out that the calculation of the quantity~$\eta_2$ from Definition~\ref{eta2 def} is equivalent to determining the mass that~$\mu_2$ assigns to the first quadrant $Q_1 = \{ (x,y)\in\R^2\colon x>0,\,y>0\}$ (the precise relationship is given in equation~\eqref{relating mu1(Q1) and eta1} below). In this section, we follow the method of Feuerverger and the second author~\cite{doi:10.1080/10586458.2000.10504659} to write this mass in terms of a principal value of a two-dimensional integral involving the Fourier transform~$\hat\mu_2$.

\subsection{Well-behaved products of Bessel functions}

The concept of a ``well-behaved function'' appeared in~\cite{doi:10.1080/10586458.2000.10504659} in connection with principal values of multivariate integrals. In this paper, we adopt the following slightly more general definition:

\begin{definition}
\label{defn:well_behaved}
A function $f$ defined on $\R^n$ is {\em well-behaved} if it has continuous partial derivatives of all orders and there exist positive constants $\beta_1,\beta_2,\beta_3$ such that for every subset $\{j_1,\ldots,j_k\} \subseteq \{1,\ldots,n\}$ we have
\[ \bigg| \frac{\partial f}{\partial x_{j_1}\ldots\partial x_{j_k}} f(\vec x) \bigg| \le \beta_1 \exp(-\beta_2|\vec x|^{\beta_3}) \]
where $|\cdot|_2$ is the Euclidean norm on $\vec x$ in $\R^n$.
\end{definition}

\noindent
(The definition in~\cite{doi:10.1080/10586458.2000.10504659} is identical except that it fixes $\beta_3 = 1$.) The goal of this section is to show that the function~$\hat\mu_2$ given in Lemma~\ref{note:common_measures}, and indeed a large class of functions of the same shape, are well-behaved.

\begin{lemma}
\label{lemma:bessel_product_analytic}
Let $\vec r_m$ be a sequence of nonzero vectors in $\C^d$ with $\sum_{m=1}^\infty |\vec r_m|^2$ finite, and let $f \colon \R^d \to \R$ be the function
\begin{equation} \label{product of J0 with rm}
f(\vec t\,) = \prod_{m=1}^\infty J_0(|\vec r_m \cdot \vec t\,|).
\end{equation}
Then~$f$ is analytic on~$\R^d$ and, in particular, has continuous partial derivatives of all orders.
\end{lemma}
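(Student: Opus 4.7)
The plan is to exploit the fact that $J_0$ is an even entire function in order to rewrite each factor of the infinite product as the composition of an entire function with a polynomial, and then show that the resulting product converges uniformly on compact subsets of a complex extension of $\R^d$.

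First I would observe from Definition~\ref{J0 definition} that $J_0(x) = g(x^2)$, where $g(w) = \sum_{n=0}^\infty (-1)^n w^n/(4^n (n!)^2)$ is an entire function on~$\C$. For a real vector $\vec t\in\R^d$ we can write $|\vec r_m\cdot\vec t\,|^2 = (\vec r_m\cdot\vec t\,)(\overline{\vec r_m}\cdot\vec t\,) =: Q_m(\vec t\,)$, which is a polynomial in the coordinates of $\vec t$ with complex coefficients. Hence the $m$th factor equals $g(Q_m(\vec t\,))$, and this expression extends naturally to complex $\vec z\in\C^d$ by $g(Q_m(\vec z))$, where $Q_m(\vec z) = (\vec r_m\cdot\vec z)(\overline{\vec r_m}\cdot\vec z)$ is now an (entire) polynomial of degree two in~$\vec z$.

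Next I would estimate, via two applications of Cauchy--Schwarz, that $|Q_m(\vec z)| \le |\vec r_m|^2\,|\vec z|^2$. Fixing any $R > 0$ and working on the closed ball $\{\vec z \in \C^d \colon |\vec z| \le R\}$, the assumption $\sum_m |\vec r_m|^2 < \infty$ in particular forces $|\vec r_m|^2 R^2 \to 0$, so for all sufficiently large $m$ the argument $Q_m(\vec z)$ lies in a fixed bounded region of $\C$ on which the entire function $g$ satisfies $|g(w) - 1| \le C|w|$. Consequently, uniformly on $\{|\vec z| \le R\}$,
\[
\sum_{m} \bigl| g(Q_m(\vec z)) - 1 \bigr| \ \le\ C R^2 \sum_{m} |\vec r_m|^2 \ <\ \infty.
\]
By the standard theory of infinite products, the product $\prod_m g(Q_m(\vec z))$ therefore converges uniformly on compact subsets of~$\C^d$, and by Weierstrass's theorem on uniform limits of holomorphic functions the limit is holomorphic on every ball $\{|\vec z| < R\}$ and hence entire on~$\C^d$. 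Restricting to~$\R^d$ recovers the function $f$, which is accordingly real analytic and in particular has continuous partial derivatives of all orders, as required.

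The one mildly delicate point -- and what I would flag as the main obstacle -- is the non-analyticity of $\vec t \mapsto |\vec r_m\cdot\vec t\,|$ arising from the absolute value. Everything in the plan above hinges on the observation that $J_0$ is an even entire function, which lets us absorb that absolute value into the polynomial $Q_m(\vec z) = (\vec r_m\cdot\vec z)(\overline{\vec r_m}\cdot\vec z)$ and thereby replace a non-smooth construction by a genuinely holomorphic one; once that reformulation is in place, convergence reduces to the summability of $|\vec r_m|^2$ and the conclusion follows from routine facts about uniform limits of holomorphic functions.
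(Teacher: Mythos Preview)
Your argument is correct and takes a genuinely different route from the paper. Both proofs rest on the same key observation you flag at the end: because $J_0$ is an even entire function, the factor $J_0(|\vec r_m\cdot\vec t\,|)$ can be rewritten as an entire function of the polynomial $Q_m(\vec t\,)=(\vec r_m\cdot\vec t\,)(\overline{\vec r_m}\cdot\vec t\,)$, which absorbs the non-smooth absolute value. From that common starting point the two proofs diverge. You extend each factor holomorphically to $\C^d$, use $|g(w)-1|\ll |w|$ together with $\sum_m|\vec r_m|^2<\infty$ to get locally uniform convergence of the infinite product, and invoke Weierstrass. The paper instead works entirely on $\R^d$: it builds an explicit analytic majorant $\exp\bigl(\tfrac14 R^2 s(\vec t\,)^2\bigr)$ with $R^2=\sum_m|\vec r_m|^2$, compares power-series coefficients term by term (using $|r_{m,a}\bar r_{m,b}|\le|\vec r_m|^2$), and then passes to the limit of the partial products by dominated convergence on coefficients. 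Your approach is shorter and more conceptual, leaning on standard several-complex-variables machinery; the paper's approach is more explicit and self-contained, never leaving the real domain.

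One small point to tighten: your displayed inequality sums over all $m$, but the bound $|g(w)-1|\le C|w|$ was only justified for $w$ in a fixed bounded disk, hence only for $m$ large enough that $|\vec r_m|^2R^2$ lies in that disk. Either note that on $\{|\vec z|\le R\}$ all the $Q_m(\vec z)$ lie in a single bounded disk (since $\sup_m|\vec r_m|<\infty$), so one $C$ works for every $m$; or simply split off the finitely many initial factors as a finite product of entire functions and apply your estimate to the tail. Either fix is routine.
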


\begin{proof}
In this proof, $\vec t\, = (t_1,\ldots,t_d)$ and $\vec r_m = (r_{m,a},\ldots,r_{m,b})$. Define
\[
s(\vec t\,)^2 = \sum_{a=1}^d \sum_{b=1}^d t_at_b
\quad\text{and}\quad
g(\vec t\,) = \prod_{m=1}^\infty \exp\biggl( \frac 14|\vec r_m|^2 s(\vec t\,)^2\biggr) .
\]
Since $\sum_{m=1}^\infty |\vec r_m|^2$ converges, say to $R^2$, the infinite product defining $\vec g(t)$ can be written as
\begin{align*}
g(\vec t\,) &= \exp\biggl(\frac 14 \sum_{m=1}^\infty |\vec r_m|^2s(\vec t\,)^2\biggr)\\ &= \exp\biggl(\frac 14 R^2s(\vec t\,)^2 \biggr) = \sum_{n=0}^\infty \frac 1{4^n} \frac{R^{2n}(\sum_{a=1}^d\sum_{b=1}^d t_at_b)^n}{n!}.
\end{align*}
The power series on the right-hand side converges absolutely for all $\vec t\, \in \R^d$, and hence the inner sum can be expanded by the multinomial theorem and rearranged into the form
\[ g(t_1,\ldots,t_d) = \sum_{n_1=0}^\infty \cdots \sum_{n_d=0}^\infty G_{n_1,\ldots,n_d} t_1^{n_1} \cdots t_d^{n_d} \]
for some nonnegative constants $G_{n_1,\ldots,n_d}$. In particular, $g(\vec t\,)$ is an analytic function on~$\R^d$.

For any positive integer~$B$, define
\begin{align*}
f_B(\vec t\,) &= \prod_{m=1}^B J_0(|\vec r_m \cdot \vec t\,|) = \prod_{m=1}^B \sum_{n=0}^\infty \frac{(-1)^n}{4^n} \frac{(|\vec r_m \cdot \vec t\,|^2)^n}{n!^2} \\
g_B(\vec t\,) &= \prod_{m=1}^B \exp\biggl( \frac 14|\vec r_m|^2 s(\vec t\,)^2\biggr) = \prod_{m=1}^B \sum_{n=0}^\infty \frac 1{4^n} \frac{(|\vec r_m|^2s(\vec t\,)^2)^n}{n!}
\end{align*}
by the Maclaurin series for $J_0(x)$ in Definition~\ref{J0 definition} and the exponential function.
Both $f_B(\vec t\,)$ and $g_B(\vec t\,)$ are certainly analytic on $\R^d$, and thus there exist real constants $F_{B;n_1,\ldots,n_d}$ and $G_{B,n_1,\ldots,n_d}$ such that
\begin{align*}
f_B(\vec t\,) &= \sum_{n_1=0}^\infty \ldots \sum_{n_d=0}^\infty F_{B;n_1,\ldots,n_d} t_1^{n_1} \cdots t_d^{n_d} \\
g_B(\vec t\,) &= \sum_{n_1=0}^\infty \ldots \sum_{n_d=0}^\infty G_{B;n_1,\ldots,n_d} t_1^{n_1} \cdots t_d^{n_d} .
\end{align*}
Since $g_B(\vec t\,)$ is a product of exponential functions, it is again clear that the $G_{B;n_1,\ldots,n_d}$ are nonnegative. Moreover, since
\[
|\vec r_m \cdot \vec t\,|^2 = \sum_{a=1}^d \sum_{b=1}^d r_{m,a}\bar r_{m,b} t_at_b
\quad\text{and}\quad
|\vec r_m|^2 s(\vec t\,)^2 = \sum_{a=1}^d \sum_{b=1}^d |\vec r_m|^2 t_at_b,
\]
the inequality $|r_{m,a}\bar r_{m,b}| \le |\vec r_m|^2$ implies that $|F_{B;n_1,\ldots,n_d}| \le G_{B;n_1,\ldots,n_d}$ for all nonnegative integers $n_1,\ldots,n_d$. Since $\lim_{B\to\infty} g_B(\vec t\,) = g(\vec t\,)$, the dominated convergence theorem implies that $\lim_{B\to\infty} f_B(\vec t\,) = f(\vec t\,)$ as well, which shows that $f(\vec t\,)$ is analytic on~$\R^d$ as claimed.
\end{proof}

\begin{lemma}
\label{lemma:bessel_product_bound}
Let $\vec r_m$ be a sequence of nonzero vectors in $\C^d$ with $\sum_{m=1}^\infty |\vec r_m|^2$ finite.
For any finite set $M \subseteq \N$, define
\[ f(\vec t\,, M) = \prod_{m \in \N \setminus M} J_0(|\vec r_m \cdot \vec t\,|). \]
Suppose there exist constants $\alpha_1, \alpha_2, R>0$ and $\alpha_3>1$ such that for all $\vec t\, \in \R$ with $|\vec t\,| > R$, there exists a set $S(\vec t\,) \subseteq \N$ with $\#S(\vec t\,) \ge \alpha_1 |\vec t\,|^{\alpha_2}$ such that $\frac \pi 2|\vec r_m \cdot \vec t\,| \ge \alpha_3$ for all $m \in S(\vec t\,)$. Then there exist positive constants $\beta_1, \beta_2, \beta_3$, depending only on $\alpha_1, \alpha_2, \alpha_3, R$, and~$\#M$, such that $|f(\vec t\,,M)| \le \beta_1 \exp(-\beta_2|\vec t\,|^{-\beta_3})$ for all $\vec t\, \in \R^d$.
\end{lemma}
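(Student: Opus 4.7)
The plan is to exploit the sharp Bessel decay bound $|J_0(x)| \le \sqrt{2/(\pi|x|)}$ recorded in~\eqref{Bessel bound} at the ``large argument'' indices $m \in S(\vec t\,)$ provided by the hypothesis, while bounding every other factor in the product by~$1$. The finite exceptional set~$M$ will contribute only a bounded multiplicative error, depending on $\#M$.

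Concretely, suppose first that $|\vec t\,| > R$. The hypothesis furnishes a set $S(\vec t\,) \subseteq \N$ with $\#S(\vec t\,) \ge \alpha_1 |\vec t\,|^{\alpha_2}$ such that $|\vec r_m \cdot \vec t\,| \ge 2\alpha_3/\pi$ for every $m \in S(\vec t\,)$. Since $\alpha_3 > 1$, this argument exceeds $2/\pi$, so applying~\eqref{Bessel bound} yields
\[
|J_0(|\vec r_m \cdot \vec t\,|)| \le \sqrt{\frac{2}{\pi \cdot 2\alpha_3/\pi}} = \frac{1}{\sqrt{\alpha_3}}
\]
for each such $m$. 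Using the trivial bound $|J_0| \le 1$ on all factors with $m \notin S(\vec t\,)$, and observing that removing the finite set $M$ eliminates at most $\#M$ of the ``good'' factors from $S(\vec t\,)$, I obtain
\[
|f(\vec t\,, M)| \le \Bigl(\tfrac{1}{\sqrt{\alpha_3}}\Bigr)^{\#S(\vec t\,) - \#M} \le \alpha_3^{\#M/2}\,\exp\bigl(-\tfrac{1}{2}\alpha_1(\log \alpha_3)\,|\vec t\,|^{\alpha_2}\bigr).
\]

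For the complementary range $|\vec t\,| \le R$, the trivial bound $|f(\vec t\,, M)| \le 1$ applies. The desired conclusion then follows by taking $\beta_3 = \alpha_2$, $\beta_2 = \tfrac{1}{2}\alpha_1 \log \alpha_3$, and
\[
\beta_1 = \max\bigl\{\alpha_3^{\#M/2},\, \exp(\beta_2 R^{\alpha_2})\bigr\},
\]
all of which depend only on $\alpha_1, \alpha_2, \alpha_3, R$, and $\#M$, as required.

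There is no substantive obstacle here: the argument is a direct pairing of the quantitative Bessel decay with the counting hypothesis on $S(\vec t\,)$. The one point requiring minor care is the interaction between the exceptional set~$M$ and the set $S(\vec t\,)$---removing $M$ can erase at most $\#M$ of the suppressed factors, but this costs only the harmless multiplicative constant $\alpha_3^{\#M/2}$. Everything else is bookkeeping to merge the estimates across the two ranges $|\vec t\,| \le R$ and $|\vec t\,| > R$ into a single global bound.
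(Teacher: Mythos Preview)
Your proof is correct and follows essentially the same approach as the paper: bound the factors indexed by $S(\vec t\,)\setminus M$ using the Bessel decay~\eqref{Bessel bound}, bound all other factors by~$1$, and handle small $|\vec t\,|$ trivially. The only cosmetic difference is that the paper absorbs the $\alpha_3^{\#M/2}$ loss by enlarging the threshold (working on $|\vec t\,| > \max\{R,(2\#M/\alpha_1)^{1/\alpha_2}\}$ and taking $\beta_2 = \tfrac{\alpha_1}{4}\log\alpha_3$), whereas you keep the threshold at~$R$ and put $\alpha_3^{\#M/2}$ directly into~$\beta_1$; your packaging is arguably cleaner.
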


\begin{proof}
Since $\sum_{m \in \N \setminus M}^\infty |\vec r_m|^2 \le \sum_{m=1}^\infty |\vec r_m|^2 < \infty$, Lemma~\ref{lemma:bessel_product_analytic} implies that $f(\vec t\,, M)$ is well defined. Set $N=\#M$.
By the inequality~\eqref{Bessel bound}, when $|\vec t\,| > R$ we have
\[ |f(\vec t\,)| \le \prod_{m \in S(\vec t\,) \setminus M} \sqrt{\frac 2{\pi|\vec r_m \cdot \vec t\,|}} \le \prod_{m \in S(\vec t\,) \setminus M}\alpha_3^{-1/2} \le \alpha_3^{-\alpha_1(|\vec t\,|^{\alpha_2})/2+N/2} . \]
Hence when $|\vec t\,| > \max\{R,(\frac{2N}{\alpha_1})^{1/\alpha_2}$\}, we have $\frac N2 < \frac{\alpha_1}4 |\vec t\,|^{\alpha_2}$ and thus
\[ |f(\vec t\,, M)| \le \alpha_3^{-\alpha_1|\vec t\,|^{\alpha_2}/4} = \exp(-\beta_2|\vec t\,|^{\beta_3}) \]
where $\beta_2 = \frac{\alpha_1}4\log\alpha_3$ and $\beta_3 = \alpha_2$.
On the other hand $|f(\vec t\,, M)| \le 1$ for all $\vec t\, \in \R^d$, and so when $|\vec t\,| \le \max\{R,(\frac{2N}{\alpha_1})^{1/\alpha_2}\}$ we have $|f(\vec t\,, M)| \le \exp(-\beta_2|\vec t\,|^{\beta_3}) \exp(\beta_2\max\{R^{\beta_3}, \frac{2N}{\alpha_1}\})$. Together these inequalities show that
\[ |f(\vec t\,, M)| \le \beta_1 \exp(-\beta_2|\vec t\,|^{\beta_3}) \]
for all $\vec t\, \in \R^d$, where $\beta_1 = 1 + \exp(\beta_2\max\{R^{\beta_3}, \frac{2N}{\alpha_1}\})$.
\end{proof}

\begin{definition} \label{tilde J0 definition}
Define the entire function
\[
\tilde J_0(x) = \sum_{n=0}^\infty \frac{(-1)^n}{4^n} \frac{x^n}{n!^2}.
\]
Note that $\tilde J_0(x) = J_0(\sqrt x)$ for $x\ge0$.
\end{definition}

\begin{lemma} \label{derivatives of J0 lemma}
For every integer $n\ge1$, we have $\tilde J_0^{(n)}(x) \ll_n 1$ for all $x\ge0$.
\end{lemma}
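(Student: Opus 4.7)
The plan is to relate the derivatives of $\tilde J_0$ to standard Bessel functions of higher order, for which uniform bounds on $[0,\infty)$ are well known, and then read off the desired estimate.

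First I would differentiate the defining series of $\tilde J_0$ termwise $n$ times. Since the series converges on all of $\C$ uniformly on compacta, this is legitimate, and after reindexing with $m = N - n$ it gives
\[
\tilde J_0^{(n)}(x) = \frac{(-1)^n}{4^n} \sum_{m=0}^\infty \frac{(-1)^m}{4^m\,m!\,(m+n)!} x^m
\]
for all $x \in \R$. Comparing this with the standard Bessel series
\[
J_n(y) = \Bigl( \frac y2 \Bigr)^{n} \sum_{m=0}^\infty \frac{(-1)^m}{m!\,(m+n)!} \Bigl( \frac{y^2}4 \Bigr)^{m},
\]
one recognizes, upon setting $y = \sqrt x$, the identity
\[
\tilde J_0^{(n)}(x) = \frac{(-1)^n}{2^n} \cdot \frac{J_n(\sqrt x)}{(\sqrt x)^{n}} \qquad (x > 0),
\]
which extends continuously to $x = 0$ with value $(-1)^n/(4^n n!)$ (matching the $m=0$ term of the rearranged series).

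It therefore remains to bound $h_n(y) := J_n(y)/y^n$ uniformly on $[0,\infty)$. On $y \ge 1$, I would apply the classical bound $|J_n(y)| \le 1$ (which follows from the integral representation $J_n(y) = \frac1\pi \int_0^\pi \cos(n\theta - y\sin\theta)\,d\theta$), giving $|h_n(y)| \le y^{-n} \le 1$. On $0 \le y \le 1$, the function $h_n$ is continuous (its power series begins with $1/(2^n n!)$), so it is bounded on this compact interval. Combining, $|h_n(y)| \le C_n$ for some constant $C_n$, and hence $|\tilde J_0^{(n)}(x)| \le C_n/2^n$ for all $x \ge 0$.

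No step here looks hard: the only thing to be a little careful about is the behaviour at $x = 0$, where the formula $\tilde J_0^{(n)}(x) = (-1)^n 2^{-n} (\sqrt x)^{-n} J_n(\sqrt x)$ is formally singular, but the removable singularity is handled by the power-series evaluation of $\tilde J_0^{(n)}(0)$ (which matches the limit $(-1)^n/(4^n n!)$). Thus the entire proof is a short series manipulation followed by invoking the standard fact that $J_n$ is bounded on $\R$.
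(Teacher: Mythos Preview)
Your proof is correct, and it takes a genuinely different (and cleaner) route than the paper's own argument. The paper proceeds by repeatedly applying the chain rule to $\tilde J_0(x)=J_0(\sqrt x)$, proving by induction that
\[
\tilde J_0^{(n)}(x)=\sum_{k=1}^n \alpha_{n,k}\,x^{-n+k/2}\,J_0^{(k)}(\sqrt x)
\]
for certain constants $\alpha_{n,k}$, and then bounding each term using $|J_0^{(k)}|\le 1$ (obtained by differentiating under the integral sign in the defining integral for $J_0$). Your approach instead differentiates the power series directly and recognises the result as a single higher-order Bessel function, yielding the exact identity $\tilde J_0^{(n)}(x)=(-1)^n 2^{-n}\,x^{-n/2}J_n(\sqrt x)$; the bound then follows from $|J_n|\le 1$ via Bessel's integral. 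Your argument avoids the inductive bookkeeping and the unspecified coefficients $\alpha_{n,k}$, at the cost of needing to know the series for $J_n$ rather than just $J_0$; since both are standard, this is a net gain in simplicity.
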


\begin{proof}
The bound $\tilde J_0^{(n)}(x) \ll_n 1$ holds for $0\le x\le1$ simply by continuity, so we may assume that $x>1$.
The inequalities $|J_0'(x)| \le \frac x2$ and $|J_0^{(n)}(x)| \le 1$ for $n\ge2$ follow easily by differentiating under the integral sign in Definition~\ref{J0 definition} (as pointed out in~\cite[page~540]{doi:10.1080/10586458.2000.10504659}). It is easy to prove by induction that there exist constants $\alpha_{n,k}$ such that
\begin{align*}
\tilde J_0^{(n)}(x) &= \sum_{k=1}^n \alpha_{n,k} x^{-n+k/2} J_0^{(k)}(\sqrt x)\\ &= \alpha_{n,1} x^{-n+1/2} J_0'(\sqrt x) + \sum_{k=2}^n \alpha_{n,k} x^{-n+k/2} J_0^{(k)}(\sqrt x).
\end{align*}
The lemma follows from the fact that each term is bounded for $x>1$ and $n\ge1$.
\end{proof}

\begin{lemma} \label{derivatives of J0 of dot product lemma}
Let $\ell\ge1$ and $i_1,\dots,i_\ell\in\{1,\dots,d\}$ be integers. For any $R>0$,
\[
\frac{\partial^\ell J_0\bigl( |\vec r\cdot\vec t\,| \bigr)}{\partial t_{i_1} \cdots \partial t_{i_\ell}} \ll_{\ell,R} |\vec r\,|^2 \max\{1,|\vec t\,|^\ell\}
\]
uniformly for $\vec t\, = (t_1,\ldots,t_d)\in\R^d$ and $\vec r\in\C^d$ with $|\vec r\,| \le R$.
\end{lemma}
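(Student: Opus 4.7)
The plan is to reduce everything to derivatives of the entire function $\tilde J_0$ from Definition~\ref{tilde J0 definition}, evaluated at the quadratic polynomial
\[
P(\vec t\,) := |\vec r\cdot\vec t\,|^2 = \sum_{a=1}^d\sum_{b=1}^d r_a\overline{r_b}\, t_at_b,
\]
so that $J_0(|\vec r\cdot\vec t\,|) = \tilde J_0(P(\vec t\,))$. This trick is the only real idea: it sidesteps the apparent non-smoothness of $|\vec r\cdot\vec t\,|$ at the origin, since $\tilde J_0$ is analytic and $P$ is a polynomial in $\vec t\,$. The rest of the argument is bookkeeping via the multivariate chain rule, with bounds chosen so that exactly one factor of $|\vec r\,|^2$ is preserved and the remaining factors are absorbed using $|\vec r\,|\le R$.

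The central step is Fa\`a di Bruno's formula, which gives
\[
\frac{\partial^\ell \tilde J_0(P(\vec t\,))}{\partial t_{i_1}\cdots\partial t_{i_\ell}} = \sum_{\pi} \tilde J_0^{(|\pi|)}\bigl(P(\vec t\,)\bigr)\prod_{B\in\pi} \frac{\partial^{|B|}P(\vec t\,)}{\prod_{j\in B}\partial t_{i_j}},
\]
the sum running over set partitions $\pi$ of $\{1,\dots,\ell\}$. Since $P$ is quadratic, every partial derivative of $P$ of order $\ge 3$ vanishes, so only partitions whose blocks all have size $1$ or $2$ contribute; the number of such partitions is bounded by a constant $c_\ell$ depending only on~$\ell$.

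Direct calculation gives $\partial P/\partial t_{i_j} = 2\Re\bigl(r_{i_j}\overline{\vec r\cdot\vec t\,}\bigr)$, which is bounded by $2|\vec r\,|^2|\vec t\,|$ by Cauchy--Schwarz, and $\partial^2 P/(\partial t_{i_j}\partial t_{i_k}) = 2\Re(r_{i_j}\overline{r_{i_k}})$, which is bounded by $2|\vec r\,|^2$. For a partition with $a$ singleton blocks and $b$ doubleton blocks (so $a+2b=\ell$ and $|\pi|=a+b\ge 1$), the product over blocks is therefore at most $2^{a+b}|\vec r\,|^{2(a+b)}|\vec t\,|^{a}$. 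Lemma~\ref{derivatives of J0 lemma} bounds $|\tilde J_0^{(|\pi|)}(P(\vec t\,))|$ by a constant depending only on~$\ell$ (this is where $\ell\ge 1$, hence $|\pi|\ge 1$, is used). Writing $|\vec r\,|^{2(a+b)} = |\vec r\,|^2\cdot|\vec r\,|^{2(a+b-1)} \le |\vec r\,|^2 R^{2(\ell-1)}$ using $|\vec r\,|\le R$, and noting $|\vec t\,|^{a}\le\max\{1,|\vec t\,|^{\ell}\}$ since $a\le\ell$, the total is $\ll_{\ell,R} |\vec r\,|^2\max\{1,|\vec t\,|^\ell\}$. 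Summing over the $c_\ell$ contributing partitions completes the bound.

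There is no real obstacle here; the only subtle point is recognizing that one must work with $\tilde J_0$ rather than $J_0$ to avoid the non-smoothness of $|\vec r\cdot\vec t\,|$ at the origin. Once that is done, the quadratic nature of $P$ makes Fa\`a di Bruno's formula collapse to a finite sum of very simple terms, and the hypothesis $|\vec r\,|\le R$ exists precisely to absorb the excess powers $|\vec r\,|^{2(|\pi|-1)}$ into the implied constant while leaving a single factor of $|\vec r\,|^2$ in the stated bound.
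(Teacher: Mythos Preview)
Your proof is correct and follows essentially the same approach as the paper: both use the key identity $J_0(|\vec r\cdot\vec t\,|)=\tilde J_0(|\vec r\cdot\vec t\,|^2)$ to avoid the non-smoothness of the absolute value, then apply the chain rule and invoke Lemma~\ref{derivatives of J0 lemma}. The only cosmetic difference is that you invoke Fa\`a di Bruno's formula directly, while the paper phrases the same computation as an induction yielding the form $\sum_{\ell/2\le h\le\ell}P_{2h,2h-\ell}(\vec r,\vec t\,)\tilde J_0^{(h)}(|\vec r\cdot\vec t\,|^2)$; the parametrizations match via $h=a+b$ and $2h-\ell=a$.
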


\begin{proof}
Our strategy uses the identity $J_0\bigl( |\vec r\cdot\vec t\,| \bigr) = \tilde J_0\bigl( |\vec r\cdot\vec t\,|^2 \bigr)$. (This identity reflects the observation that~$J_0$ is an even function and therefore $J_0\bigl( |\vec r\cdot\vec t\,| \bigr)$ is still smooth despite the square root implicit in $|\vec r\cdot\vec t\,|$.)
Write $\vec r = (r_1,\ldots,r_d)$. To guide our intuition, we compute explicitly that for any $i,j\in\{1,\dots,d\}$,
\begin{align*}
\frac{\partial}{\partial t_i} \tilde J_0\bigl( |\vec r\cdot\vec t\,|^2 \bigr) &= \tilde J_0'\bigl( |\vec r\cdot\vec t\,|^2 \bigr) \frac{\partial}{\partial t_i} |\vec r\cdot\vec t\,|^2 \\
&= \tilde J_0'\bigl( |\vec r\cdot\vec t\,|^2 \bigr) \biggl( 2\Re r_i \sum_{k=1}^d \Re r_kt_k + 2\Im r_i \sum_{k=1}^d \Im r_kt_k \biggr) \\
\frac{\partial^2}{\partial t_i\partial t_j} \tilde J_0\bigl( |\vec r\cdot\vec t\,|^2 \bigr) &= \tilde J_0''\bigl( |\vec r\cdot\vec t\,|^2 \bigr) \biggl( 2\Re r_i \sum_{k=1}^d \Re r_kt_k + 2\Im r_i \sum_{k=1}^d \Im r_kt_k \biggr)^2 \\
&\qquad{}+ \tilde J_0'\bigl( |\vec r\cdot\vec t\,|^2 \bigr) (2\Re r_i\Re r_j + 2\Im r_i\Im r_j).
\end{align*}
In general, one can prove by induction on~$\ell\ge1$ that
\[
\frac{\partial^\ell \tilde J_0\bigl( |\vec r\cdot\vec t\,|^2 \bigr)}{\partial t_{i_1} \cdots \partial t_{i_\ell}} = \sum_{\ell/2 \le h \le \ell} P_{2h,2h-\ell}(\vec r,\vec t\,) \tilde J_0^{(h)}\bigl( |\vec r\cdot\vec t\,|^2 \bigr)
\]
where $P_{a,b}(\vec r,\vec t\,)$ is a polynomial that is homogeneous of degree~$a$ in the variables~$\Re r_k$ and~$\Im r_k$ and also homogeneous of degree~$b$ in the variables~$t_k$. It follows from Lemma~\ref{derivatives of J0 lemma} that
\[
\frac{\partial^\ell \tilde J_0\bigl( |\vec r\cdot\vec t\,|^2 \bigr)}{\partial t_{i_1} \cdots \partial t_{i_\ell}} \ll_\ell \sum_{\ell/2 \le h \le \ell} |\vec r\,|^{2h} |\vec t\,|^{2h-\ell} \cdot 1 \ll_{\ell,R} \sum_{\ell/2 \le h \le \ell} |\vec r\,|^2 |\vec t\,|^{2h-\ell},
\]
which implies the statement of the lemma.
\end{proof}

\begin{lemma}
\label{lemma:bessel_product_well_behaved}
Let $\vec r_m$ be a sequence of nonzero vectors in $\C^d$ with $\sum_{m=1}^\infty |\vec r_m|^2$ finite, and let $f(\vec t\,)$ be as in equation~\eqref{product of J0 with rm}.
Suppose there exist constants $\alpha_1, \alpha_2>0$ and $\alpha_3,\tau>1$ such that for all $\vec t\, \in \R$ with $|\vec t\,| > \tau$, there exists a set $S(\vec t\,) \subseteq \N$ with $\#S(\vec t\,) \ge \alpha_1 |\vec t\,|^{\alpha_2}$ such that $\frac \pi 2|\vec r_m \cdot \vec t\,| \ge \alpha_3$ for all $m \in S(\vec t\,)$. Then $f(\vec t\,)$ is well-behaved.
\end{lemma}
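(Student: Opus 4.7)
The plan is to combine the analyticity from Lemma~\ref{lemma:bessel_product_analytic}, which already supplies continuous partial derivatives of all orders, with the exponential-decay input from Lemma~\ref{lemma:bessel_product_bound} and the polynomial derivative bounds from Lemma~\ref{derivatives of J0 of dot product lemma}. Since a well-behaved function is defined via mixed derivatives with distinct indices $\{j_1,\ldots,j_k\}\subseteq\{1,\ldots,d\}$, we always have $k\le d$, so the combinatorial complexity of the Leibniz expansion is bounded by a function of~$d$ alone.

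First, I would apply a generalized Leibniz rule to the partial product $f_N=\prod_{m=1}^N J_0(|\vec r_m\cdot\vec t\,|)$. This expresses $\partial^k f_N/\partial t_{j_1}\cdots\partial t_{j_k}$ as a sum, indexed by set partitions $\{A_1,\ldots,A_\ell\}$ of $\{j_1,\ldots,j_k\}$ paired with tuples of distinct $m_1,\ldots,m_\ell\le N$, of terms of the form
\[
\Biggl(\prod_{i=1}^\ell \frac{\partial^{|A_i|}J_0(|\vec r_{m_i}\cdot\vec t\,|)}{\partial t_{A_i}}\Biggr) \prod_{\substack{m\le N\\ m\notin\{m_1,\ldots,m_\ell\}}} J_0(|\vec r_m\cdot\vec t\,|).
\]
Lemma~\ref{derivatives of J0 of dot product lemma} bounds each differentiated factor by $O(|\vec r_{m_i}|^2\max\{1,|\vec t\,|^{|A_i|}\})$ (using that $\sup_m|\vec r_m|<\infty$), while $|J_0|\le 1$ pointwise; this supplies, for each fixed~$\vec t\,$, a majorant summable over $(m_1,\ldots,m_\ell)$. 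Dominated convergence then allows passage to $N\to\infty$, yielding the analogous formula for $f$ itself with the final factor becoming the infinite tail product $f(\vec t\,,\{m_1,\ldots,m_\ell\})$ of Lemma~\ref{lemma:bessel_product_bound}.

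Second, I would apply Lemma~\ref{lemma:bessel_product_bound} to each tail product. Because $\ell\le k\le d$ is bounded, the constants $\beta_1',\beta_2',\beta_3'$ produced there can be chosen uniformly in $(m_1,\ldots,m_\ell)$, so $|f(\vec t\,,\{m_1,\ldots,m_\ell\})|\le\beta_1'\exp(-\beta_2'|\vec t\,|^{\beta_3'})$. Summing the derivative-of-factor bounds over distinct tuples uses $\sum_{\text{distinct}}\prod_i|\vec r_{m_i}|^2\le\bigl(\sum_m|\vec r_m|^2\bigr)^\ell<\infty$, and the outer sum over set-partitions contributes only a combinatorial factor depending on~$d$. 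Combining yields
\[
\left|\frac{\partial^k f}{\partial t_{j_1}\cdots\partial t_{j_k}}(\vec t\,)\right| \ll_d (1+|\vec t\,|)^k \exp\bigl(-\beta_2'|\vec t\,|^{\beta_3'}\bigr),
\]
and absorbing the polynomial factor into the exponential (by slightly shrinking $\beta_2'$ and enlarging $\beta_1'$) finishes the well-behaved bound. The principal obstacle is the careful justification of the infinite-product Leibniz expansion and of the dominated-convergence interchange with $N\to\infty$; this relies crucially on the summability of $\sum|\vec r_m|^2$ together with the uniformity, for admissible finite sets~$M$ with $\#M\le d$, of the constants provided by Lemma~\ref{lemma:bessel_product_bound}.
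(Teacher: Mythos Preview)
Your proposal is correct and follows essentially the same approach as the paper: a Leibniz expansion of the product, Lemma~\ref{derivatives of J0 of dot product lemma} for the differentiated factors, Lemma~\ref{lemma:bessel_product_bound} for the undifferentiated tail, and summability via $\sum|\vec r_m|^2<\infty$, finishing by absorbing the polynomial factor into the exponential. The only differences are organizational: you index the Leibniz expansion by set partitions paired with distinct index tuples, whereas the paper sums over all tuples $(m_1,\dots,m_n)\in\N^n$ (with possible repeats) and groups the repeated differentiations afterward; and you take more care justifying the passage from finite products $f_N$ to the infinite product~$f$, which the paper simply asserts directly.
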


\begin{proof}
By Lemma~\ref{lemma:bessel_product_analytic}, $f(\vec t\,)$ is analytic on $\R^d$ with continuous partial derivatives of all orders. It therefore suffices to establish the bound in Definition~\ref{defn:well_behaved} on partial derivatives of~$f(\vec t\,)$; indeed, we will establish such a bound even when multiple derivatives with respect to the same variable are present. We note that the convergence of $\sum_{m=1}^\infty |\vec r_m|^2$ implies the existence of a real number $R>0$ such that $|\vec r_m| \le R$ for all $m\in\N$.

By repeated application of the product rule,
\[
\frac{\partial^n f(\vec t\,)}{\partial t_{j_1} \cdots \partial t_{j_n}} = \sum_{M = (m_1,\dots,m_n) \in \N^n} \prod_{m\in\N\setminus M} J_0\bigl( |\vec r_m\cdot\vec t\,| \bigr) \prod_{m\in M} D_{M,m} J_0\bigl( |\vec r_m\cdot\vec t\,| \bigr),
\]
where $D_{M,m}$ is the composition of the $\ell(m)$ operators $\frac{\partial}{\partial t_{j_i}}$ over all $1\le i\le n$ such that $m_i=m$. (We are abusing notation slightly: $m\in M = (m_1,\dots,m_n)$ means $m\in\{m_1,\dots,m_n\}$.) By Lemma~\ref{lemma:bessel_product_bound},
there exist positive constants $\beta_1, \beta_2, \beta_3$ (depending only on $\alpha_1, \alpha_2, \alpha_3, R$, and~$\#M \le n$) such that the first product is at most $\beta_1 \exp(-\beta_2|\vec t\,|^{-\beta_3})$ in absolute value for all $\vec t\, \in \R^d$. Therefore by Lemma~\ref{derivatives of J0 of dot product lemma}, applied~$\#M$ times with the sum of the relevant values of~$\ell(m)$ equaling~$n$,
\begin{align*}
\frac{\partial^n f(\vec t\,)}{\partial t_{j_1} \cdots \partial t_{j_n}} &\ll_{n,R} \sum_{M = (m_1,\dots,m_n) \in \N^n} \exp(-\beta_2|\vec t\,|^{-\beta_3}) \prod_{m\in M} |\vec r\,|^2  \max\{1,|\vec t\,|^{\ell(m)}\} \\
&= \exp(-\beta_2|\vec t\,|^{-\beta_3}) \max\{1,|\vec t\,|^n\} \sum_{M = (m_1,\dots,m_n) \in \N^n} \prod_{m\in M} |\vec r\,|^2 \\
&\le \exp(-\beta_2|\vec t\,|^{-\beta_3}) \max\{1,|\vec t\,|^n\} \sum_{L = 1}^n \genfrac{\{}{\}}{0pt}{}{n}{L} \biggl( \prod_{m=1}^\infty \bigl( 1 + |\vec r\,|^2 \bigr) \biggr)^L,
\end{align*}
where the Stirling number $\genfrac{\{}{\}}{0pt}{}{n}{k}$ is the number of partitions of $\{1,\dots,n\}$ into~$L$ subsets. Since $\sum_{m=1}^\infty |\vec r_m|^2 < \infty$, each of these infinite products converges, and therefore
\begin{align*}
\frac{\partial^n f(\vec t\,)}{\partial t_{j_1} \cdots \partial t_{j_n}} &\ll_{n,\{r_m\}}  \exp(-\beta_2|\vec t\,|^{-\beta_3}) \max\{1,|\vec t\,|^n\} \\ & \ll_{n,\beta_2,\beta_3,\{r_m\}}  \exp(-\beta_2|\vec t\,|^{-\beta_3/2}),
\end{align*}
which completes the proof of the lemma.
\end{proof}

\begin{proposition}
\label{prop:mu_one_well_behaved}
The function $\hat\mu_2$ is well-behaved.
\end{proposition}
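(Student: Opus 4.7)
The plan is to invoke Lemma~\ref{lemma:bessel_product_well_behaved}, after extracting from the formula for $\hat\mu_2$ in Lemma~\ref{note:common_measures} the appropriate sequence of vectors~$\vec r_m\in\C^2$. Specifically, the factor $|2\gamma_m(t_1+t_2)+i(t_1-t_2)|/(\tfrac14+\gamma_m^2)$ equals $|\vec r_m\cdot\vec t\,|$ for
\[
\vec r_m = \frac1{\tfrac14+\gamma_m^2}\bigl(1-2i\gamma_m,\,-1-2i\gamma_m\bigr),
\]
so that $\hat\mu_2(\vec t\,)=\prod_{m=1}^\infty J_0(|\vec r_m\cdot\vec t\,|)$ in the setup of that lemma.

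The first hypothesis of Lemma~\ref{lemma:bessel_product_well_behaved} is the summability of $|\vec r_m|^2$. A direct computation gives $|\vec r_m|^2 = 8/(\tfrac14+\gamma_m^2)$, and so by the same identity used in the proof of Theorem~\ref{o(1) theorem} (equation~\eqref{same comp}) together with Lemma~\ref{lemma:rho_squared},
\[
\sum_{m=1}^\infty |\vec r_m|^2 = 8\sum_{m=1}^\infty \frac1{\tfrac14+\gamma_m^2} = 4w < \infty.
\]

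The substantive hypothesis is the lower bound on $\#S(\vec t\,)$. Writing $a=t_1+t_2$ and $b=t_1-t_2$, so that $a^2+b^2=2|\vec t\,|^2$, I would show that
\[
|\vec r_m\cdot\vec t\,|^2 = \frac{b^2+4\gamma_m^2a^2}{(\tfrac14+\gamma_m^2)^2},
\]
and then split into two cases depending on which of $|a|$ or $|b|$ is at least $|\vec t\,|$. In the case $|a|\ge|\vec t\,|$, the bound $|\vec r_m\cdot\vec t\,|\ge 2\gamma_m|a|/(\tfrac14+\gamma_m^2)\ge|a|/(2\gamma_m)$ for $\gamma_m\ge 1$ shows that $|\vec r_m\cdot\vec t\,|\ge c$ whenever $\gamma_m\le|a|/(2c)$, which by the Riemann--von Mangoldt zero-counting function gives $\gg|\vec t\,|\log|\vec t\,|$ valid indices. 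In the case $|b|\ge|\vec t\,|$, the bound $|\vec r_m\cdot\vec t\,|\ge |b|/(\tfrac14+\gamma_m^2)$ gives $|\vec r_m\cdot\vec t\,|\ge c$ whenever $\gamma_m^2\le|b|/c-\tfrac14$, which yields $\gg\sqrt{|\vec t\,|}\log|\vec t\,|$ valid indices. The weaker case dictates the exponent, so taking any $\alpha_2\in(0,\tfrac12)$ (and correspondingly small $\alpha_1,\tau$, with $\alpha_3$ chosen to define the threshold~$c$ above) verifies the hypothesis for all $|\vec t\,|>\tau$. Lemma~\ref{lemma:bessel_product_well_behaved} then concludes that $\hat\mu_2$ is well-behaved.

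The main obstacle is not any single deep step but ensuring that the lower bound on $|\vec r_m\cdot\vec t\,|$ holds for enough indices in the ``antidiagonal'' direction $|a|\ll|b|$, where the natural bound only controls $|\vec r_m\cdot\vec t\,|$ by $|b|/(\tfrac14+\gamma_m^2)$ and therefore only yields polynomial growth of order~$\sqrt{|\vec t\,|}$ in the count, rather than linear growth. Fortunately Lemma~\ref{lemma:bessel_product_well_behaved} only requires \emph{some} positive power $|\vec t\,|^{\alpha_2}$, so this suffices.
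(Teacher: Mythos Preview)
Your proposal is correct and follows the same overall strategy as the paper: identify the vectors $\vec r_m$ so that $\hat\mu_2(\vec t\,)=\prod_m J_0(|\vec r_m\cdot\vec t\,|)$, check $\sum_m|\vec r_m|^2<\infty$, and then invoke Lemma~\ref{lemma:bessel_product_well_behaved} after producing enough indices with $|\vec r_m\cdot\vec t\,|$ bounded below.

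The one place where your argument differs from the paper's is in establishing the lower bound on $|\vec r_m\cdot\vec t\,|$. You split into the cases $|a|\ge|\vec t\,|$ and $|b|\ge|\vec t\,|$ and treat the antidiagonal direction as the bottleneck. The paper avoids this case split entirely by the single observation that since $\gamma_m\ge\gamma_1>1$, one has $4\gamma_m^2a^2+b^2\ge a^2+b^2=2|\vec t\,|^2$, whence
\[
|\vec r_m\cdot\vec t\,| = \frac{\sqrt{4\gamma_m^2a^2+b^2}}{\tfrac14+\gamma_m^2} \ge \frac{\sqrt2\,|\vec t\,|}{\tfrac14+\gamma_m^2} \ge \frac{|\vec t\,|}{\gamma_m^2}
\]
uniformly in the direction of~$\vec t\,$. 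Thus the ``obstacle'' you flag in the antidiagonal direction is not genuinely present: the uniform bound already gives $\#S(\vec t\,)\gg|\vec t\,|^{1/2}\log|\vec t\,|$ in every direction, with no case analysis. Your route still works and reaches the same exponent $\alpha_2=\tfrac12$, but the paper's inequality is the cleaner way to see it.
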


\begin{proof}
By Lemma~\ref{note:common_measures}, $\hat\mu_2(\vec t\,) = \prod_{m=1}^\infty J_0(|\vec r_m \cdot \vec t\,|)$ with
\[ \vec r_m = \biggl(\frac{2\gamma_m+i}{\frac14+\gamma_m^2}, \frac{2\gamma_m-i}{\frac 14+\gamma_m^2} \biggr) , \]
where $\gamma_1,\gamma_2,\ldots$ is the ascending sequence of the positive ordinates of the zeros of the Riemann zeta function. We use the fact that $N(T)$, the number of indices~$m$ such that $\gamma_m \le T$, is $\asymp T\log T$. In particular, the convergence $\sum_{m=1}^\infty |\vec r_m|^2$ follows by partial summation from
\[
|\vec r_m|^2 = 2\frac{4\gamma_m^2+1}{(\frac 14+\gamma_m^2)^2} \ll \frac1{\gamma_m^2}.
\]
Since $\gamma_1 \ge 1$, we see that for any $\vec t\,\in\R^d$,
\begin{align*}
|\vec r_m \cdot \vec t\,| &= \frac{\sqrt{4\gamma_m^2(t_1+t_2)^2+(t_1-t_2)^2}}{\frac 14+\gamma_m^2}\\ &\ge \frac{\sqrt{(t_1+t_2)^2+(t_1-t_2)^2}}{\frac 14+\gamma_m^2} \\&= \frac{\sqrt2|\vec t\,|}{\frac 14+\gamma_m^2} \ge \frac{|\vec t\,|}{\gamma_m^2} .
\end{align*}
In particular, if we define $S(\vec t\,) = \{ m \in \N \colon \gamma_m \le (\pi|\vec t\,|)^{1/2} \}$ for $\vec t\, \ne \vec 0$, then $\frac\pi 2|\vec r_m \cdot \vec t\,| \ge \frac12$ for all $m \in S(\vec t\,)$. The number of elements in $S(\vec t\,)$ is $N((\pi|\vec t\,|)^{1/2}) \gg |\vec t\,|^{1/2} \log |\vec t\,| \gg |\vec t\,|^{1/2}$ for $|\vec t\,|\ge2$. Therefore the proposition follows from Lemma~\ref{lemma:bessel_product_well_behaved} applied with $\alpha_2 = \alpha_3 = \frac12$ and $\tau=2$.
\end{proof}

\subsection{Integral formula for the densities}

We are interested in the mass that~$\mu_2$ from Definition~\ref{eta2 def} assigns to the first quadrant. The following formula identifies this quantity in terms of the principal value of a two-dimensional integral (see~\cite[equation~(2-30)]{doi:10.1080/10586458.2000.10504659} for a precise definition).

\begin{proposition}
\label{prop:mu_one_first_quadrant}
If $Q_1 \subseteq \R^2$ is the first quadrant, then
\[ \mu_2(Q_1) = \frac 14 - \frac 1{4\pi^2} \PV \iint_{\R^2} \frac{\hat\mu_2(u,v)}{uv} \, du \, dv . \]
\end{proposition}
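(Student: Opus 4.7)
The plan is to follow the Feuerverger--Martin strategy referenced in the section introduction. First I would write the indicator of~$Q_1$ as a product of sign functions: for $xy\neq 0$,
\[
\mathbf 1_{Q_1}(x,y) = \frac{1+\operatorname{sgn}(x)}{2} \cdot \frac{1+\operatorname{sgn}(y)}{2}.
\]
Since $\mu_2$ is absolutely continuous with respect to Lebesgue measure by Lemma~\ref{lemma:mu_one_continuity}, the axes $\{xy=0\}$ are $\mu_2$-null and so
\[
\mu_2(Q_1) = \tfrac14 \int_{\R^2} \bigl( 1 + \operatorname{sgn}(x) + \operatorname{sgn}(y) + \operatorname{sgn}(x)\operatorname{sgn}(y) \bigr) \, d\mu_2.
\]
Next I would exploit the symmetry that $\mu_2$ is invariant under $(x,y)\mapsto(-x,-y)$: this follows from Definition~\ref{eta2 def} since each $Z_m$ is equidistributed with $-Z_m$. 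The two linear sign integrals therefore vanish, leaving
\[
\mu_2(Q_1) = \tfrac14 + \tfrac14 \int_{\R^2} \operatorname{sgn}(x)\operatorname{sgn}(y) \, d\mu_2(x,y).
\]

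The second step is to substitute the Dirichlet representation $\operatorname{sgn}(x) = \frac{1}{i\pi}\PV\int_\R \frac{e^{ixu}}{u}\,du$ for each factor, giving formally
\[
\operatorname{sgn}(x)\operatorname{sgn}(y) = -\frac{1}{\pi^2} \PV \iint_{\R^2} \frac{e^{i(xu+yv)}}{uv}\,du\,dv.
\]
Integrating against $d\mu_2$ and swapping the order of integration, the inner $\mu_2$-integral is $\hat\mu_2(-u,-v)$, which equals $\hat\mu_2(u,v)$ by the same negation symmetry. Substituting into the expression for $\mu_2(Q_1)$ yields the claimed identity.

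The main obstacle is the justification of the interchange of the principal value integral with the $\mu_2$-integral, since no ingredient is absolutely integrable and the cancellation underlying $\PV$ is genuinely needed. This is precisely the technical content developed in~\cite{doi:10.1080/10586458.2000.10504659}: their principal value Fubini-type results apply whenever the Fourier transform of the measure is \emph{well-behaved} in the sense of Definition~\ref{defn:well_behaved}. Proposition~\ref{prop:mu_one_well_behaved}, which we have just established, supplies exactly this hypothesis for~$\hat\mu_2$. With that input, iterating the one-variable principal-value Fourier inversion (taking symmetric truncations $\int_{-U}^U\int_{-V}^V$ and sending $U,V\to\infty$) is legitimate, and the rapid decay of the partial derivatives of $\hat\mu_2$ makes the resulting double PV integral converge. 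The remainder of the proof should amount to bookkeeping around these truncations, which is routine in the Feuerverger--Martin framework.
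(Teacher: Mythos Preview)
Your approach is correct and arrives at the stated formula, but the paper takes a somewhat different technical route. Rather than decomposing $\mathbf 1_{Q_1}$ via sign functions and invoking the Dirichlet integral for each factor, the paper regularizes on the spatial side: it inserts a damping factor $e^{-c(x+y)}$, writes $\mu_2(Q_1)$ as the $c\to 0^+$ limit of $\int_{Q_1} e^{-c(x+y)}$ against the Fourier-inverted density, and applies Fubini (now justified by absolute convergence) to evaluate the inner $(x,y)$-integrals as $\frac{1}{(c-iu)(c-iv)}$. Expanding the product splits the result into three pieces: the one carrying $i(u+v)$ vanishes by the $(u,v)\mapsto(-u,-v)$ symmetry of $\hat\mu_2$; the one carrying $c^2$ in the numerator rescales to give the constant $\tfrac14$ by dominated convergence; and the one carrying $uv$ tends to the PV integral as $c\to0^+$ by~\cite[Lemma~2.6]{doi:10.1080/10586458.2000.10504659}, which is exactly where well-behavedness (Proposition~\ref{prop:mu_one_well_behaved}) enters.

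Your truncation-based argument and the paper's exponential-damping argument are dual regularizations (yours on the frequency side, theirs on the spatial side), and both rest on the same key input, the well-behavedness of $\hat\mu_2$. The paper's route has the practical advantage that every intermediate step is absolutely convergent, so only a single black-box lemma is needed at the very end; your route makes the origin of the constant $\tfrac14$ more transparent (it falls out of the product $(1+\operatorname{sgn} x)(1+\operatorname{sgn} y)/4$ rather than from a scaling limit). Either way the justification of the swap is the genuine content, and you have correctly identified Proposition~\ref{prop:mu_one_well_behaved} as the hypothesis that makes the Feuerverger--Martin machinery applicable.
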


\begin{proof}
By the Fourier inversion formula and the monotone
convergence theorem,
\[ \mu_2(Q_1) = \lim_{c\to 0^+} \frac 1{4\pi^2} \int_0^\infty \int_0^\infty e^{-c(x+y)} \iint_{\R^2} e^{i(ux+vy)} \hat\mu_2(u,v) \, du \, dv \, dx \, dy . \]
Hence, by Fubini's Theorem,
\begin{align*}
\mu_2(Q_1)
& = \lim_{c\to 0^+} \frac 1{4\pi^2} \iint_{\R^2} \hat\mu_2(u,v) \int_0^\infty \int_0^\infty e^{(uxi-cx)+(vyi-cy)} \, dx \, dy \, du \, dv \\
& = \lim_{c\to 0^+} \frac 1{4\pi^2} \iint_{\R^2} \frac{\hat\mu_2(u,v)}{(c-iu)(c-iv)} \, du \, dv \\
& = \lim_{c\to 0^+} \frac 1{4\pi^2} \iint_{\R^2} \frac{\hat\mu_2(u,v)(c+iu)(c+iv)}{(u^2+c^2)(v^2+c^2)} \, du \, dv \\
& = \lim_{c\to 0^+} \frac 1{4\pi^2} \iint_{\R^2} \frac{\hat\mu_2(u,v)(c^2-uv+ic(u+v))}{(u^2+c^2)(v^2+c^2)} \, du \, dv . \\
\end{align*}
Therefore
\begin{align*}
\mu_2(Q_1) &=
\lim_{c\to 0^+} \frac{c^2}{4\pi^2} \iint_{\R^2} \frac{\hat\mu_2(u,v)}{(u^2+c^2)(v^2+c^2)} \, du \, dv \\
&\qquad{}+
\lim_{c\to 0^+} \frac {ic}{4\pi^2} \iint_{\R^2} \frac{\hat\mu_2(u,v)(u+v)}{(u^2+c^2)(v^2+c^2)} \, du \, dv \\
&\qquad{}-
\lim_{c\to 0^+} \frac 1{4\pi^2} \iint_{\R^2} \frac{\hat\mu_2(u,v)uv}{(u^2+c^2)(v^2+c^2)} \, du \, dv .
\end{align*}
The second integral vanishes since $\mu_2(u,v) = \mu_2(-u,-v)$.
Hence, by the change of variables $\alpha = \frac uc$, $\beta = \frac vc$ in the first integral,
\begin{multline*}
\mu_2(Q_1) =
\lim_{c\to 0^+} \frac 1{4\pi^2} \iint_{\R^2} \frac{\hat\mu_2(\alpha c,\beta c)}{(1+\alpha^2)(1+\beta^2)} \, d\alpha \, d\beta \\ -
\lim_{c\to 0^+} \frac 1{4\pi^2} \iint_{\R^2} \frac{\hat\mu_2(u,v)uv}{(u^2+c^2)(v^2+c^2)} \, du \, dv .
\end{multline*}
Since $|\mu_2(\alpha c,\beta c)| \le 1 = |\mu_2(0,0)|$, it follows by the dominated convergence theorem that
\begin{align*}
\mu_2(Q_1) &=
\frac 1{4\pi^2} \iint_{\R^2} \frac{d\alpha \, d\beta}{(1+\alpha^2)(1+\beta^2)} \\ &\qquad{} -
\lim_{c\to 0^+} \frac 1{4\pi^2} \iint_{\R^2} \frac{\hat\mu_2(u,v)uv}{(u^2+c^2)(v^2+c^2)} \, du \, dv \\
&=
\frac 14 -
\lim_{c\to 0^+} \frac 1{4\pi^2} \iint_{\R^2} \frac{\hat\mu_2(u,v)uv}{(u^2+c^2)(v^2+c^2)} \, du \, dv .
\end{align*}
Since $\hat\mu_2$ is well-behaved by Lemma~\ref{lemma:bessel_product_well_behaved}, the evaluation 
\[ \mu_2(Q_1) =
\frac 14 -
\frac 1{4\pi^2} \, \PV \iint_{\R^2} \frac{\hat\mu_2(u,v)}{uv} \, du \, dv .
\]
now follows from~\cite[Lemma~2.6]{doi:10.1080/10586458.2000.10504659} with $n=k=2$. (That lemma was derived under the more restrictive definition of ``well-behaved'' where $\beta_3=1$, but the proof is essentially identical for any~$\beta_3>0$.)
\end{proof}

\section{Calculation and error analysis} \label{error section}

We carry out the numerical calculation of~$\eta_2$ in this section, with a rigorous error term. Both the numerical approximation itself and the error analysis used in this section closely follow the techniques used by Feuerverger and the second author in~\cite{doi:10.1080/10586458.2000.10504659}, which itself was inspired by the methods of Rubinstein and Sarnak~\cite{Rubinstein1994ChebyshevsB}.
To summarize, in Section~\ref{dti section} we rewrite the relevant principal value as a discrete sum over a lattice in~$\R^2$. That sum is then truncated in Section~\ref{ttros section} to a bounded portion of the lattice, and then the summand involving~$\hat\mu_2$ is approximated in Section~\ref{ttip section} using only the zeros of $\zeta(s)$ up to a fixed height. Each of these steps introduces errors, and the errors are explicitly bounded in the next three sections.

\subsection{Discretizing the integral} \label{dti section}

Our first approximation uses the definitions
\begin{multline} \label{S ep and Err1 def}
S(\ep) = \ep^2 \mathop{\sum\sum}_{\substack{m,n \in \Z \\ m,n \text{ odd}}} \frac{\hat\mu_2(n\ep/2,m\ep/2)}{(m\ep/2)(n\ep/2)}
\\ \text{and}\quad
\Err_1 = \PV \iint_{\R^2} \frac{\hat\mu_2(u,v)}{uv} \, du \, dv - S(\ep) .
\end{multline}
Note that the double integral appears in Proposition~\ref{prop:mu_one_first_quadrant} for the mass~$\mu_2$ assigned to the first quadrant.

Bounding~$\Err_1$ requires tail bounds for the distribution~$\mu_2$. As an initial step, we establish such a bound for the one-dimensional distribution~$\mu_1$ that equals each component of~$\mu_2$. We use the same method as Rubinstein and Sarnak~\cite{Rubinstein1994ChebyshevsB}, who actually treat~$\mu_1$ itself as well as other related distributions; the fact that we record better numerical constants for~$\mu_1$ specifically is because they were bounding several distributions simultaneously.

\begin{lemma}
\label{lemma:mu_zero_exponential_bound}
Assume RH. Let $X$ be a random variable with distribution function~$\mu_1$.
For any real number $x \ge 0.14$,
\[ \Pr(X \ge x) \le \exp(-3.75(x-0.14)^2) . \]
\end{lemma}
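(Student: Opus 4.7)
The plan is to apply the standard Chernoff (exponential moment) method, following Rubinstein--Sarnak~\cite{Rubinstein1994ChebyshevsB}. Since the $Z_m$ are independent and $\mathbb{E}[e^{2t\Re Z_m}] = \frac{1}{2\pi}\int_0^{2\pi} e^{2t\cos\theta}\,d\theta = I_0(2t)$ is the modified Bessel function of the first kind (equivalently $I_0(y) = J_0(iy)$ in the notation of Definition~\ref{J0 definition}), the moment generating function of $X$ factors, using the equivalent form of $V_1$ recorded in Definition~\ref{eta1 def}, as
\[
\mathbb{E}[e^{\lambda X}] = \prod_{m=1}^\infty I_0\!\left(\frac{2\lambda}{\sqrt{1/4+\gamma_m^2}}\right), \qquad \lambda \in \mathbb{R}.
\]
A termwise comparison of the Taylor series of $I_0(y) = \sum_{n\ge 0}(y/2)^{2n}/(n!)^2$ with that of $e^{y^2/4}$ yields the elementary bound $I_0(y) \le e^{y^2/4}$ for all real $y$. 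Combined with Lemma~\ref{lemma:rho_squared} (which under RH gives $\sum_{m\ge 1}1/(\tfrac14+\gamma_m^2) = w/2$), this produces the sub-Gaussian moment estimate $\mathbb{E}[e^{\lambda X}] \le e^{\lambda^2 w/2}$.

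To generate the shift by $0.14$, I would isolate the summand $Y_1 := 2\Re Z_1/\sqrt{1/4+\gamma_1^2}$ corresponding to the lowest zero of $\zeta(s)$. Since $\gamma_1 \approx 14.135$, the deterministic bound $|Y_1| \le 2/\sqrt{1/4+\gamma_1^2} < 0.1415$ holds, and the residual sum $X' := X - Y_1$ inherits the sub-Gaussian estimate with $w$ replaced by $w' := w - 2/(\tfrac14+\gamma_1^2) \approx 0.03620$. Applying Markov's inequality to $X'$ and optimizing $\lambda > 0$ then yields, for $x \ge 0.1415$,
\[
\Pr(X \ge x) \le \Pr(X' \ge x - 0.1415) \le \exp\!\left(-\frac{(x-0.1415)^2}{2w'}\right).
\]

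To match the claimed bound, I would split into two cases. For $x$ in a suitable initial range, namely where $\exp(-3.75(x-0.14)^2) \ge \tfrac12$ (i.e.\ roughly $x \le 0.14 + \sqrt{(\log 2)/3.75} \approx 0.570$), the symmetry of $\mu_1$ about the origin together with its absolute continuity (both recorded in Definition~\ref{eta1 def}) yields the trivial bound $\Pr(X \ge x) \le \Pr(X > 0) = \tfrac12$, which already implies the lemma. For $x > 0.570$, the Chernoff bound above, whose exponent grows quadratically with the much larger coefficient $1/(2w') \approx 13.8$, dominates $\exp(-3.75(x-0.14)^2)$; verifying this reduces to a routine quadratic inequality in $x$ that is easily checked with the numerical values $\gamma_1 \approx 14.1347$ and $w \approx 0.04619$.

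The main obstacle is neither analytic nor conceptual but numerical: the claimed constants $0.14$ and $3.75$ are noticeably weaker than what a careful Chernoff argument naturally delivers, so some care is needed to confirm that the two regimes above combine cleanly across the transition point. The looseness is presumably deliberate, chosen to keep the bound in a convenient closed form for use in the numerical integrations later in Section~\ref{error section}; should a sharper bound ever be required, one would simply separate out more low-lying zeros or employ a refined Taylor bound on $I_0$.
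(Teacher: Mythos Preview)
Your approach is correct and is essentially the same as the paper's: both are Chernoff/sub-Gaussian tail bounds for the sum $\sum r_k\sin(2\pi U_k)$, the paper simply quoting \cite[Lemma~3.1]{doi:10.1080/10586458.2000.10504659} as a black box while you reconstruct the argument via $I_0(y)\le e^{y^2/4}$. One small remark: the isolation of $Y_1$ and the case split at $x\approx 0.57$ are unnecessary, since the direct bound $\Pr(X\ge x)\le\exp(-x^2/(2w))$ with $1/(2w)\approx 10.8$ already dominates $\exp(-3.75(x-0.14)^2)$ for all $x\ge 0.14$ by an easy quadratic comparison.
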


\begin{proof}
By Definition~\ref{eta1 def},
\[
X = \sum_{k=1}^\infty r_k \sin(2\pi U_k)
\quad\text{where}\quad
r_k = \frac 2{\sqrt{1/4+\gamma_k^2}} .
\]
Since $r_k \gg \frac1{\gamma_k} \gg \frac{\log k}k$ by the counting function for nontrivial zeros of $\zeta(s)$, the series $\sum_{k=1}^\infty r_k$ diverges; on the other hand, by Lemma~\ref{lemma:rho_squared}, $\sum_{k=1}^\infty r_k^2 = \sum_\rho 1/\rho(1-\rho) = w$ assuming RH.
It follows from~\cite[Lemma~3.1]{doi:10.1080/10586458.2000.10504659} that for $x \ge 2r_1$,
\[ \Pr(X \ge x) \le \exp\biggl(\frac{-3(x-2r_1)^2}{16w}\biggr) . \]
Since $w < 0.05$ and $r_1 < 0.07$, it follows that
\[ \Pr(X \ge x) \le \exp\biggl(\frac{-3(x-0.14)^2}{16 \cdot 0.05}\biggr) \]
for $x \ge 0.14$, as claimed.
\end{proof}

We will bootstrap this one-dimensional tail bound into a bound for the two-dimensional distribution~$\mu_2$, using the following notation.

\begin{definition}
For random variables $X,Y$ with joint distribution~$\mu_2$, define $\bar P(u,v) = \Pr(X > u,\, Y > v)$, and set $\bar P_1(u) = \bar P(u,-\infty)$ and $\bar P_2(v) = \bar P(-\infty, v)$. Also define
\begin{align*}
P^\star(u,v) &= \bar P(2\pi u, 2\pi v)-\bar P_1(2\pi u)\bar P_2(2\pi v) \\
Q(u,v) &= P^\star(u,v)+P^\star(-u,v)+P^\star(u,-v)+P^\star(-u,-v).
\end{align*}
\end{definition}

\begin{lemma} \label{Q formula lemma}
For all $u,v\in\R$,
\begin{align*}
&Q(u,v) \\ &= 2\Pr(X > 2\pi u, Y > 2\pi v) + 2\Pr(X > -2\pi u, Y > 2\pi v) - 2\Pr(Y > 2\pi v) \\
&= 2\Pr(X > 2\pi u, Y > 2\pi v) + 2\Pr(X > 2\pi u, Y > -2\pi v) - 2\Pr(X > 2\pi u).
\end{align*}
\end{lemma}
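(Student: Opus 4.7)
The strategy is a direct calculation exploiting two symmetries of $\mu_2$: first, the marginal of $\mu_2$ in either coordinate is $\mu_1$, which by Definition~\ref{eta1 def} is symmetric about the origin and absolutely continuous; second, $\mu_2$ itself is invariant under the negation $(X,Y)\mapsto(-X,-Y)$, since replacing each $Z_m$ in Definition~\ref{eta2 def} by $-Z_m$ preserves the uniform distribution of $Z_m$ on the unit circle while negating every summand of $V_2$.

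Writing $a=2\pi u$ and $b=2\pi v$, the definition of $Q$ expands as
\[
Q(u,v) \;=\; \sum_{\epsilon_1,\epsilon_2\in\{\pm1\}} \bar P(\epsilon_1 a,\epsilon_2 b) \;-\; \bigl(\bar P_1(a)+\bar P_1(-a)\bigr)\bigl(\bar P_2(b)+\bar P_2(-b)\bigr).
\]
The marginal symmetry together with absolute continuity gives $\bar P_1(a)+\bar P_1(-a)=1$ and $\bar P_2(b)+\bar P_2(-b)=1$, so the product term contributes exactly $-1$. Everything then reduces to rewriting $\sum_{\pm,\pm}\bar P(\pm a,\pm b)$ in a form that involves only two of the four sign choices.

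Combining inclusion--exclusion with the negation invariance of $\mu_2$ (and absolute continuity) yields the three identities
\begin{align*}
\bar P(-a,-b) &= 1 - \bar P_1(a) - \bar P_2(b) + \bar P(a,b), \\
\bar P(-a, b) &= \bar P_2(b) - \bar P_1(a) + \bar P(a,-b), \\
\bar P(a,-b) &= \bar P_1(a) - \bar P_2(b) + \bar P(-a,b).
\end{align*}
The first follows by expanding $\bar P(-a,-b)$ via inclusion--exclusion and observing that $\Pr(X\le -a,\,Y\le -b)=\Pr(X\ge a,\,Y\ge b)=\bar P(a,b)$. For the second, write $\bar P(-a,b)=\bar P_2(b)-\Pr(X<-a,\,Y>b)$ and then convert $\Pr(X<-a,\,Y>b)$ into $\bar P_1(a)-\bar P(a,-b)$ via the same negation symmetry; the third identity is analogous.

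Substituting the first and third identities into the sum (so as to eliminate the $-a$ terms) collapses it to
\[
\sum_{\pm,\pm}\bar P(\pm a,\pm b) \;=\; 2\bar P(a,b) + 2\bar P(a,-b) + 1 - 2\bar P_1(a),
\]
and subtracting the product contribution of $1$ delivers the second asserted formula. Using the first and second identities instead (to eliminate the $-b$ terms) produces $\sum_{\pm,\pm}\bar P(\pm a,\pm b)=2\bar P(a,b)+2\bar P(-a,b)+1-2\bar P_2(b)$, which gives the first formula; equivalently, the first follows from the second via the swap symmetry of $\mu_2$ noted in Definition~\ref{eta2 def}. The argument is pure bookkeeping, so no serious obstacle arises; the only point worth flagging is the careful use of absolute continuity to pass between $\le$ and $<$ when invoking the negation symmetry.
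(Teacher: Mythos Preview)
Your argument is correct and follows essentially the same route as the paper: expand $Q$, collapse the marginal product to $1$ via the symmetry of $\mu_1$, and then use inclusion--exclusion together with the negation symmetry $(X,Y)\sim(-X,-Y)$ of $\mu_2$ to reduce the four joint probabilities to two. The one slip is a labeling swap: substituting your first and \emph{second} identities (not first and third) is what eliminates the $-a$ terms and yields $2\bar P(a,b)+2\bar P(a,-b)+1-2\bar P_1(a)$, while first and \emph{third} eliminate the $-b$ terms and give the other formula; the algebra and conclusions are unaffected.
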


\begin{proof}
By definition,
\begin{align*}
Q(u,v)
& = \Pr(X > 2\pi u, Y > 2\pi v) - \Pr(X > 2\pi u)\Pr(Y > 2\pi v) \\
& \qquad{} + \Pr(X > -2\pi u, Y > 2\pi v) - \Pr(X > -2\pi u)\Pr(Y > 2\pi v) \\
& \qquad{} + \Pr(X > 2\pi u, Y > -2\pi v) - \Pr(X > 2\pi u)\Pr(Y > -2\pi v) \\
& \qquad{} + \Pr(X > -2\pi u, Y > -2\pi v) - \Pr(X > -2\pi u)\Pr(Y > -2\pi v) .
\end{align*}
By the symmetry and absolute continuity of~$\mu_1$,
\begin{align*}
\Pr(X > -2\pi u) & = 1-\Pr(X < -2\pi u) = 1-\Pr(X > 2\pi u) \\
\Pr(Y > -2\pi v) & = 1-\Pr(Y < -2\pi v) = 1-\Pr(Y > 2\pi v) .
\end{align*}
Thus
\begin{align*}
Q(u,v)
& = \Pr(X > 2\pi u, Y > 2\pi v) + \Pr(X > -2\pi u, Y > 2\pi v) \\
& \qquad{} + \Pr(X > 2\pi u, Y > -2\pi v) + \Pr(X > -2\pi u, Y > -2\pi v) - 1 .
\end{align*}
Then by Lemmas~\ref{lemma:mu_one_continuity} and Definition~\ref{eta2 def},
\begin{align*}
\Pr(&X > -2\pi u, Y > 2\pi v) \\ & = 1-\Pr(X < -2\pi u)-\Pr(Y < 2\pi v)+\Pr(X < -2\pi u, Y < 2\pi v) \\
& = 1-\Pr(X > 2\pi u)-\Pr(Y < 2\pi v)+\Pr(X > 2\pi u, Y > -2\pi v) \\
\Pr(&X > 2\pi u, Y > -2\pi v) \\ & = 1-\Pr(X < 2\pi u)-\Pr(Y < -2\pi v)+\Pr(X < 2\pi u, Y < -2\pi v) \\
& = 1-\Pr(X < 2\pi u)-\Pr(Y > 2\pi v)+\Pr(X > -2\pi u, Y > 2\pi v) \\
\Pr(&X > -2\pi u, Y > -2\pi v) \\ & = 1-\Pr(X < -2\pi u)-\Pr(Y < -2\pi v)+\Pr(X < -2\pi u, Y < 2\pi v) \\
& = 1-\Pr(X > 2\pi u)-\Pr(Y > 2\pi v)+\Pr(X > 2\pi u, Y > 2\pi v).
\end{align*}
Thus, $Q(u,v)$ can be simplified even further as
\begin{align*}
Q(u,v) & = 2\Pr(X > 2\pi u, Y > 2\pi v) + 2\Pr(X > -2\pi u, Y > 2\pi v) \\
& \qquad{} + 1 - \Pr(X < 2\pi u) - \Pr(Y > 2\pi v) \\ & \qquad{} + 1 - \Pr(X > 2\pi u) - \Pr(Y > 2\pi v) - 1 \\
& = 2\Pr(X > 2\pi u, Y > 2\pi v) + 2\Pr(X > -2\pi u, Y > 2\pi v) \\ & \qquad{} - 2\Pr(Y > 2\pi v) . 
\end{align*}
or alternatively as
\begin{align*}
Q(u,v) & = 2\Pr(X > 2\pi u, Y > 2\pi v) + 2\Pr(X > 2\pi u, Y > -2\pi v) \\
& \qquad{} + 1 - \Pr(X > 2\pi u) - \Pr(Y < 2\pi v) \\ & \qquad{} + 1 - \Pr(X > 2\pi u) - \Pr(Y > 2\pi v) - 1 \\
& = 2\Pr(X > 2\pi u, Y > 2\pi v) + 2\Pr(X > 2\pi u, Y > -2\pi v) \\ & \qquad{} - 2\Pr(X > 2\pi u) .
\qedhere
\end{align*}
\end{proof}

\begin{lemma} \label{Q bound lemma}
For all $u,v\in\R$,
\[
|Q(u,v)| \le 2\min\bigl\{ \Pr(X > 2\pi u), \Pr(Y > 2\pi v) \bigr\}.
\]
\end{lemma}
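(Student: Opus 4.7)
The plan is to derive the bound immediately from the two expressions for $Q(u,v)$ established in Lemma~\ref{Q formula lemma}, using only nonnegativity of probabilities and the trivial bound that a joint probability is at most either marginal probability. There is no real obstacle; the main observation is that one formula yields the sharp control in terms of $\Pr(Y > 2\pi v)$ and the other yields the analogous control in terms of $\Pr(X > 2\pi u)$, so combining them gives the minimum.

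Concretely, from the first formula in Lemma~\ref{Q formula lemma},
\[
Q(u,v) = 2\Pr(X > 2\pi u,\, Y > 2\pi v) + 2\Pr(X > -2\pi u,\, Y > 2\pi v) - 2\Pr(Y > 2\pi v),
\]
one obtains the upper bound by discarding the (nonnegative) second summand together with the monotonicity estimate $\Pr(X > -2\pi u, Y > 2\pi v) \le \Pr(Y > 2\pi v)$, leaving
\[
Q(u,v) \le 2\Pr(X > 2\pi u,\, Y > 2\pi v) \le 2\min\bigl\{\Pr(X > 2\pi u),\, \Pr(Y > 2\pi v)\bigr\}.
\]
For the lower bound, the same formula gives $Q(u,v) \ge -2\Pr(Y > 2\pi v)$ (dropping the two nonnegative joint probabilities), while the second formula in Lemma~\ref{Q formula lemma} analogously produces $Q(u,v) \ge -2\Pr(X > 2\pi u)$. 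Taking the maximum of these two lower bounds yields
\[
Q(u,v) \ge -2\min\bigl\{\Pr(X > 2\pi u),\, \Pr(Y > 2\pi v)\bigr\},
\]
and combining with the upper bound completes the proof. The argument is short because Lemma~\ref{Q formula lemma} has already absorbed all the symmetry of~$\mu_2$; the present lemma is essentially a bookkeeping consequence.
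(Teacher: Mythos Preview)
Your proof is correct and follows essentially the same approach as the paper: both use the two formulas from Lemma~\ref{Q formula lemma} combined with the trivial bounds $0 \le \Pr(A\cap B) \le \Pr(B)$. The only cosmetic difference is that for the upper bound you pass through the joint probability $2\Pr(X>2\pi u,\,Y>2\pi v)$ to reach the minimum in one step, whereas the paper applies each formula separately to obtain $|Q|\le 2\Pr(Y>2\pi v)$ and $|Q|\le 2\Pr(X>2\pi u)$ before taking the minimum.
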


\begin{proof}
The trivial inequalities $0 \le \Pr(X > \pm2\pi u, Y > 2\pi v) \le \Pr( Y>2\pi v)$ imply that
\begin{align*}
- 2\Pr(Y > 2\pi v) &= 0 + 0 - 2\Pr(Y > 2\pi v) \\
&\le 2\Pr(X > 2\pi u, Y > 2\pi v) + 2\Pr(X > -2\pi u, Y > 2\pi v) \\ &\qquad{} - 2\Pr(Y > 2\pi v) \\
&\le 2\Pr(Y > 2\pi v) + 2\Pr(Y > 2\pi v) - 2\Pr(Y > 2\pi v) \\ & \quad{} = 2\Pr(Y > 2\pi v);
\end{align*}
similarly, the trivial inequalities $0 \le \Pr(X > 2\pi u, Y > \pm2\pi v) \le \Pr( Y>2\pi u)$ imply that
\begin{align*}
- 2\Pr(Y > 2\pi u) &= 0 + 0 - 2\Pr(Y > 2\pi u) \\
&\le 2\Pr(X > 2\pi u, Y > 2\pi v) + 2\Pr(X > 2\pi u, Y > -2\pi v) \\ &\qquad{} - 2\Pr(Y > 2\pi u) \\
&\le 2\Pr(Y > 2\pi u) + 2\Pr(Y > 2\pi u) - 2\Pr(Y > 2\pi u) \\ & \quad{} = 2\Pr(Y > 2\pi u);
\end{align*}
The claim now follows directly from Lemma~\ref{Q formula lemma}.
\end{proof}

\begin{lemma} \label{error_1_inequality}
We have
\[
|\Err_1| \le 4\pi^2 \sum_{\substack{(\kappa,\lambda) \in \Z_{\ge 0} \\ (\kappa,\lambda) \ne (0,0)}} \bigg|Q\biggl(\frac\kappa\ep, \frac\lambda\ep\biggr)\bigg| .
\]
\end{lemma}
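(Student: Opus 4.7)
The plan is to realize $S(\ep)$ as an integral against a product of two square waves, express the principal value integral from Proposition~\ref{prop:mu_one_first_quadrant} analogously, and then expand the difference as a lattice sum that reproduces the $Q$-values of Lemma~\ref{Q formula lemma}. Writing $\hat\mu_2(u,v) = \int_{\R^2} e^{-i(ux+vy)}\,d\mu_2(x,y)$ and interchanging summation with integration (justified via symmetric truncation and dominated convergence, using the uniform boundedness of the partial sums of $\sum_{n \text{ odd}} e^{-in\theta}/n$ together with the rapid decay of $\hat\mu_2$ from Proposition~\ref{prop:mu_one_well_behaved}), one arrives at
\begin{equation*}
S(\ep) = 4 \int_{\R^2} \biggl(\sum_{n \text{ odd}} \frac{e^{-in\ep x/2}}{n}\biggr) \biggl(\sum_{m \text{ odd}} \frac{e^{-im\ep y/2}}{m}\biggr) d\mu_2(x,y).
\end{equation*}
The classical Fourier expansion $\sum_{n \ge 1,\,\text{odd}} \sin(n\theta)/n = \tfrac\pi4 \mathrm{sign}(\sin\theta)$ collapses each inner sum to $-\tfrac{i\pi}{2} f(\ep x/2)$, where $f(\theta) := \mathrm{sign}(\sin\theta)$, giving $S(\ep) = -\pi^2 \int f(\ep x/2) f(\ep y/2)\,d\mu_2$. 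On the other hand, Proposition~\ref{prop:mu_one_first_quadrant} together with the symmetry $\mu_2(x,y) = \mu_2(-x,-y)$ (immediate from Definition~\ref{eta2 def}, since $-Z_m$ has the same distribution as $Z_m$) yields $\PV \iint \hat\mu_2(u,v)/(uv)\,du\,dv = \pi^2 - 4\pi^2\mu_2(Q_1) = -\pi^2\int \mathrm{sign}(x)\mathrm{sign}(y)\,d\mu_2$, so
\begin{equation*}
\Err_1 = \pi^2 \int_{\R^2} \bigl(f(\ep x/2) f(\ep y/2) - \mathrm{sign}(x)\mathrm{sign}(y)\bigr) d\mu_2.
\end{equation*}

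Next I would use the telescoping identity
\begin{equation*}
f(\ep x/2)\,\mathrm{sign}(x) = \sum_{\kappa \ge 0} \alpha_\kappa \,\mathbf{1}\bigl[|x| \ge 2\pi\kappa/\ep\bigr], \qquad \alpha_0 := 1,\ \ \alpha_\kappa := 2(-1)^\kappa \text{ for } \kappa \ge 1,
\end{equation*}
which records how $\mathrm{sign}(\sin(\ep x/2))$ differs from $\mathrm{sign}(x)$ across the jump points $|x| = 2\pi\kappa/\ep$ (an interval-by-interval check confirms it). Multiplying this identity by its analogue in $y$, separating off the $(\kappa,\lambda) = (0,0)$ term (which contributes exactly $\mathrm{sign}(x)\mathrm{sign}(y)$), and integrating against $\mu_2$ gives
\begin{equation*}
\Err_1 = \pi^2 \sum_{\substack{(\kappa,\lambda) \in \Z_{\ge 0}^2 \\ (\kappa,\lambda) \ne (0,0)}} \alpha_\kappa \alpha_\lambda \int_{\R^2} \mathrm{sign}(x)\mathrm{sign}(y)\,\mathbf{1}[|x| \ge 2\pi\kappa/\ep]\,\mathbf{1}[|y| \ge 2\pi\lambda/\ep]\,d\mu_2.
\end{equation*}

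For $\kappa,\lambda \ge 1$, splitting each integrand over the four sign-combinations of $(x,y)$ and using $\mu_2(x,y) = \mu_2(-x,-y)$ reduces the integral to $2\bar P(2\pi\kappa/\ep, 2\pi\lambda/\ep) + 2\bar P(2\pi\kappa/\ep, -2\pi\lambda/\ep) - 2\bar P_1(2\pi\kappa/\ep)$, which equals $Q(\kappa/\ep, \lambda/\ep)$ by the second formula in Lemma~\ref{Q formula lemma}; the edge cases $\kappa = 0$ or $\lambda = 0$ follow from the same quadrant decomposition together with $\mathbf{1}[|\cdot| \ge 0] \equiv 1$ and the first formula in the same lemma. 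Since $|\alpha_\kappa \alpha_\lambda| \le 4$, applying the triangle inequality now yields the claimed bound. The main technical obstacle is the opening Fourier-inversion step: because the inner sum $\sum_{n\text{ odd}} e^{-in\theta}/n$ is only conditionally convergent in $\theta$, the interchange with the $\mu_2$-integral cannot be performed by a naive application of Fubini and must instead be handled through symmetric truncation and dominated convergence, leveraging the classical uniform bound on partial sums of $\sum \sin(n\theta)/n$ and the exponential decay of $\hat\mu_2$ provided by Proposition~\ref{prop:mu_one_well_behaved}.
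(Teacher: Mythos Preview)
Your argument is correct and is precisely the computation the paper invokes by citing equation~(3\nobreakdash-7) of Feuerverger--Martin; the paper's own proof is simply that citation, and you have reconstructed the cited argument faithfully (square-wave Fourier expansion, the telescoping identity for $\mathrm{sign}(\sin(\ep x/2))\,\mathrm{sign}(x)$, and the quadrant decomposition that identifies each integral with $Q(\kappa/\ep,\lambda/\ep)$ via Lemma~\ref{Q formula lemma}). The justification you flag for the interchange step is also the right one.
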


\begin{proof}
This assertion is established by the calculations leading to~\cite[equation~(3-7)]{doi:10.1080/10586458.2000.10504659}. Those calculations involved a different two-dimensional distribution, but the calculations apply to any (sufficiently nice) two-dimensional distribution, including~$\mu_2$.
\end{proof}



\begin{lemma}
\label{lemma:error_discrete}
Assume RH. If $0 < \ep \le 13$ then
\[ |\Err_1| \le 96\pi^2\exp\biggl(-3.75\biggl(\frac{2\pi}\ep-0.14\biggr)^2\biggr). \]
\end{lemma}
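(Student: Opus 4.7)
The plan is to combine the three preceding lemmas of this section in sequence and then estimate the resulting double series. First, Lemma~\ref{error_1_inequality} gives
\[
|\Err_1| \;\le\; 4\pi^2 \sum_{\substack{(\kappa,\lambda) \in \Z_{\ge 0}^2 \\ (\kappa,\lambda) \ne (0,0)}} \bigl|Q(\kappa/\ep,\lambda/\ep)\bigr|.
\]
Since both components $X,Y$ of $V_2$ share the distribution $\mu_1$, and $\Pr(X > t)$ is nonincreasing in $t$, Lemma~\ref{Q bound lemma} collapses to $|Q(\kappa/\ep,\lambda/\ep)| \le 2\Pr\bigl(X > 2\pi\max(\kappa,\lambda)/\ep\bigr)$. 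The hypothesis $\ep \le 13$ ensures that $a := 2\pi/\ep \ge 2\pi/13 > 0.48$, so $a\max(\kappa,\lambda) > 0.14$ whenever $\max(\kappa,\lambda) \ge 1$, and Lemma~\ref{lemma:mu_zero_exponential_bound} therefore applies to yield
\[
|Q(\kappa/\ep,\lambda/\ep)| \;\le\; 2\exp\!\bigl(-3.75(a\max(\kappa,\lambda)-0.14)^2\bigr).
\]

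The next step will be to reorganize by the common value $n := \max(\kappa,\lambda) \ge 1$, noting that exactly $2n+1$ nonnegative integer pairs $(\kappa,\lambda)$ achieve a given maximum $n$. The key elementary inequality is
\[
(an - 0.14)^2 - (a - 0.14)^2 \;=\; 2a(n-1)(a-0.14) + a^2(n-1)^2 \;\ge\; a^2(n-1)^2,
\]
valid for $a \ge 0.14$ and $n \ge 1$, which cleanly factors out the $n = 1$ exponential from the rest of the series. Combining this with the bound above gives
\[
|\Err_1| \;\le\; 8\pi^2 \exp\!\bigl(-3.75(a-0.14)^2\bigr) \sum_{n \ge 1} (2n+1)\exp\!\bigl(-3.75\, a^2 (n-1)^2\bigr).
\]

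What remains is the numerical verification that the tail series on the right is at most $12$ uniformly on $0 < \ep \le 13$, which will produce the stated constant $8\pi^2 \cdot 12 = 96\pi^2$. Since $3.75\, a^2 \ge 3.75 (2\pi/13)^2 > 0.87$, the $n = 1$ term contributes $3$ while the remaining tail $\sum_{m \ge 1}(2m+3)e^{-0.87 m^2}$ is a rapidly convergent Gaussian-type series bounded by a small constant, leaving substantial slack below $12$. The main (and essentially only) obstacle is verifying this numerical bound at the worst case $\ep = 13$, where the decay rate $a^2$ is slowest; at all smaller~$\ep$ the inner series converges much faster, so that endpoint check is what pins down the constant $96$.
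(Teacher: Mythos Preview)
Your proof is correct and follows essentially the same approach as the paper: both chain Lemmas~\ref{error_1_inequality}, \ref{Q bound lemma}, and~\ref{lemma:mu_zero_exponential_bound} to reduce to bounding $\sum_{(\kappa,\lambda)\ne(0,0)} \exp\bigl(-3.75(a\max(\kappa,\lambda)-0.14)^2\bigr)$ with $a=2\pi/\ep$. The only difference is cosmetic, in how the final series is summed---the paper rewrites it as $\sum_{\kappa\ge0}\sum_{\lambda\ge\max\{1,\kappa\}}$ and twice invokes a ``each term is at most half the previous'' geometric bound, whereas you group by $n=\max(\kappa,\lambda)$ (multiplicity $2n+1$) and factor out the $n=1$ exponential via $(an-0.14)^2\ge(a-0.14)^2+a^2(n-1)^2$; both routes reach the constant~$96$ with room to spare.
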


\begin{proof}
By Lemmas~\ref{error_1_inequality}, \ref{Q bound lemma}, and~\ref{lemma:mu_zero_exponential_bound},
\begin{align*}
|\Err_1| &\le 4\pi^2 \sum_{\substack{(\kappa,\lambda) \in \Z_{\ge 0} \\ (\kappa,\lambda) \ne (0,0)}} \bigg|Q\biggl(\frac\kappa\ep, \frac\lambda\ep\biggr)\bigg| \\
&\le 4\pi^2 \sum_{\substack{(\kappa,\lambda) \in \Z_{\ge 0} \\ (\kappa,\lambda) \ne (0,0)}} 2\min\bigg\{\Pr\Bigl(X > 2\pi \frac\kappa\ep\Bigr),\Pr\Bigl(Y > 2\pi \frac\lambda\ep\Bigr)\bigg\} \\
&\le 8\pi^2 \sum_{\substack{(\kappa,\lambda) \in \Z_{\ge 0} \\ (\kappa,\lambda) \ne (0,0)}} \min\bigg\{ \exp\biggl(-3.75\Bigl(\frac{2\pi\kappa}\ep-0.14\Bigr)^2\biggr), \\ & \qquad\qquad\qquad\qquad\qquad\qquad \exp\biggl(-3.75\Bigl(\frac{2\pi\lambda}\ep-0.14\Bigr)^2\biggr) \bigg\}.
\end{align*}
The inequalities $0<\ep\le 13$ imply $\frac{2\pi}\ep - 0.14 > 0.14$, which means that $(\frac{2\pi k}\ep - 0.14)^2$ is an increasing function of the nonnegative integer variable~$k$. Consequently,
\begin{align*}
|\Err_1|
& \le 8\pi^2 \sum_{\substack{(\kappa,\lambda) \in \Z_{\ge 0} \\ (\kappa,\lambda) \ne (0,0)}} \exp\biggl(-3.75\Bigl(\frac{2\pi}\ep\max\{\kappa,\lambda\}-0.14\Bigr)^2\biggr) \\
& \le 16\pi^2 \sum_{\kappa=0}^\infty \sum_{\lambda=\max\{1,\kappa\}}^\infty \exp\biggl(-3.75\Bigl(\frac{2\pi}\ep\lambda-0.14\Bigr)^2\biggr) .
\end{align*}
One can check that each term in the inner sum is at most half the previous term when $0<\ep\le13$, and therefore
\begin{align*}
|\Err_1|
& \le 16\pi^2 \biggl( 2\exp\biggl(-3.75\Bigl(\frac{2\pi}\ep-0.14\Bigr)^2 \biggr) \\ &\qquad{} + \sum_{\kappa=1}^\infty 2\exp\biggl(-3.75\Bigl(\frac{2\pi}\ep\kappa-0.14\Bigr)^2\biggr) \\
& \le 96\pi^2 \exp\biggl(-3.75\Bigl(\frac{2\pi}\ep-0.14\Bigr)^2\biggr) 
\end{align*}
as claimed.
\end{proof}

\subsection{Truncating the range of summation} \label{ttros section}

The next step, as in~\cite{doi:10.1080/10586458.2000.10504659}, is to compute the error caused
by truncating the infinite sum $S(\ep)$ from equation~\eqref{S ep and Err1 def}
to a finite region. In that work, they truncated their sum to run over a bounded square aligned with the coordinate axes.
For us, however,
the extremely different decay rates of~$\mu_2$ in the two diagonal directions means that the same method would necessarily include many points in which the summand is very small, slowing the numerical computations significantly.
Instead, we will truncate~$S(\ep)$ to run over the rectangular region
\begin{equation} \label{RCC def}
\mathcal R(C_x,C_y) = \bigl\{ (x,y) \colon |(x,y)\cdot(\tfrac1{\sqrt 2},\tfrac1{\sqrt 2})| \le C_x,\, |(x,y)\cdot(-\tfrac1{\sqrt 2},\tfrac1{\sqrt 2})| \le C_y \bigr\}
\end{equation}
which is rotated with respect to the $xy$-axes. More precisely, for any $C_x,C_y > 0$ we define
\begin{align}
S(\ep,C_x,C_y) &= \ep^2 \mathop{\sum\sum}_{\substack{m,n \text{ odd} \\ (n\ep/2,m\ep/2) \in \mathcal R(C_x,C_y)}} \frac{\hat\mu_2(n\ep/2,m\ep/2)}{(m\ep/2)(n\ep/2)} \notag \\
\Err_2 &= S(\ep)-S(\ep,C_x,C_y) .
\label{S ep CC and Err2 def}
\end{align}
By partitioning the square lattice into two complementary diagonal lattices, and exploiting symmetries of~$\hat\mu_2(x,y)$, we can establish more convenient expressions for~$S(\ep)$ and this new truncation.

\begin{definition} \label{ae be tilde C definition}
For any $\ep > 0$, define
\begin{align*}
\la_\ep(k,\ell) & = \tfrac\ep 2 (1,1) + \ep k(1,1) + \ep\ell(-1,1) = \ep(\tfrac12+k-\ell,\tfrac12+k+\ell) , \\
\lb_\ep(k,\ell) & = \tfrac\ep 2 (-1,1) + \ep k(1,1) + \ep\ell(-1,1) = \ep(-\tfrac12+k-\ell,\tfrac12+k+\ell) .
\end{align*}
Further, for any $\ep,C_x,C_y>0$ define $\tilde C_x = {C_x}/{\ep\sqrt 2}$ and $\tilde C_y = {C_y}/{\ep\sqrt 2}$.
\end{definition}

\begin{lemma}
\label{lemma:split_lattice}
For any $\ep > 0$,
\begin{align*}
S(\ep)
& = \frac2{\ep^2} \sum_{k=0}^\infty
\frac{\hat\mu_2(\la_\ep(k,0))}{(k+1/2)^2}
- \frac2{\ep^2} \sum_{\ell=0}^\infty
\frac{\hat\mu_2(\lb_\ep(0,\ell))}{(\ell+1/2)^2} \\
& \qquad{} + \frac4{\ep^2} \sum_{k=0}^\infty \sum_{\ell=1}^\infty
\frac{\hat\mu_2(\la_\ep(k,\ell))}{(k+1/2)^2-\ell^2}
+ \frac4{\ep^2} \sum_{k=1}^\infty \sum_{\ell=0}^\infty \frac{\hat\mu_2(\lb_\ep(k,\ell))}{k^2-(\ell+1/2)^2} .
\end{align*}
Further, for any $\ep,C_x,C_y > 0$,
\begin{align*}
S(\ep,C_x,C_y) & =
\frac2{\ep^2} \sum_{0 \le k \le \tilde C_x-\frac 12}
\frac{\hat\mu_2(\la_\ep(k,0))}{(k+1/2)^2}
- \frac2{\ep^2} \sum_{0 \le \ell \le \tilde C_y-\frac 12}
\frac{\hat\mu_2(\lb_\ep(0,\ell))}{(\ell+1/2)^2} \\
& \qquad{} + \frac4{\ep^2} \sum_{0 \le k \le \tilde C_x -\frac 12}\sum_{1 \le \ell \le \tilde C_y} \frac{\hat\mu_2(\la_\ep(k,\ell))}{(k+1/2)^2-\ell^2} \\ & \qquad + \frac4{\ep^2} \sum_{1 \le k \le \tilde C_x}\sum_{0 \le \ell \le \tilde C_y-\frac 12} \frac{\hat\mu_2(\lb_\ep(k,\ell))}{k^2-(\ell+1/2)^2} .
\end{align*}
\end{lemma}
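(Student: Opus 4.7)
The plan is to reorganize the double sum over $\{(n,m)\in\Z^2 : n,m \text{ odd}\}$ as two sums over diagonal sublattices, each bijectively parametrized by $\Z^2$, and then exploit the symmetries of $\hat\mu_2$ to fold the sums into a single quadrant of indices. First, given $n,m$ both odd, set $s=(n+m)/2$ and $t=(m-n)/2$, both integers with exactly one of them odd; the case ``$s$ odd, $t$ even'' yields $s=1+2k$, $t=2\ell$ and recovers $(n\ep/2,m\ep/2)=\la_\ep(k,\ell)$, while the case ``$s$ even, $t$ odd'' yields $s=2k$, $t=1+2\ell$ and recovers $\lb_\ep(k,\ell)$. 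As $(k,\ell)$ ranges over $\Z^2$ in each case, this gives a bijective partition of the original lattice. A short computation then factors the denominators: $nm=s^2-t^2$ gives $(n\ep/2)(m\ep/2)=\ep^2\bigl[(k+\tfrac12)^2-\ell^2\bigr]$ on the $\la_\ep$ sublattice and $\ep^2\bigl[k^2-(\ell+\tfrac12)^2\bigr]$ on the $\lb_\ep$ sublattice.

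Next I would extract the two symmetries of $\hat\mu_2$ from Lemma~\ref{note:common_measures}: the identity $\hat\mu_2(-x,-y)=\hat\mu_2(x,y)$ is manifest, and $\hat\mu_2(y,x)=\hat\mu_2(x,y)$ holds because $|2\gamma_m(t_1+t_2)+i(t_1-t_2)|$ is invariant under $t_1\leftrightarrow t_2$. On the $\la_\ep$ parametrization these correspond to the involutions $(k,\ell)\to(-1-k,-\ell)$ and $(k,\ell)\to(k,-\ell)$; the summand is invariant under both. The first involution is fixed-point-free, while the second fixes exactly the slice $\ell=0$, so the orbit structure gives a factor of $4$ when $\ell\ge 1$ (choosing representatives with $k\ge 0,\,\ell\ge 1$) and a factor of $2$ when $\ell=0$ (representatives with $k\ge 0$). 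The analogous analysis for $\lb_\ep$, using the involutions $(k,\ell)\to(k,-1-\ell)$ and its composition with negation $(k,\ell)\to(-k,\ell)$, yields a factor of $4$ when $k\ge 1,\,\ell\ge 0$ and a factor of $2$ when $k=0,\,\ell\ge 0$. Assembling these four contributions with the denominator identities from the previous step and the overall prefactor from the definition of $S(\ep)$ produces the claimed formula; the minus sign on the $\lb_\ep(0,\ell)$ term arises because $k^2-(\ell+\tfrac12)^2=-(\ell+\tfrac12)^2$ at $k=0$.

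For the truncated sum $S(\ep,C_x,C_y)$, the key observation is that the rectangle $\mathcal R(C_x,C_y)$ in equation~\eqref{RCC def} is defined by bounds on $|(x,y)\cdot(1/\sqrt2,1/\sqrt2)|$ and $|(x,y)\cdot(-1/\sqrt2,1/\sqrt2)|$, both invariant under $(x,y)\to(-x,-y)$ and under $(x,y)\to(y,x)$. Hence exactly the same symmetry reduction applies, and one only needs to translate the defining inequalities into index constraints. For $\la_\ep(k,\ell)$ with $k,\ell\ge 0$ one computes the two signed dot products as $(2k+1)\ep/\sqrt2$ and $(2\ell)\ep/\sqrt2$, giving $k\le\tilde C_x-\tfrac12$ and $\ell\le\tilde C_y$; for $\lb_\ep(k,\ell)$ one gets $(2k)\ep/\sqrt2$ and $(2\ell+1)\ep/\sqrt2$, giving $k\le\tilde C_x$ and $\ell\le\tilde C_y-\tfrac12$. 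These exactly match the ranges in the claimed formula.

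The main obstacle is purely bookkeeping: one must correctly determine the orbit sizes of the two involutions on $\Z^2$, distinguishing between the interior ($4$-fold orbits producing the double sums) and the two axial boundary slices ($2$-fold orbits on $\ell=0$ for $\la_\ep$ and on $k=0$ for $\lb_\ep$, producing the two single sums with halved coefficients and an extra sign on the $\lb_\ep$ side). Once this combinatorial structure is tabulated, both formulas follow immediately from linear bookkeeping.
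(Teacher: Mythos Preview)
Your proposal is correct and follows essentially the same approach as the paper: both proofs decompose the odd-odd lattice into the two diagonal sublattices $\la_\ep$ and $\lb_\ep$ and exploit the symmetries $\hat\mu_2(-x,-y)=\hat\mu_2(x,y)$ and $\hat\mu_2(y,x)=\hat\mu_2(x,y)$ (from Lemma~\ref{note:common_measures}) to fold the sum into a fundamental domain. The only cosmetic difference is the order of operations---the paper folds geometrically into the region $y>|x|$ plus the two diagonals and then identifies which sublattice points lie where, whereas you first parametrize each sublattice by $\Z^2$ and then carry out the folding via the induced involutions on $(k,\ell)$; the orbit bookkeeping and the resulting factors of $2$ and $4$ are identical.
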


\begin{proof}
Define the function $s(x,y) = \frac{\hat\mu_2(x,y)}{xy}$ and the lattice \[ \lL = \{ (n\ep/2,m\ep/2) \colon n,m \text{ are odd integers}\}, \] so that $S(\ep)
= \sum_{(x,y) \in \lL} s(x,y)$. From Lemma~\ref{note:common_measures} we see that $s(x,y) = s(y,x) = s(-y,-x) = s(-x,-y)$, and thus
\begin{align*}
S(\ep) & = \sum_{(x,y) \in \lL} s(x,y) \\ & = \sum_{\substack{(x,y) \in \lL \\ x = y > 0}} \bigl(s(x,y)+s(-x,-y)\bigr)
+ \sum_{\substack{(x,y) \in \lL \\ -x = y > 0}} \bigl(s(x,y)+s(y,x)\bigr) \\
& \quad{} + \sum_{\substack{(x,y) \in \lL \\ y > |x|}} \bigl(s(x,y)+s(-y,-x)+s(y,x)+s(-x,-y)\bigr) \\
& = 2\sum_{\substack{(x,x) \in \lL \\ x > 0}} s(x,x)
+ 2\sum_{\substack{(x,-x) \in \lL \\ y > 0}} s(x,-x)
+ 4\sum_{\substack{(x,y) \in \lL \\ y > |x|}}s(x,y) .
\end{align*}
It is easy to check that~$\lL$ is the disjoint union of the two sublattices
\begin{align*}
\lA &= \{ \la_\ep(k,\ell)\colon k,\ell\in\Z \} \\
&= \biggl\{ \la_\ep \biggl( \frac{m+n}4-\frac12, \frac{m-n}4 \biggr) \colon n,m \text{ are odd integers, }n\equiv m\mod 4 \biggr\} \\
&= \{ (n\ep/2,m\ep/2) \colon n,m \text{ are odd integers, }n\equiv m\mod 4 \} \\
\lB &= \{ \lb_\ep(k,\ell)\colon k,\ell\in\Z \} \\
&= \biggl\{ \lb_\ep \biggl( \frac{m+n}4, \frac{m-n}4-\frac12 \biggr) \colon n,m \text{ are odd integers, }n\not\equiv m\mod 4 \biggr\} \\
&= \{ (n\ep/2,m\ep/2)) \colon n,m \text{ are odd integers, }n\not\equiv m\mod 4 \}.
\end{align*}
Consequently, in this notation, we have
\begin{align*}
S(\ep) &= 2\sum_{\substack{(x,x) \in \lL \\ x > 0}} s(x,x) + 2\sum_{\substack{(x,-x) \in \lL \\ y > 0}} s(x,-x) + 4\sum_{\substack{(x,y) \in \lL \\ y > |x|}}s(x,y) \\
&= 2\sum_{k=0}^\infty s\bigl( \la_\ep(k,0) \bigr) - 2\sum_{\ell=0}^\infty s\bigl( \la_\ep(0,\ell) \bigr) \\ &\qquad{} + 4 \biggl( \sum_{k=0}^\infty \sum_{\ell=1}^\infty s\bigl( \la_\ep(k,\ell) \bigr) + \sum_{k=1}^\infty \sum_{\ell=0}^\infty s\bigl( \la_\ep(k,\ell) \bigr) \biggr),
\end{align*}
which is equivalent to the first assertion of the lemma.
The bounds for the sums in the second assertion follow immediately from the definitions of~$\la_\ep$, $\lb_\ep$, and~$\mathcal R(C_x,C_y)$.
\end{proof}

\begin{lemma}
\label{lemma:error_2_part_1}
For any $\ep,C_x,C_y > 0$,
\begin{align*}
|&\Err_2| \le
2\sum_{k > \tilde C_x-\frac 12}
\bigg|\frac{\hat\mu_2(\la_\ep(k,0))}{(k+1/2)^2}\bigg|
+ 2\sum_{\ell > \tilde C_y-\frac 12}
\bigg|\frac{\hat\mu_2(\lb_\ep(0,\ell))}{(\ell+1/2)^2}\bigg| \\
& \quad{} + 4\sum_{k > \tilde C_x -\frac 12}\sum_{\ell=1}^\infty \bigg|\frac{\hat\mu_2(\la_\ep(k,\ell))}{(k+1/2)^2-\ell^2}\bigg| + 4\sum_{k > \tilde C_x}\sum_{\ell=0}^\infty \bigg|\frac{\hat\mu_2(\lb_\ep(k,\ell))}{k^2-(\ell+1/2)^2}\bigg| \\
& \quad{} + 4\sum_{0 \le k \le \tilde C_x - \frac 12}\sum_{\ell > \tilde C_y} \bigg|\frac{\hat\mu_2(\la_\ep(k,\ell))}{(k+1/2)^2-\ell^2}\bigg| + 4\sum_{1 \le k \le \tilde C_x}\sum_{\ell > \tilde C_y-\frac 12} \bigg|\frac{\hat\mu_2(\lb_\ep(k,\ell))}{k^2-(\ell+1/2)^2}\bigg|
\end{align*}
in the notation of Definition~\ref{ae be tilde C definition}.
\end{lemma}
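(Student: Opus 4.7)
The proof is a direct subtraction followed by an application of the triangle inequality to the termwise difference, with a little bookkeeping to partition the tail regions.

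The plan is to apply both parts of Lemma~\ref{lemma:split_lattice} to write $\Err_2 = S(\ep) - S(\ep,C_x,C_y)$ as the difference of two expressions built from the same four summand types (a single sum over $\la_\ep(k,0)$, a single sum over $\lb_\ep(0,\ell)$, a double sum over $\la_\ep(k,\ell)$, and a double sum over $\lb_\ep(k,\ell)$), with matching coefficients. Because $S(\ep,C_x,C_y)$ is obtained from $S(\ep)$ simply by restricting the four index sets to the rectangular region $\mathcal R(C_x,C_y)$ (via the translation into the $(k,\ell)$-coordinates given by Definition~\ref{ae be tilde C definition}), the subtraction yields precisely the tails of these four sums, with the same coefficients. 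Applying the triangle inequality then bounds $|\Err_2|$ by the sum of the absolute values of the four tail contributions.

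The remaining step is to exhibit each tail as a disjoint union whose pieces match the six summands in the claimed bound. The first two tails come from the single sums over the two diagonals $x=y$ and $x=-y$; their index sets become $\{k > \tilde C_x - \tfrac12\}$ and $\{\ell > \tilde C_y - \tfrac12\}$, respectively, producing the first two summands. For the double sum over $\la_\ep(k,\ell)$, the complement of the truncation rectangle $\{0 \le k \le \tilde C_x - \tfrac12,\ 1 \le \ell \le \tilde C_y\}$ inside $\Z_{\ge 0}\times\Z_{\ge 1}$ decomposes as the disjoint union
\[
\{k > \tilde C_x - \tfrac12,\ \ell \ge 1\}\ \sqcup\ \{0 \le k \le \tilde C_x - \tfrac12,\ \ell > \tilde C_y\},
\]
producing the third and fifth summands. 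An analogous decomposition of the complement of $\{1 \le k \le \tilde C_x,\ 0 \le \ell \le \tilde C_y - \tfrac12\}$ inside $\Z_{\ge 1}\times\Z_{\ge 0}$ yields the fourth and sixth summands.

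No analytic properties of $\hat\mu_2$ are needed at this stage, so the proof is purely combinatorial. The only care required is to partition each of the two bivariate complement regions without overlap or omission, and to match the index ranges written in the statement exactly; this is the step where a slip would be easiest to make, but it is dictated mechanically by the truncation ranges in the second formula of Lemma~\ref{lemma:split_lattice}.
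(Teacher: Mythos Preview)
Your proposal is correct and takes essentially the same approach as the paper, which simply says the bound ``follows immediately from Lemma~\ref{lemma:split_lattice} and the triangle inequality.'' Your write-up merely makes explicit the tail decomposition that the paper leaves implicit, including the disjoint partitions of the two bivariate complement regions, and this added bookkeeping is accurate.
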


\begin{proof}
Since $\Err_2 = S(\ep)-S(\ep,C_1,C_2)$, this bound follows immediately from
Lemma~\ref{lemma:split_lattice} and the triangle inequality.
\end{proof}

\begin{definition} \label{Xi Upsilon def}
For any $J,K \in \N$, define
\[
\Xi_J = \prod_{j=1}^J \sqrt{\frac{1/4+\gamma_j^2}{\gamma_j}}
\quad\text{and}\quad
\Upsilon_K = \prod_{j=1}^K \sqrt{1/4+\gamma_j^2} .
\]
\end{definition}

The following lemma encodes a key observation that will be used repeatedly.

\begin{lemma}
\label{lemma:error_2_part_2}
For any $\ep, C_x, C_y > 0$ and $J,K \in \N$,
\begin{align*}
2\sum_{k > \tilde C_x-\frac 12}
\bigg|\frac{\hat\mu_2(\la_\ep(k,0))}{(k+1/2)^2}\bigg|
&\le 2\Xi_J2^{-J/4}\pi^{-J/2}C_x^{-J/2}\biggl(\frac{\ep\sqrt 2}{C_x}\biggr)\biggl(\frac{\ep\sqrt 2}{C_x}+\frac 2{2+J}\biggr) \\
2\sum_{\ell > \tilde C_y-\frac 12}
\bigg|\frac{\hat\mu_2(\lb_\ep(0,\ell))}{(\ell+1/2)^2}\bigg|
&\le 2\Upsilon_K2^{K/4}\pi^{-K/2}C_y^{-K/2}\biggl(\frac{\ep\sqrt 2}{C_y}\biggr)\biggl(\frac{\ep\sqrt 2}{C_y}+\frac 2{2+K}\biggr) .
\end{align*}
\end{lemma}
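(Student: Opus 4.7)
The plan is to exploit the fact that the lattice points $\la_\ep(k,0)$ and $\lb_\ep(0,\ell)$ from Definition~\ref{ae be tilde C definition} lie along the two diagonal directions $(1,1)$ and $(-1,1)$, along which the product formula for $\hat\mu_2$ in Lemma~\ref{note:common_measures} collapses dramatically. Specifically, at $\la_\ep(k,0) = \ep(k+\tfrac12)(1,1)$ we have $t_1-t_2=0$, so the $m$th Bessel factor has argument $4\gamma_m\ep(k+\tfrac12)/(\tfrac14+\gamma_m^2)$; at $\lb_\ep(0,\ell) = \ep(\ell+\tfrac12)(-1,1)$ we have $t_1+t_2=0$, so the $m$th factor reduces to $J_0$ evaluated at $2\ep(\ell+\tfrac12)/(\tfrac14+\gamma_m^2)$. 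The contrast between these two arguments (one scales with $\gamma_m$, the other does not) is what ultimately distinguishes the two bounds via $\Xi_J$ versus $\Upsilon_K$ from Definition~\ref{Xi Upsilon def}, and also causes the opposite signs of the exponents in the factors $2^{-J/4}$ and $2^{+K/4}$.

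The next step is to apply the Bessel bound $|J_0(x)| \le \sqrt{2/(\pi|x|)}$ from equation~\eqref{Bessel bound} to the first $J$ (respectively $K$) factors of each product, and the trivial bound $|J_0(x)|\le 1$ to all later factors. This produces polynomial-decay estimates of the form
\[
|\hat\mu_2(\la_\ep(k,0))| \le \frac{\Xi_J}{(2\pi\ep(k+\tfrac12))^{J/2}}, \qquad |\hat\mu_2(\lb_\ep(0,\ell))| \le \frac{\Upsilon_K}{(\pi\ep(\ell+\tfrac12))^{K/2}}.
\]

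Substituting these into the tail sums, each tail takes the shape $\sum_{u > \tilde C,\, u-1/2\in\Z_{\ge 0}} u^{-r-2}$ with $r = J/2$ or $K/2$. Pulling off the first term and bounding the rest by the integral of the (decreasing) integrand gives
\[
\sum_{u > \tilde C,\, u-1/2\in\Z_{\ge 0}} u^{-r-2} \;\le\; \tilde C^{-r-2} + \int_{\tilde C}^{\infty} u^{-r-2}\,du \;=\; \tilde C^{-r-2} + \frac{2}{r+2}\,\tilde C^{-r-1}.
\]
The final step is a purely algebraic substitution: combining the above with the relations $\tilde C_x = C_x/(\ep\sqrt 2)$ and $\tilde C_y = C_y/(\ep\sqrt 2)$ reorganizes each expression into the stated form, with the two summands inside the parentheses coming from $\tilde C^{-r-2}$ and $\tfrac{2}{r+2}\tilde C^{-r-1}$ respectively.

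I expect the main obstacle to be nothing more than the bookkeeping of the many factors of $\ep$, $\pi$, and powers of $\sqrt 2$. In particular, the fact that $(2\pi\ep)^{-J/2}\tilde C_x^{-J/2}$ rearranges as $2^{-J/4}\pi^{-J/2}C_x^{-J/2}$ while $(\pi\ep)^{-K/2}\tilde C_y^{-K/2}$ rearranges instead as $2^{+K/4}\pi^{-K/2}C_y^{-K/2}$—an asymmetry produced by the extra $\gamma_m$ inside the Bessel argument in the $\la_\ep$ case—is the only nontrivial arithmetic, and must be carried out carefully to reproduce the signs in the statement.
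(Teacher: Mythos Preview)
Your proposal is correct and follows essentially the same approach as the paper: identify the diagonal lattice points, collapse the Bessel argument in Lemma~\ref{note:common_measures} along each diagonal, apply the bound~\eqref{Bessel bound} to the first $J$ (resp.\ $K$) factors to obtain $|\hat\mu_2(\la_\ep(k,0))|\le\Xi_J(2\pi\ep(k+\tfrac12))^{-J/2}$ and $|\hat\mu_2(\lb_\ep(0,\ell))|\le\Upsilon_K(\pi\ep(\ell+\tfrac12))^{-K/2}$, and then bound the resulting tail sum by its first term plus an integral. The algebraic simplification you flag as the only nontrivial step is exactly the one the paper carries out, and your explanation of why the exponents $2^{\mp J/4}$, $2^{\pm K/4}$ differ is accurate.
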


\begin{proof}
For any $k\ge0$, we have $\la_\ep(k,0) = \ep(k+\tfrac12,k+\tfrac12)$ by Definition~\ref{ae be tilde C definition}. Lemma~\ref{note:common_measures} then gives
\[
\hat\mu_2(\la_\ep(k,0)) = \prod_{m=1}^\infty J_0\biggl(\frac {2\gamma_m\ep(2k+1)}{1/4+\gamma_m^2}\biggr) \le \prod_{m=1}^J J_0\biggl( \frac {2\gamma_m\ep(2k+1)}{1/4+\gamma_m^2} \biggr)
\]
by inequality~\eqref{Bessel bound}, which further implies
\begin{equation} \label{Xi bound}
\bigl| \hat\mu_2(\la_\ep(k,0)) \bigr| \le \prod_{m=1}^J \sqrt{ \frac1\pi \frac{1/4+\gamma_m^2}{\gamma_m\ep(2k+1)} } = \Xi_J(\pi\ep(2k+1))^{-J/2}.
\end{equation}
Therefore
\begin{align*}
2\sum_{k > \tilde C_x-\frac 12}
\bigg|&\frac{\hat\mu_2(\la_\ep(k,0))}{(k+1/2)^2}\bigg| \\
& \le 2\Xi_J2^{-J/2}(\pi\ep)^{-J/2} \sum_{k > \tilde C_x-\frac 12} (k+1/2)^{-2-J/2} \\
& \le 2\Xi_J2^{-J/2}(\pi\ep)^{-J/2} \biggl(\biggl(\frac{\ep\sqrt 2}{C_x}\biggr)^{2+J/2} + \int_{\tilde C_x}^\infty u^{-2-J/2} \, du \biggr) \\
& \le 2\Xi_J2^{-J/2}(\pi\ep)^{-J/2} \biggl(\biggl(\frac{\ep\sqrt 2}{C_x}\biggr)^{2+J/2} + \frac 2{2+J} \biggl(\frac{\ep\sqrt 2}{C_x}\biggr)^{1+J/2}\biggr) \\
& \le 2\Xi_J2^{-J/4}(\pi)^{-J/2}C_x^{-J/2} \biggl(\biggl(\frac{\ep\sqrt 2}{C_x}\biggr)^2 + \frac 2{2+J} \biggl(\frac{\ep\sqrt 2}{C_x}\biggr)\biggr) .
\end{align*}
This establishes the first inequality, and the second follows from a similar calculation.
\end{proof}

\begin{definition} \label{tilde A tilde B def}
For any $J \in \N$, define
\begin{align*}
\lAb_J(k,\ell) & = \prod_{j=1}^J \frac{\sqrt{1/4+\gamma_j^2}}{\bigl( \ell^2+4\gamma_j^2(k+1/2)^2 \bigr)^{1/4}} \\
\lBb_J(k,\ell) & = \prod_{j=1}^J \frac{\sqrt{1/4+\gamma_j^2}}{\bigl( (\ell+1/2)^2+4\gamma_j^2k^2 \bigr)^{1/4}}.
\end{align*}
The derivation of the inequality~\eqref{Xi bound} generalizes immediately to show that
\begin{align*}
\hat\mu_2(\la_\ep(k,\ell)) & \le (\pi\ep)^{-J/2}\lAb_J(k,\ell) , \\
\hat\mu_2(\lb_\ep(k,\ell)) & \le (\pi\ep)^{-J/2}\lBb_J(k,\ell) .
\end{align*}
for any integers $k,\ell\ge0$.
\end{definition}

\begin{lemma}
\label{lemma:error_2_part_4}
For any $\ep > 0$ and $C_x \ge 5\ep\sqrt 2$ and $J \in \N$,
\begin{align*}
4 & \sum_{k > \tilde C_x - \frac 12}\sum_{1 \le \ell \le k} \bigg|\frac{\hat\mu_2(\la_\ep(k,\ell))}{(k+1/2)^2-\ell^2}\bigg| \\
&\qquad{}\le 4\Xi_J2^{-J/4}\pi^{-J/2}C_x^{-J/2}\biggl(\biggl(\frac{\ep\sqrt 2}{C_x}\biggr)\biggl(1+\frac 1{4\tilde C_x-1} + \frac 12 \log\Bigl(\frac{4C_x}{\ep\sqrt 2} - 1\Bigr)\biggr) \\
&\qquad\qquad{}+ \frac 2J\biggl(1+\frac 1J\biggr)\biggl(1+\frac 1{4\tilde C_x-1}\biggr) + \frac 1J\log\Bigl(\frac{4C_x}{\ep\sqrt 2}-1\Bigr)\biggr) \\
4 & \sum_{k > \tilde C_x}\sum_{0 \le \ell \le k-1} \bigg|\frac{\hat\mu_2(\lb_\ep(k,\ell))}{k^2-(\ell+1/2)^2}\bigg| \\
&\qquad{}\le 4\Xi_J2^{-J/4}\pi^{-J/2}C_x^{-J/2}\biggl(\biggl(\frac{\ep\sqrt 2}{C_x}\biggr)\biggl(1+\frac 1{4\tilde C_x-1} + \frac 12 \log\Bigl(\frac{4C_x}{\ep\sqrt 2} - 1\Bigr)\biggr) \\
&\qquad\qquad{}+ \frac 2J\biggl(1+\frac 1J\biggr)\biggl(1+\frac 1{4\tilde C_x-1}\biggr) + \frac 1J\log\Bigl(\frac{4C_x}{\ep\sqrt 2}-1\Bigr)\biggr).
\end{align*}
\end{lemma}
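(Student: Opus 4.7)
Both inequalities have parallel structure: the second follows from the first by swapping $\la_\ep$ with $\lb_\ep$, which trades $k+\tfrac12$ for $k$ and $\ell$ for $\ell+\tfrac12$ throughout; I therefore describe only the first. The strategy mirrors the proof of Lemma~\ref{lemma:error_2_part_2}: first bound $|\hat\mu_2(\la_\ep(k,\ell))|$ pointwise via Definition~\ref{tilde A tilde B def}, then collapse the inner sum by partial fractions, and finally estimate the outer sum by integral comparison.

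For the pointwise bound, I will start from $|\hat\mu_2(\la_\ep(k,\ell))| \le (\pi\ep)^{-J/2}\lAb_J(k,\ell)$ and apply the coarse inequality $\ell^2 + 4\gamma_j^2(k+\tfrac12)^2 \ge 4\gamma_j^2(k+\tfrac12)^2$ in each factor to obtain the $\ell$-free estimate
\[
|\hat\mu_2(\la_\ep(k,\ell))| \le (\pi\ep)^{-J/2}\,\Xi_J\,(2(k+\tfrac12))^{-J/2}.
\]
For the inner sum, I will use the partial-fraction identity
\[
\frac{1}{(k+\tfrac12)^2 - \ell^2} = \frac{1}{2k+1}\biggl(\frac{1}{k+\tfrac12-\ell} + \frac{1}{k+\tfrac12+\ell}\biggr)
\]
and observe that summing over $1 \le \ell \le k$ collects every half-integer reciprocal $(j+\tfrac12)^{-1}$ for $0 \le j \le 2k$ except $j = k$; an integral comparison then yields $\sum_{\ell=1}^k \frac{1}{(k+\tfrac12)^2-\ell^2} \le \frac{2 + \log(4k+1)}{2k+1}$.

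The remaining outer sum
\[
\sum_{k+1/2 > \tilde C_x} \frac{2 + \log(4k+1)}{(k+\tfrac12)^{J/2+1}}
\]
is estimated by isolating its first term, which (since $k+\tfrac12 \in (\tilde C_x, \tilde C_x+1]$) produces the $\tilde C_x^{-J/2-1}$-type boundary contribution responsible for the $\ep\sqrt{2}/C_x$-factor in the stated bound, together with a $\log(4\tilde C_x - 1)$ term via $\log(4k+1) \le \log(4\tilde C_x - 1) + \frac{4}{4\tilde C_x - 1}$. The tail is bounded by $\int_{\tilde C_x}^\infty x^{-J/2-1}\,dx = \frac{2}{J}\tilde C_x^{-J/2}$ together with $\int_{\tilde C_x}^\infty x^{-J/2-1}\log(4x-1)\,dx$, which integration by parts reduces to $\frac{2\log(4\tilde C_x - 1)}{J}\tilde C_x^{-J/2} + \frac{4}{J^2}\tilde C_x^{-J/2}$ modulo lower-order corrections; these supply the $\frac{1}{J}\log(4\tilde C_x - 1)$ and $\frac{2}{J^2}$ contributions respectively. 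Finally, the substitution $\tilde C_x = C_x/(\ep\sqrt{2})$ absorbs the auxiliary powers of~$2$ (combining $2^{-J/2}$ from the pointwise bound with the $2^{J/4}$ from $\tilde C_x^{-J/2} = 2^{J/4}(\ep/C_x)^{J/2}$) to produce the common prefactor $4\Xi_J\,2^{-J/4}\pi^{-J/2}C_x^{-J/2}$ and converts each $\tilde C_x^{-1}$ to $\ep\sqrt 2/C_x$, matching the stated form.

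The main obstacle is not conceptual but bookkeeping: one must choose the integral's lower limit and combine the isolated first-term contribution with the integration-by-parts boundary remainders so that the logarithm appears precisely as $\log\bigl(4C_x/(\ep\sqrt 2) - 1\bigr)$ (rather than $\log(4\tilde C_x)$ or $\log\tilde C_x$) and so that the correction factor $1 + \tfrac{1}{4\tilde C_x - 1}$ carries the claimed coefficient $1 + 1/J$. The $\lBb$-bound then follows by the same argument; the only substantive change is that the partial-fraction expansion of $\frac{1}{k^2 - (\ell+\tfrac12)^2}$ summed over $0 \le \ell \le k-1$ collects every $(j+\tfrac12)^{-1}$ for $0 \le j \le 2k-1$, giving $\frac{2 + \log(4k-1)}{2k}$, and since $k$ now ranges over integers strictly exceeding~$\tilde C_x$ the logarithm $\log(4\tilde C_x - 1)$ appears from the very smallest admissible~$k$ without any further adjustment.
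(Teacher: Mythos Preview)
Your approach is essentially the same as the paper's: bound $|\hat\mu_2(\la_\ep(k,\ell))|$ by $\Xi_J 2^{-J/2}(\pi\ep)^{-J/2}(k+\tfrac12)^{-J/2}$, collapse the inner $\ell$-sum via partial fractions, then control the outer sum by a first-term-plus-integral comparison. Two points of comparison are worth recording.

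First, your inner-sum estimate is slightly different in form. The paper bounds $\sum_{1\le\ell\le k}\frac{1}{(k+1/2)^2-\ell^2}$ by isolating the $\ell=k$ term and integrating the rest, obtaining $\frac{1}{k+1/4}+\frac{\log(4k+1)}{2k+1}$; your direct summation of the half-integer reciprocals gives $\frac{2+\log(4k+1)}{2k+1}$, which is in fact marginally tighter (since $\frac{2}{2k+1}<\frac{1}{k+1/4}$).

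Second, and more importantly, the paper does not use integration by parts on the outer sum. Instead it rewrites the summand so that the integrand has an \emph{exact} antiderivative: after replacing the constant $\frac{1}{k+1/4}$ by the overestimate $\frac{1}{k+1/2}\cdot\frac{\tilde C_x}{\tilde C_x-1/4}(1+\tfrac1J)-\frac{1}{J(k+1/4)}$, the remaining integrand $u^{-1-J/2}\bigl(\tfrac12\log(4u-1)-\tfrac{u}{J(u-1/4)}\bigr)$ integrates to $-\tfrac1J u^{-J/2}\log(4u-1)$ exactly. This is why the logarithm lands as $\log(4\tilde C_x-1)$ with no remainder to track.

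Your integration-by-parts route is equivalent in spirit, but your specific first-term estimate $\log(4k+1)\le\log(4\tilde C_x-1)+\frac{4}{4\tilde C_x-1}$ is slightly too crude: it overshoots the stated first-term bracket by $\frac{1}{4\tilde C_x-1}$, and the slack your tail estimate leaves relative to the stated bound is only $\frac{2}{J(4\tilde C_x-1)}\tilde C_x$ in the same units, so the inequality as written only closes when $J\le 2\tilde C_x$. The fix is the one the paper uses for its own first term: observe that the full summand $u^{-J/2-1}(1+\tfrac12\log(4u-1))$ is decreasing for $u\ge\tilde C_x\ge1$, so the first term is at most its value at $u=\tilde C_x$, namely $\tilde C_x^{-J/2-1}(1+\tfrac12\log(4\tilde C_x-1))$, which already beats the stated first-term bracket for every~$J$.
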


\begin{proof}
Observe for all $k \ge 0$ and $J \in \N$,
\[ \lAb_J(k,\ell) = \Xi_J(k+1/2)^{-J/2} \prod_{j=1}^J \sqrt{\frac{\gamma_j}{\sqrt{(\frac\ell{k+\frac 12})^2+4\gamma_j^2}}} \le \Xi_J2^{-J/2}(k+1/2)^{-J/2} . \]
Consequently,
\begin{align*}
4 & \sum_{k > \tilde C_x - \frac 12}\sum_{1 \le \ell \le k} \bigg|\frac{\hat\mu_2(\la_\ep(k,\ell))}{(k+1/2)^2-\ell^2}\bigg| \\
& \le 4\Xi_J2^{-J/2}(\pi\ep)^{-J/2} \sum_{k > \tilde C_x - \frac 12}\sum_{1 \le \ell \le k} \frac{(k+1/2)^{-J/2}}{(k+1/2)^2-\ell^2} \\
& \le 4\Xi_J2^{-J/2}(\pi\ep)^{-J/2} \\ &\quad{}\cdot \sum_{k > \tilde C_x - \frac 12} (k+1/2)^{-J/2}\biggl(\frac 1{k+\frac 14}+\int_1^k \frac{du}{(k+1/2)^2-u^2}\biggr) \\
& \le 4\Xi_J2^{-J/2}(\pi\ep)^{-J/2} \\ &\quad{}\cdot \sum_{k > \tilde C_x - \frac 12} (k+1/2)^{-J/2}\biggl(\frac 1{k+\frac 14}+\frac 1{2k+1}\int_1^k \frac 1{k+\frac 12+u} + \frac 1{k+\frac 12-u} \, du\biggr) \\
& \le 4\Xi_J2^{-J/2}(\pi\ep)^{-J/2} \\ &\quad{}\cdot \sum_{k > \tilde C_x - \frac 12} (k+1/2)^{-J/2}\biggl(\frac 1{k+\frac 14}+\frac 1{2k+1}\log\biggl(2(2k+1/2)\frac{k-\frac 12}{k+\frac 32}\biggr)\biggr) \\
& \le 4\Xi_J2^{-J/2}(\pi\ep)^{-J/2} \\ &\quad{}\cdot \sum_{k > \tilde C_x - \frac 12} (k+1/2)^{-J/2}\biggl(\frac 1{k+1/4}+\frac 1{2k+1}\log(4k+1)\biggr) .
\end{align*}
As $k+1/2 > \tilde C_x$ and $u/(u-1/4)$ is a decreasing function of $u$,
\begin{multline*}
\biggl(\frac 1{k+1/4}+\frac 1{2k+1}\log(4k+1)\biggr) \\
\le \frac 1{k+1/2}\biggl(\frac{\tilde C_x}{\tilde C_x-1/4}\Bigl(1+\frac 1J\Bigr)+\frac 12\log(4k+1) - \frac{k+1/2}{J(k+1/4)}\biggr)
\end{multline*}
for $C_x > \ep\sqrt 2$. Hence
\begin{align*}
& 4\sum_{k > \tilde C_x - \frac 12}\sum_{1 \le \ell \le k} \bigg|\frac{\hat\mu_2(\la_\ep(k,\ell))}{(k+1/2)^2-\ell^2}\bigg|
\le 4\Xi_J2^{-J/2}(\pi\ep)^{-J/2} \\ &\quad{}\cdot \sum_{k > \tilde C_x - \frac 12} (k+1/2)^{-1-J/2}\biggl(\frac{\tilde C_x}{\tilde C_x-\frac 14}\Bigl(1+\frac 1J\Bigr)+\frac 12\log(4k+1) - \frac{k+\frac 12}{J(k+\frac 14)}\biggr) .
\end{align*}
By observing that $\int u^{-1-J/2}\big(\frac 12\log(4u-1)-\frac u{J(u-1/4)}\big) \, du = - \frac 1J u^{-J/2} \log(4u-1)+C$, and that the integrand can be easily verified to be decreasing when $u \ge \tilde C_x \ge 5$ and $J \in \N$, we see that
\begin{align*}
4&\sum_{k > \tilde C_x - \frac 12}\sum_{1 \le \ell \le k} \bigg|\frac{\hat\mu_2(\la_\ep(k,\ell))}{(k+1/2)^2-\ell^2}\bigg| \\
& \le 4\Xi_J2^{-J/2}(\pi\ep)^{-J/2} \biggl(\biggl(\frac{\ep\sqrt 2}{C_x}\biggr)^{1+J/2}\biggl(\frac{\tilde C_x}{\tilde C_x-\frac 14}+\frac 12\log\biggl(4\tilde C_x-1\biggr)\biggr) \\
& \quad{} + \int_{\tilde C_x}^\infty u^{-1-J/2}\frac{\tilde C_x}{\tilde C_x-\frac 14}\Bigl(1+\frac 1J\Bigr)+\frac 12\log(4u-1)-\frac u{J(u-1/4)} \, du \biggr) \\
& \le 4\Xi_J2^{-J/2}(\pi\ep)^{-J/2} \biggl(\biggl(\frac{\ep\sqrt 2}{C_x}\biggr)^{1+J/2}\biggl(1+\frac 1{4\tilde C_x-1}+\frac 12\log\biggl(4\tilde C_x-1\biggr)\biggr) \\
& \quad{} + \frac 2J\biggl(1+\frac 1{4\tilde C_x-1}\biggr)\biggl(1+\frac 1J\biggr)\biggl(\frac{\ep\sqrt 2}{C_x}\biggr)^{J/2} \\
& \quad{} + \frac 1J\biggl(\frac{\ep\sqrt 2}{C_x}\biggr)^{J/2}\log\biggl(4\tilde C_x-1\biggr)\biggr) \\
& \le 4\Xi_J2^{-J/4}\pi^{-J/2}C_x^{-J/2}\biggl(\biggl(\frac{\ep\sqrt 2}{C_x}\biggr)\biggl(1+\frac 1{4\tilde C_x-1}+\frac 12\log\biggl(4\tilde C_x-1\biggr)\biggr) \\
& \quad{} + \frac 2J\biggl(1+\frac 1{4\tilde C_x-1}\biggr)\biggl(1+\frac 1J\biggr) + \frac 1J\log\biggl(4\tilde C_x-1\biggr)\biggr) .
\end{align*}
This establishes the first inequality, and the second follows from a similar calculation.
\end{proof}

\begin{lemma}
\label{lemma:error_2_part_3}
For any $\ep > 0$ and $C_y > C_x > 0$ and $K \in \N$,
\begin{align*}
4 & \sum_{0 \le k \le \tilde C_x - \frac 12}\sum_{\ell > \tilde C_y} \bigg|\frac{\hat\mu_2(\la_\ep(k,\ell))}{(k+1/2)^2-\ell^2}\bigg| \\
&\le 4\Upsilon_K2^{K/4}\pi^{-K/2}C_y^{-K/2}\biggl(\frac{\ep\sqrt 2}{C_y}\biggr)\biggl(\frac{\ep\sqrt 2}{C_y} + \frac 2{2+K}\biggr) \\ &\qquad{}\cdot \biggl(1+\frac{C_x^2}{C_y^2-C_x^2}+\tilde C_x\biggl(1+\frac{C_x}{C_y-C_x}\biggr)\biggr) \\
4 & \sum_{1 \le k \le \tilde C_x}\sum_{\ell > \tilde C_y-\frac 12} \bigg|\frac{\hat\mu_2(\lb_\ep(k,\ell))}{k^2-(\ell+1/2)^2}\bigg| \\
&\le 4\Upsilon_K2^{K/4}\pi^{-K/2}C_y^{-K/2}\biggl(\frac{\ep\sqrt 2}{C_y}\biggr)\biggl(\frac{\ep\sqrt 2}{C_y} + \frac 2{2+K}\biggr) \\ &\qquad{}\cdot \biggl(1+\frac{C_x^2}{C_y^2-C_x^2}+\tilde C_x\biggl(1+\frac{C_x}{C_y-C_x}\biggr)\biggr) .
\end{align*}
\end{lemma}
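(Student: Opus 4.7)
The plan is to mirror the proof of Lemma~\ref{lemma:error_2_part_4}, but with the roles of~$k$ and~$\ell$ reversed: here~$\ell$ is the unbounded variable, so I need to extract decay from~$\hat\mu_2$ as a function of~$\ell$. Starting from Definition~\ref{tilde A tilde B def}, the trivial estimate $\ell^2+4\gamma_j^2(k+\tfrac12)^2 \ge \ell^2$ inside each Bessel-function factor gives
\[
\lAb_K(k,\ell) \le \Upsilon_K\,\ell^{-K/2}
\qquad\text{and similarly}\qquad
\lBb_K(k,\ell) \le \Upsilon_K\,(\ell+\tfrac12)^{-K/2},
\]
so that $\hat\mu_2(\la_\ep(k,\ell)) \le (\pi\ep)^{-K/2}\Upsilon_K\,\ell^{-K/2}$ and the analogous bound holds for~$\lb_\ep$.

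Next I would handle the inner sum over~$\ell$. Since $\ell > \tilde C_y > \tilde C_x \ge k+\tfrac12$ throughout the domain of summation, the separation
\[
\frac{1}{\ell^2-(k+\tfrac12)^2} \le \frac{\tilde C_y^2}{\ell^2\bigl(\tilde C_y^2-(k+\tfrac12)^2\bigr)}
\]
peels the $\ell$-dependence cleanly away from the $k$-dependence. The residual sum $\sum_{\ell>\tilde C_y}\ell^{-2-K/2}$ is controlled by the now-familiar leading-term-plus-integral estimate (exactly as in Lemma~\ref{lemma:error_2_part_2}) by $\tilde C_y^{-2-K/2}+\tfrac{2}{2+K}\tilde C_y^{-1-K/2}$. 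Multiplying by $(\pi\ep)^{-K/2}\Upsilon_K$ and converting via $\tilde C_y = C_y/(\ep\sqrt 2)$ produces precisely the prefactor $\Upsilon_K 2^{K/4}\pi^{-K/2}C_y^{-K/2}(\ep\sqrt 2/C_y)\bigl((\ep\sqrt 2/C_y)+2/(2+K)\bigr)$ appearing in the target bound.

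It then remains to sum the $k$-dependent factor $\tilde C_y^2/\bigl(\tilde C_y^2-(k+\tfrac12)^2\bigr)$ over $0 \le k \le \tilde C_x-\tfrac12$. This function is monotone increasing in~$k$ on the range of summation, so I would peel off the largest term, which is bounded by $\tilde C_y^2/(\tilde C_y^2-\tilde C_x^2) = 1+C_x^2/(C_y^2-C_x^2)$, and bound the remaining terms via the standard monotone comparison $\sum f(k+\tfrac12) \le \int_{1/2}^{\tilde C_x} f(u)\,du$. Applying the elementary inequality $\tilde C_y^2/(\tilde C_y^2-u^2) \le \tilde C_y/(\tilde C_y-\tilde C_x)$ for $u\in[0,\tilde C_x]$ (valid because $\tilde C_y+u\ge\tilde C_y$), this integral is at most $\tilde C_x\cdot\tilde C_y/(\tilde C_y-\tilde C_x) = \tilde C_x\bigl(1+C_x/(C_y-C_x)\bigr)$. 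Adding the two contributions yields the third parenthetical factor in the target bound, completing the proof of the first inequality; the second is obtained by an identical argument, with $k+\tfrac12$ replaced by~$k$ and~$\ell$ replaced by $\ell+\tfrac12$ throughout.

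The only real obstacle is bookkeeping: matching the precise algebraic form of the target bound, and in particular ensuring that the peeled-off maximum term supplies exactly the summand $1+C_x^2/(C_y^2-C_x^2)$ while the integral of the remaining terms produces exactly $\tilde C_x\bigl(1+C_x/(C_y-C_x)\bigr)$. No new analytic ideas beyond those already employed in Lemmas~\ref{lemma:error_2_part_2} and~\ref{lemma:error_2_part_4} are needed.
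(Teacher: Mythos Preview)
Your proof is correct and follows essentially the same approach as the paper's. The only difference is cosmetic: you decouple $k$ and $\ell$ upfront via the inequality $1/(\ell^2-(k+\tfrac12)^2) \le \tilde C_y^2/\bigl(\ell^2(\tilde C_y^2-(k+\tfrac12)^2)\bigr)$, whereas the paper sums over~$k$ first (evaluating $\int du/(\ell^2-u^2)$ explicitly and bounding the resulting logarithm via $\log(1+x)\le x$) before handling the~$\ell$-sum---both routes reach the same final bound.
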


\begin{proof}
By the inequalities in Definition~\ref{tilde A tilde B def},
\[ \bigg|\frac{\hat\mu_2(\la_\ep(k,\ell))}{((k+1/2)^2+\ell)^2}\bigg| \le (\pi\ep)^{-K/2}\frac{\lAb_K(k,\ell)}{|(k+1/2)^2-\ell^2|} . \]
Observe that for $\ell > 0$,
\begin{align*}
\lAb_K(k,\ell) &= \Upsilon_K \prod_{j=1}^K (\ell^2+4\gamma_j^2(k+1/2)^2)^{-1/4} \\
&= \Upsilon_K\ell^{-K/2} \prod_{j=1}^K (1+4\gamma_j^2(k+1/2)^2\ell^{-2})^{-1/4} \le \Upsilon_K\ell^{-K/2} .
\end{align*}
Hence, for $C_y > C_x > 0$,
\begin{align*}
4&\sum_{0 \le k \le \tilde C_x-\frac 12}\sum_{\ell > \tilde C_y} \bigg|\frac{\hat\mu_2(\la_\ep(k,\ell))}{(k+1/2)^2-\ell^2}\bigg| \\
& \le 4\Upsilon_K(\pi\ep)^{-K/2} \sum_{0 \le k \le \tilde C_x-\frac 12}\sum_{\ell > \tilde C_y} \ell^{-K/2}(\ell^2-(k+1/2)^2)^{-1} \\
& \le 4\Upsilon_K(\pi\ep)^{-K/2} \sum_{\ell > \tilde C_y} \ell^{-K/2}\biggl(\frac 1{\ell^2-\frac{C_x^2}{2\ep^2}} + \int_{1/2}^{\frac{C_x}{\ep\sqrt 2}} \frac{du}{\ell^2-u^2} \biggr) \\
& \le 4\Upsilon_K(\pi\ep)^{-K/2} \sum_{\ell > \tilde C_y} \ell^{-K/2}\biggl(\frac 1{\ell^2-\frac{C_x^2}{2\ep^2}} + \frac 1{2\ell} \int_{1/2}^{\frac{C_x}{\ep\sqrt 2}} \frac 1{\ell+u} + \frac 1{\ell-u} \, du \biggr) \\
& \le 4\Upsilon_K(\pi\ep)^{-K/2} \sum_{\ell > \tilde C_y} \ell^{-K/2}\biggl(\frac 1{\ell^2-\frac{C_x^2}{2\ep^2}} + \frac 1{2\ell} \log\biggl(\frac{(\ell+\tilde C_x)(\ell-\frac 12)}{(\ell - \tilde C_x)(\ell+\frac 12)}\biggr)\biggr)
\end{align*}
By the Maclaurin series expansion
of $\log(1+x)$ as an alternating series,
\[
\log\biggl(\frac{(\ell+\tilde C_x)(\ell-\frac 12)}{(\ell - \tilde C_x)(\ell+\frac 12)}\biggr)
\le \log\Bigl(\frac{\ell+\tilde C_x}{\ell-\tilde C_x}\Bigr)
= \log\Bigl(1+\frac{2\tilde C_x}{\ell-\tilde C_x}\Bigr) \le \frac{2\tilde C_x}{\ell-\tilde C_x} ,
\]
and hence
\begin{multline*}
4\sum_{0 \le k \le \tilde C_x-\frac 12}\sum_{\ell > \tilde C_y} \bigg|\frac{\hat\mu_2(\la_\ep(k,\ell))}{(k+1/2)^2-\ell^2}\bigg| \\
\le 4\Upsilon_K(\pi\ep)^{-K/2} \sum_{\ell > \tilde C_y} \ell^{-K/2}\biggl(\frac 1{\ell^2-\frac{C_x^2}{2\ep^2}} + \frac{\tilde C_x}{\ell^2-(\tilde C_x)\ell}\biggr)\biggr).
\end{multline*}
For $\ell > \tilde C_y$ we observe that
\[ \frac 1{\ell^2}\biggl(\frac{\ell^2}{\ell^2-\frac{C_x^2}{2\ep^2}}\biggr) < \frac 1{\ell^2}\biggl(\frac{C_y^2}{C_y^2-C_x^2}\biggr) = \frac 1{\ell^2}\biggl(1+\frac{C_x^2}{C_y^2-C_x^2}\biggr) ,  \]
and likewise
\[ \frac 1{\ell^2}\biggl(\frac{\tilde C_x\ell^2}{\ell^2-(\tilde C_x)\ell}\biggr) < \frac 1{\ell^2}\tilde C_x\biggl(\frac{C_y^2}{C_y^2-C_xC_y}\biggr) = \frac 1{\ell^2}\tilde C_x\biggl(1+\frac{C_x}{C_y-C_x}\biggr) . \]
Therefore
\begin{align*}
4 & \sum_{0 \le k \le \tilde C_x-\frac 12}\sum_{\ell > \tilde C_y} \bigg|\frac{\hat\mu_2(\la_\ep(k,\ell))}{(k+1/2)^2-\ell^2}\bigg| \\
& \le 4\Upsilon_K(\pi\ep)^{-K/2} \sum_{\ell > \tilde C_y} \ell^{-2-K/2}\biggl(1+\frac {C_x^2}{C_y^2-C_x^2} + \tilde C_x\biggl(1+\frac{C_x}{C_y-C_x}\biggr)\biggr) \\
& \le 4\Upsilon_K(\pi\ep)^{-K/2} \biggl(\biggl(\frac{\ep\sqrt 2}{C_y}\biggr)^{2+K/2} + \int_{\tilde C_y}^\infty u^{-2-K/2} \, du\biggr) \\ & \qquad{} \cdot \biggl(1+\frac {C_x^2}{C_y^2-C_x^2} + \tilde C_x\biggl(1+\frac{C_x}{C_y-C_x}\biggr)\biggr) \\
& \le 4\Upsilon_K(\pi\ep)^{-K/2} \biggl(\biggl(\frac{\ep\sqrt 2}{C_y}\biggr)^{2+K/2} + \frac 2{2+K}\biggl(\frac{\ep\sqrt 2}{C_y}\biggr)^{1+K/2}\biggr) \\ & \qquad{} \cdot \biggl(1+\frac {C_x^2}{C_y^2-C_x^2} + \tilde C_x\biggl(1+\frac{C_x}{C_y-C_x}\biggr)\biggr) \\
& \le 4\Upsilon_K2^{K/4}\pi^{-K/2}C_y^{-K/2} \biggl(\biggl(\frac{\ep\sqrt 2}{C_y}\biggr)^2 + \frac 2{2+K}\biggl(\frac{\ep\sqrt 2}{C_y}\biggr)\biggr) \\ & \qquad{} \cdot \biggl(1+\frac {C_x^2}{C_y^2-C_x^2} + \tilde C_x\biggl(1+\frac{C_x}{C_y-C_x}\biggr)\biggr) .
\end{align*}
This completes the proof of the first inequality, and the second is established analogously.
\end{proof}

\begin{lemma}
\label{lemma:error_2_part_5}
For any real numbers $\ep, C_x \ge 5\ep\sqrt 2$ and any $J \in \N$,
\begin{align*}
4 & \sum_{k > \tilde C_x - \frac 12}\sum_{\ell \ge k+1} \bigg|\frac{\hat\mu_2(\la_\ep(k,\ell))}{(k+1/2)^2-\ell^2}\bigg| \\
&\qquad{}\le 4\Xi_J2^{-J/4}\pi^{-J/2}C_x^{-J/2}\biggl\{\biggl(\frac{\ep\sqrt 2}{C_x}\biggr)\biggl(1+\frac 12\log\Bigl(\frac{4C_x}{\ep\sqrt 2}+1\Bigr)+\frac 1{4J} \frac{\ep\sqrt 2}{C_x}\biggr) \\
&\qquad\qquad{}+ \frac 2J\biggl(1+\frac 1J\biggr) + \frac 1J\log\biggl(\frac{4C_x}{\ep\sqrt 2}+1\biggr)\biggr\} \\
4 & \sum_{k > \tilde C_x}\sum_{\ell \ge k} \bigg|\frac{\hat\mu_2(\lb_\ep(k,\ell))}{k^2-(\ell+1/2)^2}\bigg| \\
&\qquad{}\le 4\Xi_J2^{-J/4}\pi^{-J/2}C_x^{-J/2}\biggl\{\biggl(\frac{\ep\sqrt 2}{C_x}\biggr)\biggl(1+\frac 12\log\Bigl(\frac{4C_x}{\ep\sqrt 2}+1\Bigr)+\frac 1{4J} \frac{\ep\sqrt 2}{C_x}\biggr) \\
&\qquad\qquad{}+ \frac 2J\biggl(1+\frac 1J\biggr) + \frac 1J\log\biggl(\frac{4C_x}{\ep\sqrt 2}+1\biggr)\biggr\} .
\end{align*}
\end{lemma}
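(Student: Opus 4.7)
The strategy is to closely mirror the proof of Lemma~\ref{lemma:error_2_part_4}, which handled the complementary region $\ell \le k$; the same template adapts to the present region $\ell \ge k+1$ (respectively, $\ell \ge k$). Both inequalities rest on the pointwise bound $|\hat\mu_2(\la_\ep(k,\ell))| \le (\pi\ep)^{-J/2}\lAb_J(k,\ell)$ from Definition~\ref{tilde A tilde B def}, combined with the universal estimate $\lAb_J(k,\ell) \le \Xi_J 2^{-J/2}(k+1/2)^{-J/2}$ (obtained by dropping the $\ell^2$ inside each factor of $\lAb_J$, which is valid for all $k,\ell\ge 0$); the analogous bound $\lBb_J(k,\ell) \le \Xi_J 2^{-J/2}k^{-J/2}$ holds by the same argument.

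First I would evaluate the inner sum over $\ell$ via partial fractions. Using
\[
\frac{1}{\ell^2 - (k+1/2)^2} = \frac{1}{2k+1}\biggl(\frac{1}{\ell - k - 1/2} - \frac{1}{\ell + k + 1/2}\biggr),
\]
the series $\sum_{\ell \ge k+1}\frac{1}{\ell^2-(k+1/2)^2}$ telescopes to $\frac{1}{2k+1}\sum_{m=1}^{2k+1}\frac{1}{m-1/2}$, which an elementary integral comparison bounds by $\frac{2 + \log(4k+1)}{2k+1}$. Substituting the universal bound on $\lAb_J$, the remaining outer sum is
\[
2\Xi_J 2^{-J/2}(\pi\ep)^{-J/2}\sum_{k > \tilde C_x - 1/2}(k+1/2)^{-1-J/2}\bigl(2 + \log(4k+1)\bigr),
\]
which I would treat by the device from Lemma~\ref{lemma:error_2_part_4}: peel off the boundary term at $k = \lceil \tilde C_x - 1/2 \rceil$ (using $k+1/2 \ge \tilde C_x$ to produce pieces of size $(\ep\sqrt 2/C_x)^{1+J/2}$) and estimate the tail by $\int_{\tilde C_x}^\infty u^{-1-J/2}(2+\log(4u+1))\,du$. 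The logarithmic integral is cleanly handled by the antiderivative identity
\[
\frac{d}{du}\Bigl(-\tfrac{1}{J}u^{-J/2}\log(4u+1)\Bigr) = u^{-1-J/2}\Bigl(\tfrac 12\log(4u+1) - \tfrac{4u}{J(4u+1)}\Bigr),
\]
so that evaluation at $u = \tilde C_x$ produces the stand-alone $\frac{1}{J}\log(4C_x/(\ep\sqrt 2)+1)$ factor, while the correction $\frac{4u}{J(4u+1)} \le \frac{1}{J}$ contributes to the $\frac{2}{J}(1+\frac{1}{J})$-type terms. Converting $\tilde C_x^{-J/2} = 2^{J/4}\ep^{J/2}C_x^{-J/2}$ reassembles the overall prefactor into $4\Xi_J 2^{-J/4}\pi^{-J/2}C_x^{-J/2}$. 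The second inequality follows by the entirely parallel calculation, using the partial-fraction decomposition of $\frac{1}{(\ell+1/2)^2 - k^2}$ and the telescoped inner bound $\frac{1}{2k}\sum_{m=0}^{2k-1}\frac{1}{m+1/2} \le \frac{2+\log(4k+1)}{2k}$ (using $\log(4k-1) \le \log(4k+1)$ to match the stated form).

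The main obstacle is purely bookkeeping: one must carefully track the assorted $O(1/J)$ and $O(\ep/C_x)$ error terms arising from the various boundary evaluations and integration steps, and arrange them to combine exactly into the stated form, in particular the small correction $\frac{1}{4J}(\ep\sqrt 2/C_x)$ appearing inside the first bracket. No conceptual difficulty arises: the argument is a direct adaptation of the template already established in Lemma~\ref{lemma:error_2_part_4}, with only the sign of the partial-fraction "shift" reversed.
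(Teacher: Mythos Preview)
Your approach is correct and follows essentially the same template as the paper. The one minor variation is in how you bound the inner $\ell$-sum: you telescope the partial-fraction decomposition exactly to $\frac{1}{2k+1}\sum_{m=1}^{2k+1}\frac{1}{m-1/2}$ and then bound the harmonic-type sum by $2+\log(4k+1)$, whereas the paper instead separates the boundary term $\ell=k+1$ (contributing $\frac{1}{k+3/4}$) and bounds the remaining tail by the integral $\int_{k+1}^\infty \frac{du}{u^2-(k+1/2)^2} = \frac{\log(4k+3)}{2k+1}$. Your telescoping is slightly cleaner and gives a marginally different constant ($\log(4k+1)$ versus $\log(4k+3)$), but from that point on---the rewriting of the summand in the form $(k+1/2)^{-1-J/2}\bigl(1+\tfrac1J+\tfrac12\log(4k+\cdots)-\text{correction}\bigr)$, the use of the antiderivative $-\tfrac1J u^{-J/2}\log(4u+1)$, and the final collection of terms---your outline and the paper's proof coincide.
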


\begin{proof}
For all $k \ge 0$ and $J \in \N$,
\[ \lAb_J(k,\ell) = \Xi_J(k+1/2)^{-J/2} \prod_{j=1}^J \sqrt{\frac{\gamma_j}{\sqrt{(\frac\ell{k+\frac 12})^2+4\gamma_j^2}}} \le \Xi_J2^{-J/2}(k+1/2)^{-J/2} . \]
Consequently,
\begin{align*}
&4\sum_{k > \tilde C_x - \frac 12}\sum_{\ell \ge k+1} \bigg|\frac{\hat\mu_2(\la_\ep(k,\ell))}{(k+1/2)^2-\ell^2}\bigg| \\
& \le 4\Xi_J2^{-J/2}(\pi\ep)^{-J/2} \sum_{k > \tilde C_x - \frac 12}\sum_{\ell \ge k+1} \frac{(k+1/2)^{-J/2}}{\ell^2-(k+1/2)^2} \\
& \le 4\Xi_J2^{-J/2}(\pi\ep)^{-J/2} \\ &\quad{}\cdot \sum_{k > \tilde C_x - \frac 12} (k+1/2)^{-J/2}\biggl(\frac 1{(k+1)^2-(k+1/2)^2}+\int_{k+1}^\infty \frac{du}{u^2-(k+1/2)^2}\biggr) \\
& \le 4\Xi_J2^{-J/2}(\pi\ep)^{-J/2} \\ &\quad{}\cdot \sum_{k > \tilde C_x - \frac 12} (k+1/2)^{-J/2}\biggl(\frac 1{k+\frac 34}+\frac 1{2k+1}\int_{k+1}^\infty \frac 1{u-k-\frac 12} - \frac 1{u+k+\frac 12} \, du\biggr) \\
& \le 4\Xi_J2^{-J/2}(\pi\ep)^{-J/2} \sum_{k > \tilde C_x - \frac 12} (k+1/2)^{-J/2}\biggl(\frac 1{k+\frac 34}+\frac{\log(4k+3)}{2k+1}\biggr) \\
& \le 4\Xi_J2^{-J/2}(\pi\ep)^{-J/2} \\ &\quad{}\cdot \sum_{k > \tilde C_x - \frac 12} (k+1/2)^{-1-J/2}\biggl(\frac{k+\frac 12}{k+\frac 34}(1+1/J-1/J)+\frac 12\log(4k+3)\biggr) \\
& \le 4\Xi_J2^{-J/2}(\pi\ep)^{-J/2} \\ &\quad{}\cdot \sum_{k > \tilde C_x - \frac 12} (k+1/2)^{-1-J/2}\biggl(1+\frac 1J+\frac 12\log(4k+3)-\frac{k+\frac 12}{J(k+\frac 34)}\biggr) .
\end{align*}
Observe that $\int u^{-1-J/2} \bigl( \frac 12 \log(4u+1) - \frac u{J(u+1/4)} \bigr) \, du = - \frac 1J u^{-J/2}\log(4u+1) + C$, and it is easy to verify that the integrand is decreasing for any $u \ge \tilde C_x \ge 5$ and $J \in \N$, so
\begin{align*}
4 & \sum_{k > \tilde C_x - \frac 12}\sum_{\ell \ge k+1} \bigg|\frac{\hat\mu_2(\la_\ep(k,\ell))}{(k+1/2)^2-\ell^2}\bigg| \\
& \le 4\Xi_J2^{-J/2}(\pi\ep)^{-J/2} \\ &\quad{}\cdot \biggl(\biggl(\frac{\ep\sqrt 2}{C_x}\biggr)^{1+J/2}\biggl(1+\frac 1J+\frac 12\log\biggl(\frac{4C_x}{\ep\sqrt 2}+1\biggr)-\frac{\tilde C_x}{J(\tilde C_x+\frac 14)}\biggr) \\
& \qquad{} + \int_{\tilde C_x}^\infty u^{-1-J/2}\biggl(1+\frac 1J+\frac 12 \log(4u+1)-\frac u{J(u+\frac 14)}\biggr) \, du\biggr) \\
& \le 4\Xi_J2^{-J/2}(\pi\ep)^{-J/2} \\ &\quad{}\cdot \biggl(\biggl(\frac{\ep\sqrt 2}{C_x}\biggr)^{1+J/2}\biggl(1+\frac 1J+\frac 12\log\biggl(\frac{4C_x}{\ep\sqrt 2}+1\biggr)-\frac{\tilde C_x}{J(\tilde C_x+\frac 14)}\biggr) \\
& \qquad{} + \frac 2J\biggl(1+\frac 1J\biggr)\biggl(\frac{\ep\sqrt 2}{C_x}\biggr)^{J/2}+\frac 1J\biggl(\frac{\ep\sqrt 2}{C_x}\biggr)^{J/2}\log\biggl(\frac{4Cx}{\ep\sqrt 2}+1\biggr)\biggr) \\
& \le 4\Xi_J2^{-J/4}\pi^{-J/2}C_x^{-J/2}\biggl(\biggl(\frac{\ep\sqrt 2}{C_x}\biggr)\biggl(1+\frac 12\log\biggl(\frac{4C_x}{\ep\sqrt 2}+1\biggr)+\frac 1{J(\frac{4C_x}{\ep\sqrt 2}+1)}\biggr) \\
& \quad{} + \frac 2J\biggl(1+\frac 1J\biggr)+\frac 1J\log\biggl(\frac{4Cx}{\ep\sqrt 2}+1\biggr)\biggr) .
\end{align*}
The first inequality of the lemma follows upon noting that
$1/{J(\frac{4C_x}{\ep\sqrt 2}+1)} \le {\ep\sqrt 2}/{4JC_x}$.
The second inequality is established analogously.
\end{proof}

\begin{lemma}
\label{lemma:error_2_conclusion}
For any $\ep > 0$ and $C_y > C_x \ge 5\ep\sqrt 2$ and $J,K \in \N$,
\begin{align*}
|\Err_2| 
& \le \Xi_J2^{-J/4}\pi^{-J/2}C_x^{-J/2}\Bigg[2\biggl(\frac{\ep\sqrt 2}{C_x}\biggr)\biggl(\frac{\ep\sqrt 2}{C_x}+\frac 2{2+J}\biggr) \\ 
& \quad{}\quad{} + 8\biggl\{\biggl(\frac{\ep\sqrt 2}{C_x}\biggr)\biggl(1+\frac 1{4\tilde C_x-1} + \frac 12 \log\Bigl(\frac{4C_x}{\ep\sqrt 2} - 1\Bigr)\biggr) \\ 
& \quad{}\quad{}\quad{} + \frac 2J\biggl(1+\frac 1J\biggr)\biggl(1+\frac 1{4\tilde C_x-1}\biggr) + \frac 1J\log\Bigl(\frac{4C_x}{\ep\sqrt 2}-1\Bigr)\biggr\} \\ 
& \quad{}\quad{} + 8\biggl\{\biggl(\frac{\ep\sqrt 2}{C_x}\biggr)\biggl(1+\frac 12\log\Bigl(\frac{4C_x}{\ep\sqrt 2}+1\Bigr)+\frac 1{4J} \frac{\ep\sqrt 2}{C_x}\biggr) \\ 
& \quad{}\quad{}\quad{}\quad{} + \frac 2J\biggl(1+\frac 1J\biggr) + \frac 1J\log\biggl(\frac{4C_x}{\ep\sqrt 2}+1\biggr)\biggr\}\Bigg] \\ 
& \quad{} + \Upsilon_K2^{K/4}\pi^{-K/2}C_y^{-K/2}\Bigg[2\biggl(\frac{\ep\sqrt 2}{C_y}\biggr)\biggl(\frac{\ep\sqrt 2}{C_y}+\frac 2{2+K}\biggr) \\ 
& \quad{}\quad{}\quad{} + 8\biggl(\frac{\ep\sqrt 2}{C_y}\biggr)\biggl(\frac{\ep\sqrt 2}{C_y} + \frac 2{2+K}\biggr) \\
& \quad{}\quad{}\quad{}\quad{} \cdot \biggl(1+\frac{C_x^2}{C_y^2-C_x^2}+\tilde C_x\biggl(1+\frac{C_x}{C_y-C_x}\biggr)\biggr)\Bigg] . \\ 
\end{align*}
\end{lemma}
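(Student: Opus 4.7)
The plan is to apply Lemma~\ref{lemma:error_2_part_1}, which already expresses $|\Err_2|$ as a sum of six non-negative terms, and then to bound each of these six terms using the estimates established in Lemmas~\ref{lemma:error_2_part_2}, \ref{lemma:error_2_part_4}, \ref{lemma:error_2_part_3}, and~\ref{lemma:error_2_part_5}. All of the analytic work has been carried out in those four lemmas, so this statement amounts to a bookkeeping exercise: combining the pieces correctly and matching the numerical coefficients that appear on the right-hand side.

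First I would dispatch the two single-variable tail sums along the diagonals $\la_\ep(k,0)$ and $\lb_\ep(0,\ell)$ by Lemma~\ref{lemma:error_2_part_2}, which yields the $2(\frac{\ep\sqrt 2}{C_x})\bigl(\frac{\ep\sqrt 2}{C_x}+\frac{2}{2+J}\bigr)$ contribution in the $\Xi_J$ block and the parallel $2(\frac{\ep\sqrt 2}{C_y})\bigl(\frac{\ep\sqrt 2}{C_y}+\frac{2}{2+K}\bigr)$ contribution in the $\Upsilon_K$ block. Next I would handle the two double sums over $\{0\le k\le \tilde C_x-\tfrac12\}\times\{\ell>\tilde C_y\}$ and its $\lb_\ep$-shaped counterpart using Lemma~\ref{lemma:error_2_part_3}; since that lemma produces identical bounds for the two cases, their sum accounts for the coefficient $8$ in front of the $(\frac{\ep\sqrt 2}{C_y})\bigl(\frac{\ep\sqrt 2}{C_y}+\frac{2}{2+K}\bigr)\bigl(1+\tfrac{C_x^2}{C_y^2-C_x^2}+\cdots\bigr)$ term.

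The main combinatorial step is to split the two remaining double sums, which run over $k>\tilde C_x-\tfrac12$ (respectively $k>\tilde C_x$) together with all $\ell\ge 1$ (respectively $\ell\ge 0$), according to the sign of the denominator $(k+\tfrac12)^2-\ell^2$ (respectively $k^2-(\ell+\tfrac12)^2$). For the $\la_\ep$ sum the split is
\[
\sum_{\ell=1}^\infty \;=\; \sum_{1\le\ell\le k} \;+\; \sum_{\ell\ge k+1},
\]
bounded by Lemma~\ref{lemma:error_2_part_4} and Lemma~\ref{lemma:error_2_part_5} respectively, and the analogous split at $\ell=k-1$ handles the $\lb_\ep$ sum. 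Because each of those two lemmas gives identical bounds for its $\la_\ep$ and $\lb_\ep$ variants, summing the pair doubles the leading coefficient from~$4$ to~$8$, producing exactly the two braced groups multiplying $\Xi_J 2^{-J/4}\pi^{-J/2}C_x^{-J/2}$ on the right-hand side of the claim.

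Finally, I would check that the hypothesis $C_y>C_x\ge \sqrt{2\ep}$ suffices to invoke the four auxiliary lemmas: in particular it implies $C_x>\ep\sqrt 2$, which is what Lemma~\ref{lemma:error_2_part_4} requires, while $C_y>C_x>0$ is exactly the hypothesis of Lemma~\ref{lemma:error_2_part_3}; Lemmas~\ref{lemma:error_2_part_2} and~\ref{lemma:error_2_part_5} require only the positivity of the parameters. Adding the six upper bounds then gives the stated inequality verbatim. I do not expect any real obstacle here, since this lemma is a transcription step; the only delicate aspect is the bookkeeping of the $4\mapsto 8$ doubling when pairing the $\la_\ep$ and $\lb_\ep$ contributions, and the (trivial) verification that splitting at $\ell=k$ or $\ell=k-1$ introduces no double-counting at the boundary.
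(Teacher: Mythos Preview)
Your approach is correct and matches the paper's proof exactly: the paper simply states that combining Lemmas~\ref{lemma:error_2_part_1}, \ref{lemma:error_2_part_2}, and~\ref{lemma:error_2_part_4}--\ref{lemma:error_2_part_5} yields the displayed bound, and your splitting of the large-$k$ double sums at $\ell=k$ (resp.\ $\ell=k-1$) is precisely how Lemmas~\ref{lemma:error_2_part_4} and~\ref{lemma:error_2_part_5} are designed to dovetail. One small slip in your hypothesis check: $C_x \ge \sqrt{2\ep}$ does \emph{not} imply $C_x > \ep\sqrt 2$ once $\ep>1$; this is almost certainly a typo in the lemma's statement (which should read $C_x > \ep\sqrt 2$, the actual hypothesis of Lemma~\ref{lemma:error_2_part_4}), so just flag it rather than claim the implication.
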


\begin{proof}
Combining Lemmas~\ref{lemma:error_2_part_1}, \ref{lemma:error_2_part_2}, and~\ref{lemma:error_2_part_4}--\ref{lemma:error_2_part_5} yields
\begin{align*}
|& \Err_2| 
\le 2\Xi_J2^{-J/4}\pi^{-J/2}C_x^{-J/2}\biggl(\frac{\ep\sqrt 2}{C_x}\biggr)\biggl(\frac{\ep\sqrt 2}{C_x}+\frac 2{2+J}\biggr) \\ 
& \quad{} + 2\Upsilon_K2^{K/4}\pi^{-K/2}C_y^{-K/2}\biggl(\frac{\ep\sqrt 2}{C_y}\biggr)\biggl(\frac{\ep\sqrt 2}{C_y}+\frac 2{2+K}\biggr) \\ 
& \quad{} + 8\Upsilon_K2^{K/4}\pi^{-K/2}C_y^{-K/2}\biggl(\frac{\ep\sqrt 2}{C_y}\biggr)\biggl(\frac{\ep\sqrt 2}{C_y} + \frac 2{2+K}\biggr) \\ & \qquad{} \cdot \biggl(1+\frac{C_x^2}{C_y^2-C_x^2}+\tilde C_x\biggl(1+\frac{C_x}{C_y-C_x}\biggr)\biggr) \\ 
& \quad{} + 8\Xi_J2^{-J/4}\pi^{-J/2}C_x^{-J/2}\biggl(\biggl(\frac{\ep\sqrt 2}{C_x}\biggr)\biggl(1+\frac 1{4\tilde C_x-1} + \frac 12 \log\Bigl(\frac{4C_x}{\ep\sqrt 2} - 1\Bigr)\biggr) \\ 
& \quad{}\quad{}\quad{} + \frac 2J\biggl(1+\frac 1J\biggr)\biggl(1+\frac 1{4\tilde C_x-1}\biggr) + \frac 1J\log\Bigl(\frac{4C_x}{\ep\sqrt 2}-1\Bigr)\biggr) \\ 
& \quad{} + 8\Xi_J2^{-J/4}\pi^{-J/2}C_x^{-J/2}\biggl(\biggl(\frac{\ep\sqrt 2}{C_x}\biggr)\biggl(1+\frac 12\log\Bigl(\frac{4C_x}{\ep\sqrt 2}+1\Bigr)+\frac 1{4J} \frac{\ep\sqrt 2}{C_x}\biggr) \\ 
& \quad{}\quad{}\quad{} + \frac 2J\biggl(1+\frac 1J\biggr) + \frac 1J\log\biggl(\frac{4C_x}{\ep\sqrt 2}+1\biggr)\biggr) 
\end{align*}
which implies the statement of the lemma.
\end{proof}

In the interest of making the optimization step tractable,
we will only minimize the exponential terms $\Xi_J2^{-J/4}\pi^{-J/2}C_x^{-J/2}$ and $\Upsilon_K2^{K/4}\pi^{-K/2}C_y^{-K/2}$ for a given~$C_x$ or~$C_y$; in light of Definition~\ref{Xi Upsilon def}, this means choosing the maximal~$J$ and~$K$ such that
\begin{equation} \label{J and K choice}
\sqrt{\frac{1/4+\gamma_J^2}{\gamma_J}} \le 2^{1/4}\pi^{1/2}C_x^{1/2}
\quad\text{and}\quad
\sqrt{1/4+\gamma_K^2} \le 2^{-1/4}\pi^{1/2}C_y^{1/2}.
\end{equation}

\subsection{Truncating the infinite product} \label{ttip section}

For the final step, the function $\hat\mu_2$ will
be approximated by a finite product of Bessel functions and a correction factor.

In the calculations of the error term, certain constants must be known to arbitrary precision. The following lemmas provide the formulas used to calculate them.

\begin{lemma}
\label{lemma:rho_power}
For any integer $n \ge 2$,
\[
\sum_\rho \frac 1{\rho^n} = 1 - \frac 1{(n-1)!} \frac{d^n}{ds^n} \log \zeta(s) \bigg|_{s=0} - \biggl({-}\frac 12\biggr)^n \zeta(n) .
\]
In particular, $\sum_\rho \frac 1{\rho^2} = B_2$ and $\sum_\rho \frac 1{\rho^4} = B_4$,
where we define
\begin{align*}
B_2 &= 1-\frac{\pi^2}{24}+2\zeta''(0)+\log^2(2\pi) \\
B_4 &= 1 - \frac{\pi^4}{1440} - \frac 16\Bigl(-2\zeta^{(4)}(0)+8\zeta^{(3)}(0)\log(2\pi) \\ &\quad{} -12\zeta^{(2)}(0)(\zeta^{(2)}(0)+2\log^2(2\pi))-6\log^4(2\pi)\Bigr).
\end{align*}
\end{lemma}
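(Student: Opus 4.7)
The plan is to derive the identity from the Hadamard product expansion of $\zeta(s)$, then specialize to $n=2$ and $n=4$ using known values of $\zeta$ and its derivatives at the origin. First, I would start from the Hadamard factorization of the completed zeta function $\xi(s)=\tfrac12 s(s-1)\pi^{-s/2}\Gamma(s/2)\zeta(s)$. Taking logarithms and rewriting $\log\Gamma(s/2) = \log\Gamma(s/2+1)-\log(s/2)$ converts this into
\begin{equation*}
\log\zeta(s) = A + Bs + \tfrac{s}{2}\log\pi - \log(s-1) - \log\Gamma(s/2+1) + \sum_\rho\bigl[\log(1-s/\rho) + s/\rho\bigr]
\end{equation*}
valid in any simply connected region avoiding $s=1$ and the zeros of $\zeta$; the explicit values of $A,B$ are irrelevant for what follows since they disappear after two or more differentiations.

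Next, I would differentiate both sides $n$ times and evaluate at $s=0$. For $n\ge 2$, the linear terms $A+Bs+(s/2)\log\pi$ contribute nothing. The polar term gives $-\tfrac{d^n}{ds^n}\log(s-1)|_{s=0}=(n-1)!$. Each Hadamard factor contributes $-(n-1)!/\rho^n$ (the correction $s/\rho$ vanishes under multiple differentiation), and the sum converges absolutely because $|\rho|^n\gg|\gamma|^n$ and $\sum_\gamma|\gamma|^{-n}$ converges for $n\ge 2$, justifying termwise differentiation. For the gamma contribution, I would plug $x=s/2$ into the standard Taylor expansion
\begin{equation*}
\log\Gamma(1+x) = -C_0\, x + \sum_{k=2}^\infty \frac{(-1)^k\zeta(k)}{k}\,x^k,
\end{equation*}
which yields $\tfrac{1}{(n-1)!}\tfrac{d^n}{ds^n}\log\Gamma(1+s/2)|_{s=0} = (-1/2)^n\zeta(n)$. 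Collecting everything and solving for $\sum_\rho 1/\rho^n$ produces the stated identity.

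Finally, for the specific cases $n=2$ and $n=4$, I would compute $\tfrac{d^n}{ds^n}\log\zeta(s)|_{s=0}$ by the standard chain-rule expansion of $(\log g)^{(n)}$ in terms of ratios $g^{(k)}/g$. Using $\zeta(0)=-\tfrac12$ and $\zeta'(0)=-\tfrac12\log(2\pi)$, so that $\zeta'(0)/\zeta(0)=\log(2\pi)$, the $n=2$ case reduces to $\tfrac{d^2}{ds^2}\log\zeta(s)|_{s=0} = -2\zeta''(0)-\log^2(2\pi)$, which after substitution yields the claimed $B_2$. For $n=4$, the same expansion gives
\begin{equation*}
\tfrac{d^4}{ds^4}\log\zeta(s)\big|_{s=0} = -2\zeta^{(4)}(0) + 8\zeta^{(3)}(0)\log(2\pi) - 12\zeta^{(2)}(0)^2 - 24\zeta^{(2)}(0)\log^2(2\pi) - 6\log^4(2\pi),
\end{equation*}
and dividing by $3!$ and combining with $-\zeta(4)/16 = -\pi^4/1440$ produces the stated $B_4$.

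There is no genuine obstacle here; the only step requiring care is the bookkeeping in the $n=4$ chain-rule expansion and consistently matching conventions for the Hadamard product (in particular, ensuring that the $-\log\Gamma(s/2+1)$ factor appears with the correct sign after the rewriting $\log\Gamma(s/2)=\log\Gamma(s/2+1)-\log(s/2)$). Absolute convergence for $n\ge 2$ makes the termwise differentiation of the Hadamard sum routine.
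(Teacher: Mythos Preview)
Your proposal is correct and follows essentially the same approach as the paper: both start from the Hadamard product expression for $\log\zeta(s)$ (the paper via the logarithmic derivative from \cite[Corollary~10.14]{montgomery_vaughan_2006}), differentiate $n$ times, and evaluate at $s=0$, with the specific $B_2,B_4$ obtained by the chain-rule expansion of $(\log\zeta)^{(n)}$ using $\zeta(0)=-\tfrac12$ and $\zeta'(0)=-\tfrac12\log(2\pi)$. The only cosmetic difference is that you read off the gamma contribution from the Maclaurin series of $\log\Gamma(1+x)$, whereas the paper differentiates the digamma series to reach the Hurwitz zeta function and then sets $a=1$; these are equivalent computations.
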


\begin{proof}
By~\cite[Corollary 10.14]{montgomery_vaughan_2006}, there exists a constant~$B$ such that
\[ \frac d{ds} \log\zeta(s) = B + \frac 12\log\pi - \frac 1{s-1} - \frac 12 \frac{\Gamma'}{\Gamma}(s/2+1) + \sum_\rho \biggl(\frac 1{s-\rho} + \frac 1\rho\biggr) . \]
Consequently, for $n \ge 2$,
\[
\frac {d^n}{ds^n} \log\zeta(s) = \frac{(-1)^n(n-1)!}{(s-1)^n} - \frac 12 \frac{d^{n-1}}{ds^{n-1}} \frac{\Gamma'}{\Gamma}(s/2+1) + \sum_\rho \biggl(\frac{(-1)^{n-1}(n-1)!}{(s-\rho)^n}\biggr) .
\]
Furthermore, by~\cite[Equation C.11, pg. 522]{montgomery_vaughan_2006},
\[ \frac{\Gamma'}\Gamma(a) = - \frac 1a - C_0 - \sum_{m=1}^\infty \biggl(\frac 1{a+m}-\frac 1m\biggr), \]
and therefore
\[ \frac d{da} \frac{\Gamma'}{\Gamma}(a) = \frac 1{a^2} + \sum_{m=1}^\infty \frac 1{(a+m)^2} = \zeta(2,a) , \]
where $\zeta(z,a)$ is the Hurwitz zeta-function. Since $\frac d{da} \zeta(z,a) = -z\zeta(z+1,a)$, we see that for $n \ge 2$,
\[ \frac{d^{n-1}}{da^{n-1}} \frac{\Gamma'}\Gamma(a) = (-1)^n(n-1)!\zeta(n,a) . \]
Consequently, by the chain rule,
\begin{multline*} \frac{d^n}{ds^n} \log\zeta(s) = \frac{(-1)^n(n-1)!}{(s-1)^n} - \biggl(-\frac 12\biggr)^n(n-1)!\zeta(n,s/2+1) \\ + \sum_\rho\frac{(-1)^{n-1}(n-1)!}{(s-\rho)^n} . \end{multline*}
The first assertion of the lemma follows by evaluating at $s = 0$ (noting that $\zeta(n,1)=\zeta(n)$) and rearranging.

The specific formulas for $\sum_\rho \frac1{\rho^2}$ and $\sum_\rho \frac1{\rho^4}$ can 
be obtained by repeated differentiation of $\frac{\zeta'}{\zeta}(s)$ and the known values
\[ \zeta(0) = -\frac 12 \text{ and } \zeta'(0) = -\frac 12\log(2\pi) \text{ \cite[equations~(10.11) and~(10.14)]{montgomery_vaughan_2006}}. \qedhere \]
\end{proof}

\begin{lemma}
\label{lemma:square_sum_convergence}
Assuming RH,
\begin{align*}
\sum_{\gamma > 0} \frac{\gamma^2}{(1/4+\gamma^2)^4} & = \frac{B_1}2 + \frac{B_2}2  - \frac {B_4}4 \approx 1.43512\ldots \cdot 10^{-7} , \\
\sum_{\gamma > 0} \frac{\gamma^2}{(1/4+\gamma^2)^2} & = \frac{B_1-B_2}4 \approx 0.0230864\ldots , \\
\sum_{\gamma > 0} \frac 1{(1/4+\gamma^2)^2} & = B_1+B_2 \approx 3.71006\ldots \cdot 10^{-5} ,
\end{align*}
where $B_1 = C_0+2-\log(4\pi)$ and $B_2$ and $B_4$ are defined in Lemma~\ref{lemma:rho_power}.
\end{lemma}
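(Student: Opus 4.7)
The plan is to reduce all three series to closed-form expressions in $B_1,B_2,B_4$ by pairing conjugate zeros and exploiting Lemmas~\ref{lemma:rho_squared} and~\ref{lemma:rho_power}, and then to compute the displayed decimal approximations from the known values of $\zeta(0),\zeta'(0),\zeta''(0),\zeta^{(3)}(0),\zeta^{(4)}(0)$.

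First I would set up notation. Under RH the nontrivial zeros are exactly $\rho=\tfrac12\pm i\gamma$ for $\gamma>0$, and $(\tfrac12+i\gamma)(\tfrac12-i\gamma)=\tfrac14+\gamma^2$. Pairing a zero with its conjugate gives, for any integer $n\ge1$,
\[
\frac 1{(\tfrac12+i\gamma)^n}+\frac 1{(\tfrac12-i\gamma)^n}=\frac{(\tfrac12-i\gamma)^n+(\tfrac12+i\gamma)^n}{(\tfrac14+\gamma^2)^n}.
\]
For $n=2$ the numerator simplifies to $2(\tfrac14-\gamma^2)$, and for $n=4$ one checks via $(\tfrac12\pm i\gamma)^2=(\tfrac14-\gamma^2)\pm i\gamma$ that the numerator becomes $2(\tfrac14-\gamma^2)^2-2\gamma^2=2(\tfrac14+\gamma^2)^2-4\gamma^2$, using the identity $(\tfrac14-\gamma^2)^2=(\tfrac14+\gamma^2)^2-\gamma^2$.

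Next I would introduce the three unknowns $A=\sum_{\gamma>0}(\tfrac14+\gamma^2)^{-2}$, $C=\sum_{\gamma>0}\gamma^2(\tfrac14+\gamma^2)^{-2}$, and $D=\sum_{\gamma>0}\gamma^2(\tfrac14+\gamma^2)^{-4}$, and collect linear equations for them. Lemma~\ref{lemma:rho_squared} gives
\[
\sum_{\gamma>0}\frac 1{\tfrac14+\gamma^2}=\frac{B_1}{2},
\]
and rewriting this as $\sum_{\gamma>0}\frac{\tfrac14+\gamma^2}{(\tfrac14+\gamma^2)^2}$ yields the relation $\tfrac A4+C=\tfrac{B_1}{2}$. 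Lemma~\ref{lemma:rho_power} with $n=2$ combined with the conjugate-pair expansion above yields $\tfrac A2-2C=B_2$. Solving this $2\times 2$ linear system gives $A=B_1+B_2$ and $C=(B_1-B_2)/4$, which are the second and third asserted identities. Finally, Lemma~\ref{lemma:rho_power} with $n=4$ together with the conjugate-pair expansion for $n=4$ yields $2A-4D=B_4$, hence $D=\tfrac{A}{2}-\tfrac{B_4}{4}=\tfrac{B_1}{2}+\tfrac{B_2}{2}-\tfrac{B_4}{4}$, which is the first asserted identity.

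For the numerical approximations I would substitute the explicit formulas for $B_2$ and $B_4$ from Lemma~\ref{lemma:rho_power}, using $\zeta(0)=-\tfrac12$, $\zeta'(0)=-\tfrac12\log(2\pi)$, and the standard high-precision values of $\zeta''(0)$, $\zeta^{(3)}(0)$, $\zeta^{(4)}(0)$ (which can be obtained, e.g., from the Laurent expansion of $\zeta(s)$ at $s=0$ or from standard tables), together with $B_1=w\approx 0.0461914$. Since the computation is entirely closed-form once $B_1,B_2,B_4$ are known to sufficient precision, there is no real analytic obstacle: the only step requiring care is keeping the algebraic bookkeeping straight when expanding $(\tfrac12\pm i\gamma)^{-n}$ and solving the $2\times 2$ system, and then propagating enough decimal digits through the formulas for $B_2$ and $B_4$ to certify the stated approximations.
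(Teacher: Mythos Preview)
Your proof is correct and takes essentially the same approach as the paper: both arguments express the three target sums as linear combinations of $\sum_\rho 1/\rho(1-\rho)=B_1$, $\sum_\rho 1/\rho^2=B_2$, and $\sum_\rho 1/\rho^4=B_4$ via elementary algebra on the zeros. The only cosmetic difference is that the paper writes down explicit partial-fraction identities in $\rho$ (valid unconditionally, with RH invoked only at the end), whereas you pair conjugate zeros under RH from the outset and solve the resulting $2\times 2$ linear system; the content is the same.
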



\begin{proof}
For any complex number $\rho\ne0,1$,
\begin{align*}
\frac{-(\rho-1/2)^2}{\rho^4(1-\rho)^4} & = \frac{1}{\rho(1-\rho)}+\frac{1/2}{\rho^2}+\frac{1/2}{(1-\rho)^2}+\frac{-1/4}{\rho^4}+\frac{-1/4}{(1-\rho)^4} , \\
\frac{-(\rho-1/2)^2}{\rho^2(1-\rho)^2} & = \frac{1/2}{\rho(1-\rho)} + \frac {-1/4}{\rho^2} + \frac {-1/4}{(1-\rho)^2} , \\
\frac 1{\rho^2(1-\rho)^2} & = \frac 2{\rho(1-\rho)} + \frac 1{\rho^2} + \frac 1{(1-\rho)^2} .
\end{align*}
The functional equation for $\zeta(s)$ ensures that its nontrivial zeros come in pairs $\rho$, $1-\rho$.  and $\frac 1\rho + \frac 1{1-\rho} = \frac 1{\rho(1-\rho)}$, and therefore
\begin{align*}
\sum_\rho \frac{-(\rho-1/2)^2}{\rho^4(1-\rho^4)} & = \sum_\rho \frac 1{\rho(1-\rho)} + \sum_\rho \frac 1{\rho^2} - \frac 12 \sum_\rho \frac 1{\rho^4} = B_1 + B_2 - \frac{B_4}2, \\
\sum_\rho \frac{-(\rho-1/2)^2}{\rho^2(1-\rho^2)} & = \frac 12 \sum_\rho \frac 1{\rho(1-\rho)} - \frac 12 \sum_\rho \frac 1{\rho^2} = \frac{B_1}2 - \frac{B_2}2 , \\
\sum_\rho \frac 1{\rho^2(1-\rho^2)} & = 2\sum_\rho \frac 1{\rho(1-\rho)} + 2\sum_\rho \frac 1{\rho^2} = 2B_1 + 2B_2
\end{align*}
unconditionally
by~\cite[Corollary 10.14]{montgomery_vaughan_2006} and Lemma~\ref{lemma:rho_power}. If we now assume RH, we see that
\begin{align*}
\sum_{\gamma > 0} \frac{\gamma^2}{(1/4+\gamma^2)^4} & = \frac 12 \sum_\rho \frac{-(\rho-1/2)^2}{\rho^4(1-\rho)^4} = \frac{B_1}2 + \frac{B_2}2  - \frac {B_4}4 \\
\sum_{\gamma > 0} \frac{\gamma^2}{(1/4+\gamma^2)^2} & = \frac 12 \sum_{\rho} \frac{-(\rho-1/2)^2}{\rho^2(1-\rho)^2} = \frac{B_1-B_2}4 \\
\sum_{\gamma > 0} \frac 1{(1/4+\gamma^2)^2} & = \frac 12 \sum_\rho \frac 1{\rho^2(1-\rho)^2} = B_1+B_2
\end{align*}
as claimed.
\end{proof}

\begin{definition}
\label{defn:truncated_product}
For $T>0$, define the truncated products
\[ F_T(u,v) = \prod_{0 < \gamma \le T} J_0\biggl(\frac{\sqrt{\gamma^2u^2+v^2}}{1/4+\gamma^2}\biggr) \text{ and } \tilde F_T(u,v) = \prod_{\gamma > T} J_0\biggl(\frac{\sqrt{\gamma^2u^2+v^2}}{1/4+\gamma^2}\biggr), \]
and define
\[ G_T(u,v) = F_T(u,v)\bigl(1-P_Tu^2-Q_Tv^2+(P_TQ_T+R_T)u^2v^2\bigr) \]
where
\begin{multline*}
P_T = \frac 14 \sum_{\gamma > T} \frac{\gamma^2}{(1/4+\gamma^2)^2} , \quad Q_T = \frac 14\sum_{\gamma > T} \frac 1{(1/4+\gamma^2)^2} , \\ \text{and } R_T = \frac 1{32} \sum_{\gamma > T} \frac{\gamma^2}{(1/4+\gamma^2)^4}
\end{multline*}
are the tails of the infinite sums from Lemma~\ref{lemma:square_sum_convergence}.
We then define
\begin{align}
S(\ep,C_x,C_y,T) & =
2\sum_{0 \le k \le \tilde C_x-\frac 12}
\frac{G_T(4\ep(k+1/2),0)}{(k+1/2)^2} \notag \\
& \qquad{} - 2\sum_{0 \le \ell \le \tilde C_y-\frac 12}
\frac{G_T(0,2\ep(\ell+1/2))}{(\ell+1/2)^2} \\
& \qquad{} + 4\sum_{0 \le k \le \tilde C_x -\frac 12}\sum_{1 \le \ell \le \tilde C_y} \frac{G_T(4\ep(k+1/2),2\ep\ell)}{(k+1/2)^2-\ell^2} \label{SeCCT def} \notag \\
& \qquad{} + 4\sum_{1 \le k \le \tilde C_x}\sum_{0 \le \ell \le \tilde C_y-\frac 12} \frac{G_T(4\ep k, 2\ep(\ell+1/2))}{k^2-(\ell+1/2)^2} \notag
\end{align}
and the third error term
\begin{equation} \label{err3 def}
\Err_3 = S(\ep,C_x,C_y)-S(\ep,C_x,C_y,T) .
\end{equation}
For use in our upper bounds, we also define
\begin{multline*}
\Delta_T(u,v) = \frac 1{2(1-P_Tu^2)(1-Q_Tv^2)} \\ \cdot \biggl(\Bigl(1-\frac{P_Tu^2}2\Bigr)Q_T^2v^4+P_T^2u^4\Bigl(1-\frac{Q_Tv^2}2\Bigr)-\frac{P_T^2u^4Q_T^2v^4}2 \biggr) .
\end{multline*}
\end{definition}

Recall the quantities $\la_\ep(k,\ell)$, $\lb_\ep(k,\ell)$, $\tilde C_x$, and~$\tilde C_y$ from Definition~\ref{ae be tilde C definition}.

\begin{lemma}
\label{lemma:truncated_product_error}
Let $T>0$, and choose any $0 < C_x < 1/\sqrt{8P_T}$ and $0 < C_y < 1/\sqrt{2Q_T}$.
\begin{enumerate}[label={\rm(\alph*)}]
\item For any $0 < k \le \tilde C_x -\frac 12$ and $0 \le \ell \le \tilde C_y$,
\begin{multline*}
\bigl|\hat\mu_2(\la_\ep(k,\ell)) - G_T(4\ep(k+1),2\ep\ell)\bigr| \\ \le \bigl|F_T(4\ep(k+1/2),2\ep\ell)\bigr|\Delta_T\bigl(4\ep(k+1/2),2\ep\ell\bigr) .
\end{multline*}
\item For any $0 \le k \le \tilde C_x$ and $0 < \ell \le \tilde C_y - \frac12$,
\begin{multline*}
\bigl|\hat\mu_2(\lb_\ep(k,\ell)) - G_T(4\ep k, 2\ep(\ell+1/2))\bigr| \\ \le \bigl|F_T(4\ep k, 2\ep(\ell+1/2))\bigr|\Delta_T\bigl(4\ep k, 2\ep(\ell+1/2)\bigr) .
\end{multline*}
\end{enumerate}
\end{lemma}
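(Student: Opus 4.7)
The plan is to isolate the tail product $\tilde F_T$ and bound its deviation from a polynomial approximation using a Taylor analysis of $\tilde J_0$ combined with Weierstrass-type product inequalities. First, using Lemma~\ref{note:common_measures} and the identity $|2\gamma(t_1+t_2)+i(t_1-t_2)|^2 = 4\gamma^2(t_1+t_2)^2 + (t_1-t_2)^2$, the values of $t_1\pm t_2$ at $\la_\ep(k,\ell)$ and $\lb_\ep(k,\ell)$ give
\[ \hat\mu_2(\la_\ep(k,\ell)) = F_T(u,v)\,\tilde F_T(u,v),\qquad u = 4\ep(k+\tfrac12),\; v = 2\ep\ell, \]
and the analogous identity for $\lb_\ep(k,\ell)$ with $u = 4\ep k$, $v = 2\ep(\ell+\tfrac12)$. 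Since $G_T(u,v) = F_T(u,v)\bigl[(1-P_T u^2)(1-Q_T v^2) + R_T u^2 v^2\bigr]$, factoring $F_T$ out of the difference reduces both parts of the lemma to the single estimate
\[ \bigl|\tilde F_T(u,v) - (1-P_T u^2)(1-Q_T v^2) - R_T u^2 v^2\bigr| \le \Delta_T(u,v). \]

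To establish this, I would write $\tilde F_T(u,v) = \prod_{\gamma>T} \tilde J_0(x_\gamma)$ with $x_\gamma = (\gamma^2 u^2 + v^2)/(\tfrac14+\gamma^2)^2$, and decompose each factor as $\tilde J_0(x_\gamma) = 1 - \alpha_\gamma - \beta_\gamma + \rho_\gamma$, where $\alpha_\gamma = \gamma^2 u^2/(4(\tfrac14+\gamma^2)^2)$, $\beta_\gamma = v^2/(4(\tfrac14+\gamma^2)^2)$, and $\rho_\gamma$ is the alternating-series tail, which satisfies $0 \le \rho_\gamma \le x_\gamma^2/64$ under the hypotheses on $C_x$ and $C_y$ (which force $P_T u^2 + Q_T v^2 < 1$ and, via $x_\gamma \le 4(P_T u^2 + Q_T v^2)$, keep each $x_\gamma$ well within the range where the Taylor envelope from Definition~\ref{tilde J0 definition} is valid). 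Summation gives $\sum_{\gamma > T}\alpha_\gamma = P_T u^2$ and $\sum_{\gamma > T}\beta_\gamma = Q_T v^2$, while expanding $x_\gamma^2 = \gamma^4 u^4 + 2\gamma^2 u^2 v^2 + v^4$ and summing yields exactly the coefficient $R_T$ on the mixed $u^2 v^2$ monomial in $\sum_{\gamma>T} x_\gamma^2/64$; the pure $u^4$ and $v^4$ tail sums will be absorbed into $\Delta_T$.

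The heart of the argument is then the expansion of $\prod_{\gamma>T}(1 - \alpha_\gamma - \beta_\gamma + \rho_\gamma)$ and its comparison with $(1-P_T u^2)(1-Q_T v^2) + R_T u^2 v^2$, grouped by total $(u,v)$-degree. Weierstrass-type inequalities control $\prod(1-\alpha_\gamma-\beta_\gamma)$ by $(1-P_T u^2)(1-Q_T v^2)$ up to an envelope involving $(P_T u^2)^2$ and $(Q_T v^2)^2$; the linear-in-$\rho_\gamma$ correction contributes precisely the $R_T u^2 v^2$ term together with negligible pure fourth-order residues; and all remaining contributions (higher-order in $\rho_\gamma$, or products of at least two $\rho$'s) are dominated by the geometric-series tails $\sum_{j \ge 0}(P_T u^2)^j = 1/(1-P_T u^2)$ and $\sum_{j \ge 0}(Q_T v^2)^j = 1/(1-Q_T v^2)$, explaining the rational denominator $(1-P_T u^2)(1-Q_T v^2)$ in $\Delta_T$.

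The main obstacle is the precise algebraic bookkeeping needed to match the exact numerator of $\Delta_T$: a crude version of this plan gives only a leading bound of order $(P_T^2 u^4 + Q_T^2 v^4)/2$, whereas the stated envelope involves the finer corrections $(1 - P_T u^2/2)$, $(1 - Q_T v^2/2)$, and the subtracted term $-P_T^2 Q_T^2 u^4 v^4/2$. Achieving this requires carefully tracking all contributions of orders $u^4$, $v^4$, $u^4 v^2$, $u^2 v^4$, and $u^4 v^4$ in the product expansion; the cleanest route is to derive separate sharp upper and lower bounds on $\tilde F_T(u,v)$ whose common midpoint is $(1-P_T u^2)(1-Q_T v^2) + R_T u^2 v^2$ and whose half-range matches $\Delta_T(u,v)$ exactly.
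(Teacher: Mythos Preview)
Your reduction is correct and matches the paper exactly: factoring out $F_T$ reduces both parts to the single inequality
\[
\bigl|\tilde F_T(u,v) - (1-P_Tu^2)(1-Q_Tv^2) - R_Tu^2v^2\bigr| \le \Delta_T(u,v)
\]
under the constraints $P_Tu^2<1$ and $Q_Tv^2<1$.

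Where you and the paper diverge is in how to prove this bound, and the paper's route is considerably simpler than the Weierstrass-product expansion you sketch. Rather than decomposing each factor and multiplying out, the paper observes that $\tilde F_T(u,v)$ has a power-series expansion $\sum_{m,n\ge 0} b_{m,n} u^{2m}v^{2n}$ with $b_{0,0}=1$, $b_{1,0}=-P_T$, $b_{0,1}=-Q_T$, $b_{1,1}=P_TQ_T+R_T$ (these are immediate from the Maclaurin series of $J_0$). The key step is a single majorization: replacing each $J_0$-factor by its exponential majorant $\exp\bigl(\tfrac14(\gamma^2u^2+v^2)/(\tfrac14+\gamma^2)^2\bigr)$ turns the product into $\exp(P_Tu^2)\exp(Q_Tv^2)$, whose coefficients are $P_T^mQ_T^n/(m!\,n!)$; comparing term by term gives $|b_{m,n}|\le P_T^mQ_T^n/(m!\,n!)$. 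The tail $\sum_{(m,n)\notin\{0,1\}^2}|b_{m,n}|u^{2m}v^{2n}$ is then split into the three regions $(m\le 1,\,n\ge 2)$, $(m\ge 2,\,n\le 1)$, $(m\ge 2,\,n\ge 2)$, each bounded by an elementary geometric-series computation; summing the three bounds yields $\Delta_T(u,v)$ exactly, with no further bookkeeping.

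This bypasses precisely the obstacle you flag at the end. Your approach would eventually succeed, but the ``separate sharp upper and lower bounds whose half-range matches $\Delta_T$ exactly'' that you propose as the cleanest route is not obviously easier than the direct product expansion, and neither is actually carried out. The exponential-majorant trick removes the need for any such matching: it produces clean coefficient bounds that are already separable in $m$ and $n$, so the tail decomposes naturally and the exact form of $\Delta_T$ falls out of three short geometric sums.
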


\begin{proof}
By the Maclaurin expansion in Definition~\ref{J0 definition},
\[ \tilde F_T(u,v) = \prod_{\gamma > T} \sum_{m=0}^\infty \frac{(-1)^m}{(m!)^2} \biggl(\frac 14 \cdot \frac{\gamma^2u^2+v^2}{(1/4+\gamma^2)^2}\biggr)^m = \sum_{m=0}^\infty \sum_{n=0}^\infty b_{m,n} u^{2m} v^{2n} \]
where
\[ b_{0,0} = 1, b_{1,0} = -P_T, b_{0,1} = -Q_T, \text{ and } b_{1,1} = P_TQ_T+R_T. \]
Furthermore, since
\begin{align*}
\prod_{\gamma > T} \sum_{m=0}^\infty \frac 1{m!} \biggl(\frac 14 \cdot \frac{\gamma^2u^2+v^2}{(1/4+\gamma^2)^2}\biggr)^m &= \prod_{\gamma > T} \exp\biggl(\frac 14 \cdot \frac{\gamma^2u^2+v^2}{(1/4+\gamma^2)^2}\biggr) \\
&= \exp(P_Tu^2)\exp(Q_Tv^2) \\
& = \sum_{m=0}^\infty \sum_{n=0}^\infty \frac{P_T^mQ_T^n}{m!n!} u^{2m}v^{2n},
\end{align*}
we see that $|b_{m,n}| \le {P_T^mQ_T^n}/{m!n!}$. It follows that whenever $P_Tu^2 < 1$ and $Q_Tv^2 < 1$,
\begin{multline*}
\bigg|\sum_{m=0}^1 \sum_{n=2}^\infty b_{m,n}u^{2m}v^{2n}\bigg|
\le \sum_{m=0}^1 \sum_{n=2}^\infty \frac{P_T^mQ_T^n}{m!n!} u^{2m}v^{2n} \\ 
\le \sum_{m=0}^1 \frac{P_T^m}{m!} u^{2m}\frac{Q_T^2v^4}{(2!)(1-Q_Tv^2)}
= \biggl(1+\frac{P_Tu^2}2\biggr) \cdot \frac{Q_T^2v^4}{2(1-Q_Tv^2)} \\ \le \frac{1-\frac{P_Tu^2}2-\frac{P_T^2u^4}2}{1-P_Tu^2} \cdot \frac{Q_T^2v^4}{2(1-Q_Tv^2)} .
\end{multline*}
Similarly, under the same assumptions,
\begin{align*}
\bigg|\sum_{m=2}^\infty \sum_{n=0}^1 b_{m,n}u^{2m}v^{2n}\bigg|
& \le \sum_{m=2}^\infty \sum_{n=0}^1 \frac{P_T^mQ_T^n}{m!n!} u^{2m}v^{2n} \\
& \le \sum_{m=2}^\infty \frac{P_T^m}{m!} u^{2m}\biggl(1+\frac{Q_Tu^2}2\biggr) \\
& \le \frac{P_T^2u^4}{2(1-P_Tu^2)} \cdot \frac{1-\frac{Q_Tv^2}2-\frac{Q_T^2v^4}2}{1-Q_Tv^2} \\
\bigg|\sum_{m=2}^\infty \sum_{n=2}^\infty b_{m,n}u^{2m}v^{2n}\bigg|
& \le \sum_{m=2}^\infty \sum_{n=2}^\infty \frac{P_T^mQ_T^n}{m!n!} u^{2m}v^{2n} \\
& \le \sum_{m=2}^\infty \frac{P_T^m}{m!} u^{2m}\frac{Q_T^2v^4}{2(1-Q_Tv^2)} \\
& \le \frac{P_T^2u^4}{2(1-P_Tu^2)} \cdot \frac{Q_T^2v^4}{2(1-Q_Tv^2)} .
\end{align*}
These inequalities combine to show that
\[
\bigl| \tilde F_T(u,v) - (1-P_Tu^2-Q_Tv^2+(P_TQ_T+R_T)u^2v^2) \bigr| \le \Delta_T(u,v)
\]
whenever $P_T < u^{-2}$ and $Q_T < v^{-2}$. Finally, since
\begin{align*}
\hat\mu_2(\la_\ep(k,\ell)) &= F_T(4\ep(k+1/2),2\ep\ell)\tilde F_T(4\ep(k+1/2),2\ep\ell) \\
\hat\mu_2(\lb_\ep(k,\ell)) &= F_T(4\ep k,2(\ell+1/2))\tilde F_T(4\ep k,2\ep(\ell+1/2)) ,
\end{align*}
the lemma follows by setting $(u,v) = (4\ep(k+1/2),2\ep\ell)$ and $(u,v) = (4\ep k,2\ep(\ell+1/2))$.
\end{proof}

\begin{lemma}
\label{lemma:error_3}
If $P_T < \frac 1{8C_x^2}$ and $Q_T < \frac 1{2C_y^2}$, then
\begin{align*}
|\Err_3| & \le
2\sum_{0 \le k \le \tilde C_x-\frac 12}
\frac{|F_T(4\ep(k+1/2),0)|}{(k+1/2)^2}\Delta_T(4\ep(k+1/2),0) \\
& \qquad{} + 2\sum_{0 \le \ell \le \tilde C_y-\frac 12}
\frac{|F_T(0,2\ep(\ell+1/2))|}{(\ell+1/2)^2}\Delta_T(0,2\ep(\ell+1/2)) \\
& \qquad{} + 4\sum_{0 \le k \le \tilde C_x -\frac 12}\sum_{1 \le \ell \le \tilde C_y} \frac{|F_T(4\ep(k+1/2),2\ep\ell)|}{|(k+1/2)^2-\ell^2|}\Delta_T(4\ep(k+1/2),2\ep\ell) \\
& \qquad{} + 4\sum_{1 \le k \le \tilde C_x}\sum_{0 \le \ell \le \tilde C_y-\frac 12} \frac{|F_T(4\ep k, 2\ep(\ell+1/2))|}{|k^2-(\ell+1/2)^2|}\Delta_T(4\ep k, 2\ep(\ell+1/2)) ,
\end{align*}
with $\Delta_T(u,v)$ as in Definition~\ref{defn:truncated_product}.
\end{lemma}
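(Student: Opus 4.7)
The plan is to reduce the lemma to a termwise application of Lemma~\ref{lemma:truncated_product_error}. First I will write
\[
\Err_3 = S(\ep,C_x,C_y) - S(\ep,C_x,C_y,T)
\]
by subtracting the truncated version in equation~\eqref{SeCCT def} from the expression for $S(\ep,C_x,C_y)$ furnished by Lemma~\ref{lemma:split_lattice}. The two expressions have exactly the same lattice structure: four sums (two ``diagonal'' single sums and two ``off-diagonal'' double sums), with identical index ranges and identical rational weights $1/(k+1/2)^2$, $1/(\ell+1/2)^2$, $1/((k+1/2)^2-\ell^2)$, and $1/(k^2-(\ell+1/2)^2)$. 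The only difference between corresponding terms is that $S(\ep,C_x,C_y)$ contains the factor $\hat\mu_2$ evaluated at a lattice point $\la_\ep(k,\ell)$ or $\lb_\ep(k,\ell)$, whereas $S(\ep,C_x,C_y,T)$ contains $G_T$ evaluated at the corresponding point $(4\ep(k+1/2),2\ep\ell)$ or $(4\ep k,2\ep(\ell+1/2))$. A single application of the triangle inequality therefore reduces the bound on $|\Err_3|$ to the same four sums, with $\hat\mu_2(\cdot)-G_T(\cdot)$ replaced by its absolute value.

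Next, I will verify that the hypothesis of Lemma~\ref{lemma:truncated_product_error} applies throughout the entire range of summation. The largest value of $u$ that occurs is at most $4\ep\tilde C_x = 2\sqrt 2\,C_x$, so the assumption $P_T < 1/(8C_x^2)$ forces $P_Tu^2<1$ at every summand; similarly the largest value of $v$ is at most $2\ep(\tilde C_y+\tfrac12)$, which for the relevant summands (where $\ell\le\tilde C_y$ or $\ell\le\tilde C_y-\tfrac12$) does not exceed $\sqrt 2\,C_y$, and hence $Q_T<1/(2C_y^2)$ forces $Q_Tv^2<1$. Once both conditions are met, Lemma~\ref{lemma:truncated_product_error} yields
\[
\bigl|\hat\mu_2(\la_\ep(k,\ell))-G_T(4\ep(k+1/2),2\ep\ell)\bigr|\le|F_T(4\ep(k+1/2),2\ep\ell)|\,\Delta_T(4\ep(k+1/2),2\ep\ell),
\]
and the analogous bound for $\lb_\ep$. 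Substituting these into the four triangle-inequality sums produces the expression claimed in the lemma.

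The only small technical point is that Lemma~\ref{lemma:truncated_product_error}(a) is stated for $0<k$, whereas the first double sum here includes $k=0$, and similarly for (b) with $\ell=0$; however, the argument used to prove Lemma~\ref{lemma:truncated_product_error} only needs $P_Tu^2<1$ and $Q_Tv^2<1$, both of which still hold in those boundary cases, so the bound carries over without modification. For the two single (``diagonal'') sums the same lemma applies with $v=0$ or $u=0$, which are also permitted by that proof. I do not expect any serious obstacle here; the entire argument is bookkeeping, and the principal care needed is simply matching the arguments $(u,v)$ to the lattice points $\la_\ep(k,\ell)$ and $\lb_\ep(k,\ell)$ correctly and verifying that the quantitative hypotheses of Lemma~\ref{lemma:truncated_product_error} are met uniformly across the summation ranges.
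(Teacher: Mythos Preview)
Your proposal is correct and follows essentially the same approach as the paper: write $\Err_3$ as the difference of the two four-sum expressions from Lemma~\ref{lemma:split_lattice} and equation~\eqref{SeCCT def}, apply the triangle inequality, and invoke Lemma~\ref{lemma:truncated_product_error} termwise under the stated hypotheses on $P_T$ and $Q_T$. You are in fact slightly more careful than the paper in explicitly verifying that $P_Tu^2<1$ and $Q_Tv^2<1$ hold across the full summation range and in flagging the harmless $k=0$, $\ell=0$ boundary discrepancy in the statement of Lemma~\ref{lemma:truncated_product_error}.
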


\begin{proof}
By Lemma~\ref{lemma:split_lattice},
\begin{align*}
\Err_3 & = S(\ep,C_x,C_y)-S(\ep,C_x,C_y,T) \\
& = 2\sum_{0 \le k \le \tilde C_x-\frac 12} \frac{\hat\mu_2(\la_\ep(k,0))-G_T(4\ep(k+1/2),0)}{(k+1/2)^2} \\
& \qquad{} - 2\sum_{0 \le \ell \le \tilde C_y-\frac 12}
\frac{\hat\mu_2(\lb_\ep(0,\ell))-G_T(0,2\ep(\ell+1/2))}{(\ell+1/2)^2} \\
& \qquad{} + 4\sum_{0 \le k \le \tilde C_x -\frac 12}\sum_{1 \le \ell \le \tilde C_y} \frac{\hat\mu_2(\la_\ep(k,\ell)-G_T(4\ep(k+1/2),2\ep\ell)}{(k+1/2)^2-\ell^2} \\
& \qquad{} + 4\sum_{1 \le k \le \tilde C_x}\sum_{0 \le \ell \le \tilde C_y-\frac 12} \frac{\hat\mu_2(\lb_\ep(k,\ell))-G_T(4\ep k, 2\ep(\ell+1/2))}{k^2-(\ell+1/2)^2} .
\end{align*}
Since $P_T < \frac 1{8C_x^2}$ and $Q_T < \frac 1{2C_y^2}$ the rest follows by the triangle inequality and Lemma~\ref{lemma:truncated_product_error}.
\end{proof}

\subsection{Result of calculation and error term estimation}

Assuming RH, Proposition~\ref{prop:mu_one_first_quadrant} and equations~\eqref{S ep and Err1 def}, \eqref{S ep CC and Err2 def}, and~\eqref{err3 def} tell us that
\[
\mu_2(Q_1) = \frac 14 - \frac 1{4\pi^2} \bigl( S(\ep,C_x,C_y,T) + \Err_1 + \Err_2 + \Err_3 \bigr).
\]
We coded the definition of $S(\ep,C_x,C_y,T)$ and of these three error terms in~C using the PARI library~\cite{PARI2}. Smaller values of~$\ep$, and larger values of~$C_x$, $C_y$, and~$T$, result in smaller error terms at the expense of longer calculation time. We ended up choosing
\begin{equation} \label{param choices}
\ep = 1.5,\, C_x = 30,\, C_y = 2500,\, \text{and } T = 7500.
\end{equation}
With these parameters we found that $S(\ep,C_x,C_y,T) \approx -9.60218$, and therefore
\[
\bigl| \mu_2(Q_1) - 0.493226 \bigr| \le \frac1{4\pi^2} \bigl( |\Err_1| + |\Err_2| + |\Err_3| \bigr).
\]
Under the same parameters, Lemma~\ref{lemma:error_discrete} gives $|\Err_1| \le 1.9026\cdot10^{-24}$; also, the inequalities~\eqref{J and K choice} result in the choices $J = 44$ and $K = 18$, in which case Lemma~\ref{lemma:error_2_conclusion} gives $|\Err_2| \le 8.1525\cdot10^{-6}$. Finally, our calculations show that $P_{7500} \approx 4.2891\cdot10^{-5}$, $Q_{7500} \approx 2.3321\cdot10^{-13}$, and $R_{7500} \approx 3.0537\cdot10^{-22}$, and therefore Lemma~\ref{lemma:error_3} gives $|\Err_3| \le 3.5443\cdot10^{-4}$. These error bounds combine to yield
\[
\bigl| \mu_2(Q_1) - 0.493226 \bigr| \le 0.00001.
\]
Since $\mu_2(Q_1) = \frac{1-\eta_2}2$ by equation~\eqref{relating mu1(Q1) and eta1}, we deduce that
\[
\bigl| \eta_2 - 0.013548 \bigr| \le 0.00002.
\]

\section*{Acknowledgments}

The authors thank the anonymous referees for their helpful comments.
The second author was supported in part by a Natural Sciences and Research Council of Canada Discovery Grant.

\bibliography{comparative}
\bibliographystyle{abbrv}

\addresseshere

\newpage  
\appendix

\section{Graphs illustrating the main theorems} \label{graph appendix}

We exhibit graphs showing every possible case of the densities from Theorems~\ref{theorem:log_density_zeta}--\ref{log_density part 2} and of the 
limiting logarithmic distributions from Theorem~\ref{theorem:joint_dist_zeta}. The graph is determined solely by whether each function is standard or reciprocal and by the bias factors of the two functions.

\begin{figure}[ht]
\includegraphics[width=3in]{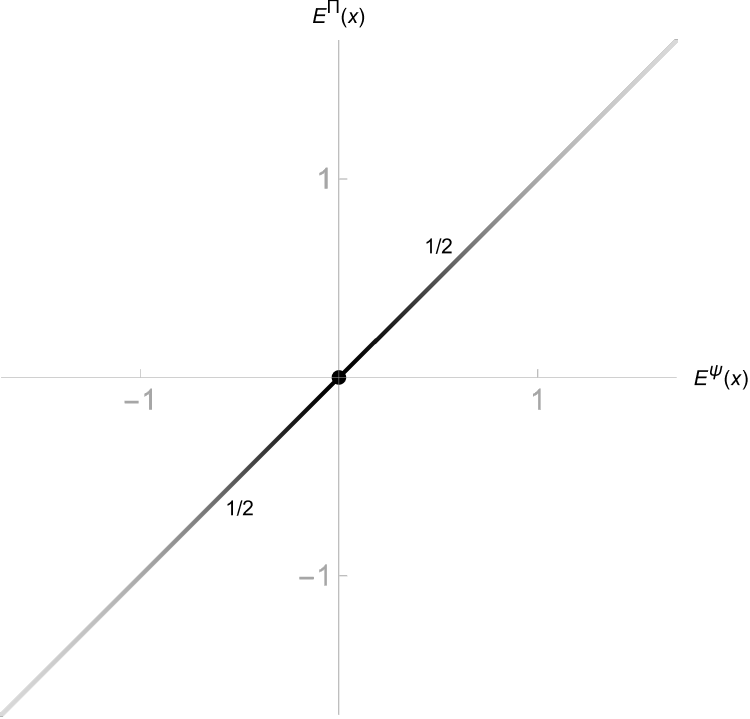}
\qquad
\includegraphics[width=3in]{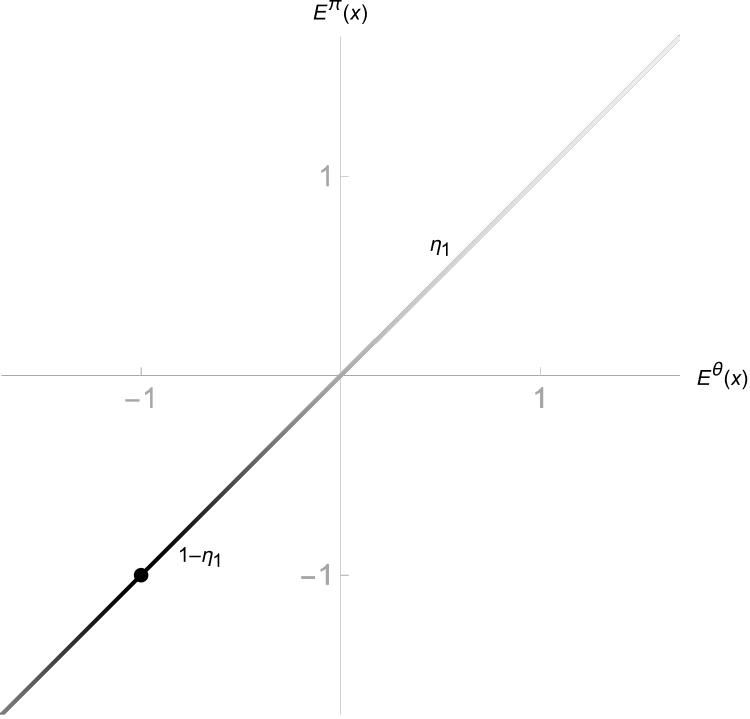}
\caption{Two pairs of standard functions with equal bias factors within each pair. Each joint distribution is singular with support equal to the line $y=x$ (perfect correlation).}
\end{figure}

\begin{figure}[ht]
\includegraphics[width=3in]{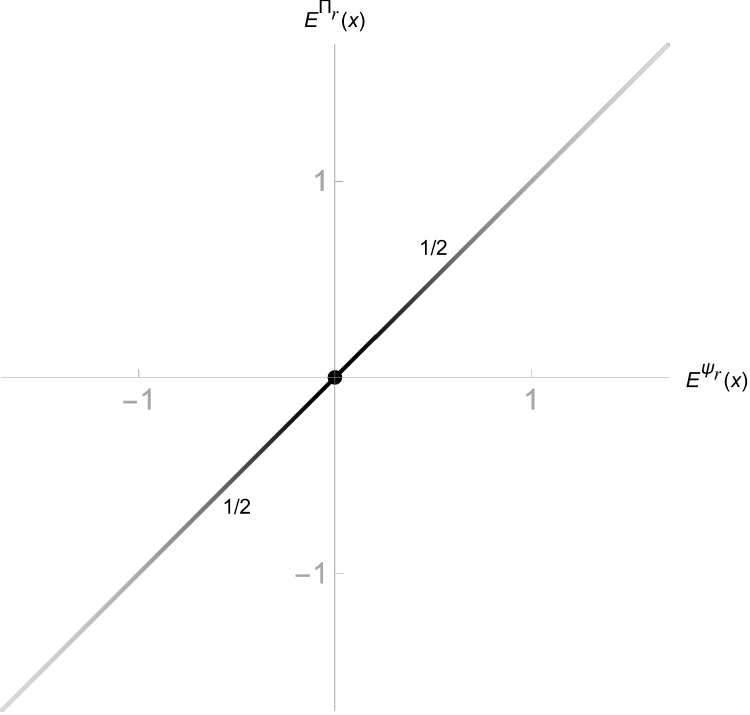}
\qquad
\includegraphics[width=3in]{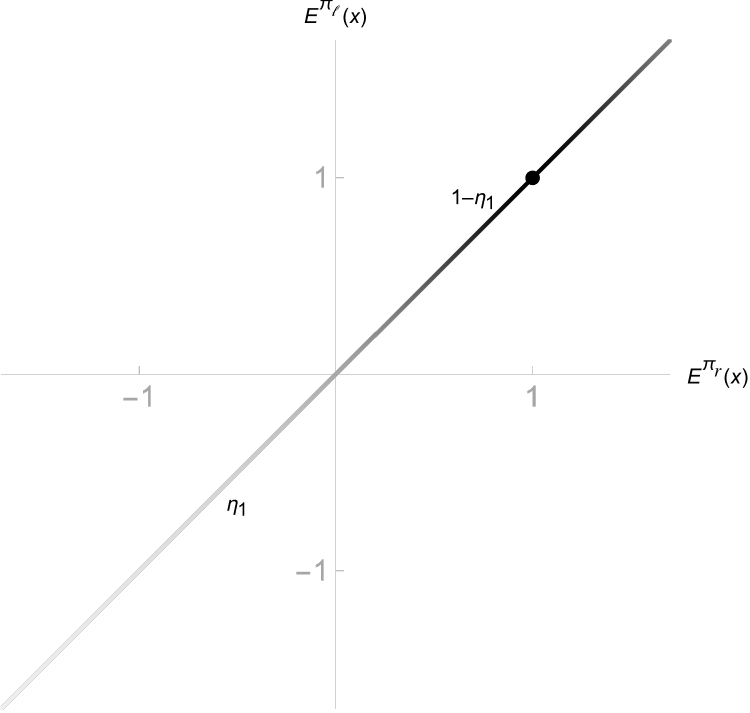}
\caption{Two pairs of reciprocal functions with equal bias factors within each pair. Each joint distribution is singular with support equal to the line $y=x$ (perfect correlation).}
\end{figure}

\begin{figure}[ht]
\includegraphics[width=3in]{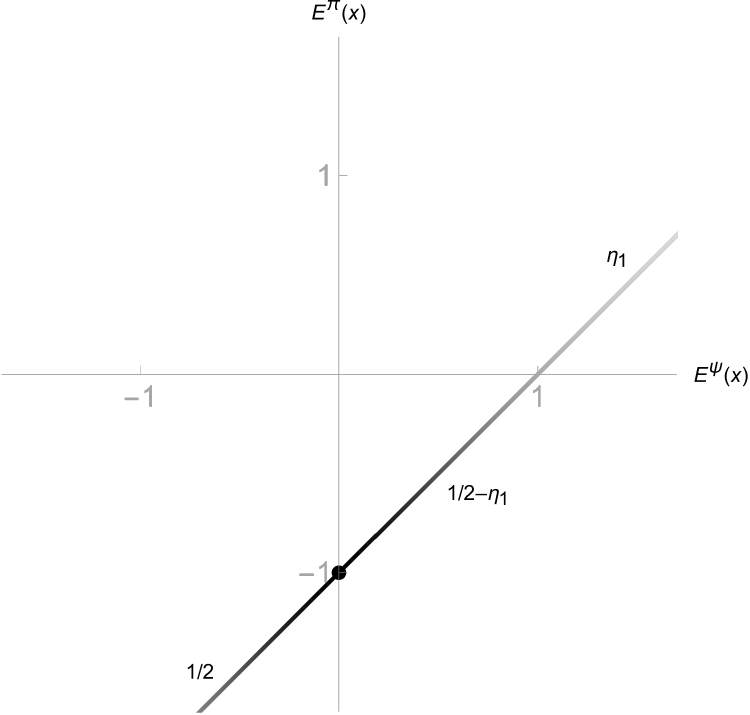}
\qquad
\includegraphics[width=3in]{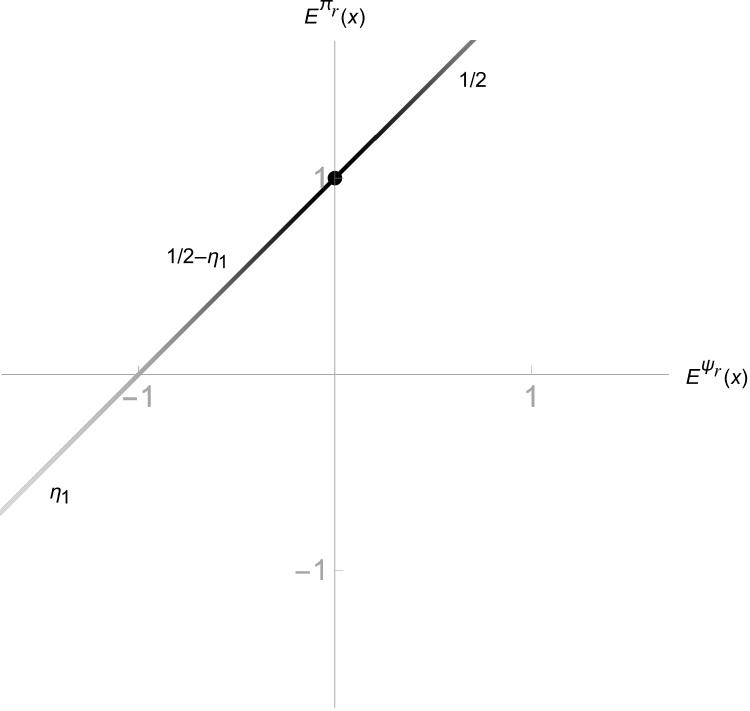}
\caption{A pair of standard functions and a pair of reciprocal functions, with unequal bias factors within each pair. Each joint distribution is singular with support equal to a line $y=x\pm1$ (perfect correlation up to the bias factors).}
\end{figure}

\begin{figure}[ht]
\includegraphics[width=3in]{graphs/psi_pi_r.pdf}
\qquad
\includegraphics[width=3in]{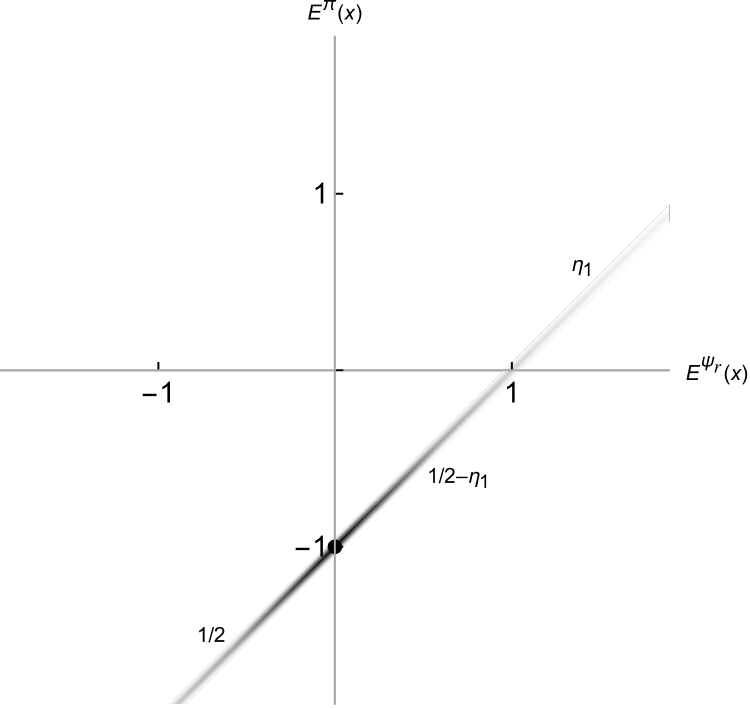}
\caption{Two pairs consisting of one standard function and one reciprocal function, with unequal bias factors within each pair. Each joint distribution is absolutely continuous supported on a thin diagonal strip of (horizontal or vertical) width~$2w$.}
\end{figure}

\begin{figure}[ht]
\includegraphics[width=3in]{graphs/psi_psi_r.pdf}
\qquad
\includegraphics[width=3in]{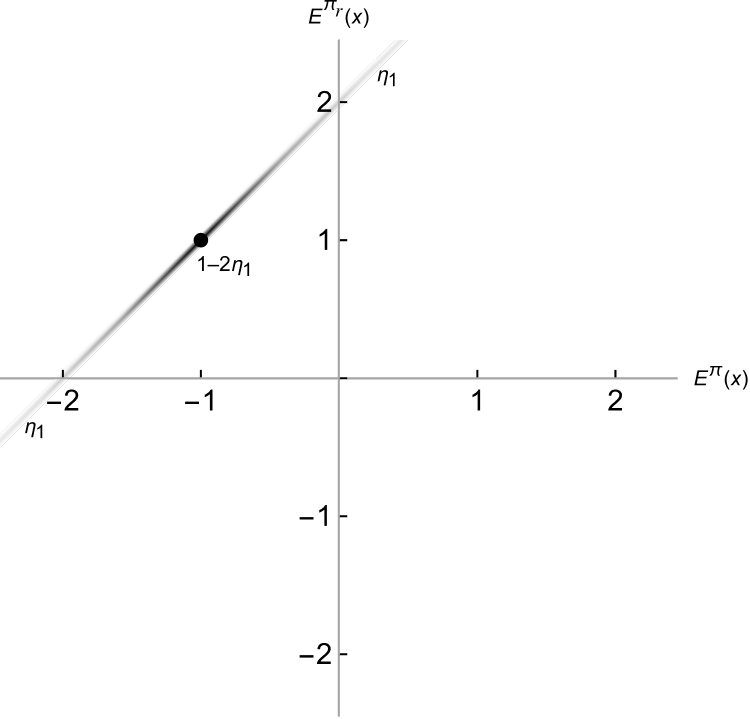}
\caption{Two pairs consisting of one standard function and one reciprocal function. Each joint distribution is absolutely continuous supported on a thin diagonal strip of width~$2w$. In the left-hand graph, both bias factors equal~$0$; in the right-hand graph, the bias factors are nonzero with opposite signs.}
\end{figure}

\end{document}